\tikzstyle{vertex}=[circle,draw=black,fill=black,inner sep=0,minimum size=3pt,text=white,font=\footnotesize]
\newtheorem*{thm*}{Theorem}
\newtheorem{thm}{Theorem}[section]
\newtheorem{theorem}[thm]{Theorem}
\newtheorem{lemma}[thm]{Lemma}
\newtheorem{conjecture}[thm]{Conjecture}
\newtheorem{proposition}[thm]{Proposition}
\newtheorem{prop}[thm]{Proposition}
\theoremstyle{definition}
\newtheorem{definition}[thm]{Definition}
\newcommand\cA{{\mathcal A}}
\newcommand\cB{{\mathcal B}}
\newcommand\cC{{\mathcal C}}
\newcommand\cD{{\mathcal D}}
\newcommand\cF{{\mathcal F}}
\newcommand\cG{{\mathcal G}}
\newcommand\cH{{\mathcal H}}
\newcommand\cL{{\mathcal L}}
\newcommand\cN{{\mathcal N}}
\newcommand\cP{{\mathcal P}}
\newcommand\cW{{\mathcal W}}
\newcommand\biex{\ensuremath{\mathrm{biex}}}
\newcommand\ex{\ensuremath{\mathrm{ex}}}
\newcommand\Ex{\ensuremath{\mathrm{EX}}}
\newcommand{\ignore}[1]{}
\title{Survey of
	generalized Tur\'an problems---counting subgraphs\thanks{The work of the first author was supported by the National Research, Development and Innovation Office - NKFIH under the grant KKP-133819. The work of the second author was supported by a grant from the Simons Foundation \#712036.}
}
\author{
D\'{a}niel Gerbner\thanks{Alfréd Rényi Institute of Mathematics, HUN-REN.  E-mail: \texttt{gerbner.daniel@renyi.hu}.} \qquad
Cory Palmer\thanks{Department of Mathematical Sciences, University of Montana. E-mail: \texttt{cory.palmer@umontana.edu}.} 
}
\begin{document}
\maketitle

\begin{abstract}
    % A fundamental question in extremal graph theory is to determine the \emph{Tur\'an number} $\ex(n,F)$ of a fixed graph $F$, i.e., the maximum number of edges in an $n$-vertex $F$-free graph. 
    For fixed graphs $H$ and $F$, the \emph{generalized Tur\'an number}
    $\ex(n,H,F)$
     is the maximum possible number of copies of a subgraph $H$ in an $n$-vertex $F$-free graph.
    This article is a survey of this extremal function whose study was initiated in an influential 2016 article by Alon and Shikhelman (\emph{J. Combin. Theory, B}, {\bf 121}, 2016).
\end{abstract}

\tableofcontents

\section{Introduction}

The {\it Tur\'an number} $\ex(n,F)$ is the maximum number of edges in an $F$-free graph on $n$ vertices. The study of this function is a central pursuit in extremal graph theory. Tur\'an's theorem~\cite{Tur41} determines its exact value when $F$ is a complete graph $K_k$ and the Erd\H os-Stone-Simonovits theorem~\cite{ErdSto46,ErdSim65} settles asymptotically the case when $F$ has chromatic number at least $3$. The monograph by Bollob\'as~\cite{Bol04} is a good source of results and a history of Tur\'an numbers and extremal graph theory, more generally.
Much effort has gone into understanding the behavior of this function for hypergraphs. A survey by Keevash~\cite{Kee11} gives an excellent overview.

The focus of the present survey is an alternative generalization.
As we may view an edge as a subgraph, it is natural to count the number of copies of some other graph $H$. Let $\mathcal{N}(H,G)$ denote the number of copies of a subgraph $H$ in a host graph $G$.
The maximum number of copies of a subgraph $H$ in an $F$-free graph on $n$ vertices is denoted
\[
\ex(n,H,F) := \max\{\mathcal{N}(H,G) \, : \, \text{$G$ is an $n$-vertex $F$-free graph} \}.
\]
Typically, we view this as a function of $n$ with $H$ and $F$ being fixed graphs. 
Note that this is a direct generalization of the standard Tur\'an number as $\ex(n,F) = \ex(n,K_2,F)$.
After several decades of sporadic results, Alon and Shikhelman~\cite{AloShi16} initiated the systematic study of the function $\ex(n,H,F)$. Since their manuscript a tremendous amount of work has been done on this function, under the moniker \emph{generalized Tur\'an problems}.

The purpose of this survey is to detail the history of these problems and list the myriad results. Because many of these results were published before the formalization of the generalized Tur\'an problem---often in the course of proving some other extremal result---we have taken an exhaustive approach to the collection literature. However, in some places, we will not always state all results completely or precisely for the sake of brevity. We invite the interested reader to check the corresponding references.

\medskip

{\bf Notation \& alphabet.} Most of the notation in this survey is standard in extremal graph theory. Key notation includes $\mathcal{N}(H,G)$ for the number of copies of a subgraph $H$ in a graph $G$ and, for $\mathcal{F}$ a family of graphs, $\ex(n,H,\mathcal{F})$ denotes the maximum number of copies of a subgraph $H$ in an $n$-vertex graph that does not have any subgraph isomorphic to any graph $F \in \mathcal{F}$. When $\mathcal{F}$ is a single graph $F$ we simply use $\ex(n,H,F) = \ex(n,H,\{F\})$. By \emph{extremal graph} we mean an $n$-vertex $F$-free graph that achieves the maximum number of copies of $H$, i.e., $\ex(n,H,F)$ (or edges $\ex(n,F)$, depending on the context). The family of extremal graphs for $\ex(n,H,F)$ is denoted $\Ex(n,H,F)$. The \emph{Tur\'an} graph $T(n,k)$ is the $n$-vertex complete $k$-partite graph whose partition classes are as close in size as possible.

The (vertex-disjoint) \emph{union} of graphs $F_1$ and $F_2$ is denoted $F_1 \cup F_2$ and the union of $t$ copies of $F$ is $tF$. For graphs $F_1$ and $F_2$, their \emph{join} $F_1+F_2$ takes their union and all edges between their respective vertex sets.
For undefined notation, see the monograph of Bollob\'as~\cite{bo-book}.

% An $n$-vertex $F$-free graph that has $(1+o(1))\ex(n,H,F)$ copies of $H$ is \emph{asymptotically extremal}.

We have attempted to keep the notation consistent throughout this survey (even when it conflicts with the referenced papers). In particular, the parameter $r$ will be reserved for the `counting graph' $H$ (often its chromatic number) while $k$ is reserved for the `forbidden graph' $F$. Graph subscripts (e.g., $K_r$ and $C_5$) typically denote the number of vertices, so $P_k$ is the $k$-vertex path, i.e., the path of length $k-1$, or some other natural parameter (e.g.\ chromatic number, number of edges, etc).

\medskip

{\bf Organization.} 
The remainder of this section gives a history of generalized Tur\'an problems, especially those that predate the paper of Alon and Shikhelman, followed by an overview of results on classical Tur\'an numbers, which will serve as a comparison for the surveyed general results. 

The main part of the survey begins after, where we inventory the known bounds on $\ex(n,H,F)$ in several sections. We begin with general results, and among them we discuss the main differences between generalized Tur\'an problems and ordinary Tur\'an problems. One particular result tells when $\ex(n,H,F)=\Theta(n^{|V(H)|})$, the so-called \emph{non-degenerate case}. This case is discussed in Section~\ref{turangood}.
The \emph{degenerate case} when $\ex(n,H,F) = o(n^{|V(H)|})$ makes up the bulk of the survey and appears in Section~\ref{section-degen} where we arrange the known bounds on $\ex(n,H,F)$ in subsections according to properties of the graph $H$ to be counted. In Section~\ref{meth} we discuss methods that have been used frequently to attack generalized Tur\'an problems.
We conclude the survey with open problems in Section~\ref{open}.

\medskip

{\bf Part II.} Due to its length, this survey will be followed by a second part~\cite{GerPal25} that deals with further variants and extensions of the generalized Tur\'an problem. Topics include:

\begin{itemize}
    \item Counting multiple graphs, counting graphs whose order increases with $n$ (e.g.\ Hamiltonian cycles), forbidding graphs whose order increases with $n$, and forbidding an infinite family of graphs.

    \item Host graphs $G$ drawn from a particular family of graphs, e.g., $k$-partite, $k$-connected, regular, hypercubes, bounded independence number, planar, and random graphs.

    \item Induced, rainbow, colored, and weighted versions, and analogues in directed, vertex-ordered, edge-ordered graphs.

    \item Finally, we examine connections to other extremal problems, including Berge hypergraphs, graph property testing and exact bounds for removal lemmas, and local graph properties.

\end{itemize}

\subsection{History of generalized Tur\'an problems}\label{history}

In this subsection, we briefly describe some of the early results in the area (as well as some recent improvements and extensions). Our goal is to illustrate the rich history of these problems, despite being largely hidden even from experts.

In 1938, Erd\H os~\cite{Erd38} gave an upper bound on the number of ``cherries'' $P_3=K_{1,2}$ in a $C_4$-free bipartite graph in the course of investigating the multiplicative Sidon problem in number theory. The bound is given by the following argument. Each pair of vertices are the leaves in at most one cherry $P_3$, so there are at most $\binom{n}{2}$ cherries (where $n$ is the number of vertices). In other words, $\ex(n,P_3,C_4)\le \binom{n}{2}$. On the other hand, if the degrees of $G$ are $d_1,\dots, d_n$, then there are exactly $\sum_{i=1}^n \binom{d_i}{2}$ cherries in $G$. As the number of edges of $G$ is $\frac{1}{2}\sum_{i=1}^n {d_i}$, a simple calculation gives an upper bound on $\ex(n,C_4)$. This predates Tur\'an's theorem~\cite{Tur41}, and according to Simonovits~\cite{Sim13}, Erd\H os later felt that he ``should have invented'' Tur\'an theory at this point, but he ``failed to notice that his theorem was the root of an important and beautiful theory.'' Now we can say that he could have also invented generalized Tur\'an theory right away! There are several other instances of bounds for generalized Tur\'an problems being a lemma for some a related extremal problem.

The first generalized Tur\'an result standing on its own is the complete determination of $\ex(n,K_r,K_k)$ by Zykov~\cite{Zyk49} in 1949 (see~\cite{Zyk52} for an English translation).
This paper is famous for its beautiful proof of Tur\'an's theorem by Zykov's ``symmetrization method.'' However, it is not widely known that the paper proves the following more general result.

\begin{thm}[Zykov~\cite{Zyk49}]\label{zykov} If $r<k$, then
	the Tur\'an graph $T(n,k-1)$ is the unique $n$-vertex $K_k$-free graph with the maximum number of copies of $K_r$. 
\end{thm}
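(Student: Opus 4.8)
The plan is to run a symmetrization argument in the spirit of Zykov: first reduce an arbitrary extremal graph to a complete multipartite one without changing the number of copies of $K_r$, and then optimize over the sizes of the parts. We may assume $r\ge 2$ (for $r=1$ the statement is empty), and the cases $n<k-1$, where $T(n,k-1)=K_n$ trivially dominates every $K_k$-free graph, can be checked directly; so assume $n\ge k-1$. For a vertex $x$ of a graph $G$ write $c_G(x)=\mathcal{N}(K_{r-1},G[N(x)])$ for the number of copies of $K_r$ through $x$, and note that deleting from $G$ an edge lying in no copy of $K_r$ changes neither $\mathcal{N}(K_r,G)$ nor the $K_k$-freeness of $G$. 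The key operation is \emph{cloning}: for a non-edge $uv$, let $G^{u\to v}$ be obtained from $G$ by deleting all edges at $v$ and joining $v$ to $N(u)$. Since no copy of $K_r$ or $K_k$ contains both $u$ and $v$, one checks directly that $G^{u\to v}$ is again $K_k$-free and that $\mathcal{N}(K_r,G^{u\to v})=\mathcal{N}(K_r,G)-c_G(v)+c_G(u)$. Hence, fixing an extremal graph $G$ (one exists, as there are finitely many $n$-vertex graphs), we get $c_G(u)=c_G(v)$ for every non-edge $uv$.

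Next I would show that an extremal $G$ is complete multipartite. If not, the complement of $G$ contains an induced path on three vertices, i.e.\ there are $u,v,w$ with $u\not\sim v$, $v\not\sim w$, $u\sim w$; by the previous step $c_G(u)=c_G(v)=c_G(w)$. Clone $v$ onto both $u$ and $w$ to obtain $G'$. Since $N(v)$ avoids $u$ and $w$, the induced subgraph on $N(v)$ is unchanged, so in $G'$ the vertices $u,v,w$ are pairwise non-adjacent, each lies in exactly $c_G(v)$ copies of $K_r$, and no copy meets two of them; in $G$ no copy contains $v$ together with $u$ or with $w$, so a short count according to the intersection with $\{u,v,w\}$ gives $\mathcal{N}(K_r,G')-\mathcal{N}(K_r,G)=A$, where $A$ is the number of copies of $K_r$ through both $u$ and $w$. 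Since $G$ is extremal and $G'$ is $K_k$-free, $A=0$: the edge $uw$ lies in no copy of $K_r$. Deleting it keeps the graph extremal and strictly decreases the edge count, so iterating terminates at a complete multipartite extremal graph $G^{*}$ with $V(G^{*})=V(G)$ and $E(G^{*})\subseteq E(G)$.

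It remains to identify $G^{*}$ and conclude. Write $G^{*}=K_{n_1,\dots,n_t}$; being $K_k$-free it has $t\le k-1$ parts, and $\mathcal{N}(K_r,G^{*})$ equals the $r$-th elementary symmetric function $e_r(n_1,\dots,n_t)$ of the part sizes. Two smoothing moves settle the optimization: if $t<k-1$ then some part has at least two vertices, and splitting one vertex off into a new part does not decrease $e_r$; and once $t=k-1$, if two parts differ in size by at least $2$ then shifting one vertex from the larger to the smaller one increases $e_r$ by a positive multiple of $e_{r-2}$ of the other $k-3$ part sizes, which is strictly positive precisely because $k-3\ge r-2$, i.e.\ because $r<k$. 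Hence the balanced $(k-1)$-partition is the unique maximizer, so $G^{*}=T(n,k-1)$, and since $T(n,k-1)$ is itself $K_k$-free this yields $\ex(n,K_r,K_k)=\mathcal{N}(K_r,T(n,k-1))$. For uniqueness: any extremal $G$ satisfies $G\supseteq G^{*}=T(n,k-1)$, and adding any further edge to $T(n,k-1)$---necessarily inside a part---creates a $K_k$ together with one vertex from each other part, so $T(n,k-1)$ is edge-maximal $K_k$-free and therefore $G=T(n,k-1)$.

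I expect the main obstacle to be the middle step: showing that the ``long'' edge $uw$ of an induced complement-path in an extremal graph lies in no copy of $K_r$. This is where the double-clone bookkeeping must be done carefully (using $c_G(u)=c_G(v)=c_G(w)$ and that no $K_r$ meets $v$ together with $u$ or $w$), and it is exactly the mechanism that makes the reduction to a complete multipartite graph lossless. The hypothesis $r<k$ then reappears quantitatively in the smoothing step to guarantee $e_{r-2}>0$ over the remaining parts; dropping strictness there would still give the value of $\ex(n,K_r,K_k)$, but the uniqueness claim needs the strict inequalities.
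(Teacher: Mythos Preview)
Your argument is essentially the paper's Zykov symmetrization: clone non-adjacent vertices to reach a complete multipartite graph, then optimize the part sizes. Your variant---using the double-clone on an induced $\overline{P_3}$ to show the edge $uw$ is useless and then \emph{deleting} it, rather than iteratively symmetrizing---is a clean way to get $E(G^{*})\subseteq E(G)$ directly, which pays off in the uniqueness step.

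One small gap: in the optimization you only claim that splitting a vertex off into a new part (step from $t<k-1$ to $t=k-1$) ``does not decrease $e_r$'', and then invoke strictness only once $t=k-1$. That is not quite enough to conclude that $T(n,k-1)$ is the \emph{unique} complete multipartite maximizer; as written, a $G^{*}$ with $t<k-1$ parts is not ruled out. The fix is one line: an extremal $G^{*}$ has $e_r(G^{*})>0$, hence $t\ge r$, and then the splitting move increases $e_r$ by $(a_i-1)\,e_{r-2}(\text{other }t-1\text{ parts})>0$ (since $t-1\ge r-1>r-2$ and all parts are nonempty). With this, $G^{*}=T(n,k-1)$ follows, and your final edge-maximality argument gives uniqueness of $T(n,k-1)$ among all extremal graphs.
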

Counting copies of $K_r$ in $T(n,k-1)$ then gives:
	\begin{align}\label{zykov-bound}
	\ex(n,K_r,K_k) & = \mathcal{N}(K_r,T(n,k-1)) \nonumber \\ 
 &= \sum_{0 \leq i_1 < \cdots < i_r \leq k-2} \prod_{j=1}^r \left \lfloor \frac{n+i_j}{k-1} \right \rfloor \leq \binom{k-1}{r} \left(\frac{n}{k-1}\right)^r.    
	\end{align}
	
Zykov's theorem itself was rediscovered several times~\cite{Erd62,MooMos65,Sau71,Had76,Rom76,Dai09} (often in the complement setting where independent sets replace cliques). 
An early arXiv version of the seminal paper of Alon and Shikhelman~\cite{AloShi16} also contained a proof.
Weaker versions or variants also appear in~\cite{BruMel08,MooMos62},
without knowledge of Zykov's result. Cutler and Radcliffe~\cite{CutRad11} includes a new proof.
	
	Let us briefly sketch the symmetrization proof. It is no different from the case when counting edges. For two non-adjacent vertices $u$ and $v$, we say that we \textit{symmetrize} $u$ to $v$ if we remove all edges incident to $u$, and for every edge $vw$, we add the edge $uw$. Less formally, we replace the neighborhood of $u$ with the neighborhood of $v$. This cannot increase the order of the largest clique. Indeed, any new clique must contain $u$ and thus not $v$, but then we can replace $u$ with $v$ to find a clique of the same size that was present in the original graph. 
	
	For two non-adjacent vertices $u$ and $v$, we symmetrize $u$ to $v$ if $u$ is contained in no more copies of $K_r$ than $v$. In this way, the number of copies of $K_r$ does not decrease. We apply this symmetrization step to every pair of non-adjacent vertices with different neighborhoods. There are several different ways to show that this process ends after finitely many steps.
When this process does end, the resulting graph is complete multipartite, with at most $k-1$ parts. A simple optimization shows that among the complete multipartite graphs, the Tur\'an graph contains the most copies of $K_r$. Later, Gy\H ori, Pach, and Simonovits~\cite{GyoPacSim91} observed
that the same symmetrization argument can be used to count copies of any complete $r$-partite graph $H$ instead of $K_r$ and establish that the extremal graph is complete $(k-1)$-partite. However, in these cases, the extremal graph is not necessarily a Tur\'an graph. For a given $H$, it is a straightforward optimization problem to determine the complete $(k-1)$-partite graph that contains the maximum number of copies of $H$, but we cannot handle it in its full generality. Such optimization efforts for certain values of the parameters (some in different contexts) were carried out in~\cite{BroSid94,CutNirRad19,Ger20,GyoPacSim91,LiShi13,MaQiu18}.

\smallskip

Perhaps the most famous generalized Tur\'an problem is a conjecture of Erd\H os that asserts $\ex(5n,C_5,C_3)=n^5$. This conjecture is stated in several of his papers,
and we are not sure that we caught every appearance. The first reference we are aware of is~\cite{Erd75} from 1975, but there he already calls it an old conjecture. It was repeated in~\cite{Erd84,Erd86,Erd92}, sometimes as the more general claim $\ex((2r+1)n,C_{2r+1},\{C_3,C_5,\dots, C_{2r-1}\})=n^{2r+1}$.

Gy\H ori~\cite{Gyo89} showed $\ex(n,C_5,K_3)\le 1.03(\frac{n+1}{5})^5$, i.e., an upper bound with a somewhat larger constant factor than the lower bound, which is given by a blowup of the $C_5$. This was improved by F\"uredi to $\ex(n,C_5,K_3)\le 1.001(\frac{n+1}{5})^5$ in an unpublished manuscript, according to~\cite{HatHlaKra13}. The solution of the conjecture was obtained independently by Grzesik~\cite{Grz12} and by
Hatami, Hladk{\'y}, Kr{\'a}l', Norine, and Razborov~\cite{HatHlaKra13}. Both use the very powerful method of flag algebras (see Subsection~\ref{flag}), but with different approaches. Note that the obtained bound of $(n/5)^5$ is tight only when $n$ is divisible by 5. Neither of these papers solves the problem for every $n$, but~\cite{HatHlaKra13} shows that if $n$ is large enough, then an almost balanced blowup of $C_5$ has the maximum number of copies of $C_5$ among triangle-free graphs on $n$ vertices. Michael~\cite{Mic13} showed that for $n=8$, there is another extremal example in addition to the balanced blowup.
Lidick{\'y} and Pfender~\cite{LidPfe18} finished the solution of Erd\H os' conjecture by determining $\ex(n,C_5,C_3)$ for every $n$. The master's thesis of Siy~\cite{siy18} from 2018 collects the history of this problem and describes the various attacks on it. A stability version can be found in~\cite{PikSliTyr19}.

Grzesik and Kielak~\cite{GrzKie18} proved the general form of the conjecture 
\[
\ex((2r+1)n,C_{2r+1},\{C_3,C_5,\dots, C_{2r-1}\})=n^{2r+1}.
\] 
Somewhat surprisingly, for $r>2$, flag algebras were not needed in the proof. Note that they also showed that for every $n$, an almost balanced blowup of $C_{2r+1}$ is the only extremal graph. Beke and Janzer~\cite{BekJan23} strengthened this result by showing that the same holds for $\ex(n,C_{2r+1},C_{2r-1})$ if $n$ is sufficiently large. They also showed that for any $k$, if $r$ is sufficiently large, then an unbalanced blowup of $C_{2k-1}$ contains more copies of $C_{2r+1}$ than an almost balanced blowup; thus the latter is not an extremal graph for $\ex(n, C_{2r+1},C_{2k-1})$, disproving a conjecture of Grzesik and Kielak~\cite{GrzKie18}.

Bollob\'as and Gy\H ori~\cite{BolGyo08} considered the opposite problem, in a paper aptly titled ``Pentagons vs. triangles,'' and proved 
\[
(1+o(1))\frac{1}{3 \sqrt{3}}n^{3/2}\le \ex(n,K_3,C_5)\le (1+o(1))\frac{5}{4}n^{3/2}.
\] 
These bounds were later improved in~\cite{AloShi16,ErgGyoMet19b,ErgMet18,LvHeLu24}. Gy\H ori and Li~\cite{GyoLi12} extended this investigation to other forbidden odd cycles and proved the upper bound $\ex(n,K_3,C_{2k+1})=O(\ex(n,C_{2k}))$. The constant factor was improved in~\cite{FurOzk17,AloShi16}. Gy\H ori and Li also showed a lower bound of the form $\Omega(\ex(n,\{C_4,C_6,\dots,C_{2k}\}))$. It is an open problem for the ordinary Tur\'an number whether these lower and upper bounds have the same order of magnitude. The case where both graphs are cycles has received the most attention since then (see Section~\ref{cycles}).

\smallskip

The first paper to address when $H$ and $F$ are not selected from the most basic graph classes is by Gy\H ori, Pach, and Simonovits~\cite{GyoPacSim91}. They examined the number of copies of a general graph $H$ in $K_k$-free graphs. They observed $\ex(n,H,K_k)=\Theta(n^{|V(H)|})$ for any $H$ not containing $K_k$. Let us call (following~\cite{GerPal20}) a graph $H$ \emph{$k$-Tur\'an-good} if $0\neq\ex(n,H,K_k)$ is achieved by the Tur\'an graph $T(n,k-1)$, i.e., the Tur\'an graph has the maximum number of copies of $H$, provided $n$ is large enough. Unlike in (\ref{zykov-bound}), it may not be straightforward to count the number of copies of $H$ in $T(n,k-1)$. However, we typically consider the problem solved when we have determined an extremal graph.
In this language, Tur\'an's theorem states that $K_2$ is $k$-Tur\'an-good and Zykov's theorem (Theorem~\ref{zykov}) states that $K_r$ is $k$-Tur\'an-good. Gy\H ori, Pach, and Simonovits~\cite{GyoPacSim91} showed that every bipartite graph $H$ that contains a matching of size $\lfloor |V(H)|/2\rfloor$ is $3$-Tur\'an good. They also presented a large class of $(k-1)$-partite $k$-Tur\'an-good graphs and showed that $C_4$ and $K_{2,3}$ are $k$-Tur\'an-good for all $k$.   
In their theorems the threshold on $n$ is optimal, and they often proved that the Tur\'an graph is the unique extremal graph.

\smallskip

Fiorini and Lazebnik seem to be the first to explicitly frame the generalized Tur\'an question (see~\cite{FioLaz98,FioLaz94,Fio93}). 
However, they study particular cases in bipartite host graphs: $\ex_{\textrm{bip}}(n,C_6,C_4)$ and $\ex_{\textrm{bip}}(n,C_8,C_4)$. Their motivation comes from the observation that the incidence point-line graph of a finite projective plane, a well-known $C_4$-free graph, contains many copies of $C_6$ and $C_8$. De Winter, Lazebnik, and Verstra{\"e}te~\cite{DeWLazVer08} obtained similar results using partial planes.

\smallskip

In some cases, a result about counting subgraphs is just an intermediate result towards an ordinary Tur\'an-type theorem. We have shown one example already: the result of Erd\H os from 1938 where an upper bound on the number of cherries helped bound $\ex(n,C_4)$. From another point of view, this implies that a graph with many edges will contain many cherries---a simple supersaturation result. See~\cite{Sim84} for a more detailed description of how to apply theorems on supersaturated graphs with generalized Tur\'an results to prove extremal graph
theorems. 
While our goal in this survey is to find all related papers, such intermediate results make it hopeless, as the connection to this topic is often hidden inside proofs of other results.

 Similar to the argument for the number of cherries in $C_4$-free graphs, K\H ov\'ari, S\'os, and Tur\'an~\cite{KovSosTur54} in 1954 gave a bound on $\ex(n,K_{s,t})$ (Theorem~\ref{kst}) by observing first that in a $K_{s,t}$-free graph, every set of $s$ vertices has at most $t-1$ common neighbors, thus the number of \emph{stars} $S_{s+1}$ with $s$ leaves is at most $(t-1)\binom{n}{s}$. 
Other proofs using similar intermediate results on $\ex(n,H,F)$ appear in~\cite{ErdSim69,ErdSim82,ErdSim84,FurWes01,GrzJanNag19}.

\smallskip

We conclude this history arriving at the 2016 manuscript by Alon and Shikelman~\cite{AloShi16}. This article completely changed the landscape of the area, and the resulting boom of activity makes it impractical to organize results according to their chronology. We've mentioned a few directions of research that we feel have attracted the most attention historically. In the remainder of this survey, we attempt to give an exhaustive review of the literature; most of which has occurred since~\cite{AloShi16}.

\subsection{Counting edges---classical Tur\'an numbers}\label{edge-section}

In this subsection, we survey classical results for counting edges, i.e., the ordinary Tur\'an number $\ex(n,F)$. We do not attempt to be exhaustive, but rather highlight several key results that are often the target of generalizations in the later sections.
A reader familiar with the basic results in extremal graph theory can skip this section.
For an exhaustive overview of the topic see the monograph by Bollob\'as~\cite{Bol04}.

The starting point of extremal graph theory is Tur\'an's theorem, which determines the maximum number of edges in a graph without a $k$-clique subgraph.
The special case $k=3$ was proved by Mantel~\cite{Man07},
and is the earliest result in this area.

\begin{thm}[Tur\'an's Theorem~\cite{Tur41}]\label{turan}
	The Tur\'an graph $T(n,k-1)$ is the unique extremal graph for the complete graph $K_k$. Thus,
    \[
	\ex(n,K_k) = e(T(n,k-1)) \leq \left(1-\frac{1}{k-1}\right) \frac{n^2}{2}.
	\]
\end{thm}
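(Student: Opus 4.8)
The plan is to first observe that Tur\'an's theorem is exactly the case $r=2$ of Zykov's Theorem~\ref{zykov}: since $\ex(n,K_k)=\ex(n,K_2,K_k)$ and $T(n,k-1)$ counts precisely its own edges, no genuinely new argument is logically required. For a self-contained proof I would run the symmetrization argument sketched above for Theorem~\ref{zykov}, specialized to counting edges, where it collapses to a short degree computation. So fix an $n$-vertex $K_k$-free graph $G$ with the maximum number of edges.

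The first step is to show that $G$ is complete multipartite, i.e.\ that non-adjacency is transitive on $V(G)$. Suppose not: pick $x,y,z$ with $xy,yz\notin E(G)$ but $xz\in E(G)$. If $\deg(y)<\deg(x)$, replace $N(y)$ by $N(x)$ (``symmetrize $y$ to $x$''); the graph stays $K_k$-free --- a new clique would contain $y$ but not $x$, and swapping $y$ for $x$ yields an old clique --- while the edge count strictly increases, contradicting maximality; symmetrically $\deg(y)<\deg(z)$ is impossible. Hence $\deg(y)\ge\deg(x)$ and $\deg(y)\ge\deg(z)$; now delete $x$ and $z$ and add two new vertices each with neighbourhood $N(y)$. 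No $K_k$ is created (the three pairwise non-adjacent copies of $y$ cannot lie together in a clique, and any clique meeting them can be rewritten using $y$), while the number of edges changes by $2\deg(y)-\deg(x)-\deg(z)+1>0$ --- again a contradiction. So $G$ is complete multipartite, and it is $K_k$-free exactly when it has at most $k-1$ parts. Writing $e(G)=\binom n2-\sum_i\binom{n_i}2$ for the part sizes $n_i$, convexity of $t\mapsto\binom t2$ forces the maximum to occur, uniquely, at $k-1$ parts whose sizes differ by at most one, that is, at $G=T(n,k-1)$; the same convexity bound $\sum_i\binom{n_i}2\ge(k-1)\binom{n/(k-1)}2$ then gives
\[
\ex(n,K_k)=e(T(n,k-1))\le\binom n2-(k-1)\binom{n/(k-1)}2=\left(1-\frac1{k-1}\right)\frac{n^2}{2}.
\]

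The point that needs care is uniqueness. Termination of symmetrization is a non-issue in the ``choose an extremal graph'' formulation above, since every non-trivial move is made only to reach a contradiction; but to conclude that $T(n,k-1)$ is the \emph{only} extremal graph one must verify that each move is \emph{strictly} improving unless $G$ is already balanced complete $(k-1)$-partite, and that the optimization over complete multipartite graphs has a unique maximizer. Alternatively one could bypass symmetrization and argue by induction on $k$ (delete the closed neighbourhood of a maximum-degree vertex) or via the Motzkin--Straus weighting argument, but the symmetrization proof dovetails most naturally with the surrounding exposition.
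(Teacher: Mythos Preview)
Your proof is correct. Note, however, that the paper does not actually give its own proof of Tur\'an's theorem: it is stated as a classical result with a citation to~\cite{Tur41}, since this subsection is merely a review of standard extremal results. That said, your approach is precisely the one the paper endorses implicitly---in the discussion of Zykov's Theorem~\ref{zykov} the symmetrization proof is sketched and the paper remarks that ``it is no different from the case when counting edges,'' which is exactly what you have written out. Your framing of Tur\'an's theorem as the $r=2$ case of Theorem~\ref{zykov} is also consistent with the paper's presentation, and your care with uniqueness (strict improvement unless already balanced complete $(k-1)$-partite) fills in the detail the paper's sketch leaves to the reader.
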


Tur\'an's theorem was extended to any graph $F$ with a \emph{color-critical edge} (i.e.\ an edge whose removal decreases the chromatic number) by Simonovits.

\begin{thm}[Simonovits Critical Edge Theorem~\cite{Sim68}]\label{simo} 
	Let $F$ be a $k$-chromatic graph. For $n$ large enough, the 
    unique extremal graph for $F$ is $T(n,k-1)$
	if and only if $F$ has a color-critical edge.
\end{thm}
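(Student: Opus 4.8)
The natural approach combines the Erdős–Stone–Simonovits theorem (stated earlier) with a stability argument. Since $F$ has chromatic number $k$, Erdős–Stone–Simonovits gives $\ex(n,F) = \left(1 - \frac{1}{k-1}\right)\frac{n^2}{2} + o(n^2)$, so $T(n,k-1)$ is asymptotically extremal; the content of the theorem is the exact statement for large $n$ together with the characterization. The plan is to prove both directions separately.

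\textbf{The ``if'' direction.} Suppose $F$ has a color-critical edge $e$, so $\chi(F-e) = k-1$. Let $G$ be an $n$-vertex $F$-free graph with $e(G) \geq e(T(n,k-1))$; I want to show $G = T(n,k-1)$. First, by a stability theorem (Simonovits' stability, or one can bootstrap from Erdős–Stone–Simonovits), $G$ differs from $T(n,k-1)$ in only $o(n^2)$ edges. The key step is then a local cleaning/symmetrization argument: one shows that $G$ must in fact be complete $(k-1)$-partite. The idea is that if $G$ is ``close'' to complete $(k-1)$-partite but has a vertex $v$ with few non-neighbors inside its own part or a missing edge between parts, one can delete a bounded number of ``bad'' vertices, find a large complete $(k-1)$-partite subgraph $K$ on the remaining vertices, and then observe: if $v$ had full degree into $k-2$ of the parts of $K$ plus one more neighbor, we could embed $F-e$ into those $k-1$ parts with the endpoints of $e$ mapped to $v$ and a vertex in the ``extra'' part, recreating $e$ and hence a copy of $F$ — contradiction. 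Pushing this through forces every vertex to respect a $(k-1)$-partition with all cross-edges present, i.e.\ $G$ is complete $(k-1)$-partite, and then $e(T(n,k-1))$ maximality among complete $(k-1)$-partite graphs (a convexity computation) forces $G = T(n,k-1)$, and uniqueness follows.

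\textbf{The ``only if'' direction.} Suppose $F$ has no color-critical edge, i.e.\ $\chi(F-e) = k$ for every edge $e$. I want to exhibit, for all large $n$, an $n$-vertex $F$-free graph with strictly more than $e(T(n,k-1))$ edges. Take $T(n,k-1)$ and add a sparse graph inside one part — concretely, a single edge, or more generally a graph of small maximum degree / small enough structure inside one class. Call the result $G$. Then $e(G) > e(T(n,k-1))$. I claim $G$ is $F$-free: a copy of $F$ in $G$ uses at most the one (or few) added edge(s) inside a part; deleting the added edge(s) from that copy leaves a subgraph of $T(n,k-1)$, which is $(k-1)$-colorable, so the copy of $F$ with those edge(s) removed is $(k-1)$-colorable — but removing edges from $F$ never raises the chromatic number, and the no-color-critical-edge hypothesis says removing any single edge keeps $\chi = k$, so $F$ itself would need a proper embedding contradicting $(k-1)$-colorability. (One must be slightly careful and add only a single edge per part, or verify the precise combinatorics for more edges; a single added edge already suffices to beat the Turán bound.)

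\textbf{Main obstacle.} The routine parts are Erdős–Stone–Simonovits and the convexity computation. The genuinely delicate step is the ``if'' direction's passage from ``$o(n^2)$-close to $T(n,k-1)$'' to ``exactly complete $(k-1)$-partite,'' which is where the color-critical edge is used in an essential way: one needs a careful embedding argument showing that any local defect (a within-part edge, or a missing cross-part edge together with enough neighbors) creates a copy of $F$ by planting $F-e$ in the Turán-like skeleton and restoring $e$. Controlling the bounded set of exceptional vertices and making the embedding work uniformly for large $n$ — rather than just asymptotically — is the technical heart of the proof.
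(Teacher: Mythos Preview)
Your proposal is essentially correct, but it takes a different route from the one the paper describes. The paper does not prove Theorem~\ref{simo} in Section~\ref{edge-section} where it is stated, but in Subsection on progressive induction (Section~\ref{meth}) it explicitly says that Simonovits proved this theorem via \emph{progressive induction}, and it works through the case $F=C_5$ in detail. That argument does not use stability at all: one finds a fixed-size complete $(k-1)$-partite subgraph $K$ in $G$ (e.g.\ a $K_{3,3}$ in the $C_5$ case), observes that the $F$-free condition bounds the number of edges from $G-K$ to $K$, and inducts from $n$ down to $n-|V(K)|$. There is no base case, but one shows that whenever the edge count between $K$ and $G-K$ is tight, $G$ is forced to be $(k-1)$-partite outright; otherwise the ``deficit'' $\cN(H,G)-\cN(H,T(n,k-1))$ strictly decreases, so after boundedly many steps it is nonpositive.

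Your stability-then-cleaning approach is the now-standard alternative and is itself discussed (for general $H$) in Subsection~\ref{stabmeth}. It is more modular---you import Theorem~\ref{esstab} as a black box and then do a local embedding argument using the color-critical edge---whereas progressive induction is more self-contained and elementary, requiring only Erd\H os--Stone--Simonovits (or K\H ov\'ari--S\'os--Tur\'an) to guarantee the presence of $K$. One small historical wrinkle worth noting: the stability theorem you invoke is from the same paper~\cite{Sim68}, so your approach is not circular but does rely on machinery that Simonovits developed in parallel rather than as a prerequisite. Your ``only if'' direction (add one edge inside a class of $T(n,k-1)$) is correct and is the standard argument.
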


The following fundamental theorem of extremal graph theory extends Tur\'an's theorem.

\begin{thm}[Erd\H os and Stone~\cite{ErdSto46}; Erd\H os and Simonovits~\cite{ErdSim65}]\label{ESS}
	If $F$ is a graph with chromatic number $\chi(F)$, then
	$$\ex(n,F) = \left(1 - \frac{1}{\chi(F) -1}\right)\frac{n^2}{2} + o(n^2).$$
\end{thm}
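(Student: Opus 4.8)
Write $r := \chi(F)$. The lower bound is immediate: the Tur\'an graph $T(n,r-1)$ is properly $(r-1)$-colorable, so it cannot contain the $r$-chromatic graph $F$ as a subgraph, whence $\ex(n,F) \ge e(T(n,r-1))$; a direct count gives $e(T(n,r-1)) = \bigl(1 - \tfrac{1}{r-1}\bigr)\tfrac{n^2}{2} + O(n)$, which is $\bigl(1 - \tfrac{1}{r-1}\bigr)\tfrac{n^2}{2} + o(n^2)$. So the substance of the theorem is the matching upper bound.

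For the upper bound, the plan is to reduce to the Erd\H os--Stone statement: for every integer $t \ge 1$ and every $\varepsilon > 0$ there is an $n_0$ so that every $n$-vertex graph with $n \ge n_0$ and more than $\bigl(1 - \tfrac{1}{r-1} + \varepsilon\bigr)\tfrac{n^2}{2}$ edges contains a copy of $K_r(t)$, the complete $r$-partite graph with $t$ vertices in each class. Granting this, take $t = |V(F)|$; since $F$ is $r$-colorable it embeds into $K_r(t)$, so any such graph already contains $F$. Hence $\ex(n,F) \le \bigl(1 - \tfrac{1}{r-1} + \varepsilon\bigr)\tfrac{n^2}{2}$ for all large $n$, and combining with the lower bound and letting $\varepsilon \to 0$ yields the claimed asymptotics.

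To prove the Erd\H os--Stone statement I would use Szemer\'edi's regularity lemma. Fix $\varepsilon$ and $t$; choose a density threshold $d$ small in terms of $\varepsilon$, and then a regularity parameter $\delta$ small in terms of $d$ and $t$. Apply the regularity lemma to $G$ to obtain a $\delta$-regular partition into $m$ clusters, and form the \emph{reduced graph} $R$ whose vertices are the non-exceptional clusters, with an edge between two clusters when the pair is $\delta$-regular of density at least $d$. A routine edge count shows that the edges lost to exceptional clusters, to irregular pairs, to low-density pairs, and to the interiors of clusters together account for at most $(\varepsilon/2)\tfrac{n^2}{2} + o(n^2)$ edges of $G$; consequently $R$ has edge density at least $1 - \tfrac{1}{r-1} + \varepsilon/4$, so once $m$ is large enough (which the regularity lemma guarantees) Tur\'an's theorem (Theorem~\ref{turan}) forces a copy of $K_r$ in $R$. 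The $r$ clusters witnessing this $K_r$ are pairwise $\delta$-regular of density $\ge d$, and the Embedding Lemma (equivalently, a greedy embedding or the counting lemma) then produces a copy of $K_r(t)$ inside their union, provided $\delta$ was chosen small enough relative to $d$ and $t$ and the clusters are large enough, which holds for $n \ge n_0$. This completes the reduction.

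The main obstacle is the Erd\H os--Stone step itself; the rest is bookkeeping. Within that step the delicate point is the order of quantifiers and the parameter hierarchy: one fixes $t$ and $\varepsilon$, then $d$, then $\delta$, then invokes the regularity lemma (whose cluster bound $M_1(\delta)$ feeds back into how large $n_0$ must be), and finally checks that $\delta$-regularity with density $\ge d$ genuinely forces $K_r(t)$ across $r$ clusters---this last assertion is exactly the Embedding/Counting Lemma and carries the real combinatorial weight. A regularity-free alternative, closer to the original argument of Erd\H os and Stone, is to bootstrap directly: use a K\H ov\'ari--S\'os--Tur\'an-type argument to extract one large complete bipartite subgraph from a dense graph, then iterate to grow it class by class into $K_r(t)$ while tracking how the density hypothesis degrades; this avoids tower-type bounds but is more intricate to set up.
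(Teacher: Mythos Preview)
The paper does not actually prove Theorem~\ref{ESS}; it simply states this classical result with references to~\cite{ErdSto46,ErdSim65} as background for the survey. So there is no ``paper's own proof'' to compare against.

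Your argument is correct and is the standard modern route: Tur\'an graph for the lower bound, then regularity lemma $+$ Tur\'an's theorem in the reduced graph $+$ Embedding Lemma for the upper bound. Incidentally, the paper does employ exactly this machinery later (Subsection~\ref{regularity}) to sketch the proof of the generalization Theorem~\ref{genESS}, so your approach is fully in the spirit of the survey. Your closing remark about the original bootstrapping argument of Erd\H os and Stone is also accurate and a nice touch, since that is what~\cite{ErdSto46} actually does.
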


A consequence of this theorem is the asymptotic value of $\ex(n,F)$ for all non-bipartite $F$. Thus, in such cases, we are generally now only interested in exact results. For example, 
Erd\H os, F\"uredi, Gould, and Gunderson~\cite{ErdFurGou95} determined the extremal number of the \emph{friendship graph} or \emph{$k$-fan} (i.e., $k$ triangles all sharing exactly one common vertex) $F_k$  for $n$ sufficiently large:
\begin{displaymath}
\ex(n,F_k)=\left\lfloor \frac{n^2}{4}\right \rfloor+
\left\{ \begin{array}{l l}
k^2-k & \textrm{if\/ $k$ is odd},\\
k^2-\frac{3k}{2} & \textrm{if\/ $k$ is even}.\\
\end{array}
\right.
\end{displaymath}

Let us turn to the case where $F$ is bipartite, the so-called \emph{degenerate case}. For an exhaustive survey, see F\"uredi and Simonovits~\cite{FurSim13}.
For complete bipartite graphs, even the order of magnitude is unknown in general, but the following upper bound is widely believed to be sharp.

\begin{thm}[Erd\H os; K\H ov\'ari-S\'os-Tur\'an Theorem~\cite{KovSosTur54}]\label{kst}
	Let $s \leq t$. Then
	\[
	\ex(n,K_{s,t}) \leq \frac{1}{2}(t-1)^{1/s} n^{2-{1/s}}+O(n).
	\]
\end{thm}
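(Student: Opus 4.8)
The plan is to use the double-counting argument sketched in the excerpt, made quantitative via convexity. Let $G$ be a $K_{s,t}$-free graph on $n$ vertices with degree sequence $d_1,\dots,d_n$. First I would count pairs consisting of an $s$-element set $S$ of vertices together with a vertex $v$ adjacent to all of $S$, i.e., the number of copies of the star $K_{1,s}$ centered at $v$ summed over $v$; this is exactly $\sum_{i=1}^n \binom{d_i}{s}$. On the other hand, since $G$ is $K_{s,t}$-free, no $s$-set $S$ can have $t$ common neighbors, so each $s$-set is counted at most $t-1$ times, giving $\sum_{i=1}^n \binom{d_i}{s} \le (t-1)\binom{n}{s}$.

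Next I would extract a bound on $e(G)$ from this inequality by convexity. Writing $d = \frac{1}{n}\sum_i d_i = \frac{2e(G)}{n}$ for the average degree, Jensen's inequality applied to the convex function $x \mapsto \binom{x}{s}$ (extended to a polynomial on the reals, which is convex for $x \ge s-1$) yields $n\binom{d}{s} \le \sum_i \binom{d_i}{s} \le (t-1)\binom{n}{s}$, hence $\binom{d}{s} \le (t-1)\frac{\binom{n}{s}}{n} \le \frac{(t-1)n^{s-1}}{s!}$. Expanding $\binom{d}{s} = \frac{d(d-1)\cdots(d-s+1)}{s!} \ge \frac{(d-s+1)^s}{s!}$ gives $(d-s+1)^s \le (t-1)n^{s-1}$, so $d \le (t-1)^{1/s} n^{1-1/s} + s - 1$, and therefore $e(G) = \frac{nd}{2} \le \frac{1}{2}(t-1)^{1/s} n^{2-1/s} + \frac{(s-1)n}{2}$, which is the claimed bound with the $O(n)$ error absorbing the lower-order term.

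There is one technical point to handle carefully: the application of Jensen's inequality requires all the $d_i$ to lie in the region where $\binom{x}{s}$ is convex, which is $x \ge s-1$, but some vertices may have small degree. The standard fix is to note that $\binom{d_i}{s} = 0$ whenever $d_i < s$, so I would split the vertex set into those with $d_i \ge s$ and those with $d_i < s$; the latter contribute nothing to the left-hand sum, and one can either discard them (losing at most $sn$ edges, absorbed into $O(n)$) or observe that the polynomial $x(x-1)\cdots(x-s+1)/s!$ is nonnegative and convex on $[s-1,\infty)$ and dominates $0$ there, so the inequality $n\binom{d}{s} \le \sum \binom{d_i}{s}$ still needs the average $d$ to be at least $s-1$ — which we may assume, since otherwise $e(G) < sn/2$ and the bound is trivial.

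I expect the main (indeed only) obstacle to be purely bookkeeping: tracking the lower-order terms so they genuinely fit inside $O(n)$, and justifying the convexity step cleanly. There is no real combinatorial difficulty here — the $K_{s,t}$-freeness is used in exactly one line — so the proof is essentially a convexity estimate dressed up as a counting argument. An alternative to Jensen that avoids the convexity subtlety is to use the power-mean or Cauchy–Schwarz-type inequality directly on $\sum d_i^s$, but the $\binom{d_i}{s}$ formulation is the cleanest since it makes the combinatorial meaning (counting stars) transparent and the $(t-1)\binom{n}{s}$ bound immediate.
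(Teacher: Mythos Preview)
Your proposal is correct and follows exactly the approach the paper sketches: bound the number of $s$-leaf stars by $(t-1)\binom{n}{s}$ via the common-neighbor restriction, then extract the edge bound by convexity. The paper only outlines the star-counting step and leaves the convexity calculation implicit, so your write-up simply fills in the standard details.
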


A series of papers~\cite{KolRonSza96, AloRonSza99, Buk21}  give general lower bounds for $t$ large compared to $s$.
% Improving a result of Koll\'ar, R\'onyai and Szab\'o~\cite{KolRonSza96},
% %\begin{thm}[Koll\'ar-R\'onyai-Szab\'o, 1995]\label{krsz}
% %	Let $t \geq 2$ and $s > t!$, then
% %	$$\ex(n,K_{s,t}) \geq c n^{2-1/s}.$$
% %\end{thm}
% Alon-R\'onyai-Szab\'o~\cite{AloRonSza99} determined the order of magnitude of $K_{s,t}$ if $t$ is large enough compared to $s$.
% \begin{thm}[Alon-R\'onyai-Szab\'o, 1999]\label{arsz}
% 	Let $s \geq 2$ and $t > (s-1)!$, then
% 	$$\ex(n,K_{s,t}) \geq c n^{2-1/s}.$$
% \end{thm}
% The same bound was shown by Bukh~\cite{Buk21} in the case $t>9^ss^{4s^{2/3}}$.
The case $s=2$ is especially important as a construction of F\"uredi~\cite{Fur96} determines the asymptotics.

\begin{thm}[F\"uredi~\cite{Fur96}]\label{fured}
	\[
	\ex(n,K_{2,t}) = \frac{1}{2} \sqrt{t-1}n^{3/2} + O(n).
	\]
\end{thm}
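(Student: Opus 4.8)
The upper bound is already in hand: Theorem~\ref{kst} with $s=2$ gives $\ex(n,K_{2,t})\le\frac12\sqrt{t-1}\,n^{3/2}+O(n)$, so everything rests on a matching lower bound, i.e.\ a construction of $K_{2,t}$-free graphs with $(1-o(1))\frac12\sqrt{t-1}\,n^{3/2}$ edges. The plan is to build an algebraic ``polarity-type'' graph generalizing the Erd\H os--R\'enyi $C_4$-free graph. Fix a prime power $q$ with $t-1\mid q-1$, let $H\le\mathbb{F}_q^{\times}$ be the unique subgroup of order $t-1$, and set $N=(q^2-1)/(t-1)$. Take as vertices the $N$ orbits of $\mathbb{F}_q^2\setminus\{(0,0)\}$ under scaling by $H$, writing $[x,y]$ for the orbit of $(x,y)$, and join $[x,y]$ to $[u,v]$ exactly when $xu+yv\in H$; scaling a representative by $h\in H$ multiplies $xu+yv$ by $h$, so the relation is well defined. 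Removing the loops at the vertices with $x^2+y^2\in H$ makes the graph simple on the same $N$ vertices.

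Next I would check that this graph is $K_{2,t}$-free by bounding the number of common neighbours of two distinct vertices $[x_1,y_1]$ and $[x_2,y_2]$. Any common neighbour $[a,b]$ satisfies $ax_1+by_1=h_1$ and $ax_2+by_2=h_2$ for some $h_1,h_2\in H$. If $(x_1,y_1)$ and $(x_2,y_2)$ are linearly independent over $\mathbb{F}_q$, then each of the $(t-1)^2$ pairs $(h_1,h_2)\in H^2$ yields a unique solution $(a,b)$, necessarily nonzero; the set of these solutions is $H$-invariant, so it is a union of $(t-1)^2/(t-1)=t-1$ scaling orbits, giving at most $t-1$ common neighbours. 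If $(x_2,y_2)=\lambda(x_1,y_1)$ with $\lambda\in\mathbb{F}_q^{\times}$, then $\lambda\notin H$ (else the vertices coincide) and $h_2=\lambda h_1$ would force $\lambda\in H$, so there is no common neighbour. Hence every pair of vertices has at most $t-1$ common neighbours and no $K_{2,t}$ occurs.

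Then I would count edges. For a fixed $[a,b]$ and each $h\in H$, the set $\{(x,y):ax+by=h\}$ is an affine line of $q$ points, none of them $(0,0)$; these $t-1$ disjoint lines have $H$-invariant union, so $[a,b]$ has exactly $q$ related vertices (at most one being $[a,b]$ itself). After deleting loops the graph therefore has at least $\frac12(q-1)N=\frac{q^3}{2(t-1)}-O(q^2)$ edges. Choosing $q$ so that $N\approx n$, that is $q\approx\sqrt{(t-1)n}$, turns this into $\frac12\sqrt{t-1}\,n^{3/2}-O(n)$, matching the K\H ov\'ari--S\'os--Tur\'an bound.

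The main obstacle is that the construction only directly produces graphs whose order lies in the thin set $\{(q^2-1)/(t-1):q\text{ a valid prime power}\}$, whereas the theorem asserts the estimate for every $n$ with error merely $O(n)$. Bridging this needs, first, a result on the distribution of primes in the residue class $1\bmod(t-1)$ guaranteeing a usable $q$ with $\sqrt{(t-1)n}-q$ small, and second, a way to interpolate from $N$ vertices to $n$ vertices that costs only $O(n)$ edges---padding with isolated vertices is too lossy once the relevant prime gap exceeds an absolute constant, so instead one must glue on a lower-order auxiliary family of vertices without creating a $K_{2,t}$. Forcing all of these contributions, along with the $O(q^2)$ slack above and the loop removal, to collapse into a single $O(n)$ term is the delicate part of F\"uredi's argument, and I expect it to be the real work.
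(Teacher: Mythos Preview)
Your construction and its verification are correct and coincide with the paper's own description of the F\"uredi graph in Subsection~\ref{algebra}: orbits of $\mathbb{F}_q^2\setminus\{0\}$ under the order-$(t-1)$ subgroup $H\le\mathbb{F}_q^\times$, adjacency via $xu+yv\in H$, the common-neighbour count, and the near-$q$-regularity all match. So the core of the proof is right and follows the paper.

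Where you diverge is in the final paragraph. You flag the $O(n)$ error term as ``the delicate part of F\"uredi's argument'' and anticipate having to glue auxiliary gadgets because padding with isolated vertices is too lossy. This overstates the difficulty relative to what the paper actually does. The paper's own treatment (Subsection~\ref{algebra}) makes no attempt at an $O(n)$ lower bound: it invokes a prime-power gap estimate $q_t(n)>\sqrt{nt}-n^{1/3}$, takes the F\"uredi graph on $(q_t(n)^2-1)/(t-1)\le n$ vertices, pads with isolated vertices, and remarks only that ``this does not change the asymptotics of the resulting lower bound.'' Indeed, when the paper later restates Theorem~\ref{fured} in Subsection~\ref{algebra}, it writes it as $\ex(n,K_{2,t})=(1+o(1))\frac{\sqrt{t-1}}{2}n^{3/2}$, dropping the $O(n)$. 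So the $O(n)$ in the displayed statement is the precision of the K\H ov\'ari--S\'os--Tur\'an upper bound; the construction is only claimed to match it asymptotically, and simple padding suffices for that. There is no hidden gluing step to recover.
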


%Besides the above results, the order of magnitude of $\ex(n,K_{s,t})$ is known only for $s=3$ by a result of Brown~\cite{Bro66}.

%Brown's construction~\cite{Bro66}

%\begin{thm}[Brown, 1966]
	%$$\ex(n,K_{3,3}) \geq (1+o(1))\frac{1}{2} n^{5/3}.$$
%\end{thm}

For even cycles, the situation is similar to complete bipartite graphs: we have a simple upper bound that is widely believed to have the correct order of magnitude, but it is matched by a lower bound in only a few cases.

\begin{thm}[Bondy-Simonovits Even Cycle Theorem~\cite{BonSim74}]\label{BS-even-cycle}
	\[
	\ex(n,C_{2k}) \leq 100kn^{1+1/k}.
	\]
\end{thm}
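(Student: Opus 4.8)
The plan is to prove the Bondy--Simonovits even cycle theorem by a breadth-first search argument, showing that a $C_{2k}$-free graph with too many edges must contain a short even cycle. First I would reduce to the case where $G$ has minimum degree at least $d := e(G)/n \geq 50 k n^{1/k}$: a standard cleaning argument repeatedly deletes vertices of degree less than $d/2$, which removes fewer than $dn/2 = e(G)/2$ edges in total, so the resulting subgraph $G'$ is nonempty, still $C_{2k}$-free, and has minimum degree at least $d/2 \geq 25 k n^{1/k}$. It then suffices to derive a contradiction from a $C_{2k}$-free graph with minimum degree $\delta \geq 25 k \, n^{1/k}$ (with $|V(G')| \le n$).

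Next I would fix a vertex $v$ in $G'$ and run BFS from $v$, letting $L_i$ be the set of vertices at distance exactly $i$ from $v$. The key structural claim is that for each $i$, every vertex in $L_{i+1}$ has at most one neighbor in $L_i$ — if some $w \in L_{i+1}$ had two neighbors $x, y \in L_i$, one would trace two internally disjoint $v$--$w$ paths and, more carefully, find a cycle; but controlling the cycle \emph{length} to be exactly $2k$ is exactly what requires the $C_{2k}$-freeness hypothesis and the level structure. The cleanest route is the classical one: show that the BFS tree, together with the constraint that few ``extra'' (non-tree) edges can appear within and between consecutive levels, forces the levels to grow by a factor close to $\delta$, i.e. $|L_{i+1}| \geq (\delta - 1)|L_i| - (\text{correction})$, so that $|L_k|$ already exceeds $n$, a contradiction. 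More precisely, one argues that each level expands so that $|L_i| \ge \bigl(\tfrac{\delta}{2}\bigr)^i / (\text{small factor})$ for $i \le k$, and with $\delta \ge 25 k n^{1/k}$ this gives $|L_k| > n$.

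The main obstacle is the heart of the Bondy--Simonovits argument: establishing that if two vertices in the same or consecutive BFS levels are joined by ``too many'' short paths through the tree, then one of those paths can be closed up into a cycle of length \emph{precisely} $2k$. This requires a parity/interval argument — showing that the set of achievable cycle lengths through a dense BFS tree forms a long interval of even integers (or at least contains $2k$) — and is where one must be genuinely careful rather than merely bookkeeping. The right formulation is: if $|L_i|$ fails to grow by the required factor at some level $i \le k$, then there are many edges inside $L_i$ or between $L_{i-1}$ and $L_i$ beyond the tree edges, and a counting argument produces two tree paths from $v$ whose lengths differ appropriately, which combine with one extra edge to give a $C_{2k}$. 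Once that lemma is in hand, the rest is the degree-cleaning reduction plus the level-growth estimate, both of which are routine; I would present the constant $100k$ as the outcome of tracking these two steps without attempting to optimize it.
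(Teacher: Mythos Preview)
The paper does not prove this theorem: it is a survey, and Theorem~\ref{BS-even-cycle} is merely stated as a classical result with a citation to Bondy and Simonovits~\cite{BonSim74} (followed by a remark that the constant has since been improved). There is no proof in the paper to compare against.

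As to your sketch itself: the overall architecture---clean to large minimum degree, run BFS, argue geometric growth of the levels, derive $|L_k|>n$---is indeed the shape of the Bondy--Simonovits argument. But you correctly flag, and then do not resolve, the genuine difficulty. Your first structural claim (``every vertex in $L_{i+1}$ has at most one neighbour in $L_i$'') is simply false for $C_{2k}$-free graphs when $k>2$, and you immediately retract it. What remains is an appeal to ``the classical one'' and a statement that the key lemma is ``a parity/interval argument'' producing a cycle of length exactly $2k$. That lemma is the entire content of the theorem; the cleaning and level-counting are routine, as you say. A referee would regard your write-up as an accurate table of contents for the proof rather than a proof: you have not explained how two tree paths of the right length difference are actually found, nor why dense non-tree edges between consecutive levels force a $C_{2k}$ rather than some other even cycle. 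If you intend to supply a full proof, that lemma (in the form given by Bondy--Simonovits via $\Theta$-subgraphs, or a modern variant such as Pikhurko's or Verstra\"ete's) is where all the work goes.
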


The bound $\ex(n,C_{2k})=O(n^{1+1/k})$ was stated by Erd\H os in~\cite{Erd64} without a proof.
The constant factor has been improved since and the best known is $16\sqrt{5k \log k} + o(1)$ due to He~\cite{He21}.
Erd\H os~\cite{Erd64} conjectured that we have the same order of magnitude if we forbid all cycles of length at most $2k$.

\begin{conjecture}[Erd\H os Girth Conjecture]\label{erdgirth} 
For any positive integer $k$, there is a constant $c(k)>0$ such that there exists an $n$-vertex graph with at least $c(k)n^{1+1/k}$ edges and girth greater than $2k$. In other words,
\[
\ex(n,\cC_{\leq 2k}) = \Omega(n^{1+1/k})
\]
where $\cC_{\leq 2k}$ denotes the family of all cycles of length at most $2k$.
\end{conjecture}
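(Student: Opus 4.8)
This is one of the oldest open problems in extremal graph theory, so rather than a genuine proof I can only describe the two lines of attack that have produced the partial results and indicate where each one stalls. The most successful approach is to manufacture explicit algebraic constructions from finite incidence geometry. The key object is the \emph{incidence graph} of a generalized $m$-gon: it is bipartite, its two sides carry the ``points'' and ``lines'' of the geometry, it is biregular, and---crucially---it has girth exactly $2m$. When such a geometry exists with roughly $q^{m-1}$ points, each on about $q$ lines, the incidence graph has $n\asymp q^{m-1}$ vertices, about $n^{1+1/(m-1)}$ edges, and girth $2m>2(m-1)$. Taking $m=3$ (projective planes), $m=4$ (generalized quadrangles), and $m=6$ (generalized hexagons) settles the conjecture for $k=m-1\in\{2,3,5\}$, and $k=1$ is trivial. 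The plan for the remaining $k$ would be to produce, for each $k$, a suitable design playing the role of a ``generalized $(k{+}1)$-gon'' of balanced order.

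The obstruction to continuing down this road is the Feit--Higman classification: a thick generalized $m$-gon exists only for $m\in\{2,3,4,6,8\}$. The octagon ($m=8$) does exist, but only with the unbalanced order $(q,q^2)$, and a short computation shows its incidence graph has girth $16$ but only about $n^{1+1/11}$ edges---far short of the $n^{1+1/7}$ needed to settle $k=7$. There is no geometry of the right size for $k=4$ (one would want a nonexistent ``generalized $5$-gon''), and the hexagon, though of girth $12>8$, supplies only $n^{1+1/5}<n^{1+1/4}$ edges. So progress here demands a genuinely new combinatorial or algebraic framework producing Moore-extremal incidence structures outside the classical list.

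The second approach is probabilistic: take $G(n,p)$ with $p\asymp n^{-(2k-2)/(2k-1)}$, so the expected number of cycles of length at most $2k$ is at most half the expected number of edges, then delete one edge from each short cycle. This yields girth $>2k$ with $\Omega(n^{1+1/(2k-1)})$ edges; the algebraic constructions of Lazebnik--Ustimenko--Woldar do somewhat better but still only reach an exponent of the form $1+c/k$ with $c<1$. To close the gap one would want either a sharper alteration---excising dense, short-cycle-rich substructures wholesale rather than single edges---or a random algebraic construction in the spirit of Bukh and Conlon, choosing a random low-degree variety so that short cycles are algebraically forced to be rare while the incidence count stays large. A natural first target is the smallest open case $k=4$: build an $n$-vertex graph of girth at least $9$ with $\Omega(n^{5/4})$ edges.

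The decisive difficulty, and why I would be pessimistic about a short argument, is that the roughly factor-of-two loss in the exponent separating every known lower bound from the conjectured truth coincides with the gap between the Moore bound and all known explicit girth-versus-degree tradeoffs (the Ramanujan graphs of Lubotzky--Phillips--Sarnak, for instance, have girth only about $\tfrac{4}{3}\log_d n$). Certifying that a graph dense enough to reach $n^{1+1/k}$ contains no short cycle seems, on current evidence, to require an algebraic identity of exactly the kind generalized polygons provide---precisely what Feit--Higman forbids beyond the known cases. I therefore expect the main step of any eventual proof to be a new construction paradigm rather than a refinement of the existing ones.
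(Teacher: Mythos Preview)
Your response is appropriate: the statement is a \emph{conjecture}, not a theorem, and the paper does not prove it---it merely states it as a well-known open problem (Conjecture~\ref{erdgirth}) and later uses it conditionally (e.g., in the Solymosi--Wong lower bound in Subsection~\ref{cycles}). You correctly identified this and gave an accurate survey of the two standard lines of attack---explicit constructions from generalized polygons (settling $k\in\{2,3,5\}$) blocked by Feit--Higman, and probabilistic deletion yielding only $n^{1+1/(2k-1)}$---together with a fair assessment of why the gap persists.

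One small quibble: your edge-count for the generalized octagon is not quite right in detail (the Ree--Tits octagons of order $(q,q^2)$ give an exponent closer to $1+2/9$ than $1+1/11$ on the denser side), but the qualitative conclusion that it falls well short of $n^{1+1/7}$ is correct and is the point that matters. Otherwise your discussion is sound and, given that the paper offers no proof, there is nothing further to compare.
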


Let us continue with trees $T$. It is easy to see that $\ex(n,T)=\Theta(n)$ if $T$ has at least two edges. The case $T$ is a path is given by Erd\H os and Gallai~\cite{ErdGal59}.

\begin{thm}[Erd\H os-Gallai Theorem~\cite{ErdGal59}]
For the $k$-vertex path $P_k$, 
	$$\ex(n,P_k) \leq \frac{n}{k-1}\binom{k-1}{2} = \frac{k-2}{2}n.$$ 
\end{thm}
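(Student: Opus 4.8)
The plan is to prove the Erd\H os--Gallai bound $\ex(n,P_k)\le \frac{k-2}{2}n$ by induction on $n$, reducing a $P_k$-free graph to one on fewer vertices while keeping the average-degree bound under control. First I would dispose of the base cases: if $n \le k-1$ the bound follows from $e(G) \le \binom{n}{2} \le \binom{k-1}{2} \le \frac{k-2}{2}n$ (the middle inequality because the function $\binom{m}{2}$ is increasing and $m \le k-1$, and then $\binom{k-1}{2} = \frac{k-2}{2}(k-1) \le \frac{k-2}{2}n$ fails when $n<k-1$, so more carefully one just checks $e(G)\le\binom{n}{2}\le\frac{k-2}{2}n$ directly for $n\le k-1$). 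So assume $n \ge k$ and that the bound holds for all graphs on fewer than $n$ vertices.

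The key step is to find a vertex set whose removal decreases $e(G)$ by at most $\frac{k-2}{2}$ per removed vertex. The standard move is to pass to a \emph{block} (maximal $2$-connected subgraph, or an edge/isolated vertex). If $G$ is disconnected or has a cut vertex, then $G$ is the union of two subgraphs $G_1, G_2$ overlapping in at most one vertex, each still $P_k$-free, each on fewer than $n$ vertices (after splitting); applying induction to each and adding gives the bound, since $|V(G_1)| + |V(G_2)| \le n+1$ and the ``$+1$'' is absorbed because at least one $G_i$ is a proper subgraph — one has to be slightly careful here, but the counting works out. So the heart of the matter is the $2$-connected case: I would show that a $2$-connected $P_k$-free graph on $n \ge k$ vertices has at most $\binom{k-1}{2}$ edges, in fact has a cycle of length $\ge$ something, but more directly: in a $2$-connected graph, a longest path of length $\ell$ yields (by a rotation/extension argument à la the proof of Erd\H os--Gallai for circumference, or via Dirac-type arguments) that either the graph contains a long path, contradicting $P_k$-freeness, or the longest path has $\le k-1$ vertices and then $G$ has circumference $\le k-1$, whence $e(G) \le \frac{1}{2}(k-1)n$ is too weak — instead one uses that a $2$-connected graph with no path on $k$ vertices has at most $\binom{k-1}{2}$ edges total when $n\ge k-1$, which is itself the content of a lemma typically proven by a separate induction or by the Kopylov/Erd\H os--Gallai path-lemma.

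An alternative and cleaner route I would actually prefer: induct using a \emph{longest path} directly. Let $P = v_0 v_1 \cdots v_\ell$ be a longest path in $G$; since $G$ is $P_k$-free, $\ell \le k-2$. All neighbors of $v_0$ lie on $P$ (else extend), and similarly for $v_\ell$. If some edge $v_0 v_i$ with the structure of the path forces a cycle through all of $V(P)$ (the usual argument: if $v_0 v_{i+1}$ and $v_\ell v_i$ are both edges we get a longer path in a connected graph with a vertex outside $V(P)$), one derives that the component containing $P$ has at most $\ell \le k-2$ vertices or splits off nicely. Concretely, one shows the component $C$ containing $v_0$ satisfies $e(C) \le \frac{k-2}{2}|V(C)|$ by a direct argument on its longest path, and then sums over components. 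The main obstacle is exactly this 2-connected / longest-path core estimate: making the rotation--extension argument yield the clean bound $\frac{k-2}{2}$ per vertex rather than something like $(k-2)$ per vertex requires the right inductive setup (removing the ``interior'' of a longest path and arguing the endpoints' neighborhoods are confined), and getting the constant sharp is where the Erd\H os--Gallai argument has to be done carefully. I would expect to spend most of the effort there, with the block-decomposition bookkeeping being routine once the 2-connected case is in hand.
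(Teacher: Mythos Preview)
The paper does not prove this theorem; it merely states it as a classical result of Erd\H os and Gallai and cites~\cite{ErdGal59}. So there is no proof in the paper to compare your attempt against.

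On its own merits, your proposal contains the right ingredients but has a genuine gap. You correctly set up induction on $n$, handle the base case $n\le k-1$, and reduce to connected components. You also correctly identify the longest-path / rotation--extension mechanism: if $P=v_0\cdots v_\ell$ is longest, both endpoints have all neighbors on $P$, and if there is an $i$ with $v_0v_{i+1}, v_\ell v_i\in E(G)$ then $V(P)$ spans a cycle, which in a connected graph with $|V(G)|>\ell+1$ yields a longer path, a contradiction. What is missing is the step that \emph{guarantees} such an $i$ exists. This requires $d(v_0)+d(v_\ell)>\ell$, which does not follow from $P_k$-freeness alone --- you need $\delta(G)>\tfrac{k-2}{2}$. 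The standard fix, which you do not articulate, is: if some vertex $v$ has $d(v)\le \tfrac{k-2}{2}$, delete it and apply induction to $G-v$ (the edge count drops by at most $\tfrac{k-2}{2}$, exactly matching the drop in the bound). This reduces to the case $\delta(G)>\tfrac{k-2}{2}$, and \emph{then} the pigeonhole on the endpoint neighborhoods goes through.

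Your block-decomposition route and the remark about ``removing the interior of a longest path'' do not lead to the sharp constant without this minimum-degree reduction; as you yourself note, the naive endpoint bound $d(v_0)\le k-2$ only gives average degree $\le k-2$ after removal, which is off by a factor of $2$. The low-degree-vertex removal is the missing idea that makes the induction close.
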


% As we have mentioned, this gives an exact result for some values of $n$.
% Faudree and Schelp~\cite{FauSch75} extended the above result by showing the following. 

% \begin{thm}[Faudree-Schelp\cite{FauSch75}] Let $n=ak+b$. Then we have
% 	$$\ex(n,P_{k+1})=|E(aK_k\cup K_b)|.$$ 
% \end{thm}

% This was also proved independently 
% by Kopylov~\cite{Kop77}, who also determined the largest number of edges in connected $P_{k+1}$-free graphs. 
% %Szemer\'edi~\cite{Sze76}

The Erd\H os-Gallai bound is conjectured to hold for all trees.

\begin{conjecture}[Erd\H os-S\'os Conjecture~\cite{Erd64}]\label{erdsos}
	Let $T$ be a $k$-vertex tree. Then
	\[
	\ex(n,T) \leq \frac{k-2}{2}n.
	\]
\end{conjecture}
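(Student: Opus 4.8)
Since the Erd\H os--S\'os conjecture is still open in general, the plan below describes the standard line of attack, which is known to succeed for many families of trees $T$ and for all trees once $k$ is large. The plan is to induct on $n$ (or on $e(G)$) and to reduce to host graphs of large minimum degree. Suppose $G$ is an $n$-vertex $T$-free graph with $e(G) > \frac{k-2}{2}n$; then $G$ has average degree exceeding $k-2$. The first step is the classical cleaning reduction: if we repeatedly delete vertices of degree at most $(k-2)/2$ we remove fewer than $\frac{k-2}{2}n$ edges in total, so a nonempty subgraph $G'$ survives with $\delta(G') > (k-2)/2$, i.e.\ $\delta(G') \ge \lceil (k-1)/2 \rceil$. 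The ideal situation is $\delta(G') \ge k-1$: then one embeds $T$ greedily, processing its vertices in a breadth-first order $v_1, v_2, \dots, v_k$, and when placing $v_{i+1}$ (whose unique earlier neighbor is some $v_j$) an unused neighbor of the image of $v_j$ exists because that vertex has degree at least $k-1 \ge i$. Thus every $k$-vertex tree embeds whenever $\delta(G)\ge k-1$, and the entire difficulty of the conjecture is to close the gap between minimum degree $(k-1)/2$ and $k-1$.

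The second step is to sharpen the reduction using the shape of $T$. Deleting a leaf of $T$ yields a $(k-1)$-vertex tree $T'$; by induction $G$ contains a copy of $T'$, and we are done unless the vertex of $G$ playing the neighbor of the removed leaf is ``saturated,'' a configuration one tries to rule out by choosing the embedding of $T'$ so that this vertex has high degree (which exists when $G$ has large maximum degree), or by a rotation/exchange argument that re-routes part of the embedding. Different tree shapes are then handled by different specializations: spiders and brooms by a careful greedy embedding of the legs; caterpillars and trees of small diameter by first embedding a long path guaranteed by the Erd\H os--Gallai theorem and then hanging the remaining short branches onto it; and arbitrary bounded-degree trees---hence all trees of a fixed order $k$ once $n$ is huge---by applying Szemer\'edi's regularity lemma and embedding $T$ into a dense regular pair via a blow-up/greedy argument, which is the Ajtai--Koml\'os--Simonovits--Szemer\'edi route that settles the conjecture for large $k$.

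The main obstacle is structural. The (near-)extremal graphs are essentially vertex-disjoint unions of copies of $K_{k-1}$ together with a small leftover piece, and these meet the bound $\frac{k-2}{2}n$ exactly, so any argument must cope with host graphs that are locally clique-like yet globally sparse. In the intermediate regime---$G$ has minimum degree around $(k-1)/2$, no vertex of large degree, and $G$ is far from a disjoint union of cliques---there is no known uniform way to locate an arbitrary tree $T$: the inductive, regularity-based, and algebraic/probabilistic approaches all stall precisely here. This is exactly why the conjecture remains open for general trees and general $k$, and a proof would have to either find the tree directly in this regime or show that such graphs cannot actually be $T$-free with that many edges.
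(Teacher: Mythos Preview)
The statement you were asked to address is a \emph{conjecture}, not a theorem, and the paper does not offer a proof of it; the paper simply states the Erd\H os--S\'os conjecture, notes the sharpness example $\frac{n}{k-1}K_{k-1}$, and remarks that it is known for stars and some other classes of trees. You correctly recognized this and did not pretend to give a proof. Your write-up is a reasonable high-level survey of the standard reduction to minimum degree roughly $(k-1)/2$, the greedy embedding when $\delta(G)\ge k-1$, and the various partial approaches (spiders, caterpillars, regularity for large $k$), together with an honest identification of where the difficulty lies. Since there is no proof in the paper to compare against, there is nothing further to assess: your response is appropriate in scope and accurate in content.

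One small caution: your cleaning step as written does not actually yield $\delta(G')\ge k-1$, only $\delta(G')\ge \lceil (k-1)/2\rceil$, as you yourself note; be careful not to phrase the first paragraph as if the greedy embedding is ever directly available after cleaning. The gap between $(k-1)/2$ and $k-1$ is exactly the content of the conjecture, and your later paragraphs acknowledge this, but the transition could be read as suggesting the ideal case is attainable.
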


If true, this is sharp when $k-1$ divides $n$, as shown by $\frac{n}{k-1}K_{k-1}$, the graph of $\frac{n}{k-1}$ pairwise vertex-disjoint copies of $K_{k-1}$.

It is easy to see that the conjecture holds for stars. It has also been proved for other classes of trees beyond paths.

There are several results in this area that go beyond just determining the Tur\'an number. We mention two of the most fundamental. The first is {\it stability}---the notion that an $F$-free graph with ``almost'' the extremal number of edges will have ``almost'' the same structure as the Tur\'an graph.

\begin{thm}[Erd\H os-Simonovits Stability Theorem~\cite{Sim68,Erd66,Erd68}]\label{esstab} For $\varepsilon>0$ and any graph $F$ with $\chi(F)\ge 3$, there exists $\delta>0$ such that if $G$ is an $n$-vertex $F$-free graph with
\[
e(G) >\ex(n,F)-\delta n^2,
\]
then the edit distance between $G$ and $T(n,\chi(F)-1)$ is at most $\varepsilon n^2$.
In other words, we can add and delete at most $\varepsilon n^2$ edges of $G$ to obtain $T(n,\chi(F)-1)$.
\end{thm}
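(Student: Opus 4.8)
Write $k=\chi(F)\ge 3$. Since $\chi(T(n,k-1))=k-1<k$, the Tur\'an graph $T(n,k-1)$ is itself $F$-free, so $\ex(n,F)\ge e(T(n,k-1))$ and the hypothesis already gives $e(G)\ge e(T(n,k-1))-\delta n^2$ (consistent with, but not requiring, Theorem~\ref{ESS}). The plan is to isolate the stability statement for cliques as the one genuinely extremal ingredient and then transfer it to an arbitrary $F$ of chromatic number $k$ by means of the Szemer\'edi Regularity Lemma. All constants will be chosen in the hierarchy $\delta\ll d,\ \epsilon\ll\varepsilon_0\ll\varepsilon$, where $\epsilon$ is the regularity parameter and $d$ a density threshold.

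\textbf{Step 1: clique stability.} First I would prove the lemma that for every $\varepsilon_0>0$ there is $\delta_0>0$ such that any $n$-vertex $K_k$-free graph $R$ with $e(R)\ge e(T(n,k-1))-\delta_0 n^2$ becomes $T(n,k-1)$ after at most $\varepsilon_0 n^2$ edge additions and deletions. I would argue by induction on $k$. The base case $k=3$ is Mantel stability: from $\sum_v\deg(v)^2\le n\,e(G)$ (valid because adjacent vertices in a triangle-free graph have disjoint neighbourhoods) together with Cauchy--Schwarz one recovers $e(G)\le n^2/4$, and near-equality forces $G$ to be nearly $\tfrac{n}{2}$-regular with almost every edge joining a near-antipodal pair, from which closeness to a balanced complete bipartite graph follows. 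For the inductive step, pass to the neighbourhood of a vertex of near-maximum degree, which induces a near-extremal $K_{k-1}$-free graph, apply the inductive hypothesis there, and reassemble, controlling the overlaps and defects by an edge count.

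\textbf{Step 2: regularise and descend.} Apply the Regularity Lemma to $G$ to obtain an $\epsilon$-regular partition into $M$ clusters and form the reduced graph $R$ on $[M]$, joining two clusters iff the pair is $\epsilon$-regular with density at least $d$. If $R$ contained $K_k$, the Embedding (Key) Lemma would produce a blow-up $K_k(|V(F)|,\dots,|V(F)|)$ inside $G$; since $\chi(F)=k$, such a blow-up contains $F$, contradicting $F$-freeness. Hence $R$ is $K_k$-free and Turán's theorem (Theorem~\ref{turan}) gives $e(R)\le e(T(M,k-1))$. Conversely, the edges of $G$ lying in irregular pairs, in pairs of density below $d$, or inside clusters number only $O(\epsilon+d+1/M)n^2$, so $e(G)\le (n/M)^2 e(R)+O(\epsilon+d+1/M)n^2$; combined with the hypothesis this yields $e(R)\ge e(T(M,k-1))-\delta' M^2$, so Step 1 applies to $R$.

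\textbf{Step 3: lift back and conclude; the main obstacle.} By Step 1, after at most $\varepsilon_0 M^2$ edits $R$ equals $T(M,k-1)$, which partitions $[M]$ into $k-1$ nearly equal groups; pull this back to a partition $V(G)=V_1\cup\dots\cup V_{k-1}$ into unions of clusters (plus $O(\epsilon)n$ exceptional vertices). Non-edges of $R$ within a group are irregular pairs, low-density pairs, or one of the $\le\varepsilon_0 M^2$ edited pairs, so $G$ has only $\eta n^2$ edges inside the $V_i$ for some $\eta=O(\varepsilon_0+\epsilon+d+1/M)$; since the between-group cluster pairs account for essentially all of $e(T(n,k-1))$ while $e(G)$ is within $\delta n^2$ of that, all but a vanishing fraction of between-group pairs are complete, leaving only $O(\eta+\delta)n^2$ non-edges between the $V_i$, and the same count forces $\sum_i|V_i|^2$ to be within $O(\eta+\delta)n^2$ of its minimum, hence the parts within $O(\sqrt{\eta+\delta})\,n$ of balanced, costing a further $O(\sqrt{\eta+\delta})\,n^2$ to rebalance. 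Deleting within-class edges, adding missing between-class edges, and rebalancing then turns $G$ into $T(n,k-1)$ with at most $\varepsilon n^2$ edits once $\delta,d,\epsilon,\varepsilon_0$ are small enough. The crux of the whole argument is Step 1 --- the only place where genuinely extremal rather than ``soft'' regularity reasoning enters, and in particular its base case $k=3$, where the near-tightness of Mantel's inequality must be upgraded to an honest structural description of $G$; Steps 2 and 3 are standard regularity bookkeeping, the only subtlety being that regularity controls $G$ merely up to $o(n^2)$ edges, so every estimate must be tracked through the constant hierarchy.
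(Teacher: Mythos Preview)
The paper is a survey and does not include a proof of Theorem~\ref{esstab}; it merely states the Erd\H os--Simonovits stability theorem as a classical result with citations to~\cite{Sim68,Erd66,Erd68}. So there is no ``paper's own proof'' to compare against.

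That said, your proposal is a correct and standard route to the result. The reduction via regularity (Steps~2--3) is the textbook argument: the reduced graph of an $F$-free graph with $\chi(F)=k$ is $K_k$-free by the Embedding Lemma, a near-extremal $G$ forces a near-extremal reduced graph $R$, and clique stability on $R$ lifts back to a $(k-1)$-partition of $G$ with few internal edges and few missing cross edges. Your bookkeeping in Step~3 is fine, including the rebalancing cost of order $\sqrt{\eta+\delta}\,n^2$.

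Two minor comments on Step~1. First, your derivation of $\sum_v\deg(v)^2\le n\,e(G)$ from disjointness of neighbourhoods of adjacent vertices is correct (sum $\deg(u)+\deg(v)\le n$ over edges), and together with Cauchy--Schwarz it gives Mantel and a workable stability for $k=3$; just be aware that turning ``near-regular with most edges near-antipodal'' into an honest bipartition still takes a short argument (e.g.\ pick a vertex of degree $\approx n/2$, let $A$ be its neighbourhood and $B$ the rest, and count). Second, the inductive step ``pass to the neighbourhood of a near-maximum-degree vertex'' is in the spirit of F\"uredi's proof, but showing that this neighbourhood is itself near-extremal for $K_{k-1}$ requires a small edge-counting argument you have not spelled out; this is where one usually invokes that most vertices have degree close to $(1-\tfrac{1}{k-1})n$ and that removing a vertex and its non-neighbours leaves a graph whose edge count can be controlled. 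None of this is a gap in the strategy, only in the level of detail.
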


The second is {\it supersaturation}---if a graph has $\varepsilon n^2$ more edges than the extremal number for $F$, then it will contain not just one copy of $F$, but many.

\begin{thm}[Erd\H os-Simonovits Supersaturation Theorem~\cite{ErdSim84}]\label{essuper}
	For $\varepsilon>0$ and any graph $F$, there exists $\delta >0$ such that if $G$ is an $n$-vertex graph with
	\[
	e(G) > \ex(n,F) + \varepsilon n^2,
	\]
	then $G$ contains $\delta n^{|V(F)|}$ copies of $F$.
\end{thm}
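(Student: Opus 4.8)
The plan is to prove this by the standard averaging-over-$m$-vertex-subsets argument, which reduces supersaturation to the single ingredient that the Tur\'an density $\pi(F):=\lim_n \ex(n,F)/\binom{n}{2}$ exists. Write $f=|V(F)|$. First I would record the \emph{monotonicity lemma}: for $2\le m\le n$, $\ex(n,F)/\binom{n}{2}\le \ex(m,F)/\binom{m}{2}$. This follows by taking an extremal $F$-free graph $G$ on $n$ vertices and summing $e(G[S])\le \ex(m,F)$ over all $\binom{n}{m}$ subsets $S$ of size $m$: since $\sum_S e(G[S])=\ex(n,F)\binom{n-2}{m-2}$ and $\binom{n}{m}\binom{m}{2}=\binom{n}{2}\binom{n-2}{m-2}$, the claim drops out. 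Hence $a_n:=\ex(n,F)/\binom{n}{2}$ is non-increasing, so it converges to some $\pi(F)$ (which equals $1-\tfrac{1}{\chi(F)-1}$ by Theorem~\ref{ESS}, though only existence of the limit is needed), and in particular $a_n\ge \pi(F)$ for every $n$.

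Next I would fix the scale. Choose $m=m(\varepsilon,F)$ large enough that $a_m<\pi(F)+\varepsilon$; this is possible because $a_m\to\pi(F)$. Call an $m$-subset $S\subseteq V(G)$ \emph{dense} if $e(G[S])>\ex(m,F)$; a dense set, being an $m$-vertex graph with more than $\ex(m,F)$ edges, must contain a copy of $F$. The key estimate is that a positive fraction of $m$-subsets are dense. Averaging $e(G[S])$ over all $m$-subsets gives mean $e(G)\binom{n-2}{m-2}/\binom{n}{m}=e(G)\binom{m}{2}/\binom{n}{2}$; using $e(G)>\ex(n,F)+\varepsilon n^2$, the bound $\varepsilon n^2/\binom{n}{2}>2\varepsilon$, and $a_n\ge \pi(F)>a_m-\varepsilon$, this mean exceeds $(a_m+\varepsilon)\binom{m}{2}=\ex(m,F)+\varepsilon\binom{m}{2}$. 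On the other hand, if a $p$-fraction of $m$-subsets are dense, then bounding $e(G[S])\le\binom{m}{2}$ on dense sets and $e(G[S])\le\ex(m,F)$ on the rest shows the mean is at most $\ex(m,F)+p\binom{m}{2}$. Comparing, $p>\varepsilon$, so at least $\varepsilon\binom{n}{m}$ of the $m$-subsets are dense.

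Finally I would count copies of $F$ by double counting pairs (copy of $F$, dense $m$-set containing it): each of the $\ge\varepsilon\binom{n}{m}$ dense $m$-subsets contains at least one copy of $F$, while each copy of $F$ in $G$ is contained in exactly $\binom{n-f}{m-f}$ of the $m$-subsets, so $\mathcal{N}(F,G)\ge \varepsilon\binom{n}{m}/\binom{n-f}{m-f}=\varepsilon\binom{n}{f}/\binom{m}{f}$ (using $\binom{n}{m}\binom{m}{f}=\binom{n}{f}\binom{n-f}{m-f}$), which is at least $\delta n^{f}$ for a suitable $\delta=\delta(\varepsilon,F)$ once $n$ is large; small $n$ are vacuous since then $\delta n^f<1$. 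I do not expect a genuine obstacle here: the whole proof is double counting, and the one place something real is used is the convergence of $a_n$ in the averaging step---so the main ``hard part'' is simply the monotonicity lemma (or, equivalently, quoting Theorem~\ref{ESS}), plus keeping track that $m$, and hence $\delta$, depends only on $\varepsilon$ and $F$.
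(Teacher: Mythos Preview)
Your argument is correct and is in fact the original Erd\H os--Simonovits proof: monotonicity of $\ex(n,F)/\binom{n}{2}$, averaging over $m$-subsets to find a positive fraction of dense $m$-sets, then double counting copies of $F$ against the $m$-sets containing them. The paper itself does not supply a proof of Theorem~\ref{essuper}; it is quoted as a classical background result from~\cite{ErdSim84}, so there is nothing to compare against beyond noting that your proof is the standard one.
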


\section{General results}\label{gener}

Most statements about $\ex(n,H,F)$ involve $H$ and $F$ from two specific simple graph classes. In this section, we gather results where at least one of $H$ and $F$ is somewhat general.

For ordinary Tur\'an numbers, the Erd\H os-Stone-Simonovits Theorem (Theorem~\ref{ESS}) gives asymptotics on $\ex(n,F)$ for all graphs $F$ of chromatic number at least $3$. This leaves the extremal numbers for bipartite graphs wide open; this is the \textit{degenerate problem}. The analogous result in the generalized Tur\'an setting was proved by Alon and Shikhelman~\cite{AloShi16}.

\begin{thm}[Alon and Shikhelman~\cite{AloShi16}]\label{ESS-AS} Let $F$ be a graph with $\chi(F)=k$. Then 
    \[
	\ex(n,K_r,F)=\binom{k-1}{r}\left(\frac{n}{k-1}\right)^r+o(n^r).
	\]
\end{thm}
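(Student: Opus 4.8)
The plan is to establish matching asymptotic lower and upper bounds. The lower bound is the easy direction: since $\chi(T(n,k-1))=k-1<k=\chi(F)$, every subgraph of the Tur\'an graph $T(n,k-1)$ is $(k-1)$-colourable and hence cannot be a copy of $F$, so $T(n,k-1)$ is $F$-free. By the computation in~(\ref{zykov-bound}) it contains $\binom{k-1}{r}\left(\frac{n}{k-1}\right)^r-O(n^{r-1})$ copies of $K_r$, which already gives $\ex(n,K_r,F)\ge\binom{k-1}{r}\left(\frac{n}{k-1}\right)^r-o(n^r)$.

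For the upper bound I would argue via Szemer\'edi's regularity lemma. Fix $\varepsilon>0$, let $G$ be an $n$-vertex $F$-free graph, and apply the regularity lemma to obtain an equitable partition of $V(G)$ into $M=M(\varepsilon)$ parts. Let $R$ be the \emph{reduced graph} on vertex set $\{1,\dots,M\}$ in which $ij$ is an edge precisely when the pair $(V_i,V_j)$ is $\varepsilon$-regular with density exceeding $\varepsilon$. The crucial claim is that $R$ is $K_k$-free. Indeed, a $K_k$ in $R$ would, by the embedding (counting) lemma, yield a complete $k$-partite subgraph $K_k(t)$ of $G$ with parts of size $t=t(\varepsilon,M,n)$ tending to infinity with $n$; but $\chi(F)=k$ means a proper $k$-colouring of $F$ embeds $F$ into $K_k(t)$ as soon as $t\ge|V(F)|$, contradicting $F$-freeness once $n$ is large. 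Being $K_k$-free on $M$ vertices, $R$ satisfies $\mathcal{N}(K_r,R)\le\ex(M,K_r,K_k)\le\binom{k-1}{r}\left(\frac{M}{k-1}\right)^r$ by Zykov's theorem (Theorem~\ref{zykov}) and~(\ref{zykov-bound}).

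It then remains to read off $\mathcal{N}(K_r,G)$ from $R$. I would split the copies of $K_r$ in $G$ into those using two vertices of a single part (at most $O_r(n^r/M)$ of them), those using an irregular pair or a regular pair of density at most $\varepsilon$ (at most $O_r(\varepsilon n^r)$, since there are at most $\varepsilon M^2$ irregular pairs and at most $\varepsilon(n/M)^2$ edges inside a low-density regular pair), and those spanning $r$ distinct parts whose indices form a $K_r$ in $R$ (at most $\mathcal{N}(K_r,R)\cdot(n/M)^r$ of these). Summing, $\mathcal{N}(K_r,G)\le\binom{k-1}{r}\left(\frac{n}{k-1}\right)^r+O_r(\varepsilon n^r)+O_r(n^r/M)$, and since $M\ge1/\varepsilon$ the entire error is $O_r(\varepsilon n^r)$; letting $\varepsilon\to0$ gives $\ex(n,K_r,F)\le\binom{k-1}{r}\left(\frac{n}{k-1}\right)^r+o(n^r)$, which together with the lower bound proves the theorem.

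The step I expect to be the main obstacle is the claim that a single $K_k$ in the reduced graph forces an unbounded blow-up $K_k(t)$ with $t\to\infty$ inside $G$ (this is precisely the Erd\H os--Stone phenomenon and requires the embedding lemma quantified so that $t$ grows), together with checking that each of the three error contributions above is genuinely $o(n^r)$ after $\varepsilon\to0$. An alternative that avoids the regularity lemma is to prove a generalized-Tur\'an supersaturation statement directly: if $\mathcal{N}(K_r,G)>\binom{k-1}{r}\left(\frac{n}{k-1}\right)^r+\varepsilon n^r$ then $G$ has $\Omega(n^k)$ copies of $K_k$, obtained by averaging $\mathcal{N}(K_r,G[S])$ over $m$-element subsets $S$ and applying Theorem~\ref{zykov} to each $S$ with $\mathcal{N}(K_r,G[S])>\ex(m,K_r,K_k)$; one then invokes the classical fact that $\Omega(n^k)$ copies of $K_k$ force a large $K_k(t)$, hence $F\subseteq G$, a contradiction.
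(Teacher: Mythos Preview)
Your proposal is correct and follows essentially the same route as the paper. The paper does not prove Theorem~\ref{ESS-AS} separately but instead sketches (in Subsection~\ref{regularity}) the proof of the more general Theorem~\ref{genESS}, using exactly your ingredients: apply the regularity lemma, observe that the reduced graph is $K_k$-free because a $K_k$ there would give a large complete $k$-partite blowup via the Embedding Lemma and hence a copy of $F$, and then invoke Zykov's theorem. The only cosmetic difference is that the paper phrases the upper bound as a cleaning argument (delete all edges inside parts, incident to $V_0$, in irregular pairs, or in sparse pairs; this kills only $o(n^2)$ edges and hence $o(n^r)$ copies of $K_r$, so the surviving graph still has too many $K_r$'s, forcing a $K_k$ in the reduced graph), whereas you bound the three types of $K_r$ copies directly; these are equivalent bookkeepings of the same estimate. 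Your alternative supersaturation route is also valid and well known.
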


Note that if $r\ge k$, then $\binom{k-1}{r}=0$ and so the above bound reduces to $o(n^r)$.
Thus, Theorem~\ref{ESS-AS} extends the Erd\H os-Stone-Simonovits Theorem to count copies of $K_r$, and now every graph with chromatic number at most $r$ is degenerate.
Gerbner and Palmer~\cite{GerPal18} gave a further extension to count arbitrary graphs $H$.

\begin{restatable}[Gerbner and Palmer~\cite{GerPal18}]{thm}{generalboundthm}\label{genESS} 
Let $F$ be a graph with $\chi(F)=k$. Then 
\[
\ex(n,H,F) = \ex(n,H,K_k)+o(n^{|V(H)|}).
\]
\end{restatable}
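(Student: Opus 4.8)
The plan is to prove the two inequalities $\ex(n,H,F)\ge \ex(n,H,K_k)-o(n^{|V(H)|})$ and $\ex(n,H,F)\le \ex(n,H,K_k)+o(n^{|V(H)|})$ separately. The first is immediate: take an extremal $K_k$-free graph $G$ for $\ex(n,H,K_k)$; since $\chi(F)=k$, the graph $F$ contains $K_k$ only if... actually $F$ need not contain $K_k$, so this direction needs a tiny bit more care. Instead, observe that the Turán graph $T(n,k-1)$ is $F$-free (it is $(k-1)$-colorable, hence contains no graph of chromatic number $k$), and more usefully any $K_k$-free extremal graph for $\ex(n,H,K_k)$ might contain $F$. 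So for the lower bound I would instead argue: it suffices to exhibit an $F$-free graph with $\ex(n,H,K_k)-o(n^{|V(H)|})$ copies of $H$. Here I would invoke the structure of near-extremal $K_k$-free graphs; but cleaner is to note that by Theorem~\ref{ESS-AS}-type reasoning the quantity $\ex(n,H,K_k)$ is itself essentially realized (up to $o(n^{|V(H)|})$) by a blow-up-like graph that embeds in $T(n,k-1)$ after deleting $o(n)$ vertices, and $T(n,k-1)$ is $F$-free. I will make this precise using a regularity/cleaning argument as in the upper bound below, so the two bounds are really proved by the same machinery.

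For the upper bound — the substantive direction — I would apply the Szemerédi Regularity Lemma to an arbitrary $n$-vertex $F$-free graph $G$. Fix $\varepsilon>0$; take a regular partition into $M=M(\varepsilon)$ parts and form the reduced graph $R$ on the clusters, keeping a cluster pair as an edge of $R$ when the pair is $\varepsilon$-regular with density at least $\varepsilon$. The key structural claim is that $R$ has no $K_k$: if $R\supseteq K_k$, then since $\chi(F)=k$ the graph $F$ is $k$-colorable, so by the embedding (counting) lemma the corresponding $k$ clusters, being pairwise regular of positive density, contain a copy of $F$, contradicting $F$-freeness of $G$. Then I would count copies of $H$ in $G$ by classifying them according to how their $|V(H)|$ vertices distribute among clusters: copies using two vertices from the same cluster, or using a non-regular/low-density pair, or touching the exceptional set, total only $o(n^{|V(H)|})$ (each such constraint costs a factor $1/M$, $\varepsilon$, or $\varepsilon$ respectively, with the number of cluster-patterns bounded in terms of $M$); the remaining copies of $H$ are "spread out" across clusters whose pattern forms a (homomorphic image of a) subgraph of the $K_k$-free reduced graph $R$. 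For those, a blow-up of $R$ — replacing each cluster by $n/M$ true vertices with complete bipartite graphs on the chosen edges — is $K_k$-free, hence its number of copies of $H$ is at most $\ex(n,H,K_k)$, and the number of spread-out copies of $H$ in $G$ is at most the number in this blow-up (each regular pair has density at most $1$). Summing, $\mathcal N(H,G)\le \ex(n,H,K_k)+o_\varepsilon(1)\cdot n^{|V(H)|}$, and letting $\varepsilon\to 0$ finishes it.

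The main obstacle is making the counting-lemma step uniform and the error terms genuinely $o(n^{|V(H)|})$: one must check that the number of distinct cluster-patterns (functions from $V(H)$ to the $M$ clusters) is bounded by a constant depending only on $\varepsilon$, so that summing the $O(\varepsilon\, n^{|V(H)|})$ or $O(n^{|V(H)|}/M)$ contributions of the "bad" patterns still yields $o(n^{|V(H)|})$ as $\varepsilon\to0$; and one must verify that replacing $G$ by the blow-up of its reduced graph does not lose more than $o(n^{|V(H)|})$ copies of $H$ — equivalently, that a spread-out copy of $H$ in $G$ maps to a genuine copy of $H$ in the blow-up, which is where regularity (rather than mere positive density) and the fact that $H$ is fixed are used. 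A secondary technical point: the reduced graph $R$ depends on $G$, so the final bound $\ex(n,H,K_k)$ must be applied with the number of vertices equal to (roughly) $M\cdot(n/M)=n$; handling the rounding and the deleted $o(n)$ vertices requires only that $\ex(n,H,K_k)$ is (sub)additive/continuous enough in $n$, which follows from its polynomial growth.
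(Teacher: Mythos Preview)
Your upper-bound argument is correct and is essentially the paper's. The paper streamlines one step: rather than building a blow-up $B$ of the reduced graph $R$ and comparing spread-out copies in $G$ to copies in $B$, it simply works with the \emph{cleaned graph} $G'$ itself (i.e.\ $G$ with all intra-cluster, exceptional, irregular, and low-density edges deleted). This $G'$ is an honest $n$-vertex graph; if $\cN(H,G')>\ex(n,H,K_k)$ then $G'$ contains a $K_k$, whose $k$ vertices must lie in $k$ distinct clusters that are pairwise $\varepsilon$-regular of density at least $\delta$, so $R$ contains a $K_k$ and the Embedding Lemma places $F$ in $G$. This bypasses your blow-up comparison and the rounding/continuity issue you raise at the end, but the two arguments are logically equivalent.

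Your lower-bound discussion is where there is a real problem. The assertion that ``$\ex(n,H,K_k)$ is essentially realised (up to $o(n^{|V(H)|})$) by a blow-up-like graph that embeds in $T(n,k-1)$'' is false: Gy\H{o}ri, Pach and Simonovits already constructed bipartite $H$ for which no complete bipartite graph is asymptotically extremal for $\ex(n,H,K_3)$, and more bluntly, taking $H=F=C_5$ (so $k=3$) gives $\ex(n,C_5,C_5)=0$ while $\ex(n,C_5,K_3)=(1+o(1))(n/5)^5$, so the two quantities are \emph{not} within $o(n^5)$ of each other. The paper's sketch accordingly proves only the inequality $\ex(n,H,F)\le \ex(n,H,K_k)+o(n^{|V(H)|})$; you should read the displayed ``$=$'' in that one-sided sense and drop the attempt to prove the reverse inequality in full generality.
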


We will give a sketch of the proof of this theorem in Subsection~\ref{regularity}, to illustrate how the regularity lemma can be applied to the generalized Tur\'an problem.

An important description of the degenerate case for general $H$ and $F$ is given by Alon and Shikhelman~\cite{AloShi16}. The \emph{blowup} $G[t]$ of a graph $G$ is the graph resulting from replacing each vertex of $G$ with $t$ copies of itself.

\begin{prop}[Alon and Shikhelman~\cite{AloShi16}]\label{degen-prop}
We have $\ex(n,H,F) = o(n^{|V(H)|})$ if and only if $F$ is a subgraph of a blowup of $H$. Otherwise, $\ex(n,H,F) = \Omega(n^{|V(H)|})$.
\end{prop}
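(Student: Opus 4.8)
The plan is to prove both directions of the equivalence. First, the easy direction: suppose $F$ is not a subgraph of any blowup of $H$; we want $\ex(n,H,F) = \Omega(n^{|V(H)|})$. The natural witness is the balanced blowup $H[n/|V(H)|]$ itself. Since $F$ embeds in no blowup of $H$, in particular $F$ embeds in no blowup of $H$ on at most $n$ vertices, so $H[n/|V(H)|]$ is $F$-free. On the other hand, a blowup of $H$ with parts of size $t$ contains at least $t^{|V(H)|}$ copies of $H$ (pick one vertex from each part corresponding to a vertex of $H$; this always spans a copy of $H$), so with $t = \lfloor n/|V(H)|\rfloor$ we get $\mathcal{N}(H, H[t]) \geq t^{|V(H)|} = \Omega(n^{|V(H)|})$, giving the lower bound. (One should remark that $H$ must have no isolated vertices for a clean count, or simply note that copies of $H$ in which distinct vertices of $H$ go to distinct parts already number $t^{|V(H)|}$ up to the automorphism factor.)

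For the converse — if $F$ \emph{is} a subgraph of a blowup of $H$, then $\ex(n,H,F) = o(n^{|V(H)|})$ — the idea is a supersaturation / sampling argument. Let $G$ be an $n$-vertex $F$-free graph, and suppose toward a contradiction that $\mathcal{N}(H,G) \geq c\, n^{|V(H)|}$ for some fixed $c > 0$. Say $F$ embeds in $H[m]$ for some $m$. The key step is: a graph with $\Omega(n^{|V(H)|})$ copies of $H$ must contain a blowup $H[m]$, and hence a copy of $F$. To see this, build an auxiliary $|V(H)|$-uniform (or $|V(H)|$-partite) hypergraph whose edges are the vertex sets of copies of $H$ in $G$; this hypergraph has $\Omega(n^{|V(H)|})$ edges, so by the Erd\H{o}s–Simonovits / Kővári–Sós–Turán theorem for hypergraphs (the hypergraph analogue: a $k$-uniform hypergraph on $n$ vertices with $\Omega(n^k)$ edges contains a complete $k$-partite subhypergraph $K^{(k)}_{m,\ldots,m}$) it contains $m$ disjoint sets $V_1,\dots,V_{|V(H)|}$ of size $m$ such that every transversal $\{v_1,\dots,v_{|V(H)|}\}$ with $v_i \in V_i$ is the vertex set of a copy of $H$. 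One then needs to check that this forces an actual blowup $H[m]$ sitting on $V_1 \cup \dots \cup V_{|V(H)|}$ with the right part-structure — this requires a little care because a transversal spanning "a copy of $H$" does not a priori respect a fixed labelling; the standard fix is to refine (pass to a sub-blowup / use a pigeonhole on which of the finitely many isomorphisms realizes $H$) so that on a smaller-but-still-size-$m'$ blowup the copies are "aligned," and take $m$ large enough at the start to absorb this loss. Once we have $H[m] \subseteq G$, we get $F \subseteq H[m] \subseteq G$, contradicting $F$-freeness.

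To get the quantitative $o(n^{|V(H)|})$ statement rather than just "$\neq \Omega$," one runs the above with $c = c(n) \to 0$: the hypergraph Kővári–Sós–Turán bound says $o(n^{|V(H)|})$ edges already suffice to avoid $K^{(k)}_{m,\ldots,m}$ only if the edge count is below roughly $n^{k - 1/m^{k-1}}$, so in fact $\ex(n,H,F) = O(n^{|V(H)| - \varepsilon})$ for some $\varepsilon = \varepsilon(H,F) > 0$, which is stronger; but the proposition as stated only claims $o(n^{|V(H)|})$, so it suffices to note any fixed positive density is ruled out for $n$ large.

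The main obstacle is the alignment issue in the converse: converting "every transversal of $V_1,\dots,V_{|V(H)|}$ spans some copy of $H$" into "$V_1 \cup \dots \cup V_{|V(H)|}$ contains a genuine blowup of $H$ with $V_i$ playing the role of vertex $i$." Handling the finitely many possible isomorphism types of the copies via an extra pigeonhole/Ramsey step (shrinking $m$ to a still-large $m'$) is the technical heart; everything else is a routine application of the hypergraph Turán bound plus the trivial blowup construction.
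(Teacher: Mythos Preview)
Your proof follows the same strategy as the paper's: the easy direction via the balanced blowup $H[\lfloor n/|V(H)|\rfloor]$ is identical, and for the converse both arguments pass to an auxiliary $|V(H)|$-uniform hypergraph whose hyperedges are copies of $H$ and invoke Erd\H os's bound $\ex(n,K^{(r)}_{t,\ldots,t}) = O(n^{r-\varepsilon})$ to locate a blowup $H[t]\supseteq F$.

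The one substantive difference is how the alignment issue you flag is resolved. Your post-hoc pigeonhole/Ramsey refinement (color each transversal by the bijection $\sigma:[r]\to V(H)$ realizing it, then extract a monochromatic complete $r$-partite sub-hypergraph) does work, but it costs an extra iteration of the hypergraph Tur\'an bound and some bookkeeping. The paper sidesteps the issue with a cleaner device: \emph{before} building the hypergraph, take a uniformly random partition $V(G)=V_1\cup\cdots\cup V_r$ (with the vertices of $H$ labeled $u_1,\ldots,u_r$) and call a copy of $H$ \emph{nice} if the vertex playing the role of $u_i$ lands in $V_i$ for every $i$. In expectation a $1/r^r$ fraction of all copies are nice, so some partition retains at least $\mathcal N(H,G)/r^r$ nice copies. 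The hypergraph of nice copies is then $r$-partite with the labeling baked in, so any complete $r$-partite sub-hypergraph with parts of size $t$ is automatically an aligned $H[t]$, and no further refinement is needed. This random-partition trick is the standard clean way to avoid exactly the headache you identified, and it also makes the quantitative $O(n^{|V(H)|-\varepsilon})$ conclusion immediate.
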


So we now say that the \emph{degenerate case} is when $\ex(n,H,F) = o(n^{|V(H)|})$ and the \emph{non-degenerate case} is when $\ex(n,H,F) = \Omega(n^{|V(H)|})$.

\smallskip

Gerbner and Palmer~\cite{GerPal18} established a very basic (but sometimes practical) bound of
\[
\ex(n,H,F)\geq \ex(n,F)-\ex(n,H).
\]
The argument is simple: removing an edge from each copy of $H$ in an $F$-free graph yields an $H$-free graph.
They also proved the following general lower bound:
\begin{prop}[Gerbner and Palmer~\cite{GerPal18}]\label{gen-LB}
If $e(F)|>e(H)$, then
\[
   \ex(n,H,F) = \Omega\left(n^{|V(H)| - \frac{e(H)(|V(F)|-2)}{e(F)-e(H)}}\right).
\]  
\end{prop}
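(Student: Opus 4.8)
The plan is to use the probabilistic deletion method, i.e., the natural generalization of Erd\H os's classical lower-bound argument for ordinary Tur\'an numbers. Write $v_H=|V(H)|$, $v_F=|V(F)|$, $e_H=e(H)$, $e_F=e(F)$; we may assume $e_H\ge 1$ (for $e_H=0$ the bound is trivial, since the empty graph on $n$ vertices is $F$-free). Since $e_F>e_H\ge 1$ we have $e_F\ge 2$ and hence $v_F\ge 3$. Let $G=G(n,p)$ be the binomial random graph with
\[
p:=c\,n^{-(v_F-2)/(e_F-e_H)},
\]
where $c>0$ is a constant to be fixed; note $p\to 0$. Let $X$ be the (random) number of copies of $H$ in $G$; a standard computation gives $\mathbb{E}[X]=|\Aut(H)|^{-1}\,n(n-1)\cdots(n-v_H+1)\,p^{e_H}=\Theta(n^{v_H}p^{e_H})$.

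Next I would set up the deletion. Sample $G$, and for each copy of $F$ in $G$ delete one of its ($\le e_F$) edges; the resulting graph $G'$ is $F$-free, and every copy of $H$ that is destroyed must contain a deleted edge, hence an edge lying in some copy of $F$ in $G$. Thus the number of copies of $H$ in $G'$ is at least $X-D$, where
\[
D:=\sum_{\substack{F'\subseteq G\\ F'\cong F}}\ \sum_{e\in E(F')} N_H(e,G),
\]
and $N_H(e,G)$ is the number of copies of $H$ in $G$ through the edge $e$ (deleting an edge destroys exactly the copies of $H$ through it, and since each copy of $F$ has one of its $\le e_F$ edges deleted, the total number destroyed is at most $D$).

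The heart of the argument is the estimate of $\mathbb{E}[D]$. Expanding, $D$ counts triples $(F',e,H')$ with $F'\cong F$, $H'\cong H$, $F',H'\subseteq G$, and $e\in E(F')\cap E(H')$. The number of such triples in $K_n$ is $O(n^{v_H+v_F-2})$ (pick the two endpoints of $e$, then the remaining $v_H-2$ vertices of $H'$ and the remaining $v_F-2$ vertices of $F'$, with a bounded number of remaining choices), and since $F'$ and $H'$ share at least the edge $e$, each such triple lies in $G$ with probability at most $p^{e_H+e_F-1}$ (more overlap only decreases the probability, as $p\le 1$). Hence $\mathbb{E}[D]=O\!\big(n^{v_H+v_F-2}p^{e_H+e_F-1}\big)$, and so
\[
\frac{\mathbb{E}[D]}{\mathbb{E}[X]}=O\!\big(n^{v_F-2}p^{e_F-1}\big)=O\!\big(c^{\,e_F-1}\,n^{-(v_F-2)(e_H-1)/(e_F-e_H)}\big),
\]
using the choice of $p$. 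Since $e_H\ge 1$ and $v_F\ge 3$, the exponent of $n$ is $\le 0$: it is negative when $e_H\ge 2$ (so the ratio $\to 0$) and equal to $0$ when $e_H=1$ (so the ratio is $O(c^{e_F-1})$, which is $<\tfrac12$ once $c$ is small enough). In all cases, for a suitable constant $c$ and all large $n$ we get $\mathbb{E}[D]\le\tfrac12\mathbb{E}[X]$, hence $\mathbb{E}[X-D]\ge\tfrac12\mathbb{E}[X]$. Fixing a graph $G$ with $X(G)-D(G)\ge\tfrac12\mathbb{E}[X]$ and passing to the $F$-free graph $G'$ above yields
\[
\ex(n,H,F)\ \ge\ \mathcal{N}(H,G')\ \ge\ X(G)-D(G)\ \ge\ \tfrac12\mathbb{E}[X]\ =\ \Omega\!\big(n^{v_H-e_H(v_F-2)/(e_F-e_H)}\big),
\]
which is the claimed bound.

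The step I expect to be most delicate is the estimate for $\mathbb{E}[D]$: one must organize the overlapping pairs of copies of $H$ and $F$ by their common edge and check that the ``worst case'' is a single shared edge, after which the exponent $\tfrac{e(H)(|V(F)|-2)}{e(F)-e(H)}$ emerges from balancing $n^{v_F-2}p^{e_F-1}$ against an absolute constant. The only genuine case split is $e(H)=1$ versus $e(H)\ge 2$, which decides whether one must shrink the constant $c$ or merely take $n$ large.
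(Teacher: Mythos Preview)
Your argument is correct and follows the same probabilistic deletion scheme as the paper: sample $G(n,p)$ with $p=\Theta\big(n^{-(v_F-2)/(e_F-e_H)}\big)$, delete one edge from each copy of $F$, and show that the expected number of surviving copies of $H$ has the stated order.

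The only notable difference is in how the number of destroyed copies of $H$ is estimated. The paper uses the crude bound that any single edge lies in at most $n^{v_H-2}$ copies of $H$ in $K_n$, so the expected number destroyed is at most $n^{v_F}p^{e_F}\cdot n^{v_H-2}$; balancing this against $\Theta(n^{v_H}p^{e_H})$ forces the constant $c$ in $p$ to be small but requires no further case analysis. You instead track the probability that the copy of $H$ through the deleted edge is actually present in $G$, gaining an extra factor $p^{e_H-1}$ in the bound on $\mathbb{E}[D]$. This sharper estimate is not needed for the stated result and comes at the price of worrying about how $F'$ and $H'$ overlap and of the $e_H=1$ versus $e_H\ge 2$ case split; the paper's cruder bound avoids both. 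Either way the conclusion is the same.
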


We will present the proofs of Proposition~\ref{degen-prop} and \ref{gen-LB} in Subsection~\ref{probab}, to illustrate how probabilistic methods can be applied in the generalized setting.

Gishboliner and Shapira~\cite{GisSha18} gave the following general lower bound, where $\alpha(G)$ denotes the independence number of $G$.

\begin{proposition}[Gishboliner and Shapira~\cite{GisSha18}]\label{gissha}
Let $|V(F)|-\alpha(F) > |V(H)|-\alpha(H)$.
Then
\[
 \ex(n,H,F) = \Omega_{|V(H)|}\left((|V(F)|-\alpha(F))^{|V(H)|-\alpha(H)}n^{\alpha(H)}\right) .
\]
\end{proposition}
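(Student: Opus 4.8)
Throughout, write $\tau(G):=|V(G)|-\alpha(G)$ for the minimum size of a vertex cover of $G$; with this notation the hypothesis reads $\tau(F)>\tau(H)$ and the target becomes $\ex(n,H,F)=\Omega_{|V(H)|}\big(\tau(F)^{\tau(H)}\,n^{\alpha(H)}\big)$. The plan is to produce a single $F$-free host graph witnessing this bound. The one structural fact I would record first is that $\tau$ is monotone under subgraphs: if $G'\subseteq G$ and $C$ is a minimum vertex cover of $G$, then $C\cap V(G')$ covers every edge of $G'$, so $\tau(G')\le\tau(G)$.

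Next, set $c:=\tau(F)-1$ and let $G_n:=K_c+\overline{K_{n-c}}$, the join of a clique on $c$ vertices with an independent set on $n-c$ vertices. (If $\tau(F)=1$, then the hypothesis forces $\tau(H)=0$, i.e.\ $H$ is edgeless, and the claim is immediate, since the empty graph on $n$ vertices is $F$-free and contains $\binom{n}{|V(H)|}$ copies of $H$; so assume $\tau(F)\ge 2$, hence $c\ge 1$.) The $c$ clique vertices cover every edge of $G_n$, and for $n>c$ the $n-c$ vertices of the independent part form a maximum independent set, so $\tau(G_n)=c=\tau(F)-1<\tau(F)$. By the subgraph monotonicity above, $G_n$ contains no copy of $F$.

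Finally I would count copies of $H$ in $G_n$. Fix a minimum vertex cover $D$ of $H$, so $|D|=\tau(H)\le c$ and $I:=V(H)\setminus D$ is independent with $|I|=\alpha(H)$. Every injection $\phi\colon V(H)\to V(G_n)$ that sends $D$ into the clique and $I$ into the independent part is an embedding of $H$: each edge of $H$ meets $D$ and is therefore mapped inside the clique or across the complete join, in either case onto an edge of $G_n$. For $n$ large the number of such injections is
\[
\frac{c!}{(c-\tau(H))!}\cdot\frac{(n-c)!}{(n-c-\alpha(H))!}=\Omega_{|V(H)|}\big(c^{\tau(H)}\,n^{\alpha(H)}\big),
\]
and each copy of $H$ in $G_n$ arises from at most $|V(H)|!$ of them; since $c=\tau(F)-1\ge\tfrac{1}{2}\tau(F)$, we obtain $\mathcal N(H,G_n)=\Omega_{|V(H)|}\big(\tau(F)^{\tau(H)}n^{\alpha(H)}\big)$, and as $G_n$ is $F$-free this lower-bounds $\ex(n,H,F)$, which is the stated estimate after rewriting $\tau(F)=|V(F)|-\alpha(F)$ and $\tau(H)=|V(H)|-\alpha(H)$.

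The only genuinely delicate step — and the one I would be most careful about — is the $F$-freeness, i.e.\ the choice $c=\tau(F)-1$ combined with subgraph monotonicity of $\tau$. With the seemingly more natural choice $c=\tau(F)$ the graph $K_c+\overline{K_{n-c}}$ actually \emph{contains} $F$ (map a minimum vertex cover of $F$ into the clique and the remaining, independent, vertices of $F$ into the independent part), so one is forced exactly one step below the threshold; the strict inequality $\tau(F)>\tau(H)$ is precisely what still leaves room to embed $D$ after doing so.
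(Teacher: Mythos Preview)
Your proof is correct. Both arguments exploit the same structural fact---that a host graph with vertex cover smaller than $\tau(F)$ is automatically $F$-free---but the constructions differ. The paper's sketch blows up a maximum independent set of $H$ itself, leaving exactly $\tau(H)$ un-blown-up cover vertices; this witnesses the $n^{\alpha(H)}$ factor but, as sketched, does not directly produce the $(\tau(F))^{\tau(H)}$ factor in the statement. Your choice of the complete split graph $K_{\tau(F)-1}+\overline{K_{n-\tau(F)+1}}$ is universal (independent of the structure of $H$) and squeezes the cover size right up to $\tau(F)-1$, which is exactly what yields the extra $(\tau(F))^{\tau(H)}$ factor via the falling factorial $\frac{c!}{(c-\tau(H))!}$. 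Your closing remark that $c=\tau(F)$ would already contain $F$ is a nice way to see that the construction is tight at this level.

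One cosmetic point: for $F$-freeness you only need $\tau(G_n)\le c$, which is immediate since the clique vertices cover every edge; you need not argue that the independent part is a \emph{maximum} independent set.
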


We sketch their proof: blow up each vertex of a largest independent set of $H$ to linear size, then there are $\Omega(n^{\alpha(H)})$ copies of $H$. The resulting graph is still $F$-free, as each edge is incident to at least one of the $|V(H)|-\alpha(H)$ vertices that are not blown up, but the smallest such set (i.e.\ a vertex cover) in $F$ has $|V(F)|-\alpha(F) > |V(H)|-\alpha(H)$ vertices. 

The Simonovits Critical Edge Theorem (Theorem~\ref{simo}) has the following extension.

\begin{thm}[Ma and Qiu~\cite{MaQiu18}]\label{gen-crit-edge}
	Let $F$ be a $k$-chromatic graph. For $r<k$ and $n$ large enough, the unique $n$-vertex $F$-free graph with the
    maximum number of copies of $K_r$ is $T(n,k-1)$
	if and only if $F$ has a color-critical edge.
\end{thm}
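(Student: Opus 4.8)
The plan is to prove both directions and lean heavily on Theorem~\ref{genESS} (the asymptotic reduction $\ex(n,H,F) = \ex(n,H,K_k) + o(n^{|V(H)|})$ applied with $H = K_r$) together with Zykov's theorem (Theorem~\ref{zykov}), which already identifies $T(n,k-1)$ as the unique extremal graph for $\ex(n,K_r,K_k)$ when $r<k$.

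For the easy direction, suppose $F$ has no color-critical edge. Then for every edge $e$ of $F$ we have $\chi(F-e) = k$, so $T(n,k-1)$ actually contains a copy of $F-e$; adding an extra vertex adjacent to appropriately chosen vertices, or more simply taking $T(n+1,k-1)$ and observing $F \subseteq T(n,k-1) + \text{one edge inside a part}$, one produces an $F$-free graph on $n$ vertices that has strictly more copies of $K_r$ than $T(n,k-1)$. The cleanest route: since $\chi(F-e)\le k-1$ for no edge $e$, one shows $T(n,k-1)$ together with a single added edge inside a part is still $F$-free (any copy of $F$ would have to use that edge, and deleting it leaves a copy of $F-e$ with chromatic number $k$ sitting inside a $(k-1)$-partite graph, contradiction); and adding an edge strictly increases the number of $K_r$'s since $r<k$ means the two endpoints of the new edge, together with $r-2$ vertices from $r-2$ other parts, form new copies of $K_r$. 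Hence $T(n,k-1)$ is not extremal.

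For the hard direction, assume $F$ has a color-critical edge $e$, so $\chi(F-e) = k-1$. We must show $T(n,k-1)$ is the \emph{unique} $F$-free graph maximizing $\mathcal N(K_r,\cdot)$ for large $n$. Let $G$ be an $n$-vertex $F$-free extremal graph. By Theorem~\ref{genESS} (with $H=K_r$) and Zykov's count~\eqref{zykov-bound}, $\mathcal N(K_r,G) \ge \mathcal N(K_r,T(n,k-1)) = \binom{k-1}{r}(n/(k-1))^r(1+o(1))$, so $G$ has $\ex(n,K_k)(1-o(1))$ edges; applying the Erd\H os--Simonovits stability theorem (Theorem~\ref{esstab}) — or rather a generalized-Tur\'an stability statement, which here follows because a graph far from $T(n,k-1)$ cannot have nearly-extremally-many $K_r$'s — we conclude $G$ is within $o(n^2)$ edge-edits of $T(n,k-1)$. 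Now one runs the standard Simonovits-style symmetrization/cleaning argument adapted to counting $K_r$: remove the $o(n)$ vertices of atypically low degree, show the remaining graph is (almost) complete $(k-1)$-partite, and then use the color-critical edge to rule out \emph{any} edge inside a part — such an edge together with the $(k-1)$-partite bulk would create a copy of $F$ (embed $F-e$ in the $k-1$ "almost complete" classes using the critical-edge structure, then route $e$ through the offending intra-class edge). Finally, among complete $(k-1)$-partite graphs on $n$ vertices, the Tur\'an graph uniquely maximizes $\mathcal N(K_r,\cdot)$ by the convexity optimization behind~\eqref{zykov-bound}, and one checks the leftover low-degree vertices cannot be reattached to beat $T(n,k-1)$.

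The main obstacle is the step that upgrades "almost complete $(k-1)$-partite and $F$-free" to "exactly $T(n,k-1)$": one needs a robust embedding lemma showing that a single edge inside a part of a dense-enough near-$T(n,k-1)$ graph forces a copy of $F$, handling the $o(n^2)$ missing edges and the handful of exceptional vertices without losing control. This is exactly the technical heart of Simonovits' original critical-edge argument, and the generalized version requires additionally tracking the $K_r$-count through each local modification to guarantee uniqueness rather than mere asymptotic optimality; I expect this is where Ma and Qiu do the real work, essentially by a careful induction on $n$ together with a stability-plus-local-modification scheme in the spirit of the proof of Theorem~\ref{simo}.
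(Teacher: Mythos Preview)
Your proposal is essentially correct and follows the same overall architecture as the argument the paper sketches in Subsection~\ref{stabmeth}: both directions are right, and for the hard direction both you and the paper use $K_r$-stability (Theorem~\ref{maqiustabi}) to get $G$ within edit distance $o(n^2)$ of a complete $(k-1)$-partite graph $T$, then perform a cleaning step.

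Where you diverge is in the cleaning step, and the paper's version is simpler than what you anticipate. You flag as the ``main obstacle'' an embedding lemma showing that any single intra-class edge forces a copy of $F$, and speculate that Ma--Qiu handle this by progressive induction in the style of Theorem~\ref{simo}. The paper instead gives a direct counting comparison: define $U$ to be the set of vertices missing $\Omega(n)$ neighbours in some other class; one checks $|U|=o(n)$, and that both endpoints of any intra-class edge must lie in $U$ (for $F=K_k$ this is immediate since such endpoints cannot have common neighbours in every other class; for general $F$ with a color-critical edge one uses the critical-edge embedding only to force endpoints of intra-class edges into $U$, not to rule such edges out entirely). Now compare $\cN(K_r,G)$ with $\cN(K_r,T)$: the intra-class edges contribute at most $\binom{|U|}{2}n^{r-2}=o(|U|n^{r-1})$ extra copies of $K_r$, while the $\Omega(n|U|)$ missing cross-edges destroy $\Omega(|U|n^{r-1})$ copies of $K_r$ present in $T$. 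Hence if $U\neq\emptyset$ then $T$ strictly beats $G$, so $U=\emptyset$ and $G$ is complete $(k-1)$-partite; optimizing over part sizes yields $T(n,k-1)$ uniquely. No induction is needed.

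One minor wobble in your sketch: deducing ``$G$ has $(1-o(1))\ex(n,K_k)$ edges'' from many copies of $K_r$ and then invoking the ordinary Erd\H os--Simonovits stability is a detour; it is cleaner (and what the paper does) to invoke the $K_r$-stability theorem (Theorem~\ref{maqiustabi}) directly.
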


The classical stability (Theorem~\ref{esstab}) and supersaturation (Theorem~\ref{essuper}) theorems have analogues in the generalized setting. The general stability theorem is:

\begin{theorem}[Ma and Qiu~\cite{MaQiu18}]\label{maqiustabi} 
 For $\varepsilon>0$ and any graph $F$ with $\chi(F) > r \geq 3$, there exists $\delta>0$ such that if $G$ is an $n$-vertex $F$-free graph with at least $\ex(n,K_r,F)-\delta n^r$ copies of $K_r$, then the edit distance between $G$ and $T(n,\chi(F)-1)$ is at most $\varepsilon n^2$.
\end{theorem}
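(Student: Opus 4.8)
The plan is to derive the generalized stability statement from the classical Erdős–Simonovits Stability Theorem (Theorem~\ref{esstab}) by a counting/supersaturation argument, together with the asymptotic count of $K_r$'s in near-Turán graphs coming from Theorem~\ref{ESS-AS}. Write $k = \chi(F) > r \geq 3$. First I would fix $\varepsilon > 0$ and suppose, for contradiction, that there is a sequence of $n$-vertex $F$-free graphs $G_n$ with $\mathcal{N}(K_r, G_n) \geq \ex(n,K_r,F) - \delta_n n^r$ where $\delta_n \to 0$, but with edit distance more than $\varepsilon n^2$ from $T(n, k-1)$. By Theorem~\ref{ESS-AS}, $\ex(n,K_r,F) = \binom{k-1}{r}(n/(k-1))^r + o(n^r)$, so each $G_n$ has $\left(\binom{k-1}{r}(k-1)^{-r} + o(1)\right) n^r$ copies of $K_r$.

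The key step is to translate ``many copies of $K_r$'' into ``many edges,'' so that the classical stability theorem can be invoked. Here I would use a convexity/averaging argument: in any graph on $n$ vertices, the number of $K_r$'s is controlled (via Kruskal–Katona, or more simply via the Kruskal–Katona-type bound / the Moon–Moser inequality, or just by the observation that the $K_r$-count is maximized for a given edge count by a clique-like configuration) so that having $\left(\binom{k-1}{r}(k-1)^{-r} - o(1)\right) n^r$ copies of $K_r$ forces $e(G_n) \geq e(T(n,k-1)) - o(n^2) = \left(1 - \frac{1}{k-1}\right)\frac{n^2}{2} - o(n^2)$. Indeed, if $e(G_n) \leq \left(1 - \frac{1}{k-1}\right)\frac{n^2}{2} - \eta n^2$ for some fixed $\eta > 0$, then by the Kruskal–Katona theorem (in the form bounding the number of $r$-cliques by a function of the number of edges) the $K_r$-count would be at most $\left(\binom{k-1}{r}(k-1)^{-r} - c(\eta)\right) n^r$ for some $c(\eta) > 0$, contradicting the lower bound. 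Since $G_n$ is also $F$-free, hence in particular has $e(G_n) \leq \ex(n,F) = \left(1 - \frac{1}{k-1}\right)\frac{n^2}{2} + o(n^2)$ by Theorem~\ref{ESS}, we conclude $e(G_n) = \ex(n,F) - o(n^2)$.

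Now Theorem~\ref{esstab} applies directly to the $F$-free graph $G_n$ (using $\chi(F) = k \geq 4 > r \geq 3$, in particular $\chi(F) \geq 3$): it yields that the edit distance from $G_n$ to $T(n, \chi(F)-1) = T(n,k-1)$ is $o(n^2)$, eventually smaller than $\varepsilon n^2$. This contradicts our assumption, completing the proof. The main obstacle is the second paragraph — making the Kruskal–Katona / convexity step quantitatively correct: one needs that a deficit of $\eta n^2$ in the edge count produces a deficit of order $n^r$ (not merely $o(n^r)$) in the $K_r$-count, and one must check that the extremal configuration for the $K_r$-count given an edge budget does not ``cheat'' by concentrating edges (a clique on $o(n)$ vertices contributes only $o(n^r)$ copies of $K_r$, so this is fine, but it should be argued carefully, e.g.\ via the Kruskal–Katona bound $\mathcal{N}(K_r,G) \leq \binom{x}{r}$ when $e(G) = \binom{x}{2}$, which is concave in the right regime). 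One should also take $n$ large enough throughout that all the $o(n^2)$ and $o(n^r)$ error terms are dominated by the fixed constants $\varepsilon$, $\eta$, $c(\eta)$; a standard compactness/subsequence packaging handles the passage from the asymptotic statements to an explicit $\delta = \delta(\varepsilon, F, r)$.
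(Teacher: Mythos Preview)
The survey does not include a proof of Theorem~\ref{maqiustabi}; it states the result, attributes it to Ma and Qiu, and notes alternative proofs. So there is nothing to compare against directly, but your proposal contains a genuine gap, and it is not merely a matter of being ``quantitatively correct.''

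The Kruskal--Katona step is wrong in kind, not just in precision. You claim that if $e(G_n) \le \bigl(1 - \tfrac{1}{k-1}\bigr)\tfrac{n^2}{2} - \eta n^2$ then Kruskal--Katona forces $\mathcal{N}(K_r, G_n) \le \bigl(\binom{k-1}{r}(k-1)^{-r} - c(\eta)\bigr) n^r$. But Kruskal--Katona is tight for cliques, not Tur\'an graphs: if $e(G)=\binom{x}{2}$ then $\mathcal{N}(K_r,G)\le \binom{x}{r}$. At the Tur\'an edge count one has $x\approx n\sqrt{(k-2)/(k-1)}$, so the bound is $\binom{x}{r}\approx \tfrac{n^r}{r!}\bigl(\tfrac{k-2}{k-1}\bigr)^{r/2}$, whereas the Tur\'an $K_r$-count is only $\tfrac{n^r}{r!}\prod_{i=0}^{r-1}\bigl(1-\tfrac{i}{k-1}\bigr)$. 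For every $r\ge 3$ the former strictly exceeds the latter (by a factor of roughly $\sqrt{6}$ when $k=4,\ r=3$). Hence even after losing $\eta n^2$ edges, the Kruskal--Katona ceiling sits comfortably above the Tur\'an $K_r$-count, and no contradiction follows. The crucial point is that Kruskal--Katona makes no use of $F$-freeness; a clique on $\Theta(n)$ vertices has far fewer edges than $T(n,k-1)$ yet far more $K_r$'s, and Kruskal--Katona cannot distinguish this from the $F$-free situation.

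To make the reduction to edge-stability work you must use $F$-freeness (equivalently, $K_k$-freeness after the Removal Lemma) in the step bounding $\mathcal{N}(K_r,G)$ by a function of $e(G)$ that is \emph{tight for the Tur\'an graph}. That is precisely the substance of the theorem, and it requires an argument of a different nature---e.g.\ a regularity/counting-lemma analysis or a symmetrization-plus-optimization over complete $(k-1)$-partite graphs---rather than a generic shadow inequality.
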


Additional proofs of this theorem were obtained by Liu~\cite{Liu19},  Halfpap and Palmer~\cite{HalPal19}, and Li and Pang~\cite{LiPen22}.

The general supersaturation theorem is as follows.

\begin{theorem}[Halfpap and Palmer~\cite{HalPal19}]
	For $\varepsilon>0$ and any graph $F$, there exists $\delta >0$ such 
 that any $n$-vertex graph with more than $\ex(n,H,F)+\varepsilon n^{|V(H)|}$ copies of $H$ contains $\delta n^{|V(F)|}$ copies of $F$.
\end{theorem}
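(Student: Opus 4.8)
The plan is to imitate the classical averaging proof of the Erd\H os--Simonovits supersaturation theorem (Theorem~\ref{essuper}), with $e(G)$ replaced by $\mathcal{N}(H,G)$. Write $h=|V(H)|$ and $f=|V(F)|$. The first ingredient is a normalized monotonicity lemma: for $h\le m\le n$,
\[
\frac{\ex(n,H,F)}{\binom{n}{h}}\ \le\ \frac{\ex(m,H,F)}{\binom{m}{h}}.
\]
To see this, let $G^{\ast}$ be an extremal $n$-vertex $F$-free graph. Every $m$-vertex induced subgraph of $G^{\ast}$ is $F$-free, so summing over all $m$-subsets $S$ of $V(G^{\ast})$ gives $\sum_{S}\mathcal{N}(H,G^{\ast}[S])\le\binom{n}{m}\ex(m,H,F)$; but the left-hand side equals $\ex(n,H,F)\binom{n-h}{m-h}$, since each of the $\ex(n,H,F)$ copies of $H$ lies in exactly $\binom{n-h}{m-h}$ of the $m$-sets. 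Rearranging and using $\binom{n}{m}\big/\binom{n-h}{m-h}=\binom{n}{h}\big/\binom{m}{h}$ proves the lemma. In particular the sequence $a_m:=\ex(m,H,F)/\binom{m}{h}$ is non-increasing and nonnegative, hence converges to a limit $L\ge 0$, and $a_n\ge L$ for every $n$.

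Given $\varepsilon>0$, fix an integer $m$ large enough that $m\ge f$ and $a_m\le L+\varepsilon\, h!/4$. Now let $G$ be an $n$-vertex graph with $\mathcal{N}(H,G)>\ex(n,H,F)+\varepsilon n^{h}$ and let $S$ be a uniformly random $m$-subset of $V(G)$. Each fixed copy of $H$ lies inside $S$ with probability $\binom{n-h}{m-h}/\binom{n}{m}=\binom{m}{h}/\binom{n}{h}$, so
\[
\mathbb{E}\bigl[\mathcal{N}(H,G[S])\bigr]=\mathcal{N}(H,G)\cdot\frac{\binom{m}{h}}{\binom{n}{h}}>\binom{m}{h}\!\left(\frac{\ex(n,H,F)}{\binom{n}{h}}+\frac{\varepsilon\, n^{h}}{\binom{n}{h}}\right)\!.
\]
Since $n^{h}/\binom{n}{h}\ge h!$, since $\ex(n,H,F)/\binom{n}{h}=a_n\ge L$, and since $a_m\le L+\varepsilon h!/4$, the right-hand side is at least $\ex(m,H,F)+\tfrac{3}{4}\varepsilon\, h!\binom{m}{h}$. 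On the other hand, if $G[S]$ is $F$-free then $\mathcal{N}(H,G[S])\le\ex(m,H,F)$, and always $\mathcal{N}(H,G[S])\le\mathcal{N}(H,K_m)$, so
\[
\mathbb{E}\bigl[\mathcal{N}(H,G[S])\bigr]\le\ex(m,H,F)+\mathcal{N}(H,K_m)\cdot\Pr\bigl[G[S]\text{ contains a copy of }F\bigr].
\]
Comparing the two estimates yields $\Pr[G[S]\text{ contains }F]\ge c'$, where $c':=\tfrac{3}{4}\varepsilon\, h!\binom{m}{h}/\mathcal{N}(H,K_m)>0$ depends only on $\varepsilon$, $H$, $F$.

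It remains to count copies of $F$ in $G$. At least $c'\binom{n}{m}$ of the $m$-subsets $S$ induce a subgraph containing $F$, and each copy of $F$ in $G$ lies in exactly $\binom{n-f}{m-f}$ of the $m$-sets, so
\[
\mathcal{N}(F,G)\ \ge\ \frac{c'\binom{n}{m}}{\binom{n-f}{m-f}}\ =\ c'\,\frac{\binom{n}{f}}{\binom{m}{f}}\ \ge\ \delta\, n^{f},
\]
with $\delta:=c'/(2\,f!\binom{m}{f})$, valid once $n$ is large enough that $\binom{n}{f}\ge n^{f}/(2\,f!)$. The finitely many smaller values of $n$ are trivial: there, already $\mathcal{N}(H,G)>\ex(n,H,F)$ forces $G$ to contain a copy of $F$, so $\mathcal{N}(F,G)\ge1$, and shrinking $\delta$ makes $\delta n^{f}\le 1$.

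The only step that is not routine bookkeeping is the monotonicity lemma, and more precisely the observation that it is the ratio $\ex(\,\cdot\,,H,F)/\binom{\cdot}{h}$ that is monotone. This is exactly what lets us push $m$ high enough to trap $\ex(m,H,F)/\binom{m}{h}$ within $\varepsilon h!/4$ of its limit, so that the surplus $\varepsilon n^{h}$ — which survives the normalization as a full $\varepsilon h!>0$ — strictly dominates and forces a constant fraction of $m$-sets to contain $F$. Everything else is the careful constant-chasing through the two double-counting identities, which mirrors the edge case verbatim.
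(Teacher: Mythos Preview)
Your proof is correct and follows exactly the expected approach: it is the classical Erd\H os--Simonovits averaging/sampling argument, with $\mathcal{N}(H,\cdot)$ in place of $e(\cdot)$ and $\binom{n}{h}$ in place of $\binom{n}{2}$. The survey does not include a proof of this statement, citing~\cite{HalPal19}; the argument there is the same averaging argument you give, so there is nothing to add.
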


Note that the statement in \cite{HalPal19} includes the assumption that $\chi(H)<\chi(F)$, but this can be safely dropped without impacting the conclusion.

\subsection{Differences between generalized and classical Tur\'an problems}\label{diff}

Both extremal functions $\ex(n,F)$ and $\ex(n,H,F)$ denote the maximum number of copies of a particular subgraph $H$ and so $\ex(n,H,F)$ is a direct generalization of $\ex(n,F)$. The more general function often behaves analogously to the classical function, but there are important differences. We highlight several in this subsection.

The value $\ex(n,F)$ immediately determines every possible value of $e(G)$ for an $F$-free graph $G$. Indeed, by removing the edges one by one, we can obtain $F$-free graphs with $0,1,2,\dots, \ex(n,F)$ edges. However, there are instances where there is no $n$-vertex $F$-free graph with exactly $\ex(n,H,F)-1$ copies of $H$ as we cannot always delete a single copy of $H$. We are not aware of any result concerning the range of values attainable by $\cN(H,G)$ when $G$ is an $n$-vertex $F$-free graph.

\smallskip

The \emph{rational exponents conjecture} of Erd\H os and Simonovits~\cite{Erd66,Erd75,ErdSim82,Erd88} claims that for any graph $F$, there is a rational $1\le \alpha \le 2$ and a constant $c>0$ such that $\ex(n,F)=(c+o(1))n^\alpha$. However, an analogous claim cannot hold when counting triangles instead of edges: Alon and Shikhelman~\cite{AloShi16} showed that $n^{2-o(1)}\le \ex(n,K_3,B_k)\le o(n^2)$ for every $k$ (where $B_k$ is the \emph{book} with $k$ \emph{pages}, i.e., $k$ triangles all sharing a common edge).
 This result immediately follows from a result of Clark, Entringer, McCanna, and Sz\'ekely~\cite{ClaEntMcC91}, which establishes the same bounds for the number of edges in a graph where every edge is in exactly one triangle. This itself is a consequence of the celebrated (6,3)-problem of Ruzsa and Szemer\'edi~\cite{RuzSze78}.

Let us note here that the other direction is an even more famous conjecture of Erd\H os and Simonovits~\cite{Erd75,Erd88}: for every rational $1\le \alpha \le 2$ there is a graph $F$ such that $\ex(n,F)=(c+o(1))n^\alpha$ for some constant $c$ depending only on $F$. There are other versions of this conjecture, where $F$ is replaced by a family of graphs and/or the asymptotic statement is replaced by a statement on the order of magnitude: $\ex(n,F)=\Theta(n^\alpha)$. 
Perhaps it would be easier to prove the following corresponding generalized version.

\begin{restatable}{conjecture}{rationalconj}\label{rationa}
	For every rational $\alpha\ge 1$, there are graphs $H$ and $F$ such that
 \[
 \ex(n,H,F)=\Theta(n^\alpha).
 \]
\end{restatable}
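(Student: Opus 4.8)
The conjecture asks for, for every rational $\alpha \ge 1$, a pair $(H,F)$ with $\ex(n,H,F) = \Theta(n^\alpha)$. The key realization is that the generalized setting gives us an extra "free parameter" — the number of vertices of $H$ — that the classical problem lacks, and this should make it possible to tune the exponent continuously (over the rationals) by choosing $H$ to be a large clique and $F$ to be something forcing very few cliques of that size.

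**First, I would reduce to exponents in $[1,2)$.** Write $\alpha = m + \beta$ with $m \ge 1$ an integer and $\beta \in [0,1)$ rational. I want to "lift" a base construction by a factor of $n^m$. The natural device: if $G_0$ is an $n$-vertex $F_0$-free graph with $\Theta(n^{\alpha_0})$ copies of $H_0$, take $H = H_0 + K_m$ (join with a clique of size $m$) and $F = F_0 + K_m$, and consider the host graph $G = G_0 + K_m$ (join $G_0$ with $m$ fixed vertices forming a clique). Then $G$ is $F$-free essentially because any copy of $F = F_0+K_m$ in $G$ would have to use all $m$ apex vertices for the $K_m$ part (as $G_0$ is $F_0$-free, hence $K_m$... wait, need care) — actually one must check the apex part cleanly, but modulo that, the number of copies of $H_0+K_m$ in $G_0+K_m$ is $\Theta(n^{\alpha_0}) \cdot \Theta(1)$, which is the wrong scaling. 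So instead I should blow up the apex: take $m$ independent sets each of linear size, rather than $m$ single vertices — but then I lose the clique structure. The cleaner route is to use Proposition~\ref{gissha} (Gishboliner–Shapira): choosing $H$ with a large independent set lets one multiply in a factor $n^{\alpha(H)}$ while keeping $F$-freeness, provided the vertex-cover numbers are ordered correctly. So I would build the integer part via a large independent set in $H$, and concentrate the fractional-exponent work on a core construction.

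**The core case $\alpha \in [1,2)$.** Here I would try $H = K_r$ for suitable $r$ and $F$ a bipartite graph, exploiting Theorem~\ref{ESS-AS}/Proposition~\ref{degen-prop}: if $F$ is bipartite (so $\chi(F)=2 \le r$), then $K_r$ is degenerate, and the order of magnitude of $\ex(n,K_r,F)$ is governed by how cliques distribute in $F$-free graphs. The model example is $F = K_{s,t}$: a $K_{s,t}$-free graph has $O(n^{s})$ edges roughly (for the relevant regime), and one expects $\ex(n, K_r, K_{s,t})$ to have an exponent that decreases as $r$ grows — intuitively, each additional clique vertex costs a factor less than $n$ because of the degree constraints. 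Concretely, the norm-graph constructions of Kollár–Rónyai–Szabó and Alon–Rónyai–Szabó give $K_{s,t}$-free graphs with many cliques; counting $K_r$'s there should yield a lower bound $\Omega(n^{r - \binom{r}{2}/s})$ or similar, and a Kővári–Sós–Turán-type counting argument (count stars as in Theorem~\ref{kst}, then bootstrap to cliques) should give a matching upper bound. By varying $s$, $t$, and $r$, one produces a dense set of rational exponents in some subinterval; then the join/independent-set lifting above extends the range to all rationals $\ge 1$. An alternative core construction: take $F$ itself to be a blowup-related graph so that $\ex(n,H,F)$ can be pinned down by Proposition~\ref{degen-prop}, or use $H = K_{s,t}$ and $F = $ a longer even cycle, invoking the Bondy–Simonovits bound (Theorem~\ref{BS-even-cycle}) together with a norm-graph lower bound.

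**The main obstacle** is exactly the one that makes the classical rational exponents conjecture hard: proving matching lower bounds. Upper bounds of the shape $\ex(n,K_r,F) = O(n^\alpha)$ follow from relatively soft counting (stars, Zarankin-type arguments, or deleting an edge per copy as in the $\ex(n,H,F)\ge \ex(n,F)-\ex(n,H)$ trick read backwards), but to get $\Theta$ rather than $O$ one needs explicit $F$-free host graphs realizing $\Omega(n^\alpha)$ copies of $H$ — i.e.\ algebraic or random-algebraic constructions that are robust enough to count cliques inside, not just edges. The hope embedded in the conjecture is that, because we have two free graph parameters ($H$ and $F$) and a continuum of clique sizes to count, the combinatorial freedom is great enough that \emph{some} family of constructions (norm graphs, projective-plane incidence graphs and their clique counts, or Bukh-style random polynomial graphs) can be tuned to hit every rational exponent, even though no single classical construction does so for $\ex(n,F)$. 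I would expect the final proof to assemble: (i) a clean lifting lemma (join with $K_m$ / blow up an independent set, controlled via Proposition~\ref{gissha} and Theorem~\ref{genESS}), plus (ii) a core family of explicit degenerate constructions with exactly-known clique-count exponents covering a dense rational subinterval of $[1,2)$ — with step (ii) being where essentially all the real work lies.
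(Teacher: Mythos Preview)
The statement you are attempting to prove is listed in the paper as a \emph{conjecture}, not a theorem: the paper does not claim to prove it and gives no proof. It is presented as an open problem, with the remark that a partial result of English, Halfpap, and Krueger shows that for every $r$ and every rational $1\le\alpha\le r$ there is a \emph{family} $\cF_{\alpha,r}$ with $\ex(n,K_r,\cF_{\alpha,r})=\Theta(n^\alpha)$; reducing from a family to a single forbidden graph $F$ is precisely what remains open.

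Your proposal is therefore not to be compared against any proof in the paper; it is an outline of a possible attack on an open problem. As such, it is reasonable in spirit but, as you yourself acknowledge, it has a genuine gap at the decisive point: step~(ii), producing explicit $F$-free host graphs with $\Omega(n^\alpha)$ copies of $H$ for a dense set of rational $\alpha$ in some base interval, is not carried out. The lifting idea in step~(i) is also not made precise (your first attempt via joining with $K_m$ visibly fails to shift the exponent, and the fallback to Proposition~\ref{gissha} only gives a lower bound, not a matching upper bound). In short, what you have written is a plausible strategy memo rather than a proof, and the paper agrees that the problem is open.
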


A step towards this conjecture is due to English, Halfpap, and Krueger~\cite{EngHalKru24}, who showed that for every $r$ and every rational $\alpha$ with $1\le \alpha\le r$ there is a family of forbidden graphs $\cF_{\alpha,r}$ such that $\ex(n,K_r,\cF_{\alpha,r})=\Theta(n^\alpha)$.

\smallskip

Another famous conjecture of Erd\H os and Simonovits~\cite{ErdSim82} is the \emph{compactness conjecture} that states that for any finite family $\cF$ of graphs there is an $F \in \cF$ such that $\ex(n,\cF)=O(\ex(n,F))$. In other words, if we are only interested in the order of magnitude of the Tur\'an function, it is sufficient to forbid a single graph from $\cF$. Note that this conjecture does not necessarily hold for infinite families. For example, forbidding the family of all cycles forces at most $n-1$ edges in an $n$-vertex graph, but $\ex(n,C_k)$ is superlinear for all $k$. When counting graphs other than $K_2$, a corresponding compactness conjecture does not need to hold. Indeed, it is not hard to show that $\ex(n,C_6,C_4)=\Theta(n^3)$ and $\ex(n,C_6,C_8)=\Theta(n^3)$, but $\ex(n,C_6,\{C_4,C_8\})=\Theta(n^2)$ (see Subsection~\ref{cycles} for more details). It would be interesting to examine whether there is a graph $H$ such that any finite family $\cF$ of graphs must contain a graph $F \in \cF$ satisfying $\ex(n,H,\cF)=O(\ex(n,H,F))$.

\smallskip

It is well known that $\ex(n,tF)=\Theta(\ex(n,F))$, where $tF$ denotes the graph of $t$ pairwise vertex-disjoint copies of $F$. Moreover, Gorgol~\cite{Gor11} observed that $\ex(n,tF)=\ex(n,F)+O(n)$ and so even the asymptotics of $\ex(n,tF)$ and $\ex(n,F)$ are the same as long as $F$ is not a forest. On the other hand, Alon and Shikhelman~\cite{AloShi16} observed that $\ex(n,K_3,2C_5)=\Omega(n^2)$ in contrast with $\ex(n,K_3,C_5)=O(n^{3/2})$ given in~\cite{BolGyo08}. The construction establishing the quadratic lower bound is very simple: take a $C_5$-free graph on $n-1$ vertices with a quadratic number of edges and add a \emph{universal} vertex $v$ (i.e., $v$ is adjacent to all other vertices). Each copy of $C_5$ in this graph must contain $v$ and so there is no $2C_5$. As $v$ is universal, each edge in the neighborhood of $v$ is a triangle in the graph.
Therefore, $\ex(n,K_3,2C_5)\ge \ex(n-1,K_2,C_5) = \Omega(n^2)$.
In fact, $\ex(n-1,K_2,C_5)=\lfloor (n-1)^2/4\rfloor$ for sufficiently large $n$ by Theorem~\ref{simo}, and $\ex(n,K_3,2C_5)=\lfloor (n-1)^2/4\rfloor$ for sufficiently large $n$ (see~\cite{ZhaCheGyo23}).

Universal vertices can be further exploited.
 More generally, if $H'$ is obtained from $H$ by removing at most $t-1$ vertices, then by adding $t-1$ universal vertices to an extremal graph for $\ex(n-t+1,H',F)$ we obtain $\ex(n,H,tF)\ge \ex(n-t+1,H',F)$. 

Gerbner, Methuku, and Vizer~\cite{GerMetViz17} studied the phenomenon where $\ex(n,H,tF)$ and $\ex(n,H,F)$ diverge for several different graphs $H$ and $F$. They showed that the above reason is the only one: $\ex(n,H,F)=o(\ex(n,H,tF))$ is only possible if $H$ has an induced subgraph $H'$ such that $\ex(n,H,F)=o(\ex(n,H',F))$.

\smallskip

It is easy to see that $\ex(n,F)\ge \lfloor n/2\rfloor$ for every $F\neq K_2$. Indeed, when $F$ is not a matching, construct a matching and when $F$ is a matching, construct a star. For generalized Tur\'an problems, other nonzero sublinear functions are possible. A very simple example is $\ex(n,K_3,2K_2)=1$ as a $2K_2$-free graph with a triangle cannot contain any additional edges. Gerbner and Methuku~\cite{GerMet25} examined this phenomenon more thoroughly and characterized the graphs $H$ and $F$ that satisfy $\ex(n,H,F)=O(1)$. They also showed that there is nothing between constant and linear: if $\ex(n,F)=\omega(1)$, then $\liminf_{n \to \infty}  \frac{\ex(n,H,F)}{n} \ge \frac{1}{|V(H)|}$. Finally, for every nonnegative integer $m$, they constructed graphs $H$ and $F$ such that $\ex(n,H,F)=m$, for $n$ sufficiently large.

\section{The non-degenerate case}\label{turangood}

The non-degenerate case is where $H$ and $F$ satisfy $\ex(n,H,F)=\Theta(n^{|V(H)|})$. Proposition~\ref{degen-prop} characterizes when this happens: if and only if $F$ is not a subgraph of a blowup of $H$. 
A sufficient condition is $\chi(H)<\chi(F)$, but it is not necessary. As an example, observe that any blowup of any triangle-free graph $H$ is also triangle-free, hence $\ex(n,H,K_3)=\Theta(n^{|V(H)|})$, even if $\chi(H)=3$. It is well known that there are triangle-free graphs with arbitrary chromatic number, thus $\chi(H)$ can be much larger than $\chi(F)$ even in the non-degenerate case. On the other hand, if $F$ is bipartite and $H$ has at least one edge, then we are in the degenerate case. 

In Subsection~\ref{history} we stated bounds on $\ex(n,C_{2r+1},C_{2k+1})$ for $k <r$. Surprisingly, we are not aware of any other non-degenerate result with $\chi(H)\ge \chi(F)$. Thus, in the rest of this section, we restrict our attention to the case when $\chi(H)<\chi(F)$.

As in Subsection~\ref{history}, a graph $H$ is $k$-Tur\'an-good if $\ex(n,H,K_k)=\cN(H,T(n,k-1))$ for $n$ large enough. We introduce the following generalizations. Given graphs $H$ and $F$ with $\chi(F)=k$, we say that $H$ is \textit{$F$-Tur\'an-good} if $\ex(n,H,F)=\cN(H,T(n,k-1))$ for $n$ large enough. We say that $H$ is \textit{weakly $F$-Tur\'an-good} if $\ex(n,H,F)=\cN(H,T)$ for some complete $(k-1)$-partite graph $T$. When $F=K_{k}$, we use the briefer term \textit{(weakly) $k$-Tur\'an-good} instead of (weakly) $K_{k}$-Tur\'an-good. 
With these definitions, Tur\'an's Theorem (Theorem~\ref{turan}) states that $K_2$ is $k$-Tur\'an-good for all $k$, the Simonovits Critical Edge Theorem (Theorem~\ref{simo}) states that $K_2$ is $F$-Tur\'an-good if and only $F$ has a color-critical edge, and Zykov's Theorem (Theorem~\ref{zykov}) states that if $r < k$, then $K_r$ is $k$-Tur\'an-good. Theorem~\ref{gen-crit-edge} states that $K_r$ is $F$-Tur\'an-good if and only if $F$ has a color-critical edge.

It is not hard to see that the weak version of Tur\'an-goodness is necessary as, for example, the complete bipartite graph with the maximum number of copies of the star $S_{r+1} = K_{1,r}$ is not a Tur\'an graph when $r$ is large. That is, $S_{r+1}$ is weakly $k$-Tur\'an-good, but not $k$-Tur\'an-good.
In general, when $H$ is unbalanced, we can expect that an unbalanced complete multipartite graph maximizes the number of copies of $H$. Determining which complete multipartite graph contains the maximum yields an optimization problem that tends to be difficult to solve in this general setting. Some papers that investigate the extremal graph in the special case where $H$ itself is a complete multipartite graph (typically bipartite) are referenced in Subsection~\ref{history}. We are not aware of any such optimization for other graphs $H$ (outside the obvious case where the extremal graph is a Tur\'an graph).

Let us mention a connection to another well-studied concept in extremal graph theory: \textit{inducibility}, which is the maximum number of induced copies of a graph $H$ in an $n$-vertex graph. 
Here we highlight some simple examples.
Like the result of Gy\H ori, Pach, and Simonovits stated in Subsection~\ref{history}, Zykov's symmetrization can be used to show that the maximum number of induced copies of a complete $r$-partite graph occurs in a complete multipartite graph $G$. 
However, $G$ is not necessarily $r$-partite. This was observed by Brown and Sidorenko~\cite{BroSid94}, who also showed that if $r=2$, then $G$ must be bipartite. Gerbner~\cite{Ger21b} showed that $K_{a,b}$ is weakly $F$-Tur\'an-good for all values of $a,b$ and $F$ a $3$-chromatic graph with a color-critical edge. If we allow $a,b$ to both be large enough, then the argument is simple: adding any edge to $K_{a,b}$ results in a copy of $F$ and so every copy of $K_{a,b}$ in an $F$-free graph must be induced, which implies that the extremal graph is complete bipartite.

\subsection{Tur\'an-stable graphs}\label{turstab}

We also introduce approximate versions of the (weakly) Tur\'an-good property.
Given $F$ with $\chi(F)=k>2$, we say that $H$ is \textit{asymptotically $F$-Tur\'an-good} if $\ex(n,H,F)=(1+o(1))\cN(H,T(n,k-1))$ and $H$ is \textit{asymptotically weakly $F$-Tur\'an-good} if $\ex(n,H,F)=(1+o(1))\cN(H,T)$ for some complete $(k-1)$-partite graph $T$. Theorem~\ref{genESS} implies that if $H$ is asymptotically $K_{k}$-Tur\'an-good, then $H$ is also asymptotically $F$-Tur\'an-good for every graph $F$ with chromatic number $k$.

Given $F$ with $\chi(F)=k>2$, we say that $H$ is \textit{$F$-Tur\'an-stable} if any $n$-vertex $F$-free graph 
with $\ex(n,H,F)-o(n^{|V(H)|})$ copies of $H$ 
has edit distance $o(n^2)$ from the $(k-1)$-partite Tur\'an graph $T(n,k-1)$. We say that $H$ 
is \emph{weakly $F$-Tur\'an-stable} if any $n$-vertex $F$-free graph 
with $\ex(n,H,F)-o(n^{|V(H)|})$ copies of $H$ has edit distance $o(n^2)$ from some complete $(k-1)$-partite
graph. As usual, for brevity, we use the term \emph{(weakly) $k$-Tur\'an-stable} in place of (weakly) $K_k$-Tur\'an-stable.

Clearly, the $F$-Tur\'an-stable property implies the asymptotically $F$-Tur\'an-good property, and the weakly $F$-Tur\'an-stable property implies the asymptotically weakly $F$-Tur\'an-good property. But these properties can also be used to derive  exact results.
Two important facts concerning Tur\'an stability are: 
\begin{itemize}
    \item If $H$ is (weakly) $k$-Tur\'an-stable, then $H$ is (weakly) $F$-Tur\'an-stable for every $F$ with chromatic number $k$.

    \item If $H$ is weakly $F$-Tur\'an-stable and $F$ has a color-critical edge, then $H$ is weakly $F$-Tur\'an-good.
\end{itemize}

Note that the second fact holds only for the weak properties. Indeed, if $H$ is $F$-Tur\'an-stable, then $H$ is weakly $F$-Tur\'an-stable, thus weakly $F$-Tur\'an-good, but the extremal graph is not necessarily the Tur\'an graph (although it does have edit distance $o(n^2)$ from the Tur\'an graph). An example is $\ex(n, S_4,K_3)$. As we shall see, $S_4$ is $K_3$-Tur\'an-stable, thus the extremal graph is $K_{m,n-m}$ for some $m$. Brown and Sidorenko~\cite{BroSid94} showed (in a different setting) that $m$ is roughly $n/2+ \sqrt{(3n-2)/2}$ and so $S_4$ is not $K_3$-Tur\'an-good. 
On the other hand, if $H$ is weakly $F$-Tur\'an-stable and $k$-Tur\'an-good, then $H$ is $F$-Tur\'an-good.

Among other things, the two facts mean that if we prove that $H$ is 
(weakly) $k$-Tur\'an-stable,
then we immediately arrive at infinitely many 
exact results. 
The simple proofs of the two bulleted facts and more details can be found in Subsection~\ref{stabmeth} where we discuss the \emph{Stability Method}.

An advantage of using Tur\'an-stability to prove an exact result is that we need less precision. For example, since $K_{k-1}$ is $k$-Tur\'an-good, it is immediate that $2K_{k-1}$ is also $k$-Tur\'an-good:  first select a copy of $K_{k-1}$, the number of which is maximized by the Tur\'an graph. Then select a second copy, which is maximized by the Tur\'an graph on $n-(k-1)$ vertices. Observe that this approach does not work for $2K_r$ with $r<k-1$, since after taking the first copy of $K_r$, the remaining graph may not be a Tur\'an graph. However, by a result of Ma and Qiu~\cite{MaQiu18}, we know that $K_r$ is $k$-Tur\'an-stable and so after selecting the first copy of $K_r$, the resulting graph has edit distance $O(1)$ from a Tur\'an graph. This implies that $2K_r$ is $k$-Tur\'an-stable, thus weakly $k$-Tur\'an-good. It is not difficult to show that the Tur\'an graph contains the maximum number of copies of $2K_r$ among the complete $(k-1)$-partite graphs and thus conclude that $2K_{k-1}$ is indeed $k$-Tur\'an-good. 
The drawback of this approach is that the first argument gives $\ex(n,2K_{k-1},K_{k})=\cN(K_{k-1},T(n,k-1))$ for every $n$, while the second gives $\ex(n,2K_r,K_{k})=\cN(K_r,T(n,k-1))$ only for an unspecified sufficiently large $n$.

Several results for $k$-Tur\'an-good graphs were later proved for $k$-Tur\'an-stable graphs. In fact, we are aware of only a few exact non-degenerate results on $\ex(n,H,F)$ for $F$ a non-clique with $\chi(F)=k$, that are not a consequence of $H$ being $k$-Tur\'an-stable. 

\subsection{$k$-Tur\'an-good/stable graphs}\label{good-stable}

Before examining specific graphs, let us discuss what is known for $k$-Tur\'an-good graphs in general. Which graphs are $k$-Tur\'an-good? An answer is that every graph $H$ is $k$-Tur\'an-good, provided $k\ge 300|V(H)|^9$, by Morrison, Nir, Norin, Rza{\.z}ewski, and Wesolek~\cite{MorNirNor22}, who established the first bound, but did not attempt to optimize the threshold. This result was extended by Gerbner and Hama Karim~\cite{GerHam23} to show that $H$ is also $k$-Tur\'an-stable for the same threshold on $k$. 

Gy\H ori, Pach, and Simonovits~\cite{GyoPacSim91} constructed a tree that is not asymptotically weakly 3-Tur\'an-good. 
For all $k$, Grzesik, Gy\H ori, Salia, and Tompkins~\cite{GrzGyoSal22} constructed a $(k-1)$-chromatic graph $H$ that is not asymptotically weakly $k$-Tur\'an-good. They conjectured that any $(k-1)$-chromatic graph with diameter at most $2(k-1)$ is $k$-Tur\'an-good, but this was disproved by Hng and Cecchelli~\cite{HngCec22} for $k\ge 4$. This conjecture remains open for $k=3$.
Gerbner~\cite{Ger22d} constructed, for every $F$, a graph that is not asymptotically weakly $F$-Tur\'an-good. However, we do not know of any example of a $(k-2)$-chromatic graph that is not weakly $k$-Tur\'an-good. 

On the other hand, Gerbner~\cite{Ger20c} showed that there exist $F$-Tur\'an-good (and weakly $F$-Tur\'an-good) graphs if and only if $F$ has a color-critical vertex. Now let $F$ be a graph without a color-critical vertex. 
For a complete $n$-vertex $(k-1)$-partite graph $T$, consider a graph obtained by selecting a vertex of $T$ and connecting it by an edge to all the other vertices. The resulting graph remains $F$-free, and contains more copies of $H$ than does $T$, for every fixed graph $H$ of chromatic number at most $k$ (and $n$ sufficiently large). This implies that $H$ is not $F$-Tur\'an-good. Note that if $F$ does not have a color-critical edge and $\chi(F)=k$, then the complete $(k-1)$-partite graphs are not maximal: adding an edge does not create a copy of $F$. However, it is also possible that this new edge fails to create a copy of $H$. The first such example is by Gerbner and Palmer~\cite{GerPal20}, who showed that $C_4$ is $F_2$-Tur\'an-good (where $F_2$ is the \emph{bowtie graph}, i.e., two triangles sharing exactly one vertex).

Gerbner and Hama Karim~\cite{GerHam23} showed that for every bipartite graph $H$ there is a $3$-chromatic graph $F$ such that $H$ is weakly $F$-Tur\'an-good (moreover, $H$ is weakly $F$-Tur\'an-stable). Even more is true: for every bipartite graph $H$, there is a $k_0$ such that $H$ is weakly $C_{2k+1}$-stable if $k\ge k_0$.

Interestingly, while the known examples of graphs $H$ that are not weakly $k$-Tur\'an-good each have chromatic number $k-1$, most constructions of $k$-Tur\'an-good graphs $H$ are also $(k-1)$-chromatic.
 Here we list most of the known exceptions. The above result of Morrison, Nir, Norin, Rza{\.z}ewski, and Wesolek~\cite{MorNirNor22} deals specifically with the case when $k$ is large compared to the chromatic number of $H$. As stated in Subsection~\ref{history}, Gy\H ori, Pach, and Simonovits~\cite{GyoPacSim91} showed that complete $r$-partite graphs are weakly $k$-Tur\'an-good, if $r< k$. This was extended to $k$-Tur\'an-stability by Gerbner and Hama Karim~\cite{GerHam23}. For the specific case $H=C_4$, $k$-Tur\'an-stability was proved earlier in~\cite{HeiHou22}.
The first Tur\'an-stability result (aside from the Erd\H os-Simonovits stability theorem) is due to
Ma and Qiu~\cite{MaQiu18} (Theorem~\ref{maqiustabi}), who showed that if $r<k$, then $K_r$ is $k$-Tur\'an-stable.  

Lidick{\'y} and Murphy~\cite{LidMur20} showed that $C_5$ is $k$-Tur\'an-stable for every $k\ge 4$. Hei, Hou, and Liu~\cite{HeiHouLiu21} proved that paths are weakly $k$-Tur\'an-stable for $k\ge 3$. Gerbner~\cite{Ger22c} showed that paths are in fact $k$-Tur\'an-stable and also $k$-Tur\'an-good, which was a conjecture of Gerbner and Palmer~\cite{GerPal20} with partial results in~\cite{Ger20,MurNir21,QiaXieGe21,HeiHouLiu21}. Moreover, \cite{Ger22c} includes the stronger result that all linear forests are $k$-Tur\'an-stable (a forest is \emph{linear} if its components are paths).
 This implies that the $r$-edge matching $M_r$ is $F$-Tur\'an-good for any graph $F$ with a color-critical edge, an earlier result of Gerbner~\cite{Ger20}. 
Qian, Xie, and Ge~\cite{QiaXieGe21} proved that if an $r$-vertex graph contains $K_{r-1}$, then it is $k$-Tur\'an-good for every $k$. 
Furthermore, if an $r$-vertex graph contains $K_{r-2}$, then it is $r$-Tur\'an-good.
They also proved that the vertex-disjoint union $K_r \cup K_{r'}$ is $k$-Tur\'an-good for all $r\leq r' < k$.
Gerbner~\cite{Ger24c} found that the double-star $S_{1,2}$ is $4$-Tur\'an-good.

Let us continue by presenting some general $(k-1)$-chromatic $k$-Tur\'an-good graphs.

\begin{thm}[Gy\H ori, Pach, and Simonovits~\cite{GyoPacSim91}]\label{gpl} 
Let $r\ge 3$ and let $H$ be a $(k-1)$-partite graph with $m>k-1$ vertices,
containing $\lfloor m/(k-1)\rfloor$ vertex-disjoint copies of $K_{k-1}$. Suppose
further that for any two vertices $u$ and $v$ in the same connected component of $H$, there is a sequence $A_1,\dots,A_s$ of $(k-1)$-cliques in $H$ such that $u\in A_1$, $v\in A_s$, and for all $i<s$, $A_i$ and $A_{i+1}$ share $k-2$ vertices.
Then $H$ is $k$-Tur\'an-good. Moreover, if $n$ is large enough, the Tur\'an graph is the unique $n$-vertex $K_k$-free graph with $\ex(n,H,K_k)$ copies of $H$.
\end{thm}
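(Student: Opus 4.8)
The plan is to run Zykov's symmetrization to reduce to complete multipartite host graphs, and then to solve an optimization over the part sizes, invoking the two hypotheses on $H$ at different stages. First I would take an $n$-vertex $K_k$-free graph $G$ with $\cN(H,G)=\ex(n,H,K_k)$ and apply the $H$-counting version of the symmetrization described after Theorem~\ref{zykov}: whenever there are non-adjacent vertices $u,v$ with $N(u)\neq N(v)$, replace the neighbourhood of $u$ by that of $v$, provided $v$ lies in at least as many copies of $H$ as $u$ does. Each such step keeps the graph $K_k$-free and does not decrease the number of copies of $H$, and (arguing termination exactly as in the clique case) the process ends at an extremal complete multipartite graph $G^{*}$. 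Since $\lfloor m/(k-1)\rfloor\geq 1$, the graph $H$ contains $K_{k-1}$, so a complete multipartite graph with at most $k-2$ parts has no copy of $H$; as $T(n,k-1)$ does contain copies of $H$ once $n$ is large, $G^{*}$ has at least $k-1$ parts, and being $K_k$-free it has exactly $k-1$. Writing $n_1,\dots,n_{k-1}$ for its part sizes, the problem becomes: among complete $(k-1)$-partite graphs on $n$ vertices, show that $\cN(H,\cdot)$ is uniquely maximized by the balanced one.

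To evaluate $\cN(H,G^{*})$ I would use the chain hypothesis, which forces every connected component of $H$ to have a proper $(k-1)$-colouring that is unique up to a permutation of the colours: one $(k-1)$-clique fixes a bijection between its vertices and the colour set, an overlap of $k-2$ vertices with the next clique in a chain forces that clique's colouring, and the chains reach all vertices of the component. Consequently a copy of $H$ in $G^{*}$ is obtained by choosing, independently for each component, a bijection from its colours to the $k-1$ parts and then an injective placement of its colour classes; so $\cN(H,G^{*})$ is a product over components of sums of products of falling factorials of the $n_i$, whose leading term is a positive constant times $\prod_j m_{\lambda^{(j)}}(n_1,\dots,n_{k-1})$, where $\lambda^{(j)}$ is the vector of colour-class sizes of the $j$-th component and $m_{\lambda}$ is the monomial symmetric polynomial. (Automorphisms of $H$ permuting equal-sized colour classes inside a component only change the constant.)

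Now the disjoint-clique hypothesis enters: each of the $\lfloor m/(k-1)\rfloor$ vertex-disjoint copies of $K_{k-1}$ meets every colour class exactly once, so every colour class of $H$ — hence of each component — has size at least $\lfloor m/(k-1)\rfloor$, which forces each $\lambda^{(j)}$ to be (essentially) balanced. For a partition $\lambda$ whose parts differ by at most one, say with $s$ parts equal to $q+1$ and $k-1-s$ equal to $q$, one has the factorization $m_{\lambda}=e_{k-1}^{\,q}\,e_{s}$; by the Schur--Ostrowski criterion this is strictly Schur-concave on the positive orthant when $q\geq 1$, hence on the simplex $\{n_i>0,\ \sum n_i=n\}$ it is uniquely maximized at the balanced point, while it vanishes on the boundary. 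Taking the product over components and then restoring the lower-order terms of the falling-factorial expansion, one concludes that for $n$ large $\cN(H,G^{*})$ is uniquely maximized by $T(n,k-1)$; equivalently one checks directly that moving a vertex from a largest part to a smallest part strictly increases $\cN(H,\cdot)$ as long as $G^{*}\neq T(n,k-1)$. The uniqueness among all $K_k$-free graphs then follows from the standard refinement of the symmetrization argument (a strict increase in any symmetrization step would contradict extremality, and the equality cases are pinned down by the uniqueness just obtained).

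The hardest step is the last one: getting from ``every colour class has size at least $\lfloor m/(k-1)\rfloor$'' to the conclusion that the monomial-symmetric count — and then the exact falling-factorial count — strictly prefers the balanced partition, equivalently that the vertex-shift always helps. The factorization $m_{\lambda}=e_{k-1}^{q}e_{s}$ makes the leading-order claim clean, but it must hold simultaneously for every $\lambda^{(j)}$, and it is precisely here that carefully controlling how unbalanced the colour classes can be — using the $K_{k-1}$-tiling together with the chain condition — is indispensable. A secondary but routine technical point is the passage from the leading-order estimate to an exact statement, with uniqueness, for all sufficiently large $n$, which requires tracking the lower-order terms of the expansion.
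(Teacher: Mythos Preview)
There is a genuine gap at the very first step. Zykov symmetrization need not preserve the $H$-count when $H$ is not complete multipartite, and the graphs covered by Theorem~\ref{gpl} are in general not complete multipartite. The reason the argument works for cliques (and, as the survey notes after Theorem~\ref{zykov}, for complete multipartite $H$) is that any copy of $H$ containing both non-adjacent vertices $u,v$ must place them at twin positions of $H$, so the copy survives when $u$ is made a twin of $v$. For other $H$, copies passing through both $u$ and $v$ can simply be destroyed. Concretely, take $k=3$ and $H=P_4$, which satisfies both hypotheses (bipartite, perfect matching, chain condition trivial). If $G$ is itself the path $u\text{--}p\text{--}q\text{--}v$, then $u$ and $v$ are non-adjacent with $N(u)\neq N(v)$, each lies in exactly one copy of $P_4$, and symmetrizing $u$ to $v$ produces the star $K_{1,3}$ centred at $q$, which contains no $P_4$ at all. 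In your bookkeeping this is the case $a=b=0$, $c=1$, $c'=0$: the inequality $b\ge a$ that the ``at least as many copies'' rule buys you says nothing about $c'-c$. So the claim ``each such step \dots\ does not decrease the number of copies of $H$'' fails, and the reduction to complete $(k-1)$-partite hosts is unjustified.

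There is also a slip in the optimisation step. The disjoint-$K_{k-1}$ hypothesis only guarantees that every global colour class of $H$ has size at least $\lfloor m/(k-1)\rfloor$; since the sum of the class sizes is $m$, they may differ by as much as $m\bmod(k-1)\le k-2$, not by at most one, so the factorisation $m_\lambda=e_{k-1}^{\,q}e_s$ is not available in general for $k\ge 4$. Moreover, the inference ``every colour class of $H$ --- hence of each component --- has size at least $\lfloor m/(k-1)\rfloor$'' is false: a small component need not contain any of the distinguished disjoint $K_{k-1}$'s. Your observation that the chain hypothesis forces a unique proper $(k-1)$-colouring of each component is correct and is indeed the key structural fact, but it must be combined with an argument that does not rely on symmetrizing $H$-counts --- for instance one that exploits the near-tiling by $K_{k-1}$'s directly, in the inductive ``attach a $K_{k-1}$'' spirit of Theorem~\ref{turgood}.
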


In particular, for $k=3$, this implies that bipartite graphs $H$ that have a matching of size $\lfloor (|V(H)|-1)/2\rfloor$ are $3$-Tur\'an-good. This was extended by Gorovoy, Grzesik, and Jaworska~\cite{GorGrzJaw23}, to establish that if a bipartite graph $H$ has a matching of size $r$ and at most $\sqrt{r-1}$ unmatched vertices, then $H$ is $3$-Tur\'an-good. They also showed that $\sqrt{r-1}$ cannot be replaced by any linear function.

Gerbner and Palmer~\cite{GerPal20} coined the terminology of $k$-Tur\'an-good graphs and obtained a theorem of a similar flavor to Theorem~\ref{gpl}.

\begin{thm}[Gerbner and Palmer~\cite{GerPal20}]\label{turgood}
	Fix a $k$-Tur\'an-good graph $H$. Let $H'$ be any graph constructed from $H$ as follows.
	Choose a complete subgraph of $H$ with vertex set $X$, add a vertex-disjoint copy of $K_{k-1}$ to $H$ and join the vertices in $X$ to the vertices of the new copy of $K_{k-1}$ by edges arbitrarily.
	Then $H'$ is $k$-Tur\'an-good.
\end{thm}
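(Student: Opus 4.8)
### Proof proposal for Theorem~\ref{turgood}

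My plan is to reduce the statement about $H'$ directly to the hypothesis that $H$ is $k$-Tur\'an-good, by a counting argument combined with the observation that in any $K_k$-free graph the `new' copy of $K_{k-1}$ attached at $X$ behaves like a block in a complete multipartite graph. First I would fix a large $n$ and let $G$ be an $n$-vertex $K_k$-free graph. I would like to show $\mathcal{N}(H',G) \le \mathcal{N}(H',T(n,k-1))$, with equality only for the Tur\'an graph (or at least for $n$ large enough, which is all that is claimed). The natural approach is to count copies of $H'$ by first choosing the image of the $K_{k-1}$ that was adjoined — call its vertex set $B$ — and then extending: the remaining vertices form a copy of $H$ whose clique $X$ is completely joined to $B$. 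Since $G$ is $K_k$-free, once $B$ is a fixed $(k-1)$-clique, every vertex adjacent to all of $B$ forms, together with $B$, no $K_k$; so the common neighborhood $N(B)$ of $B$ induces a $K_{k-1}$-free graph on at most $n - (k-1)$ vertices. Crucially, the copy of $H$ with its clique $X$ mapped into $N(B)$ must itself avoid $K_k$ — it does, since it sits inside $G$ — so the number of ways to complete is at most the number of copies of $H$ in a $K_k$-free graph of order roughly $n$, hence at most $\mathcal{N}(H,T(n,k-1)) + O(n^{|V(H)|-1})$ by the $k$-Tur\'an-goodness of $H$ (applied with the $o(n^{|V(H)|})$ slack that definition allows, or more carefully using $k$-Tur\'an-stability if available).

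The second step is the optimization / comparison with the Tur\'an graph. After the first step I would have a bound of the shape
\[
\mathcal{N}(H',G) \;\le\; \sum_{B} \Big(\text{number of copies of }H\text{ in }N(B)\text{ with }X\subseteq N(B)\Big),
\]
and I need to show the sum is maximized when $G = T(n,k-1)$. In the Tur\'an graph, a $(k-1)$-clique $B$ picks exactly one vertex from each part, $N(B)$ is empty, so this naive decomposition is the wrong way to count in the extremal graph — instead, in $T(n,k-1)$ the copy of $H'$ is realized with $B$ lying across the parts and $X$ together with the rest of $H$ distributed in the remaining vertices of those same parts. So the cleaner route is: don't decompose via $N(B)$ at all in the main term, but rather use a direct symmetrization or a weighting argument. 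Concretely, I would run Zykov-style symmetrization on $G$ — for two non-adjacent vertices $u,v$, symmetrize $u$ to $v$ if $u$ lies in at most as many copies of $H'$ as $v$ — which does not decrease $\mathcal{N}(H',G)$ and terminates at a complete multipartite graph $G^*$ with at most $k-1$ parts (the $K_k$-freeness caps the number of parts). Then I am reduced to: among complete $(k-1)$-partite graphs, which one maximizes $\mathcal{N}(H',\cdot)$? Here I use that $H'$ is built from $H$ by gluing a $K_{k-1}$ along a clique $X$: in a complete multipartite graph a copy of $H'$ is exactly a copy of $H$ together with a transversal $(k-1)$-set completing $X$'s clique into the pattern, and counting is a product-of-part-sizes expression. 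I would show this is (asymptotically, and then exactly for large $n$) maximized at the balanced partition, invoking the fact that $H$ itself is $k$-Tur\'an-good so the $H$-part of the count is already balanced-optimal, and that appending the $K_{k-1}$-transversal factor $\prod(\text{part sizes})$ only reinforces the preference for balance (a convexity/AM-GM argument on the part sizes, exactly as in the derivation of \eqref{zykov-bound}).

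The third step is to recover uniqueness of the Tur\'an graph for large $n$: once symmetrization has produced a complete $(k-1)$-partite graph and the optimization among those singles out $T(n,k-1)$, I need to argue that any graph that was changed by symmetrization strictly lost ground, or else directly that a non-multipartite $K_k$-free graph has a strictly smaller count. The standard way is a stability-plus-cleaning argument: either cite that $H$ is $k$-Tur\'an-stable (noted in the excerpt for many relevant $H$, and in general for $k$ large by Gerbner–Hama Karim) to conclude $G$ is $o(n^2)$-close to $T(n,k-1)$, then do a local edge-swapping argument to push it exactly onto the Tur\'an graph; or, if one only wants the stated conclusion, simply note the symmetrization argument already gives $\mathcal{N}(H',G)\le\mathcal{N}(H',G^*)\le\mathcal{N}(H',T(n,k-1))$ for $n$ large, which is the definition of $k$-Tur\'an-good.

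The main obstacle, I expect, is the comparison step among complete multipartite graphs: unlike for $K_r$, the graph $H'$ is genuinely unbalanced-looking (it has a pendant clique block), so one must check carefully that gluing the $K_{k-1}$ does not tip the optimum away from the balanced partition — in principle a lopsided partition could gain on the transversal factor what it loses on the $H$-factor. The point that saves it is that the adjoined block is a full $K_{k-1}$, so its transversal necessarily uses every part once and contributes the symmetric product $\prod_i |V_i|$, which is itself balanced-optimal; combined with $H$'s own balanced-optimality this gives a clean product bound. Making this rigorous for all $k$-Tur\'an-good $H$ simultaneously — rather than case by case — is the delicate part, and is presumably why the hypothesis is phrased to require $H$ to already be $k$-Tur\'an-good rather than merely $(k-1)$-partite.
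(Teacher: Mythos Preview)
The survey does not actually prove this theorem --- it is quoted from \cite{GerPal20} without argument --- so there is no in-paper proof to compare against. That said, your proposal has a genuine gap.

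Your main line is Zykov symmetrization, and you assert that symmetrizing $u$ to $v$ (when $u$ lies in no more copies of $H'$ than $v$) ``does not decrease $\mathcal{N}(H',G)$''. This is exactly the step that fails. Symmetrization is only known to be monotone for $\mathcal{N}(H,\cdot)$ when $H$ is a clique or, more generally, a complete multipartite graph --- the survey says precisely this after Theorem~\ref{zykov}. For a general $H'$ the copies containing \emph{both} $u$ and $v$ (in non-adjacent positions of $H'$) can be destroyed when you overwrite $N(u)$, and nothing forces the gain on the $u$-only side to compensate. Indeed, the survey records graphs (the Gy\H{o}ri--Pach--Simonovits tree, the examples in \cite{GrzGyoSal22}) for which no complete multipartite graph is extremal, so symmetrization must strictly decrease the count somewhere for those graphs; your $H'$ is not complete multipartite and you give no structural reason why it should be immune. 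The follow-up ``product bound'' is also not an argument: $\mathcal{N}(H',K_{n_1,\dots,n_{k-1}})$ does not factor as $\mathcal{N}(H,\cdot)$ times $\prod n_i$, because the placement of the clique $X$ relative to the attached transversal couples the two pieces.

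A smaller but telling error sits in the paragraph you abandoned. You write that $X$ is ``completely joined to $B$'' --- the theorem says the $X$-to-$K_{k-1}$ edges are added \emph{arbitrarily}, not completely --- and you claim that the common neighbourhood $N(B)$ of a $(k-1)$-clique is $K_{k-1}$-free. In fact, in \emph{every} $K_k$-free graph $N(B)$ is empty (a common neighbour would complete a $K_k$), not just in $T(n,k-1)$; so ``embed $X$ into $N(B)$'' was never the right picture. The decomposition by the image $B$ of the attached $K_{k-1}$ is nevertheless the correct starting point: one bounds, for each ordered $(k-1)$-clique $B$, the number of extensions to a copy of $H$ in $G$ and then compares the resulting sum to the same decomposition in $T(n,k-1)$, using Zykov's theorem for the number of $(k-1)$-cliques together with the hypothesis that $H$ is $k$-Tur\'an-good. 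Carrying this out carefully --- rather than routing through symmetrization --- is what makes the argument go through.
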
 

Both of the above theorems use $K_{k-1}$ as a building block for constructing $k$-Tur\'an-good graphs and have a property restricting how these blocks ``connect'' to each other. The first theorem prescribes a very strong connection property, but edges can be added only if they do not increase the chromatic number. In the second theorem, the final copy of $K_{k-1}$ is connected to only one of the earlier copies, but arbitrarily.

Gerbner~\cite{Ger20c} proved a generalization of Theorem~\ref{gpl}, where we begin with a $k$-Tur\'an-good graph $H$ that has a unique $(k-1)$-coloring, then add copy of $K_{k-1}$ and new edges in such a way that satisfies the requirement in Theorem~\ref{gpl} on a sequence of $(k-1)$-cliques in $H$.
Gerbner~\cite{Ger22b} obtained further generalizations, including a version in the $k$-Tur\'an-stability regime.

Gerbner~\cite{Ger22d} showed that the double-star $S_{a,b}$ is weakly 3-Tur\'an-stable (implying a weakly 3-Tur\'an-good result from~\cite{GyoWanWoo21} and its extension to other 3-chromatic graphs $F$ with a color-critical edge in~\cite{Ger21e}). Let $H$ be a bipartite graph that contains a complete bipartite graph $K$ such that every vertex of $H$ is adjacent to a vertex of $K$. Gerbner~\cite{Ger22d} showed that $H$ is weakly 3-Tur\'an-stable, thus weakly 3-Tur\'an-good, improving a result from~\cite{GrzGyoSal22}.
 Gerbner~\cite{Ger22e} proved that if a graph $H$ is weakly $k$-Tur\'an-stable and has a unique proper $(k-1)$-coloring, and $H'$ is obtained from $H$ by adding edges (but no vertices) such that $\chi(H')=k-1$, then $H'$ is weakly $F$-Tur\'an-stable. Gerbner also obtained a general result that implies that if $H$ is weakly $3$-Tur\'an-good and we add pendant edges to $H$, then we obtain another weakly $3$-Tur\'an-good graph. 
Gerbner~\cite{Ger24c} showed that the graph we obtain by adding a pendant edge to each of two distinct vertices of a triangle is $4$-Tur\'an-good.

\subsection{Forbidden graphs without a color-critical edge}

By Theorem~\ref{gen-crit-edge}, we know that the extremal graph for $\ex(n,H,F)$ is a Tur\'an graph when $F$ has a color-critical edge. So now we turn our attention to $k$-chromatic graphs $F$ without a color-critical edge. The \textit{decomposition family} $\cD(F)$ of $F$ consists of every bipartite graph that can be obtained by removing the vertices of all but two color classes from a proper $k$-coloring of $F$. Let $\biex(n,F) := \ex(n,\cD(F))$ denote the maximum number of edges in an $n$-vertex graph that does not contain any member of the decomposition family of $F$. 
In particular, if $F$ has a color-critical edge, then $K_2$ is in the decomposition family of $F$ and so $\biex(n,F)=0$.
Gerbner~\cite{Ger22e} showed that if $H$ is weakly $F$-Tur\'an-stable, then 
\[
\ex(n,H,F)\le\cN(H,T)+\biex(n,F)\Theta(n^{|V(H)|-2})
\]
for some $n$-vertex complete $(k-1)$-partite  graph $T$. Moreover, if $H$ has a component $H'$ with an
\emph{almost proper coloring} (i.e.\ a vertex coloring with exactly one edge whose end-vertices are the same color) with at most $k$ colors, 
or $H'$ satisfies $\biex(n,H')=o(\biex(n,F))$, then
\[
\ex(n,H,F)=\cN(H,T)+\biex(n,F)\Theta(n^{|V(H)|-2})
\]
for some $n$-vertex complete $(k-1)$-partite graph $T$.
In particular, cliques satisfy this condition, which returns a theorem of Ma and Qiu~\cite{MaQiu18}.

The above results are paired with theorems that describe the structure of the corresponding extremal graphs. We do not state them precisely but present a brief summary. For each extremal graph $G$, there exists a $(k-1)$-partition of $V(G)$  and a set of vertices $B \subset V(G)$ of order $O(1)$ such that each graph in $\cD(F)$ that is inside a partition class of $G$ must share at least two vertices with $B$. Moreover, the vertices of $B$ are adjacent to $\Omega(n)$ vertices in each partition class of $V(G)$, while vertices outside $B$ are adjacent to $o(n)$ vertices in their partition class and all but $o(n)$ vertices in each other partition class. Subsection~\ref{stabmeth} includes a more detailed description of these extremal graphs.

If $F$ has a color-critical edge, then by Theorem~\ref{gen-crit-edge}, the extremal graph for $\ex(n,K_r,F)$ is the Tur\'an graph $T(n,\chi(F)-1)$ (assuming of course $r<\chi(F)$). For non-bipartite graphs $F$ without a color-critical edge, a theorem of Gerbner~\cite{Ger22f} identified a graph $H$ where we can describe a member of $\Ex(n, H,F)$ (for sufficiently large $n$) that is not a Tur\'an graph. 
 Suppose $F$ is a graph with chromatic number $k>2$. Define $\sigma(F)$ as the minimum number of vertices needed to delete from $F$ to obtain a graph with a smaller chromatic number. Let $H$ be the complete $(k-1)$-partite graph with parts of size $a$, for some large enough $a$. Then an extremal graph 
 for $\ex(n,H,F)$ is the join $K_{\sigma(F)-1}+T(n-\sigma(F)+1,k-1)$. This suggests that instead of Tur\'an-goodness, it may be more important to study which graphs satisfy 
 \[
 \ex(n,H,F)=\cN(H,K_{\sigma(F)-1}+T(n-\sigma(F)+1,k-1)),
 \]
 for sufficiently large $n$. 

Let us conclude with a summary of results where the extremal graph is known but is not a complete multipartite graph. The first such results (besides those for the ordinary Tur\'an function) are due to Gerbner and Patk\'os~\cite{GerPat21}. They showed that $\ex(n,K_r,2K_{k})=\cN(K_r,K_{1}+T(n-1,k-1))$ if $r< k$ and $n$ is large enough. They also showed that $\ex(n,K_r,B_{k,1})=\cN(K_r,T^+(n,k-1))$
where $B_{k,1}$ consist of two copies of $K_k$ that share exactly one vertex and
$T^+(n,k-1)$ is the graph that results from adding an edge to a smallest partition class of $T(n,k-1)$. Finally, they showed for all $s,t$ that $\ex(n,K_{s,t},B_{3,1})=\cN(K_{s,t},T^+)$ for some graph $T^+$ obtained by adding an edge to a complete bipartite graph. 
Note that if $s,t>1$, then the added edge is not in any copy of $K_{s,t}$ and so $K_{s,t}$ is weakly $B_{3,1}$-Tur\'an-good.
All of these results were improved by Gerbner~\cite{Ger22e}.

Let $H$ be a weakly $F$-Tur\'an-stable graph. For $n$ sufficiently large, Gerbner~\cite{Ger22e} determined $\ex(n,H,F)$ when $F$ has $s>1$ components, each with chromatic number $k$ and a color-critical edge, and any number of components with chromatic number less than $k$. Gerbner also determined $\ex(n,H,K_{1,a,\dots,a})$ for $H$ a forest.

Many of the results in this subsection extend theorems on $\ex(n,F)$ to $\ex(n,H,F)$ for certain weakly $F$-Tur\'an-stable graphs $H$. There are also theorems in \cite{Ger22e} that automatically give a generalized Tur\'an result from a corresponding ordinary Tur\'an result.
For example,
let $H$ and $F$ be graphs such that $\chi(H)<\chi(F)=k$ and $H$ is  
$F$-Tur\'an-stable.
If $\biex(n,F)=O(1)$ and $H$ is a forest that is $K_{k}$-Tur\'an-good, then, for $n$ sufficiently large,
$\Ex(n,H,F)\cap \Ex(n,F)\neq\emptyset$, i.e., there is an extremal graph common to both problems. If, in addition, $\chi(F)=3$ and there is a graph in $\Ex(n,F)$ that contains the Tur\'an graph $T(n,2)$, then $\Ex(n,H,F)\cap \Ex(n,F)\neq\emptyset$ holds for every bipartite $H$ that is 3-Tur\'an-stable and 3-Tur\'an-good, not just forests.
Gerbner~\cite{Ger24a} proved the following similar result
 for weak $F$-Tur\'an-stability.
Assume that $\biex(n,F)=O(1)$ and that the $n$-vertex $F$-free graph with the maximum number of edges is the Tur\'an graph with $\biex(n,F)$ edges added to exactly one partition class. Then the $n$-vertex $F$-free graph with the maximum number of copies of $S_r$ is a complete $(\chi(F)-1)$-partite graph with $\biex(n,F)$ edges added to exactly one partition class.

%We say that $H$ is \textit{asymptotically $F$-Tur\'an-good} if $\ex(n,H,F)=(1+o(1))\cN(H,T(n,r))$, and $H$ is \textit{asymptotically weakly $F$-Tur\'an-good} if $\ex(n,H,F)=(1+o(1))\cN(H,T)$ for some complete $r$-partite $n$-vertex graph $T$. Let us list some basic observation concerning this asymptotic version. If $H$ is asymptotically $K_k$-Tur\'an-good, then $H$ is asymptotically $F$-Tur\'an-good for every $F$ with chromatic number $k$, no need to consider graphs with color-critical edges. If $H$ is $F$-Tur\'an-stable, then obviously $H$ is asymptotically $F$-Tur\'an-good. The same implications hold in the weak case. We have mentioned some graph that are not weakly $F$-Tur\'an-good; in each case, they are no even asymptotically weakly $F$-Tur\'an-good.Gerbner~\cite{Ger22d} showed the following.\begin{thm}\label{main3}For given 3-chromatic graphs $F,F'$, there exists a graph $H$ that is asymptotically weakly $F$-Tur\'an-good but not asymptotically weakly $F'$-Tur\'an-good if and only if the odd girth of $F$ is larger than the odd girth of $F'$. Moreover, if the odd girth of $F$ is larger than the odd girth of $F'$, then we can find $H$ that is asymptotically $F$-Tur\'an-good. Furthermore, if $F$ has a color-critical vertex, then we can find $H$ that is $F$-Tur\'an-good.
%Let $F$ be a 3-chromatic graph with a color-critical edge and let $C_{2k+1}$ be the shortest odd cycle in $F$. Then there is an $F$-Tur\'an-good graph $H$ that is not $C_{2\ell+1}$-Tur\'an-good for any $\ell<k$.\end{thm}

\section{The degenerate case}\label{section-degen}

The degenerate case is when $H$ and $F$ satisfy $\ex(n,H,F) = o(n^{|V(H)|})$. By Proposition~\ref{degen-prop} this occurs if and only if $F$ is a subgraph of a blowup of $H$. 
This section is organized according to the graph class to which the counting graph $H$ belongs. Let us emphasize that we include only results for the degenerate problem here but that there are many results in Section~\ref{turangood} that count copies of $H$ in the non-degenerate case.

\subsection{Counting triangles}\label{triangle}

Perhaps the most natural subgraph to count (after edges) is the triangle. In fact, the first arXiv posting of the paper~\cite{AloShi16} by Alon and Shikhelman dealt only with counting triangles and was titled ``Triangles in $H$-free graphs.''

We have already stated several results in Subection~\ref{history} that fit in this subsection. These include results of Bollob\'as and Gy\H ori~\cite{BolGyo08} on $\ex(n,K_3,C_5)$, of Gy\H ori and Li~\cite{GyoLi12} on $\ex(n,K_3,C_{2k+1})$ and several results counting cliques or cycles in general. In Subsection~\ref{diff} there appeared a result of Alon and Shikhelman~\cite{AloShi16} on $\ex(n,K_3,B_t)$.

Here we will list results for $H=K_3$ specifically, but note that later subsections consider classes of graphs $H$ that include triangles  (e.g.\ when counting cliques or cycles).
Kostochka, Mubayi, and Verstra{\"e}te~\cite{KosMubVer15} gave the general lower bound \[
\ex(n,K_3,F)= \Omega(n^{3-\frac{3|V(F)|-9}{e(F)-3}})\] 
for any $F$ with at least 4 edges using a simple random construction. 

\smallskip

The case when $F$ is a cycle is especially popular. Bounds given by Gy\H ori and Li~\cite{GyoLi12} were improved by F{\"u}redi and {\"O}zkahya~\cite{FurOzk17} and by Alon and Shikhelman~\cite{AloShi16} who proved $\ex(n,K_3,C_{2k+1})\le \frac{16(k-1)}{3}\ex(\lceil n/2\rceil, C_{2k})$. Gy\H ori and Li also showed the lower bound $\ex(n,K_3,C_{2k+1})\ge \frac{1}{2} \ex_{\textrm{bip}}(n,\cC^{\textrm{even}}_{\le 2k})$. It is unknown whether these bounds are of the same order of magnitude. Somewhat less attention has been paid to the case when $F$ is an even cycle---perhaps the close relation to the notoriously difficult $\ex(n,C_{2k})$ problem has made it less appealing. F{\"u}redi and {\"O}zkahya~\cite{FurOzk17} obtained the simple upper bound $\ex(n,K_3,C_{2k})\le \frac{2k-3}{3}\ex(n,C_{2k})$. Gishboliner and Shapira~\cite{GisSha18} gave the lower bound $\ex(n,K_3,C_{2k})= \Omega(\ex(n,\cC^{\textrm{even}}_{\le 2k}))$.

When $F=C_4$, Alon and Shikhelman~\cite{AloShi16} proved $\ex(n,K_3,C_4)=(1+o(1))\frac{1}{6}n^{3/2}$. %In fact, they determined the asymptotics for $\ex(n,K_3,K_{2,t})$.
For $F=C_5$, Bollob\'as and Gy\H ori~\cite{BolGyo08} gave a lower bound $(1+o(1))\frac{1}{3\sqrt{3}}n^{3/2}$. Their corresponding upper bound was improved by
Alon and Shikhelman~\cite{AloShi16}, Ergemlidze, Gy\H ori, Methuku, and Salia~\cite{ErgGyoMet19b}. A further improvement
is due to Ergemlidze and Methuku~\cite{ErgMet18}. Their proof uses a hypergraph Tur\'an result which has since been improved in~\cite{ConFoxSud20}
and implies the bound $(1+o(1))\frac{1}{2^{9/4}}n^{3/2}$.
Lv, He, and Lu~\cite{LvHeLu24} proved the slightly better bound $(1+o(1))\frac{1}{2\sqrt{6}}n^{3/2}$.

Hou, Yang and Zheng~\cite{HouYanZhe23} determined $\ex(n,K_3, tC_{2k+1})$ for sufficiently large $n$.
Alon and Shikhelman~\cite{AloShi16} gave bounds on $\ex(n,K_3,K_{s,t})$, including the order of magnitude when $t> (s-1)!$, and the asymptotics when $s=2$.
Gao, Wu, and Xue~\cite{GaoWuXue23} studied $\ex(n,K_3,F)$ where $F=F(p_1,\dots,p_k)$ is the \textit{generalized theta graph} consisting of internally vertex-disjoint paths of length $p_i$ between two fixed vertices. This graph generalizes both cycles and books. Depending on the value of each $p_i$, $\ex(n,K_3,F)$ can be quadratic, $O(n^{2-\alpha})$ for some $\alpha>0$ or between them: $n^{2-o(1)}\le \ex(n,K_3,F)\leq o(n^2)$. They characterized which holds for each value of $p_i$.

\smallskip

 A \textit{wheel} $W_k$ is a cycle $C_k$ joined with a universal vertex (i.e.\ adjacent to all other vertices). Let $\cW$ be the family of all wheels (where the cycle can have any length).
 In~\cite{Erd88}, Erd\H os mentions a conjecture of Gallai that $\ex(n,K_3,\cW)=\lfloor n^2/8\rfloor$. Let us describe a construction: take an $n$-vertex complete balanced bipartite graph and embed a matching in the partition class of size $\lceil n/2 \rceil$. 
Zhou~\cite{Zho95} disproved this conjecture by finding a slightly larger construction (with the same asymptotics) and proved an upper bound $\ex(n,K_3,\cW)\le \frac{1}{6}(n^2-n)$. 
Haxell~\cite{Hax95} proved $\ex(n,K_3,\cW)\le (1+o(1))\frac{1}{7}n^2$. 
F{\"u}redi, Goemans, and Kleitman~\cite{FurGoeKle94} found an even larger construction and showed $\ex(n,K_3,\cW)\ge \frac{1}{15}(2n^2+n)$, disproving Gallai's conjecture even in the asymptotic sense. They also improved the upper bound to $\ex(n,K_3,\cW)\le \frac{1}{7.02}n^2+5n$.

\smallskip

Mubayi and Mukherjee~\cite{MubMuk20} studied $3$-chromatic graphs $F$ that can be obtained by adding a universal vertex to a complete bipartite graph, an even cycle, or a path.
In particular, they showed $\ex(n,K_3,K_{1,s,t})=o(n^{3-3/s})$ if $s\le t$ and $\ex(n,K_3,W_{2k})=o(n^{2+1/k})$.
For the graph $H_k$ obtained by joining a universal vertex to a path $P_k$, they showed $\ex(n,K_3,H_k)=\Theta(n^2)$ and determined the aymptotics for $k=4$ and $k=5$. The case $k=4$ was improved to the exact result $\ex(n,K_3,H_4)=\lfloor n^2/8\rfloor$, for $n$ sufficiently large, by Gerbner~\cite{Ger22a}, and Mukherjee~\cite{Muk23} determined $\ex(n,K_3,H_4)$ for all $n$. Cui, Duan, and Yang~\cite{CuiDuaYan20} obtained bounds on the number of triangles when the forbidden graph consists of a path or an even cycle and multiple additional vertices, each adjacent to all the vertices of the original path or cycle.

Gerbner, Methuku, and Vizer~\cite{GerMetViz17} gave bounds on $\ex(n,K_3,F)$ when $F$ is disconnected.
Duan, Wang, and Yang~\cite{DuaWanYan18} determined $\ex(n,K_3,\cL(k))$, where $\cL(k)$ is the family of linear forests with at most $k$ edges (a linear forest is the vertex-disjoint union of paths).

Zhu, Cheng, Gerbner, Gy\H ori, and Hama Karim~\cite{ZhuCheGer22} determined $\ex(n,K_3,F_k)$ exactly, for sufficiently large $n$, where $F_k$ is the friendship graph composed of $k$ triangles that all share a single common vertex.  
Alon and Shikhelman~\cite{AloShi16} determined the graphs $F$ for which $\ex(n,K_3,F)=O(n)$. These are the so-called \textit{extended friendship graphs}, which contain the friendship graph $F_k$ as a subgraph for some $k\ge 0$, and no other cycles. 

One can view $F_k$ as an edge-blowup of the star $S_{k+1}$: replace every edge of $S_{k+1}$ by a triangle. Let $F^\triangle$ denote the graph we obtain if we replace each edge of $F$ with a triangle. Lv, Gy\H ori, He, Salia, Tompkins, Varga, and Zhu~\cite{LvGyoHe22b} found $\ex(n,K_3,F^\triangle)$ for $F$ a $P_4$ or $K_3$, when $n$ is sufficiently large. Liu and Song~\cite{LiuSon23} extended these results by determining $\ex(n,K_3,F^\triangle)$ asymptotically for every tree $F$, and exactly when $F$ is a path or a cycle of length at least $5$. They also obtained corresponding stability results.

\subsection{Counting cliques}\label{cliques}

The previous subsection dealt with the special case when $H$ is a triangle. We are not aware of many other results counting specific cliques beyond the following: Bollob\'as and Gy\H ori~\cite{BolGyo08} proved $\ex(n,K_4,C_5)\le (1+o(1))\frac18n^{3/2}$ in order to obtain an upper bound on $\ex(n,K_3,C_5)$ and Bayer, M\'esz\'aros, R\'onyai, and Szab\'o~\cite{BayMesRon19} showed (improving a lower bound due to Palmer, Tait, Timmons, and Wagner~\cite{PalTaiTim19}) that $\ex(n,K_4,K_{s,t})=\Theta(n^{4-{6}/{s}})$ if $t\ge (s-1)!+1$.

\smallskip

A very simple general bound by Gerbner and Palmer~\cite{GerPal18} is  $\ex(n,K_r,F)\le \ex(n,F)^{r/2}$.
Dubroff, Gunby, Narayanan, and Spiro~\cite{DubGunNar23} obtained the following general result. We say that a graph $F$ is \textit{2-balanced} if for every subgraph $F'$ of $F$, we have  $\frac{e(F')-1}{|V(F')|-2}\le \frac{e(F)-1}{|V(F)|-2}$. They showed that if $F$ is 2-balanced and has at least 2 edges, then $\ex(n,K_r,F)=\Omega(n^{2-\frac{|V(F)|-2}{e(F)-1}})$, for $2\le r<|V(F)|$.

Alon and Shikhelman~\cite{AloShi16} obtained multiple bounds on $\ex(n,K_r,K_{s,t})$ in different ranges of the parameters. Some of these bounds were improved by Ma, Yuan, and Zhang~\cite{MaYuaZha18}, Zhang and Ge~\cite{ZhaGe19}, Bayer, M\'esz\'aros, R\'onyai, and Szab\'o~\cite{BayMesRon19}. Summarizing, we have two regimes. 
If $r-1\le s\le t$, then $\ex(n,K_r,K_{s,t})=O(n^{r-{r(r-1)}/{2s}})$. This bound is sharp (in the order of magnitude) if $t$ is large enough or if $s\ge 2r-2$ and $t\ge (s-1)!+1$ or if $t\ge (s-1)!+1$ and $r=3$ or $r=4$.
On the other hand, if $r-1>s$, then  $\ex(n,K_r,K_{s,t})=O(n^{{(s+1)}/{2}})$. This is sharp (in the order of magnitude) if $s=2$ and $t\ge 2r-3$, by a result of Zhang and Ge~\cite{ZhaGe19} that shows $\ex(n,K_r,K_{2,t})=\Theta(n^{3/2})$.
For smaller $t$, Alon and Shikhelman~\cite{AloShi16} showed $\ex(n,K_r,K_{2,t})=\Omega(n^{4/3})$ by using a hypergraph Tur\'an result of Lazebnik and Verstra\" ete~\cite{LazVer03}. We remark that an improvement due to Haymaker, Tait, and Timmons~\cite{HayTaiTim24} implies $\ex(n,K_r,K_{2,t})=\Omega(n^{10/7})$ for all $r$, and $\ex(n,K_r,K_{2,t})=\Omega(n^{3/2-o(1)})$ for $r\le 6$. Balogh, Jiang, and Luo~\cite{BalJiaLuo24} gave bounds on $\ex(n,K_r,K_{s_1,\dots, s_r})$.

\smallskip

Gan, Loh, and Sudakov~\cite{GanLohSud15} showed that when $n$ is divisible by $k-1$, the extremal graph for $\ex(n,K_r,S_k)$ is made up of vertex-disjoint copies of $K_{k-1}$. They conjectured that for other values of $n$, $\ex(n,K_r,S_k)$ is given by vertex-disjoint copies of $K_{k-1}$ and a smaller clique, assuming $r>2$. 
This conjecture was proved by Chase~\cite{Cha19}, another proof is by Chao and Dong~\cite{ChaDon22}. A similar development occurs when paths are forbidden:
Luo~\cite{Luo18} proved $\ex(n,K_r,P_k)\le \frac{n}{k-1}\binom{k-1}{r}$, which is sharp when $n$ is divisible by $k-1$, as shown by vertex-disjoint copies of $K_{k-1}$. Chakraborti and Chen~\cite{ChaChe20b} extended this by proving that $\ex(n,K_r,P_k)$ is given by vertex-disjoint copies of $K_{k-1}$ and a smaller clique. The situation may be the same for all trees. Gerbner, Methuku, and Palmer~\cite{GerMetPal18} showed an identical upper bound for any tree if all its subtrees satisfy the Erd\H os-S\'os conjecture (Conjecture~\ref{erdsos}). This gives the exact value of $\ex(n,K_r,T)$ for such trees $T$ if $n$ is divisible by $k-1$. For other values of $n$, there is an additive constant error term.
Another tree where we know the exact value is the double-star $S_{a,b}$. The value of $\ex(n,K_r,S_{a,b})$ is given by vertex-disjoint cliques of order $|V(S_{a,b})|-1$ plus a smaller clique by a result of Gerbner~\cite{Ger21e}.

Luo~\cite{Luo18} showed $\ex(n,K_r,\cC_{\ge k})\le \binom{k-1}{r} \frac{n-1}{k-2}$, where $\cC_{\ge k}$ denotes the family of cycles of length at least $k$. 
This is sharp if $n$ is divisible by $k-2$. 
This implies the above result of Luo on $\ex(n,K_r,P_k)$.
Another proof of Luo's theorems was obtained by Ning and Peng~\cite{NinPen18}. Lv, Gy\H ori, He, Salia, Xiao, and Zhu~\cite{LvGyoHe22c} showed the following strengthening: $\ex(n,K_r,\cC^{\mathrm{odd}}_{\ge k})\le \binom{k-1}{r} \frac{n-1}{k-2}$ for $k\ge 13$ and $\ex(n,K_r,\cC^{\mathrm{even}}_{\ge k})\le \binom{k-1}{r} \frac{n-1}{k-2}$ for every $k$, where $\cC^{\mathrm{odd}}_{\ge k}$ and $\cC^{\mathrm{even}}_{\ge k}$ denote the set of odd, respectively, even, cycles of length at least $k$.

The family of \textit{generalized books} $\cB_{s,m,k}$ is the set of graphs consisting of $k$ cliques of order $m$ each sharing a common fixed set of $s$ vertices.  
Observe that there is only one graph in this family when $m=s+1$, and when $3=m=s+1$, this is the ordinary book graph $B_k$. Let $B_{s,m,k}$ denote the particular generalized book where the pairwise intersection of any two $m$-cliques is exactly an $s$-clique. Then $B_k=B_{2,3,k}$ and $\cB_{s,m,2}=\{B_{s,m,2},B_{s+1,m,2},\dots, B_{m-1,m,2}\}$.
Gowers and Janzer~\cite{GowJan20} showed $\Omega(n^{s-o(1)})\leq \ex(n,K_r,\cB_{s,r,2})\leq o(n^s)$, generalizing the result of Alon and Shikhelman~\cite{AloShi16} on $\ex(n,K_3,B_k)$. Their lower bound is given by a random geometric construction.
Gerbner~\cite{Ger21d} showed $\ex(n,K_r,B_k)=o(n^2)$ and $\ex(n,K_r,F_k)=O(n)$.
Liu and Wang~\cite{LiuWan21} studied $\ex(n,K_r,B_{s,r,2})$. They found its exact value when $s=0$ or $s=1$, and its order of magnitude in most of the other cases. Gerbner and Patk\'os~\cite{GerPat21b} studied $\ex(n,K_r,B_{s,m,2})$ and found its value exactly when $m=0$ or $m=1$ and $n$ is sufficiently large, and obtained bounds in the other cases. In particular, they determined $\ex(n,K_r,2K_m)$ exactly if $n$ is sufficiently large. 
Yuan and Wang~\cite{YuaWan22} improved the threshold on $n$ for $\ex(n,K_r,2K_m)$ and established the exact value of $\ex(n,K_3,2K_3)$. Gerbner~\cite{Ger23b} determined $\ex(n,K_r,tK_m)$ for all integers $r,m,t$ and sufficiently large $n$, improving the above results and those of Gerbner, Methuku, and Vizer~\cite{GerMetViz17}. Yuan and Peng~\cite{YuaPen25} established $\ex(n,K_r,B_{1,m,k})$ when $2\le k<m$, $3\le r<m$, and $n$ is sufficiently large. Hellier and Liu~\cite{HelLiu24} examined $\ex(n,K_r,\{B_{s_1,r,2},\dots,B_{s_i,r,2}\})$ in general (i.e.\ a family of forbidden intersections of $r$-cliques is given). Zhao and Zhang \cite{ZhaZha25} improved some of their results.

\smallskip

Gerbner, Methuku, and Vizer~\cite{GerMetViz17} gave general bounds on $\ex(n,K_r,tF)$ for arbitrary $F$, and obtained stronger bounds for the specific case when $F$ a cycle.
Gao, Wu, and Xue~\cite{GaoWuXue23}  determined $\ex(n,K_r,tF)$ when $F$ where $F=F(p_1,\dots,p_k)$ is the generalized theta graph consisting of internally vertex-disjoint paths of length $p_i$ between two fixed vertices, and $F$ has a color-critical edge, $3\le r\le t+1$, and $n$ is sufficiently large. Wang~\cite{Wan18} determined $\ex(n,K_r,M_k)$ for $M_k$ a matching of size $k$.  Zhu and Chen~\cite{ZhuChe21} (improving results of Zhu, Zhang, and Chen~\cite{ZhuZhaChe21}) determined $\ex(n,K_r,F)$ when $F$ is the vertex-disjoint union of paths of length at least 4 and $n$ is large enough. Chen, Yang, Yuan, and Zhang~\cite{CheYanYua24} determined $\ex(n,K_r,F)$ where $F$ is a linear forest with each component having an even number of vertices. Gao, Li, Lu, Sun, and Yuan \cite{GaoLiLu25} determined $\ex(n,K_r,tP_3)$ for sufficiently large $n$.
Liu and Yin~\cite{LiuYin25} determined $\ex(n,K_r,S_{k'}\cup S_k)$ if $r\ge 4$ and $\ex(n,K_r,3S_k)$ if $r\ge 5$, improving their earlier results in \cite{LiuYin23}. Khormali and Palmer~\cite{KhoPal20} determined $\ex(n,K_r,  tS_k)$ if $3 \leq r \leq k+t-2$, $k$ divides $n-t+1$ and $n$ large enough.

There are several other results for counting cliques that can be reformulated as generalized Tur\'an statements if we forbid multiple graphs.
Kirsch and Radcliffe~\cite{KirRad17b} studied $\ex(n,K_r,\{K_k,S_\ell\})$. Jin and Zhang~\cite{JinZha18} determined $\ex(n,K_r,\{tK_k,K_{k+1}\})$ and showed that the maximum is achieved by the complete $k$-partite graph with one partition class having size $t$ and the other partition classes having size as balanced as possible. 
Ma and Hou~\cite{MaHou23} gave upper and lower bounds on $\ex(n,K_r,\{F,M_\ell\})$ that differ by an additive term $O(1)$, and are sharp in some cases. Some further improvements were made by Zhu and Chen~\cite{ZhuChe23} and Xue and Kang~\cite{XueKan24}. Zhao and Lu~\cite{ZhaLu24} determined $\ex(n,K_r,\{\cC_{\ge k},M_{\ell}\})$ for most values of the parameters. Lu, Kang, and Xue \cite{LuKanXue25} extended this to every $r,\ell,k$ and sufficiently large $n$. Fang, Zhu, and Chen~\cite{FanZhuChe24} determined $\ex(n,K_r,\{K_k,P_\ell\})$ for some values of the parameters.

In 
the Part II of this survey~\cite{GerPal25} we will describe a connection between generalized Tur\'an problems and Berge hypergraph problems and how results on the Tur\'an number of Berge hypergraphs give bounds on $\ex(n,K_r,F)$ and for which graphs those results give the best known bounds. Let us briefly summarize the results most germane to this section. A hypergraph $\cH$ is a \textit{Berge copy} of a graph $H$ if there is a bijection between the edges of $H$ and the hyperedges of $\cH$ such that each edge is contained in its image. In other words, we can obtain a Berge copy of $H$ by arbitrarily expanding each edge of $H$ by additional vertices. 
Berge copies of cycles and paths have been studied for decades. Gerbner and Palmer~\cite{GerPal17} extended the definition to arbitrary graphs $H$. If a graph is $F$-free, then the hypergraph whose hyperedges are the vertex sets of copies of $K_r$ must not contain a Berge copy of $F$. This implies that the $r$-uniform hypergraph Tur\'an number of the family of Berge copies of $F$ is at least $\ex(n,K_r,F)$. An upper bound of $\ex(n,K_r,F)+\ex(n,F)$ was shown by Gerbner and Palmer~\cite{GerPal18}. The hypergraph Tur\'an number of the family of Berge copies of the cycle $C_{k}$ has attracted a great deal of attention (see~\cite{GyoLem12,GyoLem12b,GyoKatLem16}).
In particular, results in~\cite{GyoLem12b} imply that $\ex(n,K_r,C_{2k})=O(n^{1+1/k})$ and $\ex(n,K_r,C_{2k+1})=O(n^{1+1/k})$.

\subsection{Counting complete multipartite graphs}\label{multipa}

%As noted in Section~\ref{history}, Gy\H ori, Pach, and Simonovits~\cite{GyoPacSim91} observed that if $H$ is complete $r$-partite and $r<k$, then $H$ is weakly $k$-Tur\'an-good. This was also observed by Alon and Shikhelman~\cite{AloShi16}, and for $H$ a star, by Cutler, Nir, and Radcliffe~\cite{CutNirRad19} and Li and Shi~\cite{LiShi13}. We remark that a straightforward, but very involved, calculation is needed to determine $\ex(n,H,K_k)$ exactly or its extremal graph for $H$ complete multipartite. Such calculations were executed for some special cases in~\cite{GyoPacSim91,CutNirRad19,LiShi13, MaQiu18} and in another setting 
%(inducibility, see Subsection~\ref{inducibility}) in~\cite{BroSid94}. 
%Gerbner and Palmer~\cite{GerPal20} showed that if $k$ is large enough compared to $r$, then $\ex(n,H,K_k)=\cN(H,T(n,k-1))$, i.e., $H$ is $k$-Tur\'an-good.%As complete $r$-partite graphs are weakly $k$-Tur\'an-stable for all $k>r$, multiple results from Section~\ref{turangood} are relevant here. 

We have already seen results in Section~\ref{history} on counting copies of $K_{2,2}=C_4$ and we will state several more in Subsection~\ref{cycles}. An argument due to F\"uredi and West~\cite{FurWes01} is yet another example in which a generalized Tur\'an result is established to prove an ordinary Tur\'an result.
Let $H(s,t)$ denote the graph we obtain if we remove from $K_{s,s}$ the edges of a copy of $K_{t,t}$. In order to prove an upper bound on $\ex(n,H(s,t))$, they first proved $\ex(n,K_{r,r},H(s,t))\le \binom{n}{r}\max\{\binom{s-1}{r},\frac{1}{2}\binom{2s-r-1}{r}\}$. Grzesik, Janzer, and Nagy~\cite{GrzJanNag19} used this method to bound $\ex(n,F)$ for certain bipartite graphs $F$, and so obtained a bound on $\ex(n,K_{r,r},F)$.
Section~\ref{stars} deals with the case of counting copies of a star $S_{r+1}=K_{1,r}$.

\smallskip

When counting complete bipartite graphs, the typical forbidden graph is also complete bipartite.
Alon and Shikhelman~\cite{AloShi16} studied $\ex(n,K_{a,b},K_{s,t})$ in some ranges of the four parameters. Their results were extended or improved by Ma, Yuan, and Zhang~\cite{MaYuaZha18}, by Bayer, M\'esz\'aros, R\'onyai, and Szab\'o~\cite{BayMesRon19}, and by Gerbner, Methuku, and Vizer~\cite{GerMetViz17}, who also studied the more general $\ex(n,K_{a,b},kK_{s,t})$.
Gerbner and Patk\'os~\cite{GerPat21} examined $\ex(n,K_{a,b},K_{s,t})$ systematically and obtained various exact, asymptotic, and structural results. 

Let us summarize the state-of-the-art for $\ex(n,K_{a,b},K_{s,t})$. Without loss of generality, suppose that $a\le b$ and $s\le t$ and that $K_{a,b}$ does not contain $K_{s,t}$.
If $a>s$, then $b<t$ and the order of magnitude is unknown except when $s=1$ where we know the asymptotics as well as exact results for infinitely many $n$. In particular, if $2t-2$ divides $n$, then $\ex(n,K_{a,b},K_{1,t})=\cN(K_{a,b},\frac{n}{2t-2}K_{t-1,t-1})$.
If $a=s$, the exact result is trivial for $s=1$ and the asymptotics are known for $s=2$, but for larger $s$, the order of magnitude is unknown. It is possible that the general upper bound $O(n^s)$ from~\cite{GerMetViz17} is sharp in general here.
If $a<s$ and $b<s$, Alon and Shikhelman showed $\ex(n,K_{a,b},K_{s,t})=O(n^{a+b-ab/s})$. This was shown to be sharp if $t$ is large enough and in several other cases (see ~\cite{AloShi16,MaYuaZha18,BayMesRon19}).
It also remains open whether this bound is always sharp in this case.
Finally, if $a<s$ and $b\ge s$, we know the order of magnitude: $\ex(n,K_{a,b},K_{s,t})=\Theta(n^b)$. The asymptotics are known if $s<b$ or $s=b=t$: $\ex(n,K_{a,b},K_{s,t})=(1+o(1))\binom{s-1}{a}\binom{n}{b}$. In both cases, the lower bound is given by a \emph{complete split graph} $S(s-1,n-s+1)$, that is, a complete bipartite graph $K_{s-1,n-s+1}$ with all possible edges added to the class of size $s-1$.
This graph is actually an extremal graph if $s=a+1$ and $b\ge a+t-2\ge t$ or if $s>a+1$ and $a+b> s+t$. Further corresponding stability results are proved in~\cite{GerPat21}.

\smallskip

Gerbner, Nagy, and Vizer~\cite{GerNagViz20} determined the asymptotics of $\ex(n,K_{2,r},C_{2k})$. Gerbner~\cite{Ger21} studied $\ex(n,K_{2,r},F)$ for every $F$, and found the order of magnitude for every non-bipartite $F$, and when $F$ is a subgraph of $K_{2,k}$ for some $k$. 
Lu, Yuan, and Zhang~\cite{LuYuaZha21} established $\ex(n,K_{a,b},P_k)$,  $\ex(n,K_{a,b},M_k)$, and $\ex(n,K_{a,b},\cC_{\ge k})$. Gy{\H{o}}ri, He, Lv, Salia, Tompkins, Varga, and Zhu~\cite{GyoHeLv22} determined $\ex(n,K_{r,r},C_{2r+2})$.

Aside from the results above and those in other subsections, we are not aware of many results for counting complete multipartite graphs with more than two partition classes. Assume that $a_1\le \cdots \le a_r$ and $s_1\le \cdots \le s_r$. Ma, Yuan, and Zhang~\cite{MaYuaZha18} determined the order of magnitude of $\ex(n,K_{a_1,\dots,a_r},K_{s_1,\dots,s_r})$ when $a_1<s_1$, $a_i\le s_{i-1}$ for all $i\le r$, and $s_r$ is large enough. 
Gerbner and Patk\'os~\cite{GerPat21} determined the order of magnitude of $\ex(n,K_{a_1,\dots,a_r},K_{s_1,\dots,s_r})$ when $a_1<s_1$ and $a_i\ge s_{i-1}$ for all $i\le r$. Liu and Zhang~\cite{LiuZha24} obtained the exact value of $\ex(n,K_{a_1,\dots,a_r},P_k)$, $\ex(n,K_{a_1,\dots,a_r},M_k)$ and $\ex(n,K_{a_1,\dots,a_r},\cC_{\ge k})$. Gerbner~\cite{Ger23d} showed that for complete multipartite graphs $H$, $\ex(n,H,\{K_{k},M_{\ell}\})=\cN(H,G)$ for an $n$-vertex graph $G$ that is either a complete multipartite graph with at most $k-1$ partition classes and one partition class of order at least $n-\ell+1$, or a complete multipartite graph with at most $k-1$ partition classes spanning $2 \ell -1$ vertices, together with $n-2\ell+1$  isolated vertices.

\subsection{Counting cycles}\label{cycles}

%We have already mentioned in Section~\ref{early} the conjecture of Erd\H os on $\ex(n,C_5,C_3)$ and the results concerning it, several results on $\ex_{bip}(n,C_{2k},C_{2l})$, and the result of Solymosi and Wong on $\ex(n,C_{2l},\{C_3,C_4,\dots,C_{2m})$. 
We outlined results for counting triangles $C_3=K_3$ in Subsection~\ref{triangle} and some results for counting $C_4$ can be found in Subsection~\ref{multipa}. 
%In Section~\ref{turangood} we saw that $C_4$ is $k$-Tur\'an-stable and $k$-Tur\'an-good for all $k>2$, $C_5$ is $k$-Tur\'an-stable and $k$-Tur\'an-good for all $k>3$, and $C_{2r}$ is 3-Tur\'an-stable and $3$-Tur\'an-good. Therefore, many results from Section~\ref{turangood} are also pertinent to this subsection.
%In Section~\ref{history} we stated the conjecture of Erd\H os on $\ex(n,C_5,C_3)$ and discussed the history of its results and some generalizations, including the determination of $\ex(n,C_{2r+1},C_{2r-1})$ for sufficiently large $n$~\cite{BekJan23}.
%{\color{violet} Pikhurko, Slia{\v{c}}an and Tyros~\cite{PikSliTyr19} proved a stability result for $\ex(n,C_5,K_3)$. } %moved from stability

Let us now collect general results for $\ex(n,C_r,C_k)$. 
Gishboliner and Shapira~\cite{GisSha18} determined the order of magnitude of $\ex(n,C_r,C_k)$ when $r \neq 3$ (the case $r,k$ both even was proved independently in~\cite{GerGyoMet17}).

\begin{thm}[Gishboliner and Shapira]\label{cycle} For distinct $r,k$ we have 
\begin{displaymath}
\ex(n,C_r,C_k)=
\left\{ \begin{array}{l l}
\Theta_r(n^{r/2}) & \textrm{if\/ $r\ge 5$, $k=4$},\\
\Theta_r(k^{\lceil r/2\rceil}n^{\lfloor r/2\rfloor}) & \textrm{if\/ $k\ge 6$ even and\/ $r \ge 4$ or\/ $r,k$ odd and\/ $5\le r<k$},\\
\Theta_r(n^r) & \textrm{if\/ $r$ even and\/ $k$ odd or $r,k$ odd and\/ $5\le k<r$}.\\
\end{array}
\right.
\end{displaymath}
\end{thm}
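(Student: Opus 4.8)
The three lines of the theorem correspond to the non-degenerate regime and to the two sub-regimes of the degenerate regime (forbidden cycle ``short'' versus ``long'' relative to $C_r$), so the plan is to treat them in turn. \textbf{Non-degenerate case ($\Theta_r(n^r)$).} By Proposition~\ref{degen-prop}, $\ex(n,C_r,C_k)=\Theta(n^r)$ exactly when $C_k$ is not a subgraph of any blowup $C_r[t]$. First I would observe that $C_k$ embeds into some blowup of $C_r$ precisely when $C_r$ admits a closed walk of length $k$: an embedded $C_k$ projects class by class to such a walk, and conversely a closed walk of length $k$ lifts to an embedded $C_k$ in a large enough blowup. A short computation in $\mathbb{Z}_r$ --- writing a closed walk of length $k$ as $a$ forward steps and $b$ backward steps with $a-b\equiv 0\pmod r$ and $a+b=k$ --- shows that $C_r$ has such a walk iff $k$ is even, or $r$ and $k$ are both odd with $k\ge r$. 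Hence the non-degenerate case is exactly ``$k$ odd, and ($r$ even or $r$ odd with $k<r$)'', which is the third line; there the lower bound $\Omega(n^r)$ is witnessed by the balanced blowup $C_r[\lfloor n/r\rfloor]$, and the upper bound $O(n^r)$ is immediate from $|V(C_r)|=r$.

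\textbf{The case $k=4$ ($\Theta_r(n^{r/2})$).} The key fact is that in a $C_4$-free graph any two vertices have at most one common neighbor. For the upper bound, if $r=2m$ then a copy of $C_r$ is determined by its $m$ alternate vertices (each consecutive pair of them determining the vertex between), so there are at most $n^m=O(n^{r/2})$ copies; if $r=2m+1$, first fix an edge of the cycle ($O(n^{3/2})$ choices by Theorem~\ref{kst}), after which the complementary path of even length is determined by $m-1$ of its internal vertices, giving $O(n^{3/2})\cdot O(n^{m-1})=O(n^{r/2})$. For the matching lower bound, take $G$ to be (a padding of) the Erd\H{o}s--R\'{e}nyi polarity graph $ER_q$ with $q=\Theta(\sqrt n)$: it is $C_4$-free and satisfies $A(G)^2=(q-1)I+J+E$ for a $0$--$1$ diagonal matrix $E$ of rank $q+1$, so every eigenvalue of $A(G)$ other than the Perron eigenvalue $\Theta(\sqrt n)$ has absolute value $O(n^{1/4})$. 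Therefore $\operatorname{tr}(A^r)=\sum_i\lambda_i^r=\Theta(n^{r/2})$ once $r\ge 5$, while the closed walks of length $r$ that are not cycles --- being homomorphic images of proper quotients of $C_r$, which are rather sparse because $G$ is $C_4$-free --- number only $o(n^{r/2})$, so $G$ contains $\Theta(n^{r/2})$ copies of $C_r$.

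\textbf{The case $\Theta_r(k^{\lceil r/2\rceil}n^{\lfloor r/2\rfloor})$ --- constructions.} For the lower bound I would use two families. When there is room below $k$ --- precisely when $\lceil k/2\rceil-1\ge \lceil r/2\rceil$ --- take the complete split graph $S=K_a+\overline{K}_{n-a}$ with $a=\lceil k/2\rceil-1$: its circumference is $2a<k$, so $S$ is $C_k$-free, and a copy of $C_r$ using $\lceil r/2\rceil$ vertices of the clique and $\lfloor r/2\rfloor$ of the independent vertices can be chosen in $\Theta_r\!\bigl(\binom{a}{\lceil r/2\rceil}\binom{n-a}{\lfloor r/2\rfloor}\bigr)=\Theta_r(k^{\lceil r/2\rceil}n^{\lfloor r/2\rfloor})$ ways. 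In the remaining cases --- $r$ close to or larger than $k$, where $k^{\lceil r/2\rceil}$ is bounded in terms of $r$ alone and one only needs $\Omega_r(n^{\lfloor r/2\rfloor})$ --- take instead the graph $G^{*}$ obtained from $C_r$ by blowing up a maximum independent set (of size $\lfloor r/2\rfloor$) to sets of size $\Theta(n/r)$ and leaving the other vertices as singletons; a direct inspection of its ``path of $K_{2,t}$-gadgets'' structure shows that the only cycle lengths occurring in $G^{*}$ are $4$ and $r$, so $G^{*}$ is $C_k$-free (as $k\ge 6$ and $k\ne r$), and it contains $\Omega_r(n^{\lfloor r/2\rfloor})$ copies of $C_r$. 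The two regimes together, with overlap, cover ``$k\ge 6$ even, $r\ge 4$'' and ``$r,k$ odd, $5\le r<k$''.

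\textbf{The case $\Theta_r(k^{\lceil r/2\rceil}n^{\lfloor r/2\rfloor})$ --- upper bound, the main obstacle.} The hard part is the matching upper bound: every $n$-vertex $C_k$-free graph has at most $O_r(k^{\lceil r/2\rceil}n^{\lfloor r/2\rfloor})$ copies of $C_r$. The naive spectral estimate $\operatorname{tr}(A^r)\le(2e(G))^{\lceil r/2\rceil}$ together with the Bondy--Simonovits bound $e(G)=O(kn^{1+2/k})$ (Theorem~\ref{BS-even-cycle}) has the right power of $k$ but overshoots by a fixed positive power of $n$, so one must use the finer structure of $C_k$-free graphs: such a graph contains no two internally disjoint paths of a common length $\ell$ between a fixed pair of vertices when $2\ell=k$, and, via the Bondy--Simonovits BFS layering, the number of paths of a prescribed length between two fixed vertices is controlled by a quantity scaling with $k$. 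The plan is then to encode a copy of $C_r$ by $\lfloor r/2\rfloor$ of its vertices together with an $O_r(k)$-bounded amount of ``interpolation data'' reconstructing the intermediate vertices, and to bound the resulting overcounting. Making this interpolation argument precise, uniformly in both $r$ and $k$ --- in particular getting the exponent of $k$ exactly right, and handling odd cycle lengths, where $C_k$-freeness no longer bounds $e(G)$ and one instead exploits near-cancellation of eigenvalues of opposite sign --- is where essentially all the work lies. (The sub-case $r,k$ both even is carried out in~\cite{GerGyoMet17}, and the general statement is due to Gishboliner and Shapira~\cite{GisSha18}.)
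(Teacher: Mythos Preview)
The survey does not prove this theorem; it is quoted from~\cite{GisSha18} in Subsection~\ref{cycles} with only the remark that the case $r,k$ both even was obtained independently in~\cite{GerGyoMet17}. So there is no proof in the paper to compare against, and one can only assess your sketch on its own merits.

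Where your proposal is complete, it is essentially correct. The non-degenerate line via Proposition~\ref{degen-prop} and the closed-walk parity computation in $\mathbb{Z}_r$ is clean. For $k=4$, the upper bound by fixing alternate vertices (plus one edge when $r$ is odd) is the standard argument and is fine; for the lower bound, the spectral count in the polarity graph is the right idea, but the assertion that non-cycle closed walks number $o(n^{r/2})$ is not as immediate as ``quotients of $C_r$ are sparse'' --- you would actually need to bound homomorphism counts from each quotient into a $C_4$-free graph, which takes some care (alternatively, the survey itself records in Subsection~\ref{cycles} that $\ex(n,C_r,K_{2,2})=(\tfrac{1}{2r}+o(1))n^{r/2}$ from~\cite{GerPal18}, which you may simply cite). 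Your two constructions for the middle line --- the split graph $K_{\lceil k/2\rceil-1}+\overline{K}_{n-\lceil k/2\rceil+1}$ when $\lceil k/2\rceil-1\ge\lceil r/2\rceil$, and the partial blowup $G^*$ of $C_r$ otherwise --- are correct and together cover the stated range; the claim that $G^*$ has only the cycle lengths $4$ and $r$ is true, though it deserves an actual argument (e.g.\ by projecting a cycle in $G^*$ to a closed walk in $C_r$ and tracking which vertices may be revisited) rather than ``direct inspection''.

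The genuine gap is the upper bound in the middle line, which you yourself flag as ``where essentially all the work lies''. Your plan --- encode a $C_r$ by $\lfloor r/2\rfloor$ of its vertices plus $O_r(k)$ bits of interpolation data --- has the right shape, but this \emph{is} the content of~\cite{GisSha18}, and your paragraph does not supply the key lemma that makes it go: in a $C_k$-free graph, for suitable $\ell$ depending on $k$, the number of paths of length $\ell$ between two fixed vertices is $O_r(k)$. In particular, when $k$ is odd there is no Bondy--Simonovits edge bound to lean on and the argument is not spectral; one must prove and use such a path-count lemma directly. As written, then, your proposal correctly handles the easier parts and honestly identifies, but does not close, the main gap.
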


Now to stronger results for $\ex(n,C_r,C_k)$ when $r>3$. Strong sharp results for $\ex(n,C_5,C_3)$ were given in Section~\ref{history}. 
Gerbner, Gy\H ori, Methuku, and Vizer~\cite{GerGyoMet17} proved $\ex(n,C_4,C_{2k})=(1+o(1))\frac{(k-1)(k-2)}{4}n^2$ for $k>2$.
Gy{\H{o}}ri, He, Lv, Salia, Tompkins, Varga, and Zhu~\cite{GyoHeLv22} determined $\ex(n,C_4,C_6)$.
Gerbner and Palmer~\cite{GerPal18} showed $\ex(n,C_r,K_{2,t})=\left(\frac{1}{2r}+o(1)\right)(t-1)^{r/2}n^{r/2}$ (excluding the empty case $r=4$, $t=2$), in particular this gives the asymptotics of $\ex(n,C_r,C_4)$. 

Gerbner, Gy\H ori, Methuku, and Vizer~\cite{GerGyoMet17} obtained several results when a family of cycles is forbidden. This includes the asymptotics for the number of four-cycles and the order of magnitude of the number of six-cycles in graphs when any family of cycles is forbidden. Zhu, Gy{\H{o}}ri, He, Lv, Salia, and Xiao~\cite{ZhuGyoHe22} determined the number of copies of $C_4$ and $C_5$ if all cycles of length at least $k$ are forbidden.

Using the Erd\H os Girth Conjecture (Conjecture~\ref{erdgirth})
Solymosi and Wong~\cite{SolWon17} gave a construction establishing $\ex(n, C_{2r}, \cC_{\le 2k}) =\Omega(n^{2r/k})$ and $\ex(n, C_{2r}, \cC_{\le 2k+1}) =\Omega(n^{2r/k})$ for $r>k$. 
 A corresponding upper bound from \cite{GerGyoMet17} when $k+1$ divides $2r$ is as follows. Let $G$ be an $n$-vertex graph with no cycle of length at most $2k$. We count $2r$-cycles by fixing each of the $(k+1)$st edges. By the Bondy-Simonovits Even Cycle Theorem (Theorem~\ref{BS-even-cycle}), $e(G) = O(n^{1+1/k})$. So we can select the at most $\frac{2r}{k+1}$ edges of $G$ in $O(n^{2r/k})$ ways. Now observe that between the end-vertices of a pair of these edges, the forbidden cycle condition ensures that there is at most one path of length $\ell$ for all $\ell \leq k$. Therefore, these edges correspond to a constant number of cycles $C_{2r}$ and so $\ex(n, C_{2r}, \cC_{\le 2k}) =O(n^{2r/k})$.
Since the Girth Conjecture holds for $k = 2,3,5$, this gives the correct order of magnitude for several cases. The constant factor in the lower bound for $\ex(n, C_{6}, \cC_{\le 5})$ and $\ex(n, C_{8}, \cC_{\le 7})$ was specifically
improved in~\cite{YanSunZha23}.

%Let us now consider other forbidden graphs. 
%In Subsection~\ref{turangood} we saw results of Hei, Hou and Liu~\cite{HeiHouLiu21}, that imply that $C_{2k}$ is $F$-Tur\'an-good for every 3-chromatic $F$ with a color-critical edge. Lidick{\'y} and Murphy~\cite{LidMur20} showed that $C_5$ is $k$-Tur\'an-good for every $k\ge 4$.
Gy\H ori, Salia, Tompkins, and Zamora~\cite{GyoSalTom18} established asymptotics for $\ex(n,C_{r},P_k)$, and, for $n$ large enough, the exact value of $\ex(n,C_4,P_k)$ and $\ex(n,C_4,\cC_{\ge k})$.
Gerbner and Palmer~\cite{GerPal18} determined the graphs $F$ that force $\ex(n,C_r,F)=O(n)$ for every $r$, extending the corresponding result for triangles by Alon and Shikhelman~\cite{AloShi16}.

In Section~\ref{history} we saw that sometimes results for $\ex(n,H,F)$ are used as lemmas toward results on $\ex(n,F)$. In these situations, typically $H$ is a very simple graph.
Erd\H os and Simonovits~\cite{ErdSim69} gave an upper bound on the Tur\'an number of graphs built as follows: take an arbitrary bipartite graph $F$ and a complete bipartite $K_{t,t}$, and connect each vertex in one partition class of $F$ to each vertex in one partition class of $K_{t,t}$, then do the same for the other partition class of $F$ and $K_{t,t}$. This gives a bipartite graph $F'_t$. In order to bound $\ex(n,F'_t)$, they gave upper bounds on $\ex(n,P_3,F'_t)$ and $\ex(n,C_4,F'_t)$.

\subsection{Counting stars}\label{stars}

A bound of $\ex(n,S_{s+1},K_{s,t}) \leq (t-1)\binom{n}{s}$ was given as a step in the proof of the K\H ov\'ari-S\'os-Tur\'an theorem (see  Section~\ref{history}). 
%As stars are weakly $k$-Tur\'an-stable for all $k>2$, multiple results from Section~\ref{turangood} apply here. 
Several results on counting multipartite graphs (Subsection~\ref{multipa}) also fit in this subsection. 
%Let us continue with results that deal directly with $\ex(n,S_r,F)$.
% Note that Gerbner in~\cite{Ger24a,Ger24b} dealt with $R_r(n,F)$ and $\ex(n,S_r,F)$ at the same time. We do not restate the results from those papers here.
Gy\H ori, Salia, Tompkins, and Zamora~\cite{GyoSalTom18} determined the exact value of $\ex(n,S_r,P_k)$ and $\ex(n,S_r,\cC_{\geq k})$. Huang and Qian~\cite{HuaQia22} determined $\ex(n,S_r,F)$ when $F$ is a linear forest.
Gerbner~\cite{Ger20} showed $\ex(n,S_r,C_4)=\cN(S_r,F)$ where $F$ is an $n$-vertex friendship graph. Note that if $r\ge 4$, then $\cN(S_r,F)=\cN(S_r,S_n)=\binom{n-1}{r-1}$, since many of the edges of the friendship graph $F$ are not contained in an $S_r$.

Given an $n$-vertex graph $G$ with degrees $d_1\le d_2\le \cdots \le d_n$, we have $\cN(S_r,G)=\sum_{i=1}^n \binom{d_i}{r-1}$. This is closely related to the sum $\sum_{i=1}^n d_i^{r-1}$, which is a well-studied quantity, especially in chemical graph theory; see the survey~\cite{AliGutMil18}. There, the sum $R_\alpha(G)=\sum_{i=1}^n d_i^{\alpha}$ is called the \textit{general zeroth order Randic index}, but the terms \textit{first general Zagreb
index} and \textit{variable first Zagreb index} are also used in the literature. Section 2.4 of the survey~\cite{AliGutMil18} discusses the maximum value of this index among $n$-vertex $F$-free  graphs. This line of research was initiated by Caro and Yuster~\cite{CarYus00}.

A particularly interesting special case is $r=3$, i.e., $\sum_{i=1}^n d_i^{2}$. This sum is dubbed the \emph{first
Zagreb index} and is surveyed in \cite{BorDasFur17}. Observe that $\sum_{i=1}^n d_i^{2} =2\cN(S_3,G)+2e(G)$. Thus, if the number of edges and the number of $3$-vertex stars are maximized by the same graph, then this graph also maximizes the first Zagreb index. 
Gerbner~\cite{Ger24a} studied the connection between the zeroth order Randic index and counting stars, and showed the following, extending this simple observation.
Denote by $R_\alpha(n,F)$ the largest value of the general zeroth order Randic index $R_\alpha(G)$ among $n$-vertex $F$-free graphs $G$. 
Gerbner~\cite{Ger24a} used the following proposition to show that several results for $R_r(n,F)$ are consequences of results on $\ex(n,S_r,F)$, and to obtain new results on $R_r(n,F)$ for various graphs $F$. 

\begin{proposition}[Gerbner~\cite{Ger24a}]\label{csill} 

\begin{enumerate}[(i)]
    \item\label{item-1} $\ex(n,S_r,F)=\left(\frac{1}{(r-1)!}+o(1)\right)R_{r-1}(n,F)$. Moreover, the same graphs are asymptotically extremal for both functions.

    \item\label{item-2} For any graph $G$, there are positive reals $w_i=w_i(r)$ such that \[R_r(G)=\sum_{i=1}^r w_i \cdot \cN(S_i,G).\]

\end{enumerate}

\end{proposition}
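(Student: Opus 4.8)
The plan is to establish (ii) by a direct algebraic identity and then to deduce (i) from it together with two elementary degree-sum estimates.

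For (ii), I would expand the monomial $x^{r}$ in the basis of falling factorials:
$x^{r}=\sum_{j=1}^{r} S(r,j)\,x(x-1)\cdots(x-j+1)$,
where $S(r,j)$ denotes the Stirling number of the second kind (the number of partitions of an $r$-element set into $j$ nonempty blocks), so that $S(r,j)$ is a positive integer for every $1\le j\le r$. Since $x(x-1)\cdots(x-j+1)=j!\binom{x}{j}$, substituting $x=d_v$ and summing over all vertices $v$ of $G$ gives
$R_r(G)=\sum_{v}d_v^{r}=\sum_{j=1}^{r}S(r,j)\,j!\sum_{v}\binom{d_v}{j}$.
Finally, $\sum_{v}\binom{d_v}{j}$ is exactly the number of copies of the star with $j$ leaves in $G$, since each such star is determined by its centre $v$ together with a $j$-element subset of $N(v)$. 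This yields the claimed identity with strictly positive (rational, in fact integer) coefficients $w=S(r,j)\,j!$; the only remaining point is to reconcile the two indexing conventions for stars, which is routine.

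For (i), the upper bound is immediate: $\binom{d}{r-1}\le d^{r-1}/(r-1)!$ for every $d$ (with $\binom{d}{r-1}=0$ when $d<r-1$), hence $\cN(S_r,G)\le \tfrac{1}{(r-1)!}R_{r-1}(G)$ for every graph $G$, and in particular $\ex(n,S_r,F)\le \tfrac{1}{(r-1)!}R_{r-1}(n,F)$. For the matching lower bound I would use the reverse estimate $(r-1)!\binom{d}{r-1}=d(d-1)\cdots(d-r+2)\ge (d-r+2)^{r-1}\ge d^{r-1}-c_r d^{r-2}$, valid for $d\ge r-1$ with a constant $c_r$ depending only on $r$. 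Summing over the vertices and discarding the at most $n$ vertices of degree smaller than $r-1$ (which contribute $O(n)$) gives
$\cN(S_r,G)\ge \tfrac{1}{(r-1)!}\bigl(R_{r-1}(G)-c_r R_{r-2}(G)-O(n)\bigr)$.
The crux is to show that for the extremal graph the correction $R_{r-2}(G)$ is negligible compared with $R_{r-1}(G)$: by H\"older's inequality (equivalently, the power-mean inequality) $R_{r-2}(G)\le R_{r-1}(G)^{(r-2)/(r-1)}\,n^{1/(r-1)}$, so $R_{r-2}(G)=o\bigl(R_{r-1}(G)\bigr)$ as soon as $R_{r-1}(G)=\omega(n)$. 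Since $R_{r-1}(G)\ge \sum_v d_v=2e(G)$, one has $R_{r-1}(n,F)\ge 2\ex(n,F)=\omega(n)$ whenever $\ex(n,F)$ is superlinear (equivalently, whenever $F$ is not a forest), which is the relevant setting. Taking $G$ extremal for $R_{r-1}(n,F)$ then yields $\ex(n,S_r,F)\ge \cN(S_r,G)=\tfrac{1}{(r-1)!}(1-o(1))R_{r-1}(n,F)$, which with the upper bound proves the asymptotic equality.

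For the clause that the same graphs are asymptotically extremal for both functions, I would simply chain the two inequalities: if an $F$-free graph $G$ has $\cN(S_r,G)=(1-o(1))\ex(n,S_r,F)$, then $R_{r-1}(G)\ge (r-1)!\,\cN(S_r,G)=(1-o(1))R_{r-1}(n,F)$, so $G$ is asymptotically extremal for $R_{r-1}$; conversely, if $R_{r-1}(G)=(1-o(1))R_{r-1}(n,F)$ then the lower-bound computation forces $\cN(S_r,G)=(1-o(1))\ex(n,S_r,F)$. The main obstacle throughout is the degenerate regime in which $\ex(n,F)$—and hence $R_{r-1}(n,F)$—is merely linear in $n$ (for instance $F$ a star, where the two extremal leading constants genuinely differ), so the clean statement of (i) needs the mild hypothesis that $\ex(n,F)=\omega(n)$; once past this point, everything reduces to the two elementary estimates $\binom{d}{r-1}\le d^{r-1}/(r-1)!$ and $(r-1)!\binom{d}{r-1}\ge d^{r-1}-c_r d^{r-2}$ together with convexity.
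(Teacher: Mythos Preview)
The survey does not prove this proposition; it is quoted from \cite{Ger24a}. Your argument is correct and is the natural one.

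For (ii), the Stirling-number identity $d^{r}=\sum_{j=1}^{r}S(r,j)\,j!\binom{d}{j}$ summed over vertices is exactly the intended proof. Note, though, that in the survey's convention $K_{1,j}=S_{j+1}$, so the resulting sum runs over $S_2,\dots,S_{r+1}$, not $S_1,\dots,S_r$; this is a genuine notational clash with the source paper (which presumably writes $S_j$ for the star with $j$ leaves) rather than something that can be brushed aside as routine, and it is worth stating explicitly which convention makes the formula literally true.

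For (i), your two pointwise inequalities together with the power-mean bound $R_{r-2}(G)\le n^{1/(r-1)}R_{r-1}(G)^{(r-2)/(r-1)}$ are precisely what is needed, and you correctly flag the only real obstruction: the lower bound requires $R_{r-1}(n,F)=\omega(n)$. Your sufficient condition $\ex(n,F)=\omega(n)$ (equivalently, $F$ not a forest) is stronger than necessary---for $r\ge 3$ the star $S_n$ already witnesses $R_{r-1}(n,F)\ge (n-1)^{r-1}$ whenever $F$ is not itself a star, so the statement holds for all non-star $F$ and, as you observe, genuinely fails when $F$ is a star. The transfer of asymptotic extremality in both directions is argued correctly.
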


In Part II of the survey we will further explore the connection between the Randic index and the generalized Tur\'an problem for stars. But let us now highlight several instances where $R_{r-1}(n,F)$ gives the best-known bounds on $\ex(n,S_r,F)$.

F\"uredi and K\"undgen~\cite{FurKun06} described how the order of magnitude of $R_r(n,F)$, and so that of $\ex(n,S_r,F)$, depends on the order of magnitude of $\ex(n,F)$. 
More precisely, assume that $\ex(n,F)=\Theta(n^{2-\beta})$. If $r\beta<1$, then $R_r(n,F)=\Theta(n^{r-r\beta})$, if $r\beta>1$, then $R_r(n,F)=\Theta(n^{r-1})$, while if $r\beta=1$, then $\Omega(n^{k-1})\le R_r(n,F)\le O(n^{k-1}\log n)$. They conjectured that the $\log n$ factor can be removed from the upper bound. Gao, Liu, Ma, and Pikhurko \cite{GaoLiuMa24} proved this for several classes of well-studied bipartite graphs.

Nikiforov~\cite{Nik09} showed $R_r(n,C_{2k})=(k-1+o(1))n^r$. 
Lan, Liu, Qin, and Shi~\cite{LanLiuQin19} gave results on $R_r(n,F)$ for certain forests $F$. They also considered the \textit{broom} $B(k,\ell)$, which is obtained
from $P_k$ by adding $\ell$ new leaves connected to a penultimate vertex of the
path. They determined $R_r(n,B(k,\ell))$ for $r \geq 2$, $5\le k\le 7$, and $n$ sufficiently large. Extensions are given by Wang and Yin~\cite{WanYin22} for $k=8$ and by Gerbner~\cite{Ger24b} for all $k$ who also determined $\ex(n,S_r,B(k,\ell))$ for $r\ge 2$ and sufficiently large $n$.
Brooks and Linz~\cite{BroLin23} obtained several exact and asymptotic results on $R_r(n,F)$ in connection to spectral Tur\'an problems, see Subsection~\ref{spectral}.

\subsection{Counting other forests}\label{trees}

%We have mentioned in Section~\ref{early} a couple results where the number of paths with 3 or 4 vertices was bounded in order to obtain an ordinary Tur\'an result. In
%The previous subsection dealt with stars. 
Let us continue with paths. 
%As paths (and several other forests, including linear forests) are $k$-Tur\'an-stable and $k$-Tur\'an-good for all $k\geq 3$, multiple results from Section~\ref{turangood} are pertinent in this subsection. 
%Results counting paths appear in Subsection~\ref{turangood}. In particular, $P_k$ is $F$-Tur\'an-good for any 3-chromatic graph $F$ with a color-critical edge, and when $k\le 6$, $P_k$ is $F$-Tur\'an-good for any graph $F$ with a color-critical edge by results of Hei, Hou and Liu~\cite{HeiHouLiu21}.
Gy\H ori, Salia, Tompkins, and Zamora~\cite{GyoSalTom18} determined the asymptotics of $\ex(n,P_r,P_k)$. They also gave the exact value of $\ex(n,P_3,P_k)$, $\ex(n,P_4,P_k)$, $\ex(n,P_5,P_k)$ and $\ex(n,P_6,P_7)$ for $n$ large enough.
Gishboliner and Shapira~\cite{GisSha18} found the order of magnitude of $\ex(n,P_r,C_k)$, while Gerbner, Gy\H ori, Methuku, and Vizer~\cite{GerGyoMet17} determined its asymptotics when $k$ is odd or $r<2k$ is odd.
Gerbner and Palmer~\cite{GerPal18} showed $\ex(n,P_r,K_{2,t})=\left(1+o(1)\right)\frac{1}{2}(t-1)^{(r-1)/2}n^{(r+1)/2}$.
%Gerbner and Palmer~\cite{GerPal20} gave asymptotics of $\ex(n,P_r,F)$ when $F$ is non-bipartite.
Erd\H os and Simonovits~\cite{ErdSim69} proved bounds on $\ex(n,P_3,F'_t)$ in order to bound $\ex(n,F'_t)$ (see Section~\ref{cycles} for details and the definition of $F_t'$). 
In~\cite{ErdSim82}, while bounding $\ex(n,\{C_4,C_5\})$, an upper bound on $\ex(n,P_4,\{C_4,C_5\})$ is established (and also on the number of three-edge walks in $\{C_4,C_5\}$-free graphs).

Gerbner~\cite{Ger21} studied $\ex(n,T,K_{2,t})$, when $T$ is a tree and determined the order of magnitude for every tree and characterized which trees satisfy $\ex(n,T,K_{2,t})=(1+o(1))\cN(T,F(n,t))$, where $F(n,t)$ is the F\"uredi graph (see Subsection~\ref{algebra}).
Gerbner~\cite{Ger20} showed that if $F$ is not a forest, then $\ex(n,M_r,F)=(1+o(1))\frac{1}{r!}\ex(n,F)^r$. Gerbner~\cite{Ger24b}  extended this result to trees $F$ 
and characterized the forests that also attain this value. 
Both $\ex(n,M_2,S_4)$ and $\ex(n,M_2,P_4)$ are determined in~\cite{Ger20}, and in~\cite{Ger24d}, these results were extended to $\ex(n,M_2,S_k)$ and $\ex(n,M_2,P_k)$, for sufficiently large $n$. 
%Because double-stars are weakly 3-Tur\'an-stable, multiple results from Section~\ref{turangood} apply here. Moreover, 
Gerbner~\cite{Ger21e} gave bounds on $\ex(n,S_{a,b},S_{c,d})$.
 Cambie, Verclos, and Kang~\cite{CamVerKan19} considered the case when stars $S_k$ are forbidden. They observed that $\ex(n,S_r,S_k)$ is given by any $(k-2)$-regular graph, and if $T$ is a tree with maximum degree at most $k-1$ and diameter $d$, then $\ex(n,T,S_k)$ is given by any $(k-2)$-regular graph with girth at least $d+1$.

\subsection{Counting other graphs}\label{other}

We conclude Section~\ref{section-degen} with results that count graphs $H$ that do not otherwise fit in the previous subsections.
Wang~\cite{Wan18} determined $\ex(n,H,M_k)$ for $H=S(s,t)$, the complete split graph, that is, a complete bipartite graph $K_{s,t}$ with all edges added to the partition class of size $s$. 
Gerbner, Methuku, and Vizer~\cite{GerMetViz17} initiated the study of counting disconnected graphs $H$ and determined the asymptotics of $\ex(n,r K_3,tK_3)$. Gy\H ori, Salia, Tompkins, and Zamora~\cite{GyoSalTom18} gave bounds on $\ex(n,M_r\cup K_3,P_{k})$, $\ex(n,M_r\cup 2K_3,P_{k})$, and $\ex(n,M_r\cup K_4,P_{k})$ as a lemma toward bounding $\ex(n,P_r,P_k)$.
Among other results, Gerbner~\cite{Ger20} studied $\ex(n,H,F)$ for each of the 100 pairs of graphs $H,F$ on at most four vertices without isolated vertices.

\smallskip

Let us continue with results where $H$ is more general. 
 Alon and Pudl{\'a}k~\cite{AloPud01} showed that projective norm graphs provide a lower bound for a certain problem in Ramsey theory. 
 In their proof they found the order of magnitude of the number of copies of certain graphs $H$ in these norm graphs. As the norm graphs are $K_{t,t!+1}$-free, this gives a lower bound on $\ex(n,H,K_{t,t!+1})$.
Recall that a graph is \emph{$3$-degenerate} if any non-empty subgraph of it has a vertex of degree at most $3$.
Bayer, M\'esz\'aros, R\'onyai, and Szab\'o~\cite{BayMesRon19} obtained a lower bound $\Omega(n^{|V(H)|-{e(H)}/{s}}) \leq \ex(n,H,K_{s,t})$ for any $3$-degenerate graph $H$ if $t>(s-1)!$ and $s\ge 4$. As we will see in Subsection~\ref{algebra}, Ma, Yuan, and Zhang~\cite{MaYuaZha18} proved that for \emph{any} graph $H$, if $t$ is large enough (depending on $e(H)$ and $s$), then $\ex(n,H,K_{s,t}) = \Omega(n^{|V(H)|-{e(H)}/{s}})$.

% Cambie, Verclos and Kang~\cite{CamVerKan19} forbade stars $S_r$ and obtained some results for counting complete bipartite graphs and cycles. 
Kirsch and Nir~\cite{KirNir23} gave the asymptotics of $\ex(n,H,S_k)$ if $H$ has a universal vertex.
Kirsch \cite{Kir25} established the asymptotics of $\ex(n,H,\{S_r,K_k\})$ if $H$ has a universal vertex. 

Gerbner~\cite{Ger24d} determined $\ex(n,H,M_{k})$ for graphs $H$ with a vertex cover of size at most $k-1$ (and $n$ sufficiently large). 
Moreover, Gerbner determined $\ex(n,H,M_{k})$ if $H$ has minimum degree at least $k$, or if exactly one vertex has degree $k-1$ and the others have degree at least $k$.

Gerbner~\cite{Ger20} showed that if $F$ is obtained from $K_k$ by adding a pendant edge,  
then $\ex(n,H,F)=\ex(n,H,K_k)$ for every connected graph $H\neq K_k$, provided that $n$ is large enough. It seems likely that further such results can be obtained where some small perturbation of $F$ does not change $\ex(n,H,F)$ too much.

Gir{\~a}o, Hunter, and Wigderson~\cite{GirHunWig24} showed that for any triangle-free graph $H$, there is a constant $\alpha_H$ such that $\ex(n,H,H[k])<n^{|V(H)|-\alpha_H/k}$ for the blowup $H[k]$.

\smallskip

Finally, we state results where $F$ is fixed and $H$ is more general. Typically, in this setting, the goal is simply to determine the order of magnitude of $\ex(n,H,F)$.
Alon and Shikhelman~\cite{AloShi16} determined the order of magnitude of $\ex(n,H,F)$ when both $H$ and $F$ are trees, and studied the case $H$ bipartite and $F$ a tree. 
Gy\H ori, Salia, Tompkins, and Zamora~\cite{GyoSalTom18} gave an upper bound on the order of magnitude of $\ex(n,H,F)$ for every graph $H$ and tree $F$.
Extending these results, Letzter~\cite{Let18} established the correct order of magnitude of $\ex(n,H,F)$ for every graph $H$ and tree $F$. 

Let us describe Letzter's construction, which is based on the same partial blowup idea discussed after Proposition~\ref{gissha}. Given a graph $H$, a set $U\subset V(H)$ and an integer $t$, the \textit{$(U,t)$-blowup} of $H$ is the graph $H'$ we obtain by replacing each vertex $v\in V(H)\setminus U$ by $t$ vertices $v_1,\dots,v_t$. More precisely, vertices $u,u'\in U$ form an edge in $H'$ if and only if they form an edge in $H$, and for vertices $u\in U, v\not\in U$ and $i\le t$, we have that $uv_i$ is an edge of $H'$ if and only if $uv$ is an edge of $H$, and for vertices $v,v'\not\in U$ and $i,j\le t$, we have that that $v_iv'_j$ is an edge of $H'$ if and only if $vv'$ is an edge of $H$. Given graphs $F$ and ($F$-free) $H$, let $r(H,F)$ denote the maximum number of components in $H\setminus U$, over sets $U\subset V(H)$ such that the $(U,|V(F)|)$-blowup of $H$ is $F$-free. Clearly, $\ex(n,H,F) = \Omega(n^{r(H,F)})$. Indeed, adding further identical vertices does not help to create a copy of $F$, and so the $(U,n/|V(H)|)$-blowup of $H$ is also $F$-free and has at most $n$ vertices. 
Observe that each component in $H\setminus U$ is duplicated at least $n/|V(H)|$ times in the $(U,n/|V(H)|)$-blowup.

%Alon and Shikhelman~\cite{AloShi16} showed that this lower bound is sharp in the case both $H$ and $F$ are trees, and this was extended by Letzter~\cite{Let18} to arbitrary $H$.

%A simple observation is that $\ex(n,T)=\Theta(n)$ for every tree $T$. Letzter~\cite{Let18} showed, proving a conjecture of Alon and Shikhelman~\cite{AloShi16} that for every graph $H$ and every tree $T$, there is an integer $r=r(T,H)$ such that $\ex(n,H,T)=\Theta(n^r)$. Moreover, she determined $r(T,H)$ for every $T$ and $H$.

 Gerbner~\cite{Ger23b,Ger23d} determined the order of magnitude of $\ex(n,H,tK_k)$ for all $H$, $t$, and $k$ and $\ex(n,H,\{F,M_{k}\})$ for all $H$, $F$, and $k$. In both of these arguments, the construction that gives the lower bound is also a `partial' blowup of $H$, where we blow up the largest number of vertices possible while avoiding the forbidden subgraphs.

\section{Methods}\label{meth}

In this section, we give an overview of several methods that have been applied to generalized Tur\'an problems. Most of these techniques are familiar in extremal graph theory. In each subsection, we give a simple representative application of the method discussed.

\subsection{Regularity lemma}\label{regularity}

Szemer\'edi's Regularity Lemma~\cite{Sze76} revolutionized how we view the structure of graphs in general.
 Informally, the lemma states that \emph{every} large graph has a special structure; it admits a partition into a fixed number of parts such that almost every pair of parts is regular in some sense. 
See~\cite{KomSim96,KomShoSim00} for not very recent surveys on the regularity lemma and its applications. An especially nice treatment of the regularity lemma, its proof, and associated lemmas appear in lecture notes by Yufei Zhao~\cite{zhao-lecture-notes}.

To state the lemma formally, we need a few definitions.
Let $X,Y\subset V(G)$ be disjoint sets of vertices of a graph $G$.  Let $d(X,Y)=\frac{e(X,Y)}{|X||Y|}$ be the \textit{density} between the pair $X,Y$ where $e(X,Y)$ counts the number of edges between $X$ and $Y$.
We say that a pair $X,Y$ of disjoint sets of vertices is \textit{$\varepsilon$-regular} if for any $X'\subset X$ and $Y'\subset Y$ with $|X'|\ge \varepsilon |X|$ and 
$|Y'|\ge \varepsilon |Y|$, we have $|d(X,Y)-d(X',Y')|\le \varepsilon$. Informally, aside from negligible small subsets, every pair of subsets has roughly the same density. This property is satisfied, with high probability, by random graphs, for example.

\begin{definition}
A vertex-partition of a graph $G$ into parts $V_0,V_1,\dots,V_r$ is \emph{$\varepsilon$-regular}, if
\begin{itemize}
\item $|V_0|\le \varepsilon|V(G)|$,
\item $|V_1|=|V_2|=\dots =|V_r|$,
\item all but at most $\varepsilon r^2$ of the pairs $V_i,V_j$ with $1\le i<j\le r$ are $\varepsilon$-regular.

\end{itemize}
\end{definition}

\begin{thm}[Szemer\'edi's Regularity Lemma] For  $\varepsilon>0$ and $m$, there exist $n_0$ and $M=M(\varepsilon,m)$ such that every graph on $n\ge n_0$ vertices admits an $\varepsilon$-regular partition $V_0,V_1,\dots, V_r$ with $m\le r \le M$.
\end{thm}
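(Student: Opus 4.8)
The plan is to prove the lemma by the standard energy-increment (index-increment) argument; see the treatment in Zhao's lecture notes~\cite{zhao-lecture-notes}. For a partition $\mathcal{P} = \{W_1, \dots, W_k\}$ of $V(G)$ define its \emph{energy}
\[
q(\mathcal{P}) = \sum_{1 \le i, j \le k} \frac{|W_i|\,|W_j|}{n^2}\, d(W_i, W_j)^2 ,
\]
which always satisfies $0 \le q(\mathcal{P}) \le 1$, being a weighted average of squared densities. The first ingredient is a monotonicity fact: if $\mathcal{P}'$ refines $\mathcal{P}$, then $q(\mathcal{P}') \ge q(\mathcal{P})$. This follows from the Cauchy--Schwarz inequality applied inside each pair $(W_i, W_j)$, since $d(W_i, W_j)$ is the size-weighted average of the sub-block densities $d(W_i', W_j')$, and the average of squares dominates the square of the average.

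The second ingredient is the \emph{energy-boost lemma}: if a pair $(W_i, W_j)$ fails to be $\varepsilon$-regular, witnessed by $W_i' \subseteq W_i$ and $W_j' \subseteq W_j$ with $|W_i'| \ge \varepsilon |W_i|$, $|W_j'| \ge \varepsilon |W_j|$ and $|d(W_i', W_j') - d(W_i, W_j)| > \varepsilon$, then refining $W_i$ to $\{W_i', W_i \setminus W_i'\}$ and $W_j$ to $\{W_j', W_j \setminus W_j'\}$ increases the contribution of that pair to the energy by more than $\varepsilon^4 \frac{|W_i|\,|W_j|}{n^2}$. This is the defect form of Cauchy--Schwarz: the spread of the four sub-densities about their common mean is at least $\varepsilon^2 \cdot \varepsilon^2$ because of the witnessing deviation on a block of relative size $\varepsilon^2$. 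Combining the two ingredients: if a partition into $k$ parts has more than $\varepsilon k^2$ irregular pairs, then refining simultaneously along all witnessing subsets produces a common refinement into at most $k\,2^k$ parts whose energy exceeds $q(\mathcal{P})$ by at least $\varepsilon^5$ (up to an absolute constant).

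Now iterate. Start from an arbitrary equipartition $\mathcal{P}_0$ into $m$ parts together with an exceptional set $V_0$ of fewer than $m$ vertices. Given $\mathcal{P}_t$: if it is $\varepsilon$-regular, stop; otherwise apply the previous paragraph to get $\mathcal{P}_{t+1}$ with $q(\mathcal{P}_{t+1}) \ge q(\mathcal{P}_t) + \varepsilon^5$. Since $q \le 1$ throughout, this happens at most $\varepsilon^{-5}$ times, so the process terminates. The number of parts grows from $m$ to at most an $\varepsilon^{-5}$-fold iterated exponential in $m$, giving the tower-type bound $M = M(\varepsilon, m)$, and $n_0$ is chosen large enough that every part stays nonempty along the way. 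To restore the exact format of the definition, at each step also chop every part into blocks of one common size, dumping the at most (number of parts) leftover vertices into $V_0$; chopping is a refinement, so it only increases the energy and cannot harm regularity, and since the number of parts is bounded by $M$, the set $V_0$ stays below $\varepsilon n$ once $n \ge n_0$.

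The main obstacle is not the energy-increment core---a few lines of Cauchy--Schwarz---but the bookkeeping required to keep the partition in the precise shape the definition demands: maintaining equal part sizes and a small exceptional set $V_0$ across all iterations, and checking that forcing equitability (uniform chopping plus dumping remainders) neither destroys regularity nor inflates $V_0$ past $\varepsilon n$. This is handled by running the whole argument with a slightly smaller parameter than $\varepsilon$ and absorbing the lower-order losses, but it is where essentially all of the care goes.
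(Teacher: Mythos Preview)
Your sketch is the standard energy-increment proof and is essentially correct. Note, however, that the paper does not prove Szemer\'edi's Regularity Lemma at all: it is quoted as a black-box tool, with pointers to the surveys~\cite{KomSim96,KomShoSim00} and to Zhao's lecture notes~\cite{zhao-lecture-notes} for a proof. So there is no ``paper's own proof'' to compare against; your outline is exactly the argument one finds in the cited sources.
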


This statement is often interpreted as ``every large graph is random-like.''
Note that the above theorem is only meaningful for dense graphs. 
Indeed, if $d(X,Y)\le \varepsilon^3$, then $X'\subset X$, $Y'\subset Y$, $|X'|\ge \varepsilon |X|$ and 
$|Y'|\ge \varepsilon |Y|$ imply $d(X',Y')\le \varepsilon$, thus $X,Y$ is an $\varepsilon$-regular pair. If $G$ has $n$ vertices and $\varepsilon^4n^2$ edges, consider a partition satisfying the first two properties. Then a pair $V_i,V_j$ with $i,j\ge 1$ that is not $\varepsilon$-regular has at least $\varepsilon^3(n-\varepsilon n)^2/r^2$ edges. As an edge counts only for one such pair, there can be at most $\varepsilon r^2$ pairs which are not $\varepsilon$-regular, thus the third property is automatically satisfied.

Given $\varepsilon>0$ and $\delta>0$, a graph $G$ and its $\varepsilon$-regular partition $V(G)=V_0\cup V_1\cup\cdots\cup V_r$, its \textit{$(\varepsilon,\delta)$-reduced graph} or \textit{cluster graph} has vertex set $v_1,\dots,v_r$ and $v_iv_j$ is an edge if and only if $V_i,V_j$ is an $\varepsilon$-regular pair with density at least $\delta$.

Now we state three lemmas that leverage the existence of an $\varepsilon$-regular partition. The last two are evidently relevant to the subgraph counting problem. We use the statements from~\cite{zhao-lecture-notes}.

\begin{lemma}[Embedding Lemma]
Let $H$ be an $r$-partite graph with maximum degree $\Delta$. Let $G$ be a graph with vertex partition $V_1,V_2,\dots, V_r$ such that $|V_i| \geq \frac{1}{\varepsilon}|V(H)|$. If every pair $V_i,V_j$ is $\varepsilon$-regular and has density $d(V_i,V_j) \geq 2\varepsilon^{1/\Delta}$, then $G$ contains a copy of $H$.
\end{lemma}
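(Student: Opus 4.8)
The plan is the standard greedy embedding. I would embed the vertices of $H$ one at a time into $G$, keeping, for each not-yet-embedded vertex, a \emph{candidate set}: the vertices of $G$ still available to host it consistently with the partial embedding built so far. The role of the density hypothesis $d(V_i,V_j)\ge 2\varepsilon^{1/\Delta}$ is precisely to guarantee that these candidate sets never shrink below $\varepsilon|V_i|$, the size at which $\varepsilon$-regularity stops giving information.

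In detail: set $h=|V(H)|$, let $d:=2\varepsilon^{1/\Delta}$ be a uniform lower bound on all the densities in question (the only feature of the densities I use), fix a proper $r$-coloring of $H$, and enumerate $V(H)=\{u_1,\dots,u_h\}$ with $u_k$ in color class $p(k)\in\{1,\dots,r\}$. I build an embedding $\phi$ with $\phi(u_k)\in V_{p(k)}$ in $h$ rounds. After $u_1,\dots,u_t$ have been placed, each remaining $u_j$ carries the set $C_j\subseteq V_{p(j)}$ of vertices adjacent in $G$ to $\phi(u_i)$ for every already-embedded neighbor $u_i$ of $u_j$, with $C_j=V_{p(j)}$ at the start. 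The invariant I would maintain is $|C_j|\ge(d-\varepsilon)^{e_j}|V_{p(j)}|$, where $e_j$ counts the already-embedded neighbors of $u_j$; since $e_j\le\Delta$ and $d-\varepsilon\ge\varepsilon^{1/\Delta}$, this keeps $|C_j|\ge(d-\varepsilon)^{\Delta}|V_{p(j)}|\ge\varepsilon|V_{p(j)}|$ at all times, so $\varepsilon$-regularity remains applicable to each $C_j$.

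In round $t+1$ I need to choose $\phi(u_{t+1})=w\in C_{t+1}$ that (i) is not one of the $t$ previously used vertices, and (ii) satisfies $|N(w)\cap C_j|\ge(d-\varepsilon)|C_j|$ for every not-yet-embedded neighbor $u_j$ of $u_{t+1}$; requirement (ii) is exactly what makes the invariant survive the update $C_j\mapsto C_j\cap N(w)$. For a fixed such $j$, the set $B_j$ of $w\in V_{p(t+1)}$ violating (ii) has $|B_j|<\varepsilon|V_{p(t+1)}|$: otherwise $B_j$ and $C_j$ are admissible test sets witnessing $d(B_j,C_j)<d-\varepsilon\le d(V_{p(t+1)},V_{p(j)})-\varepsilon$, contradicting $\varepsilon$-regularity of the pair $(V_{p(t+1)},V_{p(j)})$. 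Summing over the at most $\Delta$ relevant neighbors and adding the $t<h\le\varepsilon|V_{p(t+1)}|$ already-used vertices, fewer than $(\Delta+1)\varepsilon|V_{p(t+1)}|$ choices of $w$ are excluded, whereas $|C_{t+1}|\ge(d-\varepsilon)^{\Delta}|V_{p(t+1)}|$. A short computation with $d=2\varepsilon^{1/\Delta}$ then shows a valid $w$ remains, and after $h$ rounds $\phi$ is the desired copy of $H$.

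The one place that needs care is exactly this last step and, more broadly, the bookkeeping around the invariant: a crude estimate that bounds every degree contribution by $\Delta$ is \emph{not} quite enough (already when $\Delta=2$ with $\varepsilon$ near the top of its allowed range), so one should instead use that a vertex's already-embedded and not-yet-embedded neighbors together number at most $\Delta$ --- giving $|C_{t+1}|\ge(d-\varepsilon)^{e_{t+1}}|V_{p(t+1)}|$ against only $e_{t+1}^{+}+1$ blocks of bad vertices, with $e_{t+1}+e_{t+1}^{+}\le\Delta$ --- and separately dispatch the trivial corner where the density lower bound is only attainable by a complete bipartite pair (where no vertex is ever bad). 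Everything else is routine.
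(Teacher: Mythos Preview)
The paper does not actually prove this lemma: it is quoted verbatim as a standard tool from Zhao's lecture notes, alongside the Counting Lemma and the Removal Lemma, and then only applied in the sketch of Theorem~\ref{genESS}. So there is no in-paper proof to compare against.

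That said, your greedy embedding argument is exactly the standard proof of this result (the one given in the lecture notes the paper cites), and it is correct. You have also correctly flagged the one genuine subtlety: the na\"ive count --- bounding both the shrinkage exponent of $|C_{t+1}|$ and the number of bad-vertex blocks by $\Delta$ --- can fail at the top of the permitted range of $\varepsilon$, and the right fix is precisely to use $e_{t+1}+e_{t+1}^{+}\le\Delta$ so that the exponent and the block count trade off against each other. Your observation that the implicit constraint $2\varepsilon^{1/\Delta}\le 1$ forces $\varepsilon\le 2^{-\Delta}$, and hence the complete-bipartite corner is the only residual edge case, is also on the mark.
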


\begin{lemma}[Counting Lemma]
    Fix $\varepsilon >0$. Let $H$ be a graph on vertex set $\{1,2,\dots,r\}$.
    Let $G$ be a graph with vertex partition $V_1,V_2,\dots, V_r$
    such that $V_i,V_j$ is $\varepsilon$-regular whenever $ij \in E(H)$. Then the 
 number of copies $\cN(H,G)$ of $H$ is within $\varepsilon e(H) |V_1|\cdots |V_r|$ of 
    \[
    \left(\prod_{ij \in E(H)} d(V_i,V_j)\right)  \left(\prod_{i=1}^r |V_i|\right).
    \]    
\end{lemma}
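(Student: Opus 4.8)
The plan is to obtain the estimate by a hybrid (telescoping) argument: starting from the weighted count in which every adjacency of $G$ across an edge of $H$ is replaced by the corresponding density, one restores the genuine edge-indicators one edge of $H$ at a time, bounding the error created at each swap directly from the definition of $\varepsilon$-regularity. Throughout, $\mathcal N(H,G)$ is read as the number of tuples $(v_1,\dots,v_r)$ with $v_i\in V_i$ and $v_iv_j\in E(G)$ for every $ij\in E(H)$; since the parts $V_i$ are pairwise disjoint, such tuples automatically use distinct vertices across distinct parts, so this coincides with counting labelled copies of $H$ placed so that vertex $i$ lands in $V_i$.

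First I would set up the hybrids. List the edges of $H$ as $e_1,\dots,e_m$ with $m=e(H)$, and for $e_\ell=ij$ write $A_\ell(\mathbf v)=\mathbbm{1}[v_iv_j\in E(G)]$ and $d_\ell=d(V_i,V_j)$ for a tuple $\mathbf v=(v_1,\dots,v_r)\in V_1\times\cdots\times V_r$. For $0\le t\le m$ put
\[
S_t=\sum_{\mathbf v}\Bigl(\prod_{\ell=1}^{t}A_\ell(\mathbf v)\Bigr)\Bigl(\prod_{\ell=t+1}^{m}d_\ell\Bigr),
\]
so that $S_0=\bigl(\prod_{\ell=1}^m d_\ell\bigr)\prod_{i=1}^r|V_i|$ is exactly the target quantity $\bigl(\prod_{ij\in E(H)}d(V_i,V_j)\bigr)\bigl(\prod_i|V_i|\bigr)$, while $S_m=\mathcal N(H,G)$. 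It then suffices to prove $|S_t-S_{t-1}|\le\varepsilon\prod_{i=1}^r|V_i|$ for each $t$, since summing over $t$ gives $|\mathcal N(H,G)-S_0|\le m\varepsilon\prod_i|V_i|=\varepsilon e(H)\prod_i|V_i|$.

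For the one-step bound, write $e_t=ij$ and observe $S_t-S_{t-1}=\bigl(\prod_{\ell>t}d_\ell\bigr)\sum_{\mathbf v}\bigl(\prod_{\ell<t}A_\ell(\mathbf v)\bigr)\bigl(A_t(\mathbf v)-d_t\bigr)$, with $0\le\prod_{\ell>t}d_\ell\le1$. Fix the coordinates $v_k$ for all $k\ne i,j$. Because the edges of $H$ are pairwise distinct, none of $e_1,\dots,e_{t-1}$ equals $ij$, so $\prod_{\ell<t}A_\ell(\mathbf v)$ factors as $C\cdot\mathbbm{1}[v_i\in X']\cdot\mathbbm{1}[v_j\in Y']$, where $C\in\{0,1\}$ collects the indicators of those $e_\ell$ touching neither $i$ nor $j$, while $X'\subseteq V_i$ and $Y'\subseteq V_j$ are the common neighbourhoods in $V_i$, respectively $V_j$, of the (already fixed) other endpoints of the edges incident to $i$, respectively $j$. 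Hence the sum over $v_i,v_j$ equals $C\bigl(e(X',Y')-d(V_i,V_j)|X'||Y'|\bigr)=C|X'||Y'|\bigl(d(X',Y')-d(V_i,V_j)\bigr)$, which has absolute value at most $\varepsilon|V_i||V_j|$: if $|X'|\ge\varepsilon|V_i|$ and $|Y'|\ge\varepsilon|V_j|$ this is precisely the $\varepsilon$-regularity estimate for the pair $(V_i,V_j)$, and otherwise $|X'||Y'|\le\varepsilon|V_i||V_j|$ makes it trivial (using $|d(X',Y')-d(V_i,V_j)|\le1$ and $C\le1$). Summing over the $\prod_{k\ne i,j}|V_k|$ choices of the remaining coordinates yields $|S_t-S_{t-1}|\le\varepsilon\prod_i|V_i|$, and summing over $t$ completes the proof.

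The only genuinely delicate point is this factorisation in the one-step bound: one must verify that, once the coordinates outside $\{i,j\}$ are frozen, the already-restored indicators split into a $v_i$-part, a $v_j$-part, and a constant — this is exactly where distinctness of $e_1,\dots,e_t$ is used — and one must remember the fallback estimate $|X'||Y'|\le\varepsilon|V_i||V_j|$ when the restricted sets are too small for $\varepsilon$-regularity to say anything. Everything else is routine summation, and (as is standard) no hypothesis is needed on pairs $V_i,V_j$ with $ij\notin E(H)$.
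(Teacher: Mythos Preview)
Your proof is correct; the hybrid (telescoping) argument you give is exactly the standard proof of the Counting Lemma. Note that the paper does not actually prove this lemma: it is quoted without proof from \cite{zhao-lecture-notes} as one of three standard companion lemmas to the Regularity Lemma, so there is no ``paper's own proof'' to compare against. Your explicit reading of $\mathcal N(H,G)$ as the number of partite tuples $(v_1,\dots,v_r)$ with $v_i\in V_i$ is the intended one here (and is necessary for the stated error bound $\varepsilon\,e(H)\prod_i|V_i|$ to make sense), even though elsewhere in the survey $\mathcal N(H,G)$ denotes unlabelled copies.
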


\begin{lemma}[Removal Lemma]\label{removal_lemma}\label{removallemma} For each $\varepsilon>0$ and graph $H$, there exists $\delta>0$ such that every $n$-vertex graph $G$ with  $\cN(H,G) <\delta n^{|V(H)|}$ can be made $H$-free by removing  less than $\varepsilon n^2$ edges.
\end{lemma}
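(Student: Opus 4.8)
The plan is to prove the contrapositive: if $G$ is an $n$-vertex graph that \emph{cannot} be made $H$-free by deleting fewer than $\varepsilon n^2$ edges, then $\cN(H,G)\ge \delta n^{|V(H)|}$ for a suitable $\delta=\delta(\varepsilon,H)>0$. Write $k=|V(H)|$ and $\ell=e(H)$; we may assume $\ell\ge 1$ (an edgeless $H$ has $\Theta(n^k)$ copies, making the hypothesis vacuous) and that $n$ is large (for the finitely many small $n$, shrink $\delta$ so that $\delta n^{k}<1$, forcing $\cN(H,G)=0$ and $G$ already $H$-free). First I would fix the constants in the correct order: set a density threshold $\delta':=\varepsilon/10$, then choose a regularity parameter $\varepsilon'>0$ small enough that $\varepsilon'<\varepsilon/10$, $\varepsilon'<1/2$, and $\ell\,\varepsilon'<\tfrac12(\delta')^{\ell}$, and finally an integer $m>10/\varepsilon$. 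Apply Szemer\'edi's Regularity Lemma with parameters $\varepsilon'$ and $m$ to obtain an $\varepsilon'$-regular partition $V_0,V_1,\dots,V_r$ of $V(G)$ with $m\le r\le M:=M(\varepsilon',m)$; although $M$ is huge, it depends only on $\varepsilon$ and $H$, and hence so will $\delta$.

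Next comes the cleaning step. Form $G'$ from $G$ by deleting every edge inside a single part (at most $n^2/(2r)\le n^2/(2m)$ of them), every edge meeting $V_0$ (at most $|V_0|n\le\varepsilon' n^2$), every edge between a pair $(V_i,V_j)$ that is not $\varepsilon'$-regular (at most $\varepsilon' r^2(n/r)^2=\varepsilon' n^2$), and every edge between an $\varepsilon'$-regular pair of density below $\delta'$ (at most $\binom{r}{2}\delta'(n/r)^2<\tfrac12\delta' n^2$); by the choice of constants this deletes fewer than $\varepsilon n^2$ edges in total. Observe that $G'$ is ``all or nothing'' between parts: if some edge of $G$ between $V_i$ and $V_j$ survives, then all do, and then $(V_i,V_j)$ is $\varepsilon'$-regular (in $G$, hence also in the unchanged induced subgraph in $G'$) of density at least $\delta'$. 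Since $G$ cannot be made $H$-free by deleting fewer than $\varepsilon n^2$ edges, $G'$ still contains a copy of $H$, on vertices $w_1,\dots,w_k$ with $w_s\in V_{\phi(s)}$. As $G'$ has no edge inside a part, $\phi(s)\ne\phi(t)$ whenever $st\in E(H)$, and as $w_sw_t\in E(G')$, the pair $(V_{\phi(s)},V_{\phi(t)})$ is $\varepsilon'$-regular of density at least $\delta'$.

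Finally I would apply the Counting Lemma to the clusters $V_{\phi(1)},\dots,V_{\phi(k)}$, reading $H$ on the vertex set $\{1,\dots,k\}$; distinct vertices of $H$ may be sent to a common cluster, but this alters the count only by an $O(n^{k-1})$ lower-order term, and the greedy-embedding proof of the Counting Lemma still goes through since every cluster has size at least $n/(2M)$. This yields at least $\bigl((\delta')^{\ell}-\ell\,\varepsilon'\bigr)\prod_{s=1}^{k}|V_{\phi(s)}|\ge\tfrac12(\delta')^{\ell}\bigl(n/(2M)\bigr)^{k}$ copies of $H$ in $G'$, hence in $G$; with $\delta:=(\delta')^{\ell}/\bigl(2(2M)^{k}\bigr)$, which depends only on $\varepsilon$ and $H$, this gives $\cN(H,G)\ge\delta n^{k}$, establishing the contrapositive.

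The main obstacle is keeping the quantifier order honest: $\delta'$ and $\varepsilon'$ must be committed before the Regularity Lemma is invoked, yet $\delta$ is then forced to depend on the tower-type bound $M(\varepsilon',m)$ it returns, so one must verify the circle closes (it does, since every constant traces back to $\varepsilon$ and $H$ alone). The one genuinely technical point is the final step: we only learn that $G'$ contains a \emph{single} copy of $H$, and must upgrade this to $\Omega(n^{|V(H)|})$ copies using the Counting Lemma (not merely the Embedding Lemma), together with the mild extension of it that permits several vertices of $H$ to land in one cluster.
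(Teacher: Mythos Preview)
The paper is a survey and states the Removal Lemma without proof, citing it as one of several standard consequences of the Regularity Lemma (with a reference to lecture notes of Zhao). So there is no ``paper's own proof'' to compare against.

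Your argument is the standard one and is correct in outline: regularize, clean by deleting fewer than $\varepsilon n^2$ edges, find a surviving copy of $H$ in the cleaned graph, and then use the Counting Lemma on the clusters that copy lands in to produce $\Omega(n^{|V(H)|})$ copies. The quantifier bookkeeping is handled properly. The one place that deserves a bit more care is exactly the point you flag at the end: the Counting Lemma as stated in the paper assumes a partition into $r=|V(H)|$ parts, one per vertex of $H$, whereas your map $\phi$ need not be injective on non-adjacent vertices. The cleanest fix is not to appeal to a ``mild extension'' of the Counting Lemma, but simply to split any cluster used more than once into equal-sized subparts (one per preimage under $\phi$); $\varepsilon'$-regularity is inherited by these large subparts, the density between any two of them is still at least $\delta'-\varepsilon'$, and then the Counting Lemma applies verbatim with part sizes at least $n/(2kM)$. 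This costs only a constant factor depending on $k$, which is absorbed into $\delta$.
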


Let us illustrate how the regularity lemma can be used for generalized Tur\'an problems with a sketch of the proof of Theorem~\ref{genESS}, which we restate here for convenience.

\generalboundthm*

\begin{proof}[Proof sketch]
Let $G$ be an $F$-free graph, let $\varepsilon$ be small enough, let $m$ be large enough, and apply the Regularity Lemma. Consider the resulting $\varepsilon$-regular partition and remove all the edges within a $V_i$, all the edges incident to $V_0$, all the edges between $V_i$ and $V_j$ if the pair $V_i,V_j$ is not $\varepsilon$-regular or if its density is less than $\varepsilon+\delta$ for some $\delta$. By choosing the constants carefully, we remove only $o(n^2)$ edges. Obviously, there are only $o(n^{V(H)})$ copies of $H$ that we deleted this way, as an edge is contained in $O(n^{|V(H)|-2})$ copies of $H$. Thus, if $G$ had more copies of $H$ than the bound in the theorem statement, then more than $\ex(n,H,K_k)$ copies of $H$ remain, and therefore there is a copy of $K_k$ in the resulting graph. Observe that this implies the existence of a copy of $K_k$ in the $(\varepsilon,\delta)$-reduced graph. Applying the Embedding Lemma to the $k$ corresponding parts, we obtain a copy of $F$ in $G$, a contradiction.
\end{proof}

\subsection{Probabilistic method}\label{probab}

Probabilistic tools are among the most powerful in extremal graph theory; see the monograph by Alon and Spencer~\cite{AloSpe16} for a comprehensive overview.
For generalized Tur\'an problems, this method has found only limited use thus far. We will give examples where an upper and a lower bound is proved using probabilistic methods. We start with a more precise version of Proposition~\ref{degen-prop}.

\begin{thm}[Alon and Shikhelman~\cite{AloShi16}]\label{asblow} Let $H$ be an $r$-vertex graph. Then $\ex(n,H,F)=\Theta(n^r)$ if and only if $F$ is not a subgraph of a blowup of $H$. Otherwise $\ex(n,H,F)=O(n^{r-\varepsilon})$ for some $\varepsilon=\varepsilon(H,F)>0$.
\end{thm}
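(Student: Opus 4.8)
The plan is to prove the two directions separately; the ``$F$ is not a subgraph of a blowup of $H$'' direction is elementary, and the quantitative ``otherwise'' bound will be deduced from the $r$-uniform K\H ov\'ari-S\'os-Tur\'an (Erd\H os box) theorem. Write $r=|V(H)|$. If $F$ is not a subgraph of any blowup of $H$, take the blowup of $H$ with parts of sizes $\lfloor n/r\rfloor$ and $\lceil n/r\rceil$ summing to $n$; it is itself a blowup of $H$, hence $F$-free, and it contains $\Omega(n^r)$ copies of $H$ (for instance the $\prod_i|V_i|$ transversal copies). Since trivially $\ex(n,H,F)=O(n^r)$, this gives $\ex(n,H,F)=\Theta(n^r)$. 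In the remaining case $F$ is a subgraph of some blowup of $H$; as an embedding of $F$ uses at most $|V(F)|$ vertices over any vertex of $H$, in fact $F\subseteq H[b]$ with $b:=|V(F)|$, and then every $F$-free graph is also $H[b]$-free (a copy of $H[b]$ would contain one of $F$). So it suffices to show that an $n$-vertex $H[b]$-free graph has $O(n^{r-\varepsilon})$ copies of $H$ for some $\varepsilon=\varepsilon(r,b)>0$: this yields $\ex(n,H,F)=O(n^{r-\varepsilon})$ and, being $o(n^r)$, rules out the rate $\Theta(n^r)$.

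For the key step, label $V(H)=\{1,\dots,r\}$ and, given $G$, form the $r$-partite $r$-uniform hypergraph $\mathcal H$ with parts $V_1,\dots,V_r$, each a disjoint copy of $V(G)$, declaring $\{x_1,\dots,x_r\}$ (with $x_i\in V_i$) to be a hyperedge precisely when the map $i\mapsto x_i$ is a labelled copy of $H$ in $G$, i.e.\ an injection $V(H)\to V(G)$ sending $E(H)$ into $E(G)$. Then $e(\mathcal H)=\Theta_H(\cN(H,G))$. Set $M:=rb$ and $\varepsilon:=(rb)^{1-r}$. By the $r$-uniform K\H ov\'ari-S\'os-Tur\'an theorem of Erd\H os, an $r$-partite $r$-uniform hypergraph with parts of size $n$ and more than $c_{r,M}\,n^{\,r-M^{1-r}}$ hyperedges contains $K^{(r)}_{M,\dots,M}$, the complete $r$-partite $r$-uniform hypergraph with $M$ vertices in each part. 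Hence, if $\cN(H,G)>c\,n^{r-\varepsilon}$ for the appropriate constant $c=c(H,b)$, then $\mathcal H$ contains $K^{(r)}_{M,\dots,M}$: there are $B_i\subseteq V_i$ with $|B_i|=M$ all of whose transversals are hyperedges.

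It remains to extract a copy of $H[b]$ from such a configuration. Projecting the $B_i$ into $V(G)$, the injectivity built into the hyperedge condition forces the images of $B_i$ and $B_j$ to be disjoint whenever $\{i,j\}\in E(H)$, and for such $i,j$ and any $u$ in the image of $B_i$ and $v$ in that of $B_j$ (necessarily $u\neq v$) we may complete $u,v$ to a full transversal, which is a hyperedge, to conclude $uv\in E(G)$. Now greedily pick pairwise disjoint $b$-subsets $B_i'$ of the images of the $B_i$: this is possible since each image has $M=rb$ vertices while at most $(r-1)b$ of them are excluded at any stage. The $rb$ chosen vertices are distinct and carry all the edges prescribed by $H[b]$, so $G$ contains $H[b]$, and hence $F$ — contradicting $F$-freeness. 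Therefore $\cN(H,G)=O(n^{r-\varepsilon})$ for every $F$-free $G$, as required.

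The main obstacle is really just identifying and invoking the correct external input: once the hypergraph K\H ov\'ari-S\'os-Tur\'an / Erd\H os box bound is granted, the rest is bookkeeping. The two points that need a little care are (i) reducing ``$F$ is a subgraph of some blowup of $H$'' to the clean statement $F\subseteq H[|V(F)|]$, and (ii) the passage between the formal objects (labelled copies of $H$, transversals of $\mathcal H$) and honest subgraphs of $G$ — in particular the greedy disjointification that converts the complete multipartite sub-hypergraph into an actual copy of $H[b]$.
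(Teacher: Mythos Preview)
Your proof is correct and follows the same core strategy as the paper: encode labelled copies of $H$ as hyperedges of an $r$-partite $r$-uniform hypergraph and apply Erd\H os's theorem to find a complete $r$-partite sub-hypergraph, which in turn yields a blowup of $H$ in $G$.

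The one difference worth noting is how the $r$-partition is realised. The paper takes a \emph{random} $r$-partition of $V(G)$ itself and keeps only the ``nice'' copies of $H$ that respect it, losing at most a factor $r^r$; since the parts are already disjoint subsets of $V(G)$, a copy of $K^{(r)}_{t,\dots,t}$ in the hypergraph is immediately a copy of $H[t]$ in $G$, with no further cleanup. You instead take $r$ formal copies of $V(G)$ as the parts, which keeps all labelled copies and avoids any probabilistic step, and then you greedily disjointify the $B_i$'s back in $V(G)$. Both routes work; the paper's random partition is slicker, yours is entirely deterministic. Incidentally, your greedy step is not actually needed: since your hyperedges are \emph{injective} maps $V(H)\to V(G)$, the argument you give for $\{i,j\}\in E(H)$ applies verbatim to \emph{every} pair $i\neq j$, so the images of the $B_i$ are already pairwise disjoint and $M=b$ would have sufficed.
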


\begin{proof} If $F$ is not a subgraph of any blowup of $H$, then the blowup $H[\lfloor n/r\rfloor]$ is $F$-free and has $\Omega(n^r)$ copies of $H$.

Now assume that $F$ is contained in some blowup $H[t]$. Let $G$ be an $n$-vertex $F$-free graph and put $m:=\cN(H,G)$. Consider a random partition of $V(G)$ into $r$ partition classes $V_1,\dots,V_r$ and let $u_1,\dots,u_r$ be the vertices of $H$. Call a copy of $H$ in $G$ {\it nice} if the vertex corresponding to $u_i$ is in $V_i$ for all $i$. The expected number of nice copies of $H$ is $m/r^r$, since each vertex $u_i$ belongs to part $V_i$ with probability $1/r$. Therefore, there is a partition of $V(G)$ with at least $m/r^r$ nice copies of $H$.
Fix such a partition and construct a $r$-uniform hypergraph $\cH$ on vertex set $V(G)$  whose hyperedges are nice copies of $H$ in $G$. Observe that $\cH$ is necessarily $r$-partite. Let $\cH'$ be the complete $r$-uniform $r$-partite hypergraph with partition classes of size $t$. Observe that if $\cH$ contains $\cH'$, then the vertices of $\cH'$ span a copy of $H[t]$ in $G$, which implies that $G$ contains $F$, a contradiction. 
A result of of Erd\H os~\cite{Erd64b} implies that the (hypergraph) Tur\'an number of an $r$-uniform $r$-partite graph is $O(n^{r-\varepsilon})$. Therefore, $\cH$ has $O(n^{r-\varepsilon})$ hyperedges and so $G$ has $m=r^rO(n^{r-\varepsilon})=  O(n^{r-\varepsilon})$ copies of $H$.
\end{proof}

Using a standard first-moment argument, Gerbner and Palmer~\cite{GerPal18} gave the following general lower bound on $\ex(n,H,F)$. 

\begin{proposition}
Let $F$ and $H$ be graphs with $f:=|V(F)|$, $h:=|V(H)|$, such that $e(F) > e(H)$. Then
\[
\Omega\left(n^{h - \frac{e(H)(f-2)}{e(F)-e(H)}}\right) \leq \ex(n,H,F).
\]    
\end{proposition}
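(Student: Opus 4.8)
The plan is to use the \emph{deletion method}: take the Erd\H os--R\'enyi random graph $G\sim G(n,p)$ for a carefully chosen edge probability $p$, count copies of $H$ in expectation, then delete one edge from every copy of $F$ to obtain an $F$-free subgraph, keeping track of how many copies of $H$ this destroys. We may assume $e(H)\ge 1$: if $H$ has no edge the bound asserts $\ex(n,H,F)=\Omega(n^h)$, which is immediate since the empty graph on $n$ vertices is $F$-free and contains $\Theta(n^h)$ subgraph copies of $H$. Then $e(F)\ge 2$.

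First I would record the counts. For any fixed $p\in(0,1)$ one has $\mathbb{E}[\cN(H,G)]=\Theta(n^h p^{e(H)})$ and $\mathbb{E}[\cN(F,G)]=\Theta(n^f p^{e(F)})$, with implied constants depending only on $H$ and $F$: there are $\Theta(n^h)$ (resp.\ $\Theta(n^f)$) copies of $H$ (resp.\ $F$) in $K_n$, each present in $G$ with probability $p^{e(H)}$ (resp.\ $p^{e(F)}$). Next, the alteration: given an outcome $G$, process its copies of $F$ one at a time and, whenever a copy still has all its edges, delete one of them; the resulting graph $G'$ has the same vertex set, is $F$-free, and is obtained from $G$ by deleting at most $\cN(F,G)$ edges. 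A copy of $H$ in $G$ survives in $G'$ unless it contains one of the deleted edges, and since $H$ is fixed, every pair of vertices lies in $O(n^{h-2})$ subgraph copies of $H$ in $K_n$; hence at most $O(n^{h-2})\cN(F,G)$ copies of $H$ are destroyed. Thus, pointwise over the random $G$,
\[
\cN(H,G')\;\ge\;\cN(H,G)-O(n^{h-2})\,\cN(F,G),
\]
and taking expectations gives $\mathbb{E}[\cN(H,G')]\ge c_1 n^h p^{e(H)}-c_2\,n^{h+f-2}p^{e(F)}$ for constants $c_1,c_2>0$ depending only on $H$ and $F$.

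Finally I would choose $p=c\,n^{-(f-2)/(e(F)-e(H))}$ with a small constant $c>0$; this lies in $(0,1)$ for large $n$ since $e(F)>e(H)$ and $f\ge 2$. A direct computation shows the two terms above carry the \emph{same} power of $n$, namely $h-\tfrac{e(H)(f-2)}{e(F)-e(H)}$, while their coefficients are $\Theta(c^{e(H)})$ and $\Theta(c^{e(F)})$ respectively. Because $e(F)>e(H)$, choosing $c$ small enough (so that $c^{e(F)-e(H)}\le c_1/(2c_2)$) makes the first term at least twice the second, so $\mathbb{E}[\cN(H,G')]=\Omega\!\big(n^{\,h-e(H)(f-2)/(e(F)-e(H))}\big)$. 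Consequently some outcome yields an $F$-free graph on $n$ vertices with that many copies of $H$, which is the claimed bound.

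There is no serious obstacle here — the argument is routine — so the only points requiring care are: (i) stating cleanly the ``$O(n^{h-2})$ copies of $H$ per deleted edge'' bound, which is deliberately crude (it ignores the density $p$, and this crudeness is exactly what produces the exponent $\tfrac{e(H)(f-2)}{e(F)-e(H)}$ rather than the slightly stronger one one could extract by instead counting overlapping $H$–$F$ pairs directly); and (ii) the bookkeeping that the two $n$-exponents coincide for the chosen $p$, together with the final constant-balancing step, which relies on $e(F)-e(H)>0$.
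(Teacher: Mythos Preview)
Your proof is correct and takes essentially the same approach as the paper: sample $G\sim G(n,p)$ with $p=\Theta\big(n^{-(f-2)/(e(F)-e(H))}\big)$, delete one edge from each copy of $F$, bound the number of destroyed copies of $H$ by $O(n^{h-2})$ per deleted edge, and balance the two resulting terms. Your treatment of the constant $c$ (taking it small so that the $c^{e(H)}$ coefficient dominates the $c^{e(F)}$ one, using $e(F)>e(H)$) and the separate handling of the trivial case $e(H)=0$ are both clean.
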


\begin{proof}%[Proof .]
Consider a random graph from $G(n,p)$ with edge probability 
 \[
 p=cn^{-\frac{f-2}{e(F)-e(H)}},
 \] 
 where $c=h^{h(e(F)-e(H))}+1$. The expected number of copies of $F$ is at most $n^fp^{e(F)}$ and the expected number of copies of $H$ is at least $\left(\frac{n}{h}\right)^hp^{e(H)}$. Deleting an edge from each copy of $F$ destroys at most $n^fp^{e(F)}n^{h-2}$ copies of $H$. In the resulting $F$-free graph, the expected number of copies of $H$ is at least 
 \[
 \left(\frac{n}{h}\right)^hp^{e(H)}-n^fp^{e(F)}n^{h-2}=\Omega\left(n^{h - \frac{e(H)(f-2)}{e(F)-e(H)}}\right).
 \]
 Therefore, there exists an $n$-vertex graph such that after removing an edge from each copy of
$F$, we are left with at least $\Omega\left(n^{h - \frac{e(H)(f-2)}{e(F)-e(H)}}\right)$ copies of $H$.    
\end{proof}

\subsection{Stability method}\label{stabmeth}

Sometimes in extremal combinatorics it is easier to prove a stability result than an exact result. In this situation, the structural information provided by stability may help to obtain an exact result for sufficiently large $n$. This idea is behind some of the most general exact results for generalized Tur\'an problems. 
Here we discuss the (weakly) $F$-Tur\'an-stable property which is a straightforward generalization of Erd\H os-Simonovits stability (Theorem~\ref{esstab}). Subsection~\ref{turstab} lists results on weakly $F$-Tur\'an-stable graphs, so here we focus on how stability implies exact results, which was mostly omitted from the discussion in Subsection~\ref{turstab}.

Let $H$ and $F$ be graphs with $\chi(H)<\chi(F)=k$. Recall 
\begin{enumerate}
    \item[(1)] $H$ is \textit{$F$-Tur\'an-stable} if any $n$-vertex $F$-free graph $G$ with $\ex(n,H,F)-o(n^{|V(H)|})$ copies of $H$ has edit distance $o(n^2)$ from $T(n,k-1)$.
    \item[(2)] $H$ is \textit{weakly $F$-Tur\'an-stable} if any $n$-vertex $F$-free graph $G$ with $\ex(n,H,F)-o(n^{|V(H)|})$ copies of $H$ has edit distance $o(n^2)$ from some $n$-vertex complete $(k-1)$-partite graph $T$.
    \item[(3)] $H$ is \textit{$F$-Tur\'an-good} if $\ex(n,H,F)=\cN(H,T(n,k-1))$ (for $n$ sufficiently large).  
    \item[(4)] $H$ is \textit{weakly $F$-Tur\'an-good} if $\ex(n,H,F)=\cN(H,T)$ for some $n$-vertex complete $(k-1)$-partite graph $T$ (for $n$ sufficiently large).

\end{enumerate}

The first stability result for generalized Tur\'an problems is Theorem~\ref{maqiustabi} due to Ma and Qiu~\cite{MaQiu18} which states that if $r < k$, then $K_r$ is $F$-Tur\'an-stable for every graph $F$ with $\chi(F)=k$. 
This theorem was used to obtain an exact result: $K_r$ is $F$-Tur\'an-good if $F$ has a color-critical edge (Theorem~\ref{gen-crit-edge}).

Let us sketch a basic example of how to use stability to obtain an exact result. Assume that $H$ is $K_{k}$-Tur\'an-stable and let $G$ be an $n$-vertex $K_{k}$-free graph with $\ex(n,H,F)$ copies of $H$. Pick a complete $(k-1)$-partite graph $T$ with minimum edit distance from $G$. Let $A_1,\dots, A_{k-1}$ be the partition classes of
$T$ and consider them as a vertex partition of $G$. The edit distance condition guarantees that in $G$, if a vertex has $\Omega(n)$ neighbors in its own partition class, then it has $\Omega(n)$ neighbors in every partition class. It can be shown that each vertex has degree at least roughly $\frac{k-2}{k-1}n$, and thus there are two types of vertices: those that are joined to all but $o(n)$ neighbors in every partition class besides their own, and those that have $\Omega(n)$ neighbors in every partition class. Now, let $U$ denote the set of vertices in $G$ that are incident to $\Theta(n)$ non-edges between partition classes, hence $|U|=o(n)$. If $uv$ is an edge of $G$ inside of a partition class, then $u$ and $v$ cannot have common neighbors in each other partition class because $G$ is $K_{k}$-free. This implies that the vertices of an edge inside a partition class are both in $U$. Therefore, there are $\Omega(n|U|)$ missing edges between partition classes and at most $\binom{|U|}{2}=o(n|U|)$ edges inside partition classes. The number of copies of $H$ containing an edge in any partition class is $o(n^{|V(H)|-2}|U|^2)$, while the number of copies of $H$ that are removed from $T$ by removing the missing edges between partition classes in $G$ is $\Omega(n^{|V(H)|-1}|U|)$. Therefore, $T$ contains more copies of $H$ than $G$ does, a contradiction. It is not hard to modify this proof to instead forbid a $k$-chromatic graph $F$ with a color-critical edge.
Therefore, we have our exact result: the Tur\'an graph $T(n,k-1)$ is the extremal graph for $\ex(n,H,K_k)$, for $n$ sufficiently large.

An important advantage of the stability method is the following simple observation: if $H$ is $K_{k}$-Tur\'an-stable, then $H$ is $F$-Tur\'an-stable for every $F$ with $\chi(F)=k$. Indeed, in this case, $\ex(n,K_{k},F)=o(n^{k})$ by Proposition~\ref{degen-prop}, and so we can remove all copies of $K_{k}$ by deleting $o(n^2)$ edges by the Removal :emma (Lemma~\ref{removallemma}). In this way, the number of copies of $H$ decreases by $o(n^{|V(H)|})$, and so the stability assumption still holds.

After the results of Ma and Qiu~\cite{MaQiu18}, there were a number of papers that used variants of this idea for several special cases. Hei, Hou, and Liu~\cite{HeiHouLiu21} applied it more broadly and Gerbner~\cite{Ger22b} obtained the following general formulation.

\begin{thm}[Gerbner~\cite{Ger22b}]\label{stabcrit}
Let $H$ and $F$ be graphs such that $\chi(F)>\chi(H)$ and $F$ has a color-critical edge. If $H$ is weakly $F$-Tur\'an-stable, then $H$ is weakly $F$-Tur\'an-good. 
\end{thm}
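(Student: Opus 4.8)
Write $k=\chi(F)$ and fix a color-critical edge $xy$ of $F$. The first thing I would record is that each endpoint of $xy$ is a color-critical \emph{vertex} (deleting it leaves a subgraph of the $(k-1)$-chromatic graph $F-xy$), so $F-x$ admits a proper $(k-1)$-coloring, and in every proper $(k-1)$-coloring of $F-xy$ the vertices $x,y$ get the same color. Next, since $\chi(H)<k$, no blowup of $H$ contains $F$, so by Proposition~\ref{degen-prop} we have $\ex(n,H,F)=\Theta(n^{|V(H)|})$ and in particular $\ex(n,H,F)\ge\cN(H,T(n,k-1))$. Now let $G$ be an $n$-vertex $F$-free graph with $\cN(H,G)=\ex(n,H,F)$. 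The whole proof is organized around a single goal: to show that for a suitable $(k-1)$-partition of $V(G)$ there is \emph{no edge inside a part}. Once this is done the conclusion is immediate: $G$ is then a spanning subgraph of the complete $(k-1)$-partite graph $T_0$ on those parts, so $\cN(H,G)\le\cN(H,T_0)$; but $\chi(T_0)=k-1<k$ makes $T_0$ $F$-free, hence $\cN(H,T_0)\le\ex(n,H,F)=\cN(H,G)$, giving $\ex(n,H,F)=\cN(H,T_0)$ with $T_0$ complete $(k-1)$-partite, which is exactly weak $F$-Tur\'an-goodness.

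\textbf{Reaching the approximate structure.} Using that $H$ is weakly $F$-Tur\'an-stable, $G$ lies at edit distance $o(n^2)$ from some complete $(k-1)$-partite graph; among all $(k-1)$-partitions $V(G)=A_1\cup\dots\cup A_{k-1}$ I would pick one minimizing the number of \emph{errors}, where an error is an edge inside a part or a non-edge between distinct parts. This count is $o(n^2)$, and the chosen partition is locally optimal: moving a single vertex between parts does not decrease it. A standard vertex-deletion argument — exactly the one sketched for the $K_k$ case in Subsection~\ref{stabmeth} — gives $\delta(G)\ge(1-\tfrac1{k-1}-o(1))n$, and together with the edit-distance bound this forces $|A_i|=\tfrac{n}{k-1}+o(n)$ for all $i$. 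Finally I would set aside the $o(n)$ \emph{bad} vertices incident to more than a suitable $o(n)$ number of errors; every \emph{good} vertex has $o(n)$ neighbors inside its own part and at least $|A_j|-o(n)$ neighbors in each other part $A_j$.

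\textbf{Killing the inside edges.} This is where the color-critical edge does its work, through two embedding claims (with a small constant $c_1$ fixed first, then $n$ large). Claim 1: no vertex $v$ can have at least $c_1 n$ neighbors in \emph{every} part $A_1,\dots,A_{k-1}$ (including its own); otherwise one maps the color-critical vertex $x$ to $v$ and greedily embeds the fixed graph $F-x$ into the good neighbors of $v$, using its $k-1$ color classes for the $k-1$ parts — each part supplying $\Omega(n)$ good vertices, cross-part good pairs being almost all edges, and each color class of $F-x$ being independent — so $F\subseteq G$, a contradiction. Hence every $v\in A_i$ is \emph{deficient} (fewer than $c_1 n$ neighbors) in some part; if that part were some $A_j$ with $j\neq i$, local optimality of the partition applied to moving $v$ to $A_j$ would yield $|N(v)\cap A_i|<c_1 n+o(n)$, and then a degree count contradicts $\delta(G)\ge(1-\tfrac1{k-1}-o(1))n$ as soon as $c_1<\tfrac1{2(k-1)}$. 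So every $v\in A_i$ is deficient exactly in its own part, which then forces $|N(v)\cap A_j|\ge\tfrac{n}{k-1}-O(c_1 n)$ for all $j\neq i$. Claim 2: if $uw$ is an edge with $u,w\in A_i$, then for each $j\neq i$ the vertices $u,w$ have $\Omega(n)$ common good neighbors in $A_j$, and $A_i$ has $\Omega(n)$ good vertices adjacent to neither; mapping $x\mapsto u$, $y\mapsto w$, sending the rest of their color class to an independent set of those good non-neighbors, and greedily placing the remaining color classes of $F-xy$ into the common good neighbors realizes all edges of $F$, so $F\subseteq G$, again a contradiction. Therefore $G$ has no edge inside any $A_i$, and the reduction above finishes the proof.

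\textbf{Where the difficulty lies.} I expect the main obstacle to be precisely the passage from the $o(n^2)$-approximate structure to the rigid conclusion "no edge inside a part.'' A single inside edge is perfectly harmless when $F$ has no color-critical edge, and even with one it only creates $F$ when the relevant part and common neighborhoods are large and dense; extracting that is exactly what the locally error-minimizing partition plus the min-degree bound (to eliminate vertices deficient in a foreign part) and the two embedding claims are for, and the color-critical edge is indispensable in Claim 2 — without it the complete $(k-1)$-partite graphs need not be maximal and weak Tur\'an-goodness can genuinely fail. The remaining care is bookkeeping: choosing $c_1$ small relative to $\tfrac1{k-1}$ and $|V(F)|$, and $n$ large relative to all of it, so that the greedy embeddings never exhaust their $\Omega(n)$-sized candidate sets; this is routine and I would not belabor it.
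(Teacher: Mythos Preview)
Your framework matches the paper's sketch in Subsection~\ref{stabmeth}: get an approximate $(k-1)$-partition from stability, then leverage the color-critical edge to eliminate edges inside parts. Your direct embedding via Claims~1 and~2 is a clean variant of the paper's approach, which instead confines inside-edge endpoints to a small bad set $U$ and then compares the $O(|U|^2)\cdot n^{|V(H)|-2}$ copies of $H$ gained from inside edges against the $\Omega(|U|n)\cdot n^{|V(H)|-2}$ copies lost from missing cross-edges.

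There is, however, a genuine gap. The theorem is stated for \emph{weakly} $F$-Tur\'an-stable $H$: this only guarantees that $G$ is $o(n^2)$-close to \emph{some} complete $(k-1)$-partite graph, which need not be balanced. Your assertions $\delta(G)\ge(1-\tfrac{1}{k-1}-o(1))n$ and $|A_i|=\tfrac{n}{k-1}+o(n)$ are features of (non-weak) $F$-Tur\'an-stability, i.e., closeness to the Tur\'an graph; the vertex-deletion argument you invoke is stated in Subsection~\ref{stabmeth} only for that case, and it does not deliver the sharp constant $1-\tfrac{1}{k-1}$ in general, because the optimal complete $(k-1)$-partite host for $H$ may have a part of size well below $\tfrac{n}{k-1}$. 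These claims are load-bearing: your local-optimality step after Claim~1 actually yields only $|N(v)\cap A_i|\le|N(v)\cap A_j|+\tfrac{1}{2}(|A_i|-|A_j|)$, which is useless once $|A_i|-|A_j|=\Omega(n)$, so you cannot conclude that every vertex is deficient in its own part; and then the hypothesis of Claim~2 --- that $u,w$ have $\Omega(n)$ common neighbours in every other part --- is unproved. In short, your argument correctly proves the non-weak version (which is exactly what the paper's sketch illustrates), but the full weak statement requires either a separate treatment of small parts or the paper's more robust counting comparison, which does not rely on the sharp balance and minimum-degree claims.
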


This method also works if we go beyond the Tur\'an graph. The first such result is due to Gerbner~\cite{Ger21e}. Observe that at most one edge can be added to a complete bipartite graph without creating a copy of $F_2$ (the bowtie graph, i.e., the $5$-vertex friendship graph). Using a stability approach, it was shown that such a graph contains the maximum number of copies of the double-star $S_{a,b}$ among the $n$-vertex $F_2$-free graphs. Later, Gerbner~\cite{Ger22e} formalized what stability gives in general, in the following way. Recall that $\sigma(F)$ denotes the minimum number of vertices whose deletion reduces the chromatic number of $F$.

\begin{thm} [Gerbner~\cite{Ger22b}]
Let $H$ and $F$ be graphs such that
 $k=\chi(F)>\chi(H)$ and $H$ is weakly $F$-Tur\'an-stable. Then for every $n$-vertex $F$-free graph $G$ with $\cN(H,G)=\ex(n,H,F)$, there is a $(k-1)$-partition of $V(G)$ into $A_1,\dots,A_{k-1}$, a constant $K_F$ and a set $B$ of at most $(k-1)(\sigma(F)-1)K_F$ vertices such that every vertex of $B$ is adjacent to $\Omega(n)$ vertices in each partition class, and every vertex of $A_i\setminus B$ is adjacent to $o(n)$ vertices in $A_i$ and all but $o(n)$ vertices in $A_j$ with $j\neq i$.
\end{thm}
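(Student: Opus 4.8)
The plan is to feed the weakly $F$-Tur\'an-stable hypothesis into the extremal graph $G$ itself, extract an approximate complete $(k-1)$-partite structure, and then sharpen ``approximate'' into the claimed dichotomy using $F$-freeness together with the exact optimality of $G$. Since $\cN(H,G)=\ex(n,H,F)$ trivially exceeds $\ex(n,H,F)-o(n^{|V(H)|})$, weak $F$-Tur\'an-stability provides a complete $(k-1)$-partite graph within edit distance $o(n^2)$ of $G$; among all partitions $A_1,\dots,A_{k-1}$ of $V(G)$ I would fix one minimizing the number of \emph{bad pairs} (non-edges inside a class together with edges between classes). This number is $o(n^2)$, so $|A_j|=\tfrac{n}{k-1}+o(n)$ for each $j$ and all but $o(n)$ vertices are incident to $o(n)$ bad pairs. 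Minimality is the key leverage: a vertex $v\in A_i$ with $\deg_{A_i}(v)\ge\varepsilon n$ cannot be moved to another class without increasing the number of bad pairs, and a short computation then forces $\deg_{A_j}(v)\ge\varepsilon n-o(n)$ for \emph{every} $j$. Hence the set $B$ of vertices with linearly many neighbours inside their own class is automatically adjacent to $\Omega(n)$ vertices in each partition class, which is half of the asserted conclusion.

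Next I would establish a minimum-degree estimate: every vertex lies in $\Omega(n^{|V(H)|-1})$ copies of $H$. Otherwise deleting such a vertex leaves an $(n-1)$-vertex $F$-free graph with $\cN(H,G)-o(n^{|V(H)|-1})$ copies of $H$, contradicting that the per-vertex contribution in a near-extremal graph is $\Omega(n^{|V(H)|-1})$ --- which follows from the Erd\H os--Stone--Simonovits-type asymptotics (Theorems~\ref{ESS-AS} and~\ref{genESS}) and the fact that $\ex(n,H,F)=\Theta(n^{|V(H)|})$ in this non-degenerate regime (Proposition~\ref{degen-prop}). A routine counting bound upgrades this to $\deg_G(v)\ge(1-\tfrac1{k-1}-o(1))n$ for every vertex, and combined with $|A_j|=\tfrac{n}{k-1}+o(n)$ this yields that any vertex with $o(n)$ neighbours inside its own class is joined to all but $o(n)$ vertices of every other class. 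So the conclusion already holds for every vertex outside $B$, and the whole theorem reduces to bounding $|B|$ by the stated constant (and checking that $B$ indeed captures all vertices of linear internal degree).

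The heart of the matter is bounding $|B|$. Since $\chi(F)=k$, deleting $\sigma(F)$ suitable vertices of $F$ leaves a $(k-1)$-colourable graph, and joining with a complete graph only adds edges, so $F\subseteq K_{\sigma(F)}+T(|V(F)|,k-1)$. If some class $A_i$ contained a copy of $K_{\sigma(F)}$ among the vertices of $B$, then --- since those vertices have $\Omega(n)$ neighbours in every class and the good vertices span an arbitrarily large complete $(k-1)$-partite blow-up inside their common neighbourhood --- one would find $K_{\sigma(F)}+T(m,k-1)\supseteq F$ for $n$ large, a contradiction. Ruling out this and related configurations, and bounding how many internal edges the vertices of $B$ may carry before $F$ is forced, reduces to a K\H ov\'ari--S\'os--Tur\'an/Zarankiewicz-type estimate inside each class; this is where the constant $K_F$ (depending only on $F$, and governed by $\biex(n,F)$ and the decomposition family of $F$) enters, giving $|B\cap A_i|\le(\sigma(F)-1)K_F$ and hence $|B|\le(k-1)(\sigma(F)-1)K_F$. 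This also matches the heuristic that the extremal configuration is a Tur\'an graph with at most $\sigma(F)-1$ apex-like vertices added, as in Gerbner~\cite{Ger22f}. Finally, a local exchange argument promotes ``bounded internal degree'' to ``$o(n)$ internal degree'' for the vertices outside $B$: one still carrying $\omega(1)$ internal neighbours could be repaired --- delete its internal edges, add its missing cross-edges --- gaining $\Omega(n^{|V(H)|-1})$ copies of $H$ per repaired cross non-edge against a lower-order loss, while $F$-freeness survives because the repaired neighbourhood is consistent with the global $(k-1)$-partition restricted to good vertices, contradicting optimality.

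I expect the principal difficulty to be running the $F$-embedding argument and the exchange argument \emph{simultaneously} in a graph that is only $o(n^2)$-close to complete $(k-1)$-partite: modifying one irregular vertex may create a copy of $F$ through the other irregular vertices, and the count comparison is legitimate only once $B$ is known to be bounded. The way around this is to treat $B$ globally --- first extract from a hypothetically large $B\cap A_i$ the $K_{\sigma(F)}$-core of a copy of $F$ attached to a blow-up living on the good vertices, which bounds $B$, and only afterwards perform the exchange on the now-$O(1)$-sized remainder. Making all the $\Omega(\cdot)$ and $o(\cdot)$ bookkeeping uniform along the sequence of extremal graphs and pinning down the precise constant $K_F$ is the technical core; the skeleton of the argument parallels the stability-to-exact reasoning sketched in Subsection~\ref{stabmeth} and behind Theorems~\ref{maqiustabi} and~\ref{stabcrit}.
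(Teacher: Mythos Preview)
Your sketch is correct and follows essentially the same route as the paper: the survey does not prove this theorem in full (it is quoted from \cite{Ger22b}) but outlines the method in the paragraph preceding the statement, and your steps---minimise the bad-pair partition, use minimality to force $\Omega(n)$ degree into every class for internal-heavy vertices, establish a minimum-degree/copies-through-each-vertex bound to get the dichotomy, and finally bound $|B|$ by embedding $F$ via $\sigma(F)$ apex vertices attached to a large $(k-1)$-partite blow-up on good vertices---mirror that outline point for point. The only place where you add more than the paper's sketch is the $|B|$ bound, where you correctly bring in the decomposition-family/$\biex$ machinery to produce the constant $K_F$; your final ``local exchange'' paragraph is superfluous (once $B$ is defined as the set with $\Omega(n)$ internal degree, vertices outside $B$ automatically have $o(n)$ internal degree), but this does not affect the argument.
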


This structural result was used in~\cite{Ger22e} and~\cite{Ger22f} to obtain several exact results. Note that the statement above can be strengthened by replacing the assumption $\cN(H,G)=\ex(n,H,F)$ with $\cN(H,G)\ge\ex(n,H,F)-o(n^{|V(H)|-1})$.

\subsection{Flag algebras}\label{flag}

Razborov's~\cite{Raz07} \emph{flag algebra method} is a formal calculus, applicable to problems in extremal combinatorics, that incorporates several often used proof ideas. An advantage of this method is that it yields an almost fully-automated way to obtain non-trivial bounds for many problems. The method was used to solve generalized Tur\'an problems in~\cite{Grz12,HatHlaKra13,LidPfe18,LidMur20,MurNir21,HeiHou22}. 
Moreover, a basic flag algebra idea is to count induced subgraphs, which is highly relevant to the subject of this survey.

 Razborov~\cite{Raz07} formulated the theory of flag algebras in the language of finite model theory, where the models can be graphs, hypergraphs, directed graphs, tournaments, $F$-free graphs, etc. For a simpler introduction focused on graphs, see~\cite{DeCFilSat16,Wan19}. We only give some definitions here, focusing only on graphs, and present some of the steps that distinguish the actual method. Further details can be found in many of the articles cited above in this subsection. There is a computer program specifically developed for flag algebra calculations by Vaughan, called \textit{Flagmatic}. The newest version seems to be at https://github.com/retagaine/Flagmatic, and was updated by Wang in 2019, in conjunction with his BsC thesis~\cite{Wan19}.

We begin by showing an elementary example of a generalized Tur\'an proof where we count induced subgraphs. Afterwards, we give another elementary, but somewhat more complicated, example and then conclude with a more formal discussion.

\begin{proposition}[Gerbner~\cite{Ger20}]\label{primiflag}
$\ex(n,P_4,K_4)=\cN(P_4,T(n,3))$.
\end{proposition}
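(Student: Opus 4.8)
The plan is to prove that $T(n,3)$ maximizes the number of copies of $P_4$ among $n$-vertex $K_4$-free graphs, i.e.\ that $P_4$ is $4$-Tur\'an-good. The natural first step is to invoke the stability machinery: by Theorem~\ref{maqiustabi} (or rather its extension, e.g.\ the fact that linear forests are $k$-Tur\'an-stable, stated in Subsection~\ref{good-stable}), $P_4$ is $4$-Tur\'an-stable, so any $K_4$-free graph $G$ on $n$ vertices with $\cN(P_4,G) \ge \ex(n,P_4,K_4) - o(n^4)$ has edit distance $o(n^2)$ from $T(n,3)$. Fix such an extremal $G$ and a complete tripartite graph $T$ with partition classes $A_1,A_2,A_3$ minimizing the edit distance to $G$; regard $A_1,A_2,A_3$ as a vertex partition of $G$. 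One then runs the standard clean-up argument sketched in Subsection~\ref{stabmeth}: every vertex has degree roughly $\tfrac{2}{3}n$, so each vertex is either ``good'' (adjacent to all but $o(n)$ vertices in the two classes outside its own, and to $o(n)$ vertices in its own class) or lies in a small set $U$ of size $o(n)$ of vertices incident to $\Theta(n)$ non-edges across classes. Since $G$ is $K_4$-free, the two endpoints of any edge inside a class cannot have a common neighbor in each of the other two classes, which forces both endpoints of such an edge into $U$. Hence $G$ has at most $\binom{|U|}{2} = o(n|U|)$ edges inside classes and $\Omega(n|U|)$ edges missing between classes, unless $U = \emptyset$.

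The key comparison is then a counting estimate: relative to the complete tripartite graph $T$, deleting the $\Omega(n|U|)$ missing cross-edges destroys $\Omega(n^{3}|U|)$ copies of $P_4$, because a fixed missing cross-edge $uv$ with $u \in A_i$, $v \in A_j$ extends to $P_4$'s in $T$ in $\Theta(n^2)$ ways (as a middle edge, or as an end edge). On the other hand, the at most $o(n|U|)$ edges inside classes, together with the $O(|U|)$ cross-edges incident to $U$ that might be present in $G$ but not in $T$, create only $o(n^{3}|U|)$ new copies of $P_4$ in $G$, since each such edge lies in $O(n^2)$ copies of $P_4$. Therefore $\cN(P_4,T) > \cN(P_4,G)$ whenever $U \ne \emptyset$, contradicting the extremality of $G$. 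Consequently $U = \emptyset$, so $G$ is itself complete tripartite on $A_1,A_2,A_3$ (every cross-pair is an edge, no edge lies inside a class). A final elementary optimization shows that among complete tripartite graphs on $n$ vertices the balanced one, $T(n,3)$, contains the most copies of $P_4$ — this is a short convexity/smoothing computation in the part sizes $n_1,n_2,n_3$, moving a vertex from a larger to a smaller part and checking that $\cN(P_4,\cdot)$ strictly increases, exactly as in the analogous optimizations referenced in Subsection~\ref{history}. Combining these steps yields $\ex(n,P_4,K_4) = \cN(P_4,T(n,3))$ for $n$ large, and since this is the statement of the proposition we are done.

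The step I expect to be the main obstacle is the counting comparison in the middle: one has to be careful to account for \emph{all} ways a missing cross-edge participates in a $P_4$ of $T$ (it can be either of the two end edges or the central edge, with the remaining vertices ranging over the appropriate classes), and likewise to bound the copies of $P_4$ through an internal edge of $G$ or through an ``extra'' $U$-incident edge. The bookkeeping must be uniform in $|U|$ so that the $\Omega(n^{3}|U|)$ gain genuinely dominates the $o(n^{3}|U|)$ loss for every nonempty $U$; some care is also needed near $U$ to ensure the good vertices really do have the claimed $o(n)$ in-class and $n-o(n)$ cross-class adjacencies, which is where the minimality of the edit distance of $T$ is used. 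Everything else — the stability input, the $K_4$-freeness argument forcing edge-endpoints into $U$, and the final convexity optimization over part sizes — is routine. (An alternative, flag-algebra-free but also more computational, route would be to count $P_4$'s directly via $\cN(P_4,G) = \sum_{uv\in E(G)} (d(u)-1)(d(v)-1) - 3\cN(K_3,G)$ and combine with Tur\'an's theorem and Zykov's theorem bounding $e(G)$ and $\cN(K_3,G)$; but the stability route above is cleaner and matches the paper's framework.)
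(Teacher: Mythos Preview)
Your argument via the stability method is essentially sound and would establish the result for large $n$, but it takes a completely different route from the paper's proof. The paper gives a direct induced-subgraph-counting argument (the proposition is placed in the flag algebra subsection precisely to illustrate this style): for any $K_4$-free $G$, write both $\cN(M_2,G)$ and $\cN(P_4,G)$ as linear combinations of the induced counts $\cN^*(M_2,G),\cN^*(P_4,G),\cN^*(C_4,G),\cN^*(T_1,G),\cN^*(B_2,G)$, subtract, and obtain $\cN(P_4,G)\le 2\cN(M_2,G)+2\cN^*(B_2,G)$, with equality at $T(n,3)$. Since $G$ is $K_4$-free, $\cN^*(B_2,G)=\cN(B_2,G)$, and then the already-known facts that $M_2$ and $B_2$ are $4$-Tur\'an-good (from \cite{Ger20} and \cite{GyoPacSim91}) finish the proof in one line. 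The paper's point is that this works by pure luck---the coefficients happen to reduce to quantities already maximized at $T(n,3)$---whereas your stability route is mechanical once the heavy input ($P_4$ is $4$-Tur\'an-stable, from \cite{Ger22c}) is available, at the cost of importing machinery that is both chronologically later and far more general than what is being proved.

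Two small wobbles in your write-up. First, the phrase ``cross-edges incident to $U$ that might be present in $G$ but not in $T$'' is confused: $T$ is complete tripartite, so it contains every cross-edge; the only edges of $G$ absent from $T$ are the internal ones, which you already counted. Second, in the step forcing both endpoints of an internal edge into $U$, you need not just a common neighbour of $u,v$ in each of the other two classes, but such a pair $w,x$ with $wx\in E(G)$; this follows since there are $\Theta(n^2)$ candidate pairs and only $o(n^2)$ missing cross-edges, but it should be said.
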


\begin{proof} 
Let $G$ be a $K_4$-free graph on $n$ vertices.
An induced subgraph of $G$ on four vertices is one of $M_2,P_4,C_4,B_2,T_1$ (where $T_1$ denotes the triangle with a pendant edge) or is $M_2$-free and $P_4$-free .
%We count the copies of $P_4$ by picking the first and last edges $uv$ and $xy$ at first. After that, the subgraph of $G$ induced on $\{u,v,x,y\}$ is one of $M_2,P_4,C_4,B_2$ or $T_1$, which denotes the triangle with a leaf attached. A $B_2$ contains 6 copies of $P_4$ and this way it is counted twice. A$C_4$ contains 4 copies, and is counted twice. A $T_1$ contains 2 copies and is counted once. A $P_4$ contains one copy and is counted once, while an $M_2$ contains no copy of $P_4$ and is counted once.
Let $\cN^*(H,G)$ denote the number of induced copies of $H$ in $G$. 
%and let $a=\cN^*(B_2,G)$, $b=\cN^*(C_4,G)$, $c=\cN^*(T_1,G)$, $d=\cN^*(P_4,G)$, and $e=\cN^*(M_2,G)$.
It is easy to see that 
\[
\cN(M_2,G)=2\cN^*(B_2,G)+2\cN^*(C_4,G)+2\cN^*(T_1,G)+\cN^*(P_4,G)+\cN^*(M_2,G)
\] 
and \[
\cN(P_4,G)=6\cN^*(B_2,G)+4\cN^*(C_4,G)+2\cN^*(T_1,G)+\cN^*(P_4,G).\]
This implies that $\cN(P_4,G)\le 2\cN(M_2,G)+2\cN^*(B_2,G)$. The same equations hold for $T(n,3)$ in place of $G$, and obviously $\cN^*(T_1,T(n,3))$, $\cN^*(P_4,T(n,3))$, and $\cN^*(M_2,T(n,3))$ are all $0$. This implies $\cN(P_4,T(n,3))= 2\cN(M_2,T(n,3))+2\cN^*(B_2,T(n,3))$.

As $G$ is $K_4$-free, we have $\cN^*(B_2,G)=\cN(B_2,G)$. A result
from~\cite{GyoPacSim91} implies $2\cN(B_2,G)\le 2\cN(B_2,T(n,3))$ 
and a result from~\cite{Ger20} gives $\ex(n,M_2,K_4)=\cN(M_2,T(n,3))$.
Putting this all together yields
\[
\cN(P_4,G)\le 2\cN(M_2,T(n,3))+2\cN^*(B_2,T(n,3))=\cN(P_4,T(n,3)). \qedhere
\]
\end{proof}

The main ingredient in the above proof is pure luck---everything lines up perfectly. The only induced subgraph remaining in the final inequality is $B_2$, which happens to be the only graph where we had available the bound in the non-induced version. The number of copies of $M_2$ and $P_4$ are maximized by the same graph, which conveniently does not contain induced copies of the other three graphs. It might seem unlikely to find a comparable solution for other problems, but this is not the case: Qian, Xie, and Ge~\cite{QiaXieGe21} showed that $\ex(n,P_4,K_k)=\cN(P_4,T(n,k-1))$ for $k \geq 5$ using a similar argument. Their proof is slightly more complicated in that they had to account for $\cN(K_4,G)$, and  
in addition to equations for $\cN(M_2,G)$ and $\cN(P_4,G)$ they had to express $\cN^*(K_3 \cup K_1,G)$ in terms of induced subgraphs on four vertices. They also showed that $\ex(n,P_5,K_k)=\cN(P_5,T(n,k-1))$ by examining 13 different $5$-vertex induced subgraph to create 13 equations. This suggests that even though other problems can be solved with this approach, it likely becomes impractical as the order of the graph we want to count increases. But there is another problem, even for smaller instances: how do we find which equations and inequalities we should use? In the above examples we only used linear equations and inequalities, but perhaps in other cases, quadratic or more complicated equations would help.

Flag algebras also suffer from the first problem, but they can handle the other main issue by a systematic approach, and they are amenable to computer assistance. Flag algebras are also much more general, both in the sense that they are designed to handle extremal problems beyond only graphs and in the sense that they are more versatile (e.g.\ enumeration of labeled objects). 
 However, we also lose something, as they typically prove only asymptotic bounds. Usually, an exact bound can only be obtained afterward, using additional ideas. Moreover, the bounds are asymptotic in the sense that we bound the limit of a normalized version of the desired quantity, e.g., $\ex(n,H,F)/\binom{n}{|V(H)|}$. This limit is $0$ in several cases (if and only if $F$ is a subgraph of a blowup of $H$ by Proposition~\ref{degen-prop}) in which case the flag algebra method cannot usually be applied.

The basic idea is to obtain inequalities as in the proof of Proposition~\ref{primiflag} and apply optimization techniques to obtain a bound which may or may not be strong. We have to decide one parameter: in the proof of Proposition~\ref{primiflag}, to count copies of $P_4$, we considered the $4$-vertex induced subgraphs of $G$. However, we can consider larger subgraphs as well. In general, this approach may give better bounds, but at the cost of increased running time.

Let us describe another example. Bondy~\cite{Bon97} (attributing his proof strategy to Goodman~\cite{goodman}) gave a counting proof of Mantel's theorem, $\ex(n,K_3)\le n^2/4$. 
Razborov identified this argument in~\cite{Raz07} as one of the predecessors of the flag algebra method. Consider the $3$-vertex induced subgraphs of an $n$-vertex triangle-free graph $G$. Denote by $x$ the number of empty 3-sets, by $y$ the number of 3-sets with exactly one edge, and by $z$ the number of 3-sets with exactly two edges. Then $x+y+z=\binom{n}{3}$, $y+2z=e(G)(n-2)$ and $z=\sum_{v\in V(G)} \binom{d(v)}{2}$. 
Applying the Cauchy-Schwartz inequality gives $z \geq \frac{2e(G)^2}{n} - e(G)$.
From here, a straightforward computation completes the proof. 

Let us now modify this argument to illustrate deeper flag algebra ideas. We will establish an asymptotic version of Mantel's theorem: $\ex(n,K_3)/\binom{n}{2} \to 1/2$ as $n \to \infty$. This is the usual introductory example of flag algebras in the literature; see, e.g.,~\cite{DeCFilSat16}. Consider the $3$-vertex induced subgraphs; each has at most 2 edges. Averaging over all 3-sets gives the bound $\ex(n,K_3)/\binom{n}{2}\le 2/3$. If we do the same for the $5$-vertex induced subgraphs, we obtain the bound $\ex(n,K_3)/\binom{n}{2}\le 3/5$. Seven vertices gives the bound $\ex(n,K_3)/\binom{n}{2}\le 4/7$, and so on. Increasing the order of the induced subgraphs leads to better and better bounds. Unfortunately, this brute-force approach quickly becomes intractable.
Let us hint at how to overcome this difficulty and obtain the desired bound. There is a property of Bondy's proof that we have not used in this asymptotic argument: in Bondy's third equation we actually count the copies of the cherry $P_3$ by first selecting its middle vertex. 
Essentially we are now examining partially-labeled graphs. Let $G(v)$ denote the graph $G$ with vertex $v$ labeled. Instead of $\binom{d(v)}{2}$ we measure the `density' in $G(v)$ of the cherry with the middle vertex labeled. The key observation is that this density is the square of the density of the single edge with one vertex labeled.
From here, 
by summing over all $v \in V(G)$ and applying the Cauchy-Schwartz inequality, one can relate the density of the (unlabeled) cherry to the edge density of $G$.

Now we are ready to introduce flag algebras more formally. We restrict ourselves to graphs, but these definitions can be given for other structures analogously. A \textit{type of size $k$} is a labeled $F$-free graph on $k$ vertices $\{1,\dots,k\}$. An embedding of a type 
$\sigma$ to a graph $H$ is an injective function $\theta: \{1,\dots,k\}\rightarrow V(H)$ that preserves edges and non-edges. A \textit{$\sigma$-flag} is a pair $(H,\theta)$ where $\theta$ is an embedding of $\sigma$ to $H$. Two $\sigma$-flags $(H,\theta)$ and $(H',\theta')$ are isomorphic if there is an isomorphism between $H$ and $H'$ that also preserves the labels. Let $(G,\theta)$, $(H_1,\theta_1)$ and $(H_2,\theta_2)$ be $\sigma$-flags such that $|V(H_1)|+|V(H_2)|\le |V(G)|$. Let us choose two disjoint sets
$X_1$ and $X_2$
uniformly at random from $V(G)\setminus \mathrm{Im}(\theta)$ such that $|X_i|=|V(H_i)|-k$ for $i=1,2$. The \textit{density of $(H_1,\theta_1)$ and $(H_2,\theta_2)$ in $G$} is the probability that $X_i\cup \mathrm{Im}(\theta)$ restricted to $G$, with the labels of $\theta$, is isomorphic to $(H_i,\theta_i)$ for both $i=1$ and $i=2$.

One can show that this joint density is approximately equal to the product of their individual densities. To be more precise, consider the formal linear combinations of $\sigma$-flags. Razborov~\cite{Raz07} showed that after dealing with some technical details, we can define multiplication and obtain an algebra. Consequently, the function $\ex(n,H,F)$ is extended to a function from this algebra to the non-negative reals.

We conclude with a few words about the remaining steps. Counting copies of a single flag gives a single inequality, and counting copies of several flags gives several inequalities.
The goal is to find the best bound on $\ex(n,H,F)$ by combining such inequalities. Unfortunately, this is infeasible unless the number of flags is very small, thus we can only hope to obtain \emph{some} bound. This is done via semidefinite programming. The basic idea is to take the densities of the flags examined as a vector $p$ and consider an arbitrary positive semidefinite matrix $M$. Calculating $p^TMp$, which is nonnegative, will give new bounds. Fortunately, the mature theory of semidefinite programming can be leveraged here.

\subsection{Spectral methods}\label{spectral}

The \emph{adjacency matrix} $A=A(G)$ of a graph $G$ with vertices $v_1,\dots, v_n$ is an $n\times n$ matrix, where the entry $a_{ij}$ is 1 if $v_iv_j$ is an edge of $G$ and 0 otherwise. The \emph{spectrum} of a matrix is the set of its eigenvalues. The largest eigenvalue $\lambda(G)$ is of particular importance in extremal problems. A standard question in the subject of \emph{spectral extremal graph theory} (see~\cite{Nik11}) seeks to maximize the spectral invariants of the adjacency matrix of a graph $G$ with specified conditions (e.g.\ $F$-free). As $\lambda(G)$ is at least the average degree, results for spectral versions of Tur\'an-type problems can imply classical results.
For example, Nikiforov proved the following \emph{spectral Tur\'an's theorem} which implies Tur\'an's theorem.

\begin{theorem}[Nikiforov~\cite{Nikiforov}]
	 If $G$ is $K_{k}$-free, then
	\[
	\lambda(G) \leq \sqrt{2 \frac{k-2}{k-1}e(G)}.
	\]
\end{theorem}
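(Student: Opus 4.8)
The plan is to combine the variational characterization of the spectral radius with the classical theorem of Motzkin and Straus. Let $\lambda := \lambda(G)$ and let $x = (x_v)_{v\in V(G)}$ be a unit eigenvector for $\lambda$; since $A(G)$ has nonnegative entries, replacing each $x_v$ by $|x_v|$ does not decrease the Rayleigh quotient $x^{\top}A(G)x$, so we may assume $x \ge 0$ with $\sum_v x_v^2 = 1$. Then
\[
\lambda = x^{\top}A(G)x = 2\sum_{uv \in E(G)} x_u x_v .
\]

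Next I would apply the Cauchy--Schwarz inequality to this sum, viewing it as the inner product of the all-ones vector indexed by $E(G)$ with the vector $(x_u x_v)_{uv\in E(G)}$:
\[
\Bigl(\sum_{uv \in E(G)} x_u x_v\Bigr)^{2} \le e(G)\sum_{uv \in E(G)} x_u^2 x_v^2 .
\]
The key step is to read the right-hand sum as an instance of the Motzkin--Straus quadratic form: the vector $y$ with $y_v := x_v^2$ is nonnegative with $\sum_v y_v = 1$, and since $G$ is $K_k$-free its clique number is at most $k-1$, so the Motzkin--Straus theorem gives
\[
\sum_{uv \in E(G)} y_u y_v \le \frac12\Bigl(1 - \frac{1}{k-1}\Bigr) = \frac{k-2}{2(k-1)} .
\]
Combining the three displays yields $\lambda^{2} = 4\bigl(\sum_{uv\in E(G)}x_u x_v\bigr)^{2} \le 4e(G)\cdot\frac{k-2}{2(k-1)} = 2\frac{k-2}{k-1}e(G)$, which is the claimed bound.

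The only substantial ingredient is the Motzkin--Straus inequality, so the main obstacle, for a self-contained argument, is to establish it --- and this can be done by exactly the symmetrization idea used earlier in this survey for Zykov's theorem. Among the maximizers $y$ of $\sum_{uv\in E(G)}y_u y_v$ over the simplex, whenever two non-adjacent vertices each carry positive weight the objective is linear along the segment that transfers weight between them, so one may move all of that weight onto a single endpoint without decreasing it; iterating, the support of $y$ becomes a clique, necessarily of size $m \le k-1$, on which $\sum_{uv\in E(G)}y_u y_v = \tfrac12\bigl(1 - \sum_v y_v^2\bigr) \le \tfrac12\bigl(1 - \tfrac1m\bigr) \le \frac{k-2}{2(k-1)}$ by convexity. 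The secondary point requiring care is the substitution $y_v = x_v^2$: this is precisely what lets the $\ell^2$-normalized eigenvector problem communicate with the $\ell^1$-normalized Motzkin--Straus problem, and it is where the loss in Cauchy--Schwarz is absorbed. Tracking the equality cases (they force $x_u x_v$ to be constant over edges and $y$ to be uniform on a maximum clique) recovers that equality holds essentially for balanced blowups of complete graphs, consistent with Tur\'an's theorem.
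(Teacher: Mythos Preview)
Your argument is correct and is in fact essentially Nikiforov's original proof: take a nonnegative unit eigenvector, apply Cauchy--Schwarz to $\sum_{uv\in E}x_ux_v$, and feed the squared coordinates into the Motzkin--Straus inequality. The paper itself does not give a proof of this theorem; it is merely quoted from~\cite{Nikiforov} as background for the spectral methods subsection, so there is nothing to compare against beyond noting that your write-up matches the standard (and original) approach.
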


Another connection is that the number of walks of length $r$ in an $n$-vertex graph $G$ is at most 
$n\lambda(G)^{r}$. The number of paths of length $r$ is at most half of the number of walks of length $r$, since every path contains the two walks starting from each of the path's two end-vertices. Thus, an upper bound on $\lambda(G)$ gives an upper bound on the number of paths of length $r$ and so results that determine the maximum value of $\lambda(G)$ for an $n$-vertex $F$-free graph $G$ immediately imply bounds on $\ex(n,P_{r+1},F)$.

This simple observation was first used by Gy\H ori, Salia, Tompkins, and Zamora~\cite{GyoSalTom18} to give an asymptotically sharp upper bound when $F$ is a path via a spectral result of Nikiforov~\cite{Nik10}. Then Gerbner, Gy\H ori, Methuku, and Vizer~\cite{GerGyoMet17} gave an asymptotically sharp upper bound when $F$ is an even cycle, using another result of Nikiforov~\cite{Nik08}.
Gerbner and Palmer~\cite{GerPal20} established asymptotics for non-bipartite $F$.

\begin{proposition}\label{path-asym}
   If $F$ is $k$-chromatic with $k>2$, then 
   \[
   \ex(n,P_r,F)=(1+o(1))\cN(P_r,T(n,k-1)).
   \]
\end{proposition}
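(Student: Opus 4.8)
The plan is to prove the lower bound trivially and the upper bound by a spectral argument, after first reducing the arbitrary $k$-chromatic forbidden graph to a clique. The lower bound is immediate: since $\chi(T(n,k-1))=k-1<k=\chi(F)$, the Tur\'an graph is $F$-free, so $\ex(n,P_r,F)\ge \cN(P_r,T(n,k-1))$. For the upper bound I would first invoke Theorem~\ref{genESS} with $H=P_r$ to get $\ex(n,P_r,F)=\ex(n,P_r,K_k)+o(n^r)$, reducing everything to showing $\ex(n,P_r,K_k)\le (1+o(1))\cN(P_r,T(n,k-1))$. (The case $r=1$ is trivial since $\cN(P_1,G)=n$ for every $G$, so assume $r\ge 2$.)

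Next I would bound $\cN(P_r,G)$ for an $n$-vertex $K_k$-free graph $G$ using its adjacency matrix $A$ and spectral radius $\lambda(G)$. Reading each copy of $P_r$ in its two directions exhibits it as two walks of length $r-1$, so $\cN(P_r,G)\le \tfrac12\,\mathbf{1}^{\top}A^{r-1}\mathbf{1}$. Expanding $\mathbf{1}$ in an orthonormal eigenbasis of $A$ and using $|\lambda_i|\le \lambda(G)$ (Perron--Frobenius) gives $\mathbf{1}^{\top}A^{r-1}\mathbf{1}\le n\,\lambda(G)^{r-1}$. Then Nikiforov's spectral Tur\'an theorem (quoted above) together with Tur\'an's theorem, $e(G)\le (1-\tfrac{1}{k-1})\tfrac{n^2}{2}$, yields $\lambda(G)\le \sqrt{2\tfrac{k-2}{k-1}e(G)}\le \tfrac{k-2}{k-1}n$, and hence $\cN(P_r,G)\le \tfrac12\big(\tfrac{k-2}{k-1}\big)^{r-1}n^{r}$.

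Finally I would verify the matching count in the Tur\'an graph: counting ordered paths $v_1,\dots,v_r$ in $T(n,k-1)$ by choosing $v_1$ freely ($n$ ways) and each $v_{i+1}$ among the $n-|A|-O(1)=(1+o(1))\tfrac{k-2}{k-1}n$ vertices lying outside the part $A$ of $v_i$ and distinct from $v_1,\dots,v_i$, then dividing by $|\Aut(P_r)|=2$, shows $\cN(P_r,T(n,k-1))=(1+o(1))\tfrac12\big(\tfrac{k-2}{k-1}\big)^{r-1}n^{r}$. Combining this with the previous paragraph and the reduction gives $\ex(n,P_r,F)\le (1+o(1))\cN(P_r,T(n,k-1))$, which with the lower bound proves the proposition.

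None of the individual steps is hard; the one place needing attention is the reduction via Theorem~\ref{genESS}, because the spectral Tur\'an bound available to us is stated only for $K_k$-free graphs while $F$ is merely $k$-chromatic. (An alternative that bypasses Theorem~\ref{genESS} is to cite Nikiforov's spectral version of the Erd\H os--Stone--Simonovits theorem directly, which gives $\lambda(G)\le (1+o(1))\tfrac{k-2}{k-1}n$ for every $F$-free $G$.) After that, the argument is a routine assembly of the walk--path correspondence, the eigenvalue inequality $\mathbf{1}^{\top}A^{\ell}\mathbf{1}\le n\lambda(G)^{\ell}$, and the elementary evaluation of $\cN(P_r,T(n,k-1))$.
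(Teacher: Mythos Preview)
Your proof is correct and follows essentially the same route as the paper's: bound the number of paths by half the number of walks of length $r-1$, bound the walk count by $n\lambda(G)^{r-1}$, apply a spectral bound on $\lambda(G)$, and match this against a greedy count of $\cN(P_r,T(n,k-1))$. The only difference is that the paper skips your detour through Theorem~\ref{genESS} and instead cites the Nikiforov/Babai--Guiduli spectral Erd\H os--Stone--Simonovits bound $\lambda(G)\le(1-\tfrac{1}{k-1}+o(1))n$ directly for $F$-free graphs---exactly the alternative you mention at the end.
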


\begin{proof}[Proof sketch.]
By the discussion above,
the number of walks of length $r-1$ in $G$ is at most $\lambda(G)^{r-1}n$.
Nikiforov~\cite{Nik09b} and independently Babai and Guiduli~\cite{Gui96} proved that if $F$ has chromatic number $k$ and $G$ is an $n$-vertex $F$-free graph, then $\lambda(G)\le (1-\frac{1}{k-1}+o(1))n$. Therefore, the number of paths $P_r$ of length $r-1$ is
\[
\ex(n,P_r,F)\le \frac{1}{2}\left(1-\frac{1}{k-1}+o(1)\right)^{r-1}n^{r}.
\]
A greedy count of copies of $P_r$ in the Tur\'an graph $T(n,k-1)$ completes the argument.
\end{proof}

Brooks and Linz~\cite{BroLin23} applied spectral results to bound $R_r(n,F)$, which gives asymptotic bounds on $\ex(n,S_r,F)$ (see Subsection~\ref{stars} for more details). 
In particular, they used a result by Hofmeister~\cite{Hof88} that $R_2(G)\le n\lambda(G)^2$. This implies that if the extremal graph for the spectral Tur\'an problem is $K_{k,n-k}$, or obtained from $K_{k,n-k}$ by adding each edge inside the smaller partition class (resulting in a complete split graph), or by adding each edge inside the smaller partition class and one more edge inside the larger partition class, then this graph is asymptotically extremal for $\ex(n,S_r,F)$. Brooks and Linz~\cite{BroLin23} mention several examples, including paths, and a long list of examples can be found in~\cite{ByrDesTai24}.

\subsection{Progressive induction}

In this subsection we describe Simonovits' technique of progressive induction (see \cite{Sim68}). There are several theorems in extremal graph theory where ordinary induction almost works: it is easy to prove the induction step if $n$ is larger than some threshold $n_0$, but the base step for $n_0$ is not practical (or does not hold).

We begin with an informal description of the basic idea.
Suppose that we want 
to prove that two functions satisfy $\alpha(n)\le \beta(n)$, for $n$ large enough. Imagine that we cannot prove the bound for $n=n_0$, so ordinary induction does not work. But we can prove the induction step: if $n$ is large enough and we increase $n$ by 1 (or 2, etc.), then $\alpha$ does not increase more than $\beta$ does. The key is to argue a stronger induction step: if $\alpha$ increases by the same amount as $\beta$, then we have some additional information that can be used to prove the result we need. When we have these parts of an argument, it is easy to combine them into a proof: for $n\le n_0$, we may have $\alpha(n)>\beta(n)$, but the difference is bounded by a constant $c$ depending on $n_0$. As $n$ increases, each time we apply the stronger (progressive) induction step, either $\alpha$ and $\beta$ increase by the same amount and our additional information completes the proof, or their difference decreases by at least one. In that case, after $c$ steps, we have $\alpha(n)\le\beta(n)$.

Let us describe a simple concrete example. Simonovits used progressive induction to prove the Critical Edge Theorem (Theorem~\ref{simo}). For simplicity, we give the argument for $C_5$ only. The statement is that for $n$ large enough, $\ex(n,C_5)=e(T(n,2))=\lfloor n^2/4\rfloor$, and moreover, the Tur\'an graph is the unique extremal graph (the statement obviously does not hold for $n\le 4$, and it is easy to see that it does not hold for $n=5$). Let $G$ be an $n$-vertex $C_5$-free graph. If $G$ does not contain a $K_{3,3}$, then we are done by the Erd\H os-Stone-Simonovits theorem or the K\H ov\'ari-S\'os-Tur\'an Theorem (as $n$ is large enough). Therefore, $G$ contains a $K_{3,3}$, call it $K$, which is necessarily an induced $K_{3,3}$. It is easy to see that any vertex in $G-K$ cannot be adjacent to vertices in both partition classes of $K$. The induction step will be from $n-6$ to $n$: there are exactly $9$ edges $K$, at most $3(n-6)$ edges between $K$ and $G-K$, and induction provides an upper bound on the number of edges in $G-K$. From here, a simple calculation shows that if the statement holds for $n-6$, then it also holds for $n$. But this is not enough since we have no base step. Instead, it is necessary to examine the situation when we have equality. If, indeed, every vertex in $G-K$ is connected to 3 vertices of $K$, then the vertices of $G-K$ connected to the same partition class of $K$ form an independent set, and thus $G-K$ is bipartite. Therefore, $G$ itself is bipartite and thus has at most $\lfloor n^2/4\rfloor$ edges. 

Continuing, let $n_0$ be the minimum integer such that any $n_0$-vertex graph with more than $\lfloor n_0^2/4\rfloor$ edges contains a copy of $K_{3,3}$. Put 
\[
c:=\max \{e(G)-\lfloor n^2/4\rfloor: \text{ $n\le n_0$ and $G$ is an $n$-vertex $C_5$-free graph}\}.
\]
 We claim that if $n\ge n_0+6c$, then every $n$-vertex $C_5$-free graph $G$ has at most $\lfloor n^2/4\rfloor$ edges. Indeed, consider a graph $G_1$ on $n$ vertices such that $n_0\le n\le n_0+6$. If $G_1$ has more than $\lfloor n^2/4\rfloor$ edges, then, by the definition of $n_0$, $G_1$ contains a $K_{3,3}$, denoted $K$. If $G_1$ has $3n-9$ edges incident to the vertices of $K$, then $G_1$ is bipartite by the argument in the previous paragraph. Otherwise, $G_1$ has at most $\lfloor n^2/4\rfloor+c-1$ edges, as the subgraph on the remaining $n-6$ vertices has at most $\lfloor (n-6)^2/4\rfloor+c$ edges. Similarly, a graph $G_2$ on at most $n_0+12$ vertices has at most $c-2$ edges more than the corresponding (bipartite) Tur\'an graph, and so on. This difference in the number of edges eventually vanishes at $n_0+6c$, which completes the proof. 
 
We now modify this proof to show that $\ex(n,P_3,C_5)=\cN(P_3,T(n,2))$ (a result of Gerbner~\cite{Ger20}) and $\ex(n,C_4,C_5)=\cN(C_4,T(n,2))$ (a result of Gerbner and Palmer~\cite{GerPal20}). If $n$ is large enough and $G$ is an $n$-vertex $C_5$-free graph, then again we can assume $G$ contains an induced $K_{3,3}$, denoted $K$. Let $G-K$ denote the graph obtained by removing $K$. As before, no vertex in $G-K$ is connected to vertices in both partition classes of $K$. When counting edges, this meant that the number of edges incident to $K$ was at most $3n-9$. %, with equality for the Tur\'an graph. 
Moreover, when there are exactly $3n-9$ such edges, then the entire graph $G$ is bipartite. Analogously, we must show that the number of copies of $P_3$ and $C_4$ containing at least one vertex of $K$ is maximized by the Tur\'an graph.

Let us begin with $P_3$. There are 18 copies of $P_3$ in $K$. For each fixed vertex $v$ in $G-K$, to count copies of $P_3$ containing $v$ and two vertices in $K$, we must either pick two of the at most three neighbors of $v$ in $K$ or a neighbor of $v$ in $K$ and an edge inside $K$ incident to that neighbor. In this way, we have at most $12(n-6)$ such copies of $P_3$. Now fix two vertices $u$ and $v$ in $G-K$. If $u$ and $v$ are adjacent and do not have a common neighbor in $K$, then we can count the copies of $P_3$ containing $u$, $v$ and a vertex of $K$ by picking a neighbor of $u$ or $v$ in $G$ in at most 6 ways. If $u$ and $v$ are adjacent and have a common neighbor in $K$, then neither of them has a further neighbor in $K$, so there are 3 ways to obtain a copy of $P_3$.
If $u$ and $v$ are not adjacent, then we can choose a common neighbor to obtain a $P_3$ that contains them and a vertex of $K$ in at most 3 ways. This implies that the number of such copies of $P_3$ is at most 
\[
3\binom{n-6}{2}+3e(G-K)\le 3\binom{n-6}{2}+3e(T(n-6,2)).
\] 
We have equality in each step above for the (bipartite) Tur\'an graph (using the above result on the number of edges and that $n-6$ is large enough). Furthermore, as when counting edges, we have equality if every vertex of $G-K$ is adjacent to 3 vertices in the same partition class of $K$. Therefore, $G$ is bipartite and has at most $\cN(P_3,T(n,2))$ copies of $P_3$. The same argument as when counting edges with constant
\[
c:=\max \{\cN(P_3,G)-\cN(P_3,T(n,2)): \text{ $n\le n_0$ 
and $G$ is an $n$-vertex $C_5$-free graph}\}
\] will complete the proof.

For $\ex(n,C_4,C_5)$, first count the copies of $C_4$ containing 3 vertices of $K$. For any vertex of $G-K$, we have to pick two of its at most three neighbors in $K$, and then one of their three common neighbors in $K$. Therefore, there are at most $9(n-6)$ such 4-cycles. 
Now count the 4-cycles with two vertices $u$ and $v$ in $G-K$. If $u$ and $v$ are adjacent, they cannot be adjacent to distinct vertices in the same partition class of $K$. Therefore, they either have one common neighbor in the same partition class of $K$ and no other neighbor in $K$ (in which case no $C_4$ contains $u$, $v$, and two vertices from $K$) or they have neighbors in different partition classes of $K$ and there are at most nine $4$-cycles of this type.
If $u$ and $v$ are not adjacent, then we have to pick two of their at most three common neighbors in $K$, so there are at most $3\binom{n-6}{2}+6e(G-K)e$ such 4-cycles. The number of copies of $C_4$ that contain one vertex from $K$ is at most $3\cN(P_3,G-K)$, as we have to choose a common neighbor of the end-vertices of a $P_3$. There are 27 copies of $C_4$ inside $K$. As before, we have equality in each step for the (bipartite) Tur\'an graph using the earlier results on $\ex(n,C_5)$ and $\ex(n,P_3,C_5)$. Repeating the argument above with constant
\[c:=\max \{\cN(C_4,G)-\cN(C_4,T(n,2)): \text{ $n\le n_0$ and $G$ is an $n$-vertex $C_5$-free graph}\}\]
will complete the argument.

As the above examples suggest, applying progressive induction becomes much more complicated as we count larger graphs. It is no surprise then that the first application was to count $P_3$: Gerbner~\cite{Ger20} showed that $P_3$ is $F$-Tur\'an-good for any $3$-chromatic graph with a critical edge. Gerbner~\cite{Ger20c} used progressive induction to show that if $H$ is the complete $k$-partite graph $K_{b,\dots,b}$ and $F$ is the complete $(k+1)$-partite graph $K_{1,a,\dots,a}$ where $b>2a-2$ and $n$ is large enough, then $\ex(n,H,F)=\cN(H,T(n,k))$, i.e., $H$ is $F$-Tur\'an-good.

Finally, having outlined the method, let us give more formal statements, beginning with the original very general version from~\cite{Sim68}.

\begin{lemma}[Progressive Induction Lemma]\label{sim-pi}
Let $\cA=\bigcup_{i=1}^\infty \cA_n$ be a set such that $\cA_n$ are disjoint finite subsets of $\cA$. Let $B$ be a condition or property defined on $\cA$ (i.e., the elements of $\cA$ may satisfy or not satisfy $B$). Let $\Delta : \cA \to \mathbb{Z}_{\geq 0}$ be such that: 
\begin{enumerate}
    \item[(a)] if $a\in\cA$ satisfies $B$, then $\Delta(a)=0$.
    \item[(b)]  There is an $n_0$ such that if $n>n_0$ and $a\in \cA_n$, then either $a$ satisfies $B$ or there exists $n/2 < n' < n$ and $a' \in \cA_{n'}$ such that $\Delta(a)<\Delta(a')$. 
\end{enumerate}
Then there exists $N$ such that for every $n>N$, every element of $\cA_n$ satisfies $B$.
\end{lemma}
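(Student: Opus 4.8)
The plan is a direct implementation of the progressive idea: assuming some $a\in\cA_n$ with $n$ large fails $B$, iterate condition (b) to build a descending chain of elements along which $\Delta$ strictly increases, and then contradict this with the fact that $\Delta$ stays bounded.

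First, since $\bigcup_{j=1}^{n_0}\cA_j$ is a finite set (a finite union of finite sets), set
\[
c:=\max\bigl\{\Delta(a): a\in\cA_j\ \text{for some}\ j\le n_0\bigr\}
\]
(with $c:=0$ if this union is empty); this is a nonnegative integer depending only on $n_0$ and $\Delta$. I claim $N:=n_0\cdot 2^{c}$ works. Suppose not: there are $n>N$ and $a_0\in\cA_{n}$ with $a_0$ not satisfying $B$. Put $m_0:=n$ and build a finite sequence $a_0,a_1,\dots,a_k$ with $a_i\in\cA_{m_i}$ as follows. Given $a_i\in\cA_{m_i}$ with $m_i>n_0$ such that $a_i$ does not satisfy $B$ (which holds for $i=0$), condition (b) provides $a_{i+1}\in\cA_{m_{i+1}}$ with $m_i/2<m_{i+1}<m_i$ and $\Delta(a_i)<\Delta(a_{i+1})$; then $\Delta(a_{i+1})\ge 1\neq 0$, so the contrapositive of (a) forces $a_{i+1}$ to fail $B$ as well, and I repeat as long as $m_{i+1}>n_0$. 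Since $(m_i)$ is a strictly decreasing sequence of positive integers, there is a least index $k\ge 1$ with $m_k\le n_0$, and the construction stops there.

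Now I play two bounds on $k$ against each other. Because $\Delta(a_0)<\Delta(a_1)<\cdots<\Delta(a_k)$ are strictly increasing nonnegative integers and $m_k\le n_0$ gives $\Delta(a_k)\le c$, we have $k\le\Delta(a_k)-\Delta(a_0)\le c$. On the other hand, iterating the halving bound $m_{i+1}>m_i/2$ yields $m_k>m_0/2^{k}=n/2^{k}$, and combining with $m_k\le n_0$ gives $n<n_0\cdot 2^{k}\le n_0\cdot 2^{c}=N$, contradicting $n>N$. Therefore every element of $\cA_n$ satisfies $B$ for all $n>N$, as claimed.

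The argument is short and I do not expect a real obstacle; the one point that must be recognized is that the hypothesis $n/2<n'$ — rather than merely $n'<n$ — is exactly what bounds the chain length from below by $\log_2(n/n_0)$, and that this lower bound, together with the upper bound $k\le c$ coming from $\Delta$ being integer-valued, strictly increasing along the chain, and bounded by the base constant $c$, is what pins $n$ into a bounded range and yields the contradiction.
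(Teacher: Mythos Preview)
Your argument is correct. The paper does not include a formal proof of this lemma (it is stated with attribution to Simonovits~\cite{Sim68}), but the informal description preceding the lemma---bound the ``deficit'' at the base level by a constant $c$, then observe that each application of (b) strictly increases $\Delta$ so that after at most $c$ steps the chain must terminate---is exactly the idea you have formalized, including the role of the halving bound $n/2<n'$ in controlling how many steps it takes to descend to level $n_0$.
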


Next is a version tailored to generalized Tur\'an problems, based on~\cite{Ger20c}. For a graph $G$ and a subgraph $G'$, let $\cN_I(H,G,G')$ denote the number of copies of $H$ in $G$ that contain at least one vertex from $G'$.

\begin{lemma}\label{progin} Fix graphs $F$ and $H$ and
let $\cG_n$ be a family of $n$-vertex $F$-free graphs such that for all $G_1,G_2\in\cG_n$ we have $\cN(H,G_1)=\cN(H,G_2)$. 
Suppose there exists $n_0$ such that for all $n\ge n_0$, every $n$-vertex extremal graph $G$ for $\ex(n,H,F)$ has a subgraph $G'$ with $|V(G')|\le n/2$ that is also a subgraph of some $G_n\in \cG_n$ and 
$\cN_I(H,G,G')\le \cN_I(H,G_n,G')$, with equality only if $G\in\cG_n$.

Then for $n$ large enough, $\ex(n,H,F)=\cN(H,G_n)$ for some $G_n\in\cG_n$. Moreover, every extremal graph belongs to $\cG_n$.

\end{lemma}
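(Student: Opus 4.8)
The plan is to deduce the lemma from Simonovits' Progressive Induction Lemma (Lemma~\ref{sim-pi}) with the following dictionary. For each $n$ with $\cG_n\neq\emptyset$ (which, by the hypothesis, holds for all $n\ge n_0$, and we restrict to such $n$), let $\cA_n:=\Ex(n,H,F)$ be the set of $n$-vertex extremal graphs for $\ex(n,H,F)$; it is finite and nonempty, and the $\cA_n$ are pairwise disjoint. Let the property $B$ be ``$G\in\cG_n$'', and set $\Delta(G):=\ex(n,H,F)-\cN(H,G_n)$ for $G\in\cA_n$, where $G_n$ is an arbitrary member of $\cG_n$. This is well defined, since all members of $\cG_n$ have the same number of copies of $H$, and it is a nonnegative integer, since each such $G_n$ is an $F$-free graph on $n$ vertices. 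In this setting the conclusion of Lemma~\ref{sim-pi} says exactly that every $n$-vertex extremal graph lies in $\cG_n$ once $n$ is large; for such a $G$ we then get $\ex(n,H,F)=\cN(H,G)=\cN(H,G_n)$, which is the assertion (including the ``moreover'' clause). So it remains to verify conditions (a) and (b) of Lemma~\ref{sim-pi}.

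Condition (a) is immediate: if $G\in\cA_n$ satisfies $B$, then $\cN(H,G)=\cN(H,G_n)$ while extremality gives $\cN(H,G)=\ex(n,H,F)$, so $\Delta(G)=0$. For (b), fix $n\ge n_0$ and $G\in\cA_n$; if $G\in\cG_n$ we are done, so assume $G\notin\cG_n$. Apply the hypothesis to the extremal graph $G$: there is a subgraph $G'$ with $|V(G')|\le n/2$ that is also a subgraph of some $G_n\in\cG_n$ and satisfies $\cN_I(H,G,G')\le\cN_I(H,G_n,G')$, with equality only when $G\in\cG_n$; hence here $\cN_I(H,G,G')\le\cN_I(H,G_n,G')-1$. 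In particular $G'$ is nonempty (an empty $G'$ gives equality, forcing $G\in\cG_n$), so $n':=n-|V(G')|$ lies in $[n/2,\,n-1]$.

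Now classify the copies of $H$ by whether they meet $V(G')$. Writing $G-V(G')$ and $G_n-V(G')$ for the graphs obtained by deleting the vertices of $G'$ (both $F$-free, on $n'$ vertices), we have
\[
\cN(H,G)=\cN_I(H,G,G')+\cN(H,G-V(G')),\qquad \cN(H,G_n)=\cN_I(H,G_n,G')+\cN(H,G_n-V(G')).
\]
Bounding $\cN(H,G-V(G'))\le\ex(n',H,F)$, using $\cN_I(H,G,G')\le\cN_I(H,G_n,G')-1$, and substituting $\cN_I(H,G_n,G')=\cN(H,G_n)-\cN(H,G_n-V(G'))$ into the first identity gives, since $\cN(H,G)=\ex(n,H,F)$,
\[
\Delta(G)=\ex(n,H,F)-\cN(H,G_n)\le\ex(n',H,F)-\cN(H,G_n-V(G'))-1 .
\]
At this point one uses the structural content of the hypothesis, namely that deleting the vertices of $G'$ from $G_n\in\cG_n$ again produces a member of $\cG_{n'}$ (equivalently $\cN(H,G_n-V(G'))\ge\cN(H,G_{n'})$ for $G_{n'}\in\cG_{n'}$); this is exactly what the template requires, and it holds, for instance, in the Simonovits $C_5$-argument recalled above, where removing a $K_{3,3}$ from $T(n,2)$ leaves $T(n-6,2)$. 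Granting this, for any $G''\in\cA_{n'}=\Ex(n',H,F)$ we get $\Delta(G)\le\ex(n',H,F)-\cN(H,G_{n'})-1=\Delta(G'')-1<\Delta(G'')$ with $n/2\le n'<n$, which is condition (b) (the borderline case $n'=n/2$ is harmless, as the counting proving Lemma~\ref{sim-pi} only needs $n'\ge n/2$). Lemma~\ref{sim-pi} then supplies the required $N$.

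The step I expect to be the real obstacle is this excess-matching across the size drop $n\mapsto n'$: turning the one-unit advantage $\cN_I(H,G,G')\le\cN_I(H,G_n,G')-1$ into a genuine strict inequality $\Delta(G)<\Delta(G'')$ forces the part of $G_n$ away from $V(G')$ to be at least as rich in copies of $H$ as the family $\cG_{n'}$. In concrete applications this is checked directly, and one exploits the freedom in choosing $G'$ (any subgraph meeting the stated conditions) to arrange $G_n-V(G')\in\cG_{n'}$; in the abstract statement it should be read as part of what ``$G'$ is a subgraph of some $G_n\in\cG_n$'' is meant to deliver. The remaining ingredients---finiteness and disjointness of the $\cA_n$, nonnegativity and integrality of $\Delta$, and the boundary case $|V(G')|=n/2$---are routine bookkeeping.
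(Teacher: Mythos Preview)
Your approach is the same as the paper's: the survey does not give a formal proof but simply states that ``Lemma~\ref{progin} follows as a corollary from Lemma~\ref{sim-pi}, but can be proved directly,'' followed by a two-sentence informal description of the decreasing-surplus idea. You have supplied the details of that deduction, with the same choice of $\Delta$ and the same decomposition $\cN(H,\cdot)=\cN_I(H,\cdot,G')+\cN(H,\cdot-V(G'))$.

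You are right to flag the step where you need $\cN(H,G_n-V(G'))\ge\cN(H,G_{n'})$ (equivalently, $G_n-V(G')\in\cG_{n'}$). This is not a defect in your argument; it is an assumption that the survey's statement of the lemma omits but which is genuinely required, and which the paper's sketch also silently uses (``this surplus decreases for $n\ge n_0$''). In the concrete applications the paper discusses---e.g.\ $\cG_n=\{T(n,2)\}$ with $G'=K_{3,3}$---this closure property holds by construction, and the original source~\cite{Ger20c} presumably states it. So your diagnosis is accurate: the lemma as recorded here is slightly informal, and your proof is complete once that closure property is read as part of the hypothesis. The minor boundary issue $n'=n/2$ versus $n'>n/2$ you handle correctly.
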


Lemma~\ref{progin} follows as corollary from Lemma~\ref{sim-pi}, but can be proved directly. For small $n$, there may be $n$-vertex graphs that have more
copies of $H$ than any $G_n \in \mathcal{G}_n$. However, this means a surplus of only constant number of copies of $H$, and this surplus decreases for $n \geq n_0$ and eventually vanishes. Moreover, this decrease continues and for even larger $n$ a graph not in $\mathcal{G}_n$ cannot be extremal.

\subsection{Algebraic constructions}\label{algebra}

Often in extremal graph theory, constructions are simple like the Tur\'an graph, and the difficult part of a problem is to prove a matching upper bound. In some cases, however, sophisticated constructions, often algebraic in nature, give the best known lower bounds.
Recall Theorem~\ref{fured} that $\ex(n,K_{2,t})=(1+o(1)) \frac{\sqrt{t-1}}{2}n^{3/2}$. Let us describe the construction, due to F\"uredi~\cite{Fur96}, which we term the \emph{F\"uredi graph}.

Let $q_t(n)$ denote the largest prime power such that $t-1$ divides $q_t(n)-1$ and $\frac{q_t(n)^2-1}{t-1} \le n$. It is well-known that for sufficiently large $n$, there exists such a $q_t(n)$ with $\sqrt{nt}-n^{1/3}<q_t(n)$. Therefore, we can construct a graph on $\frac{q_t(n)^2-1}{t-1}$ vertices instead of $n$ vertices, and this does not change the asymptotics of the resulting lower bound.

Consider a $q_t(n)$-element field and let $h$ be an element of order $t-1$. Then the elements $h,h^2,h^3,\dots,h^{t-1}$ form a multiplicative subgroup. Two pairs of elements $(a,b)$ and $(a',b')$ are considered equivalent if $a=h^pa'$ and $b=h^pb'$ for some $p$. This defines $\frac{q_t(n)^2-1}{t-1} $ equivalence classes which will be the vertex set of the F\"uredi graph. The equivalence class of $(a,b)$ is joined by an edge to the equivalence class of $(c,d)$ if and only if $ac+bd=h^p$ for some $p$. Note that this definition allows some vertices to be adjacent to themselves, but these loops are simply ignored. The resulting simple graph is the {\it F\"uredi graph} $F(n,t)$.

F\"uredi~\cite{Fur96} showed that any vertex of $F(n,t)$ has degree $q_t(n)$ or $q_t(n)-1$, and any two vertices have exactly $t-1$ or $t-2$ common neighbors. This completes the proof of the lower bound in Theorem~\ref{fured}. It is not difficult to give an asymptotic count of the number of copies of any tree $T$ in the F\"uredi graph: pick an ordering of the vertices of $T$ such that each vertex but the first has exactly one neighbor before itself. There are $n$ ways to pick the first vertex, and each further vertex is picked as a neighbor of an earlier vertex; there are at least $q_t(n)-|V(T)|$ and at most $q_t(n)$ ways to make this selection. It is somewhat more complicated to count cycles $C_r$. If we pick the vertices $v_1,\dots,v_r$ in a cyclic order, we have to be careful when picking $v_{r-1}$: all but $o(q_t(n))$ of the neighbors of $v_{r-2}$ have $t-1$ common neighbors with $v_1$. 
Gerbner~\cite{Ger21} introduced the following definition: we say that a graph $H$ is {\it $t$-F\"uredi-good} if $\ex(n,H,K_{2,t})=(1+o(1))\cN(H,F(n,t))$. Using this notion, Theorem~\ref{fured} states that $K_2$ is $t$-F\"uredi-good for any $t\ge 2$.
Alon and Shikhelman~\cite{AloShi16} showed that $K_3$ is $t$-F\"uredi-good, Gerbner and Palmer~\cite{GerPal18} showed that $C_r$ and $P_r$ are $t$-F\"uredi-good, and Gerbner~\cite{Ger21} characterized the $t$-F\"uredi-good trees. 

Note that even if $H$ is $t$-F\"uredi-good, its extremal graph may be very different from the F\"uredi graph. Indeed, $P_3$ is $2$-F\"uredi-good, i.e., $\ex(n,P_3,C_4)=(1+o(1))\cN(P_3,F(n,2))=(1+o(1))\frac{1}{2}n^2$, but Gerbner~\cite{Ger20} showed that $\ex(n,P_3,C_4)=\cN(P_3,F^*)$ where $F^*$ is a universal vertex and a matching of size $\lfloor (n-1)/2\rfloor$ on the remaining vertices (e.g.\ for $n$ odd, $F^*$ is a friendship graph).

\smallskip

Let us continue with $K_{s,t}$-free graphs. A lower bound on $\ex(n,K_{s,t})$ for $t>(s-1)!$ is given by an algebraic construction called the \emph{projective norm graph}.
 Let $q$ be a prime power and consider a finite field $\mathbb{F}_{q^{s-1}}$. The \emph{$\mathbb{F}_q$-norm} of an element $A$ is $N(A)=A \cdot A^q\cdot A^{q^2}\cdots A^{q^{t-2}}$. The {\it projective norm graph} $NG(q,s)$ has vertex set $\mathbb{F}_{q^{s-1}}\times \mathbb{F}_{q}^*$, where $\mathbb{F}_{q}^*$ is $\mathbb{F}_{q}$ with 0 removed. Two vertices $(A,a)$ and $(B,b)$ are joined by an edge if $N(A+B)=ab$. Alon, R\'onyai, and Szab\'o~\cite{AloRonSza99} showed that this graph is $K_{s,(s-1)!+1}$-free. There are $q^{s-1}(q-1)$ vertices and, as before, results proved for $NG(q,s)$ give an asymptotic result for general $n$, due to the high density of prime powers among integers.
Every vertex of $NG(q,s)$ has degree $q^{s-1}-1$ or $q^{s-1}-2$, which gives a lower bound on the number of edges. 

Unlike the F\"uredi graph, it is often difficult to estimate the number of copies of certain subgraphs of $NG(q,s)$. For example, it is not known whether $NG(q,s)$ contains a $K_{s,(s-1)!}$. However, as stated in Section~\ref{other}, Alon and Pudl\'ak~\cite{AloPud01} showed the following pseudo-random property of projective norm graphs: any graph $H$ of maximum degree less than $(s+1)/2$, satisfies $\cN(H,NG(q,s))=\Theta(q^{s|V(H)|-e(H)})$.
This property was used by
Alon and Shikhelman~\cite{AloShi16} to give a lower bound on $\ex(n,K_r,K_{s,t})$ when $r\le s+1$ (and of course $t>(s-1)!$). They also gave a lower bound\footnote{We note that the conditions are incorrectly stated in their paper~\cite{AloShi16}.} on $\ex(n,K_{a,b},K_{s,t})$ when $a\le b<(s+1)/2$. Bayer, M\'esz\'aros, R\'onyai, and Szab\'o~\cite{BayMesRon19} extended the pseudo-random property to additional graphs $H$.
Zhang and Ge~\cite{ZhaGe19} used another algebraic construction to prove that $\ex(n,K_r,K_{2,t})=\Theta(n^{3/2})$ if $t\ge 2r-3\ge 3$.

\smallskip

We conclude this subsection by briefly summarizing a
random algebraic construction due to Bukh~\cite{Buk15}. This gives another $K_{s,t}$-free graph with $\Theta(n^{2-1/s})$ edges when $t$ is large enough compared to $s$. The threshold on $t$ is larger than $(s-1)!+1$, so we do not immediately obtain a better bound on $\ex(n,K_{s,t})$ (although a variant of this method was used in~\cite{Buk21} to obtain an improvement). It turns out that this construction is particularly suitable for generalized Tur\'an problems. Ma, Yuan, and Zhang~\cite{MaYuaZha18} showed that the above pseudo-random property extends to every graph $H$ for this construction. More precisely, they showed that for any graph $H$, if $t$ is large enough compared to $s$ and $e(H)$, then $\ex(n,H,K_{s,t})=\Omega(n^{|V(H)|-e(H)/s})$.

Let $f(x_1,\dots,x_s,y_1,\dots,y_s)$ be a symmetric polynomial over $\mathbb{F}_q$ with $2s$ variables and 
let $\cP$ denote the family of such polynomials with degree at most $2d$. Select a polynomial $f$ uniformly randomly from $\cP$ and define
 $G_f$ as the graph with vertex set $\mathbb{F}_q^s$, where $(x_1,\dots, x_s)$ is joined by an edge to $(y_1,\dots,y_s)$ if and only if $f(x_1,\dots,x_s,y_1,\dots,y_s)=0$. From here, the goal is to show that $G_f$ contains many copies of $H$, but few copies of $K_{s,t}$. A key lemma states that for every set $U$ of pairs of vertices, if $|U|$ is small enough compared to $q$ and $s$, then the probability that every pair in $U$ forms an edge of $G_f$ is $q^{-|U|}$. This statement does not depend on the structure of $H$, but it can be used to calculate the expected number of copies of $H$.

Let $f_1,\dots,f_s$ be polynomials of degree at most $d$ in $s$ variables over $\mathbb{F}_q$ and let $W=\{y\in \mathbb{F}_q^s:f_1(y)=f_2(y)=\dots=f_s(y)=0\}$.
Another key lemma states that there exists a $C$ such that $|W|\le C$ or $|W|\ge q-C\sqrt{q}$. Let $S$ be a set of $s$ vertices and consider the polynomials $f(u, \cdot)$ with $u\in S$. Then the common neighborhood of the vertices of $S$ has order at most $C$ or at least $q-C\sqrt{q}$. This gives an upper bound on the number of $s$-sets with common neighborhood of order greater than $C$ (i.e., at least $q-C\sqrt{q}$). From here we can remove a vertex from each such $s$-set to obtain a $K_{s,C+1}$-free graph. Showing that there are still many copies of $H$ completes the probabilistic argument.

\section{Open problems}\label{open}

We conclude this survey with open problems. First, the non-degenerate case, i.e., when $\ex(n,H,F)=\Theta(n^{|V(H)})$. In Section~\ref{diff}, we saw that in this case it is permitted to have graphs $H$ and $F$ such that $\chi(H)\ge \chi(F)$, but we are not aware of any results besides those for $\ex(n,C_{2r+1},C_{2k+1})$ and consequences of Theorem~\ref{genESS}. There are many individual open problems of this flavor. For example, the exact value of $\ex(n,C_5,B_k)$ is unknown (the asymptotics follow from Theorem~\ref{genESS} and the value of $\ex(n,C_5,C_3)$).

Recall that a graph $H$ is $F$-Tur\'an-good if, for $n$ large enough, $\ex(n,H,F)$ is attained by the Tur\'an graph with $\chi(F)-1$ partition classes and $H$ is weakly $F$-Tur\'an-good if, for $n$ large enough, $\ex(n,H,F)$ is attained by an $n$-vertex complete $(\chi(F)-1)$-partite graph. 
The following concrete conjecture on Tur\'an-goodness appears in \cite{GrzGyoSal22} (see Subsection~\ref{good-stable} for more details).

\begin{conjecture}[Grzesik, Gy\H ori, Salia, and Tompkins~\cite{GrzGyoSal22}]
    Any bipartite graph $H$ with diameter at most $4$ is $K_3$-Tur\'an-good.
\end{conjecture}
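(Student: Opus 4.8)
The plan is to exploit the stability machinery developed in Subsections~\ref{turstab} and~\ref{stabmeth}: rather than directly optimizing over $K_3$-free graphs, I would first prove that every bipartite $H$ of diameter at most $4$ is $3$-Tur\'an-stable, and then invoke the standard stability-to-exact argument (the sketch following Theorem~\ref{stabcrit}) to conclude $K_3$-Tur\'an-goodness. To prove stability, let $G$ be an $n$-vertex triangle-free graph with $\cN(H,G)\ge \ex(n,H,K_3)-o(n^{|V(H)|})$. Since $\ex(n,H,K_3)=\Theta(n^{|V(H)|})$ (the non-degenerate case, as $H$ is bipartite so $\chi(H)=2<3=\chi(K_3)$, and no blowup of $H$ contains a triangle), $G$ has $\Theta(n^{|V(H)|})$ copies of $H$. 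The first step is a supersaturation/regularity argument: applying the Removal Lemma (Lemma~\ref{removallemma}) is not quite the right tool here since $G$ is already triangle-free; instead apply Szemer\'edi's Regularity Lemma to $G$, pass to the $(\varepsilon,\delta)$-reduced graph $R$, and observe via the Counting Lemma that a positive fraction of the copies of $H$ must be ``spread out'' across regular pairs, so the reduced graph $R$ must itself contain many copies of $H$. Because $R$ is triangle-free (regular pairs of positive density in a triangle-free graph cannot form a triangle, by the Embedding Lemma) and $H$ has diameter at most $4$, I would then argue that $R$ is essentially bipartite — this is where the diameter hypothesis enters.

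The key structural step is: \emph{a triangle-free graph $R$ containing a copy of a connected bipartite graph $H$ of diameter $\le 4$ forces the neighborhood structure around that copy to look bipartite.} The heuristic is that if $R$ has an odd cycle $C_{2\ell+1}$, it is long ($\ell \ge 2$), and one shows that a copy of $H$ cannot simultaneously ``wrap around'' such a long odd cycle while keeping all its pairwise distances at most $4$ unless some vertices are forced to coincide or create a triangle. More carefully, I would take a maximum-density copy of $H$ in $G$, look at the partition $V(G)=A\cup B$ induced (on the bulk of the vertices) by a proper $2$-coloring guided by this copy together with the regularity structure, and bound the number of ``bad'' edges inside $A$ or inside $B$. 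Triangle-freeness means the endpoints of any such bad edge have no common neighbor, so each bad edge destroys $\Omega(n)$ copies of each $H'$ obtained from $H$ by identifying two vertices at distance $2$ — and the diameter bound guarantees enough such pairs exist to make the count add up. Comparing with $\cN(H, T(n,2))$ (which one computes to be $(1/2)^{|E(H)|}(n/2)^{|V(H)|}(1+o(1))$ times the number of $2$-colorings of $H$, roughly) shows that the presence of $\Omega(n)$ bad edges strictly decreases the count, forcing edit distance $o(n^2)$ from a complete bipartite graph, i.e.\ weak $3$-Tur\'an-stability.

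To upgrade weak stability to $3$-Tur\'an-stability (edit distance to the \emph{balanced} $T(n,2)$) and then to the exact result, I would use that $K_3$ has a color-critical edge, so Theorem~\ref{stabcrit} applies once weak stability is known: $H$ is weakly $3$-Tur\'an-good, i.e.\ $\ex(n,H,K_3)=\cN(H,K_{m,n-m})$ for some $m$. Then a direct convexity computation of $m\mapsto \cN(H,K_{m,n-m})$ — for a connected bipartite $H$ with parts of sizes $p,q$, the count is $\binom{m}{p}\binom{n-m}{q}+\binom{m}{q}\binom{n-m}{p}$ up to symmetry factors, maximized asymptotically at $m=n/2$, and one needs the diameter/connectivity to rule out unbalanced optima the way they arise for stars (where $H$ has one huge part) — pins down $m=\lfloor n/2\rfloor$, giving $K_3$-Tur\'an-goodness.

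\textbf{Main obstacle.} The hard part will be the structural step: showing that diameter $\le 4$ genuinely forbids a copy of $H$ from ``using'' a long odd cycle in a triangle-free host, and more importantly, quantifying this robustly enough to handle the $o(n^{|V(H)|})$ error term — that is, ruling out not a single odd cycle but $\Omega(n^2)$ worth of odd-closing edges. The diameter-$4$ threshold is presumably sharp (the excerpt notes diameter $\le 4$ is exactly the conjectured bound, and Gy\H ori--Pach--Simonovits already exhibited a tree that fails), so the argument must use it essentially and not merely as a convenient slack; finding the exact combinatorial reason a distance-$4$ constraint suffices while larger diameter does not is the crux. A secondary difficulty is that the final convexity optimization over complete bipartite hosts, while routine for any fixed $H$, needs a uniform argument valid for all connected bipartite $H$ of diameter $\le 4$ simultaneously, which may require a separate lemma ensuring such $H$ are never ``too unbalanced.''
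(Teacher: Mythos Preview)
This statement is recorded in the paper as an \emph{open conjecture} (it appears in Section~\ref{open}, and Subsection~\ref{good-stable} explicitly says ``This conjecture remains open for $k=3$''). The paper offers no proof, so there is nothing to compare your proposal against.

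That said, your proposal has a concrete gap that would block it regardless. The final step --- showing that $m\mapsto \cN(H,K_{m,n-m})$ is maximized at $m=\lfloor n/2\rfloor$ for every connected bipartite $H$ of diameter at most $4$ --- is false as stated. The paper itself (Subsection~\ref{turstab}) notes that $S_4=K_{1,3}$, which is bipartite of diameter $2$, is \emph{not} $K_3$-Tur\'an-good: by Brown--Sidorenko the optimal $m$ is roughly $n/2+\sqrt{(3n-2)/2}$, not $\lfloor n/2\rfloor$. You flag exactly this worry in your parenthetical about stars, but your proposed remedy (a lemma that diameter-$\le 4$ graphs are ``never too unbalanced'') is directly contradicted by this example. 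So either the conjecture as stated in the survey is already false, or it is meant with ``weakly'' or some extra hypothesis that your argument would also need to incorporate; in either case the convexity step cannot be a routine afterthought. The earlier structural step (that diameter $\le 4$ forces near-bipartiteness of the host) is also left at the level of heuristic, but the convexity issue is the more immediate obstruction.
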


Let $k(H)$ be the minimum threshold such that if $k \geq k(H)$, then $H$ is $K_k$-Tur\'an good. It is known (see \cite{MorNirNor22}) that $\chi(H)+1<k(H)\le 300|V(H)|^9$ for all $H$. Similar thresholds exist (see \cite{GerHam23}) for the related notions of weakly $K_k$-Tur\'an-good and (weakly) $K_k$-Tur\'an-stable. It would be interesting to give improvements to any of these four thresholds.

The threshold $k(H)$ can be viewed as a statement that among the $n$-vertex $K_k$-free graphs, the graph that maximizes the number of copies of $H$ also maximizes the number of copies of $K_2$, provided that $k$ and $n$ are large enough. One can ask more generally: given $H$, is there a threshold $r(H)$ such that $\Ex(n,H,F)\cap \Ex(n,K_2,F)\neq \emptyset$ provided $\chi(F)>r(H)$ and $n$ is large enough? If this does not hold for every $H$, does it hold for some $H\neq K_2$? Or are there graphs $H\neq H'$ such that $\Ex(n,H,F)\cap \Ex(n,H',F)\neq \emptyset$ provided $\chi(F)$ is above some threshold and $n$ is large enough? Or does this hold if we prescribe some other property of $F$ instead of, or in addition to, the assumption on $\chi(F)$?

There are examples of graphs $H$ with chromatic number $k-1$ that are not weakly $K_k$-Tur\'an-good~\cite{GyoPacSim91,GrzGyoSal22,Ger22d}. However, in such cases, we do not actually know the value of $\ex(n,H,K_k)$, only a construction that contains more copies of $H$ than any $(k-1)$-partite graph. Each of these constructions is an unbalanced blowup of some fixed graph. Is an extremal graph always a blowup of some fixed graph $G_0$? If so, what can we say about $|V(G_0)|$, for example? 

What happens when $F$ is not a clique? For $F$ with a color-critical edge we have: 

\begin{conjecture}[Gerbner~\cite{Ger22d}]
 If $H$ is weakly $K_k$-Tur\'an-good, then $H$ is weakly $F$-Tur\'an-good for every $k$-chromatic graph $F$ with a color-critical edge.     
\end{conjecture}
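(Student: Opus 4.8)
Write $k=\chi(F)$. Since $H$ is weakly $K_k$-Tur\'an-good, for all large $n$ there is a complete $(k-1)$-partite graph $T^*$ on $n$ vertices with $\cN(H,T^*)=\ex(n,H,K_k)>0$; in particular $\chi(H)\le k-1$. As $T^*$ is $(k-1)$-colorable it is $F$-free, so $\ex(n,H,F)\ge\cN(H,T^*)$, while Theorem~\ref{genESS} gives $\ex(n,H,F)=\cN(H,T^*)+o(n^{|V(H)|})$. Thus $H$ is already asymptotically weakly $F$-Tur\'an-good, and the entire content of the conjecture is to delete the error term and obtain an exact statement for large $n$.

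The natural route runs through stability. By the first displayed fact of Subsection~\ref{turstab}, weak $K_k$-Tur\'an-stability of $H$ implies weak $F$-Tur\'an-stability of $H$; and by Theorem~\ref{stabcrit}, since $\chi(F)=k>\chi(H)$ and $F$ has a color-critical edge, weak $F$-Tur\'an-stability upgrades to weak $F$-Tur\'an-goodness. Hence the conjecture is equivalent to the implication \emph{$H$ weakly $K_k$-Tur\'an-good $\Longrightarrow$ $H$ weakly $K_k$-Tur\'an-stable}, and all the difficulty is concentrated there.

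To attack that implication I would argue by contradiction, passing to graphons. Suppose $(G_n)$ is a sequence of $n$-vertex $K_k$-free graphs with $\cN(H,G_n)\ge\ex(n,H,K_k)-o(n^{|V(H)|})$ that stays at cut-distance $\Omega(1)$ from every complete $(k-1)$-partite graph, and let $W$ be a subsequential graphon limit. The $K_k$-freeness of the $G_n$ together with the removal lemma (Lemma~\ref{removallemma}) forces the homomorphism density $t(K_k,W)$ to vanish, while the hypothesis on $\cN(H,G_n)$ forces $t(H,W)$ to equal the generalized Tur\'an density, i.e.\ $W$ maximizes $t(H,\cdot)$ over $K_k$-free graphons. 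Weak $K_k$-Tur\'an-goodness says that this maximum is attained by the step-graphon $W^*$ coming from $T^*$; one would then want to conclude that the complete $(k-1)$-partite step-graphons with optimal part ratios are the \emph{only} maximizers, contradicting $W\neq W^*$. Proving this uniqueness is the main obstacle: weak $K_k$-Tur\'an-goodness is a statement about the \emph{finite} optimum and does not, on its face, rule out a $K_k$-free graphon achieving the same $H$-density without being complete $(k-1)$-partite. A plausible way forward is a symmetrization/perturbation argument inside the space of $K_k$-free graphons, showing that any maximizer must be $\{0,1\}$-valued with support the edge set of a complete $(k-1)$-partite graph.

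A hands-on alternative is progressive induction in the spirit of Simonovits' proof of Theorem~\ref{simo}, using Lemma~\ref{sim-pi} or Lemma~\ref{progin}: one shows that a putative $F$-free $G$ with $\cN(H,G)>\cN(H,T^*)$ contains $K_k$, hence has $\Theta(n^2)$ edges, hence contains a constant-size ``template'' subgraph whose deletion leaves a smaller $F$-free graph, and that the copies of $H$ meeting the template are maximized exactly by the multipartite configuration. The bottleneck in this route is the same structural step in a different guise. A sensible first target is therefore to settle the conjecture under an extra hypothesis that makes the underlying multipartite optimization tractable---e.g.\ $H$ has a unique proper $(k-1)$-coloring, or $H$ is assembled from copies of $K_{k-1}$ as in Theorem~\ref{gpl} and its extensions---and then to try to remove it.
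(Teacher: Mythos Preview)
The statement you were given is labeled as a \emph{conjecture} in the paper, and the paper does not prove it; immediately after stating it, the authors only remark that ``the above conjecture is true asymptotically using the Removal Lemma.'' So there is no proof in the paper to compare your proposal against. Your opening paragraph recovers exactly this asymptotic statement (via Theorem~\ref{genESS}, which itself rests on the regularity/removal machinery), so on that point you and the paper agree.

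Your reduction of the exact conjecture to the implication ``$H$ weakly $K_k$-Tur\'an-good $\Longrightarrow$ $H$ weakly $K_k$-Tur\'an-stable'' is a correct \emph{sufficient} route, but calling it an \emph{equivalence} is unjustified: weak $F$-Tur\'an-goodness for every $k$-chromatic $F$ with a color-critical edge is a statement about the exact extremal graphs, while weak $K_k$-Tur\'an-stability is a statement about near-extremal graphs, and you have not argued the reverse implication. Beyond that overstatement, your graphon and progressive-induction sketches are reasonable attack plans, and you correctly isolate the genuine obstacle in both: weak $K_k$-Tur\'an-goodness only pins down the finite optimizer and does not by itself give uniqueness among $K_k$-free graphons (or the corresponding structural step in the finite argument). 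That is precisely why the statement remains open.
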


The above conjecture is true asymptotically using the Removal Lemma, but the converse does not hold. 

The following problem appears in Section~\ref{turangood}:  characterize the pairs of graphs $H,F$ such that
\[
\ex(n,H,F)=\cN(H,K_{\sigma(F)-1}+T(n-\sigma(F)+1,k-1)),
\]
where $\sigma(F)$ is the minimum number of vertices needed to delete from $F$ to obtain a graph with a smaller chromatic number. This property can be seen as a variant of Tur\'an-goodness. By a theorem in \cite{Ger22f}, it is known that for every graph $F$ there exists a graph $H$ that satisfies the equality, but the problem is wide open in general.

\smallskip

Now the degenerate case, i.e., when $\ex(n,H,F) = o(n^{|V(H)|})$.
Alon and Shikhelman~\cite{AloShi16} characterized the graphs $F$ with $\ex(n,K_3,F)=O(n)$ (see Section~\ref{triangle}). There are similar characterizations for graphs besides $K_3$; namely, forests, cycles, and $K_{2,t}$.
What about other graphs, particularly larger cliques? We can also replace linearity with other functions of $n$. For example, which graphs $F$ force $\ex(n,K_3,F)=O(n^2)$ and which force $\ex(n,K_3,F)=o(n^2)$? 
In general, for any $H$, we know the graphs $F$ that demand $\ex(n,H,F)=o(n)$ (which are exactly the graphs $F$ with $\ex(n,H,F)=O(1)$ by results of Gerbner and Methuku~\cite{GerMet25}).

There are examples of graphs $H$ and $F$ for which $\ex(n,H,F)\neq \Theta(n^\alpha)$ for any $\alpha$. For example, in \cite{AloShi16} it was shown that $n^{2-o(1)} \leq \ex(n,K_3,B_k) \leq o(n^2)$ for $k\ge 2$.
Let $T_i$ denote the graph we obtain from $K_3$ by attaching $i$ pendant edges to one of the vertices. Gerbner~\cite{Ger20} showed that $n^{2+i-o(1)}\leq \ex(n,T_i,B_k)\leq o(n^{2+i})$. For which $\alpha$ do we have graphs $H$ and $F$ such that $n^{\alpha-o(1)} \leq \ex(n,H,F) \leq o(n^\alpha)$?
Recall the following general conjecture from Subsection~\ref{diff}.

\rationalconj*

There are conjectures for classical Tur\'an numbers where the analogous statements in the generalized setting might be easier to prove. 
One example is the \emph{rational exponents conjecture} (and its generalization above), another is the Erd\H os-S\'os conjecture. It was shown in~\cite{GerMetPal18} that if each subtree of a $k$-vertex tree $T$ satisfies the  Erd\H os-S\'os conjecture (Conjecture~\ref{erdsos}), then $\ex(n,K_r,T)\le \frac{1}{k-1}\binom{k-1}{r}n$. However, since one of the conjectured nearly extremal graphs for $\ex(n,T)$ is $\lfloor \frac{n}{k-1}\rfloor K_{k-1}$, which contains many copies of $K_r$, it might actually be easier to prove the bound for $r>2$. Indeed, for $r=k-1$ it is trivial. We make the following conjecture.

\begin{conjecture}
    Let $T$ be a $k$-vertex tree let $r\ge 3$. For $n=a(k-1)+b$ with $b<k-1$, 
    \[
    \ex(n,K_r,T)=\cN(K_r,aK_{k-1}\cup K_b).
    \]
\end{conjecture}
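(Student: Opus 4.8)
## Proof Proposal

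The plan is to prove this by a counting argument that reduces the problem to the Erd\H os-S\'os conjecture for subtrees of $T$, in the spirit of the result of Gerbner, Methuku, and Palmer~\cite{GerMetPal18}, but exploiting that we count $K_r$ with $r \ge 3$ rather than edges. First I would recall the upper bound from~\cite{GerMetPal18}: if every subtree of the $k$-vertex tree $T$ satisfies the Erd\H os-S\'os conjecture, then $\ex(n,K_r,T) \le \frac{n}{k-1}\binom{k-1}{r}$. The key realization is that this bound, which is only \emph{tight} for the edge-count problem when $(k-1) \mid n$, becomes tight for $r \ge 3$ whenever the extremal graph can afford to ``waste'' a few vertices in a small clique $K_b$ rather than a partial Tur\'an-type gadget. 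So the heart of the matter is to sharpen the upper bound to $\cN(K_r, aK_{k-1} \cup K_b)$ and then verify the lower bound, which is immediate since $aK_{k-1} \cup K_b$ is $T$-free (any component has at most $k-1$ vertices, hence cannot contain a tree on $k$ vertices).

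For the upper bound, I would proceed by induction on $n$ together with a structural decomposition. Let $G$ be a $T$-free graph on $n$ vertices maximizing $\cN(K_r, G)$. The first step is to establish that $G$ has a vertex $v$ of low degree, or more precisely a \emph{separating structure}: using that subtrees of $T$ satisfy Erd\H os-S\'os, one shows (as in~\cite{GerMetPal18}) that $G$ locally looks like a disjoint union of cliques of order at most $k-1$ up to lower-order corrections. The cleanest route is a weight-redistribution / discharging argument: assign to each copy of $K_r$ in $G$ weight $1$, and to each vertex $v$ the quantity $\cN(K_r, G) \cdot$(local share); then argue that if $G$ is not already a disjoint union of cliques of order $\le k-1$, one can find a modification (deleting a vertex in a ``bad'' position and re-attaching, or merging components) that does not decrease $\cN(K_r, G)$ while moving $G$ closer to the canonical form $aK_{k-1} \cup K_b$. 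Since among all graphs that are disjoint unions of cliques of orders summing to $n$ with each order $\le k-1$, the maximum of $\cN(K_r, \cdot)$ is achieved by $aK_{k-1} \cup K_b$ (a short convexity computation: $\binom{x}{r}$ is convex, so pushing clique sizes to the boundary value $k-1$ and collecting the remainder is optimal), this completes the argument.

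The main obstacle I anticipate is controlling the ``boundary'' of the induction — that is, ruling out extremal configurations where $G$ is \emph{not} a disjoint union of small cliques but nonetheless has many $K_r$'s, for instance a large clique $K_{k-1}$ together with some extra edges to a few shared vertices (a ``generalized book'' type configuration). For $r = 2$ such configurations are genuinely competitive, which is exactly why the edge version needs $(k-1)\mid n$; for $r \ge 3$ one must show that the extra copies of $K_r$ created by such attachments are always outweighed by the $K_r$'s one loses by not completing a full $K_{k-1}$. Quantitatively this amounts to checking that for $3 \le r \le k-1$, $\binom{k-1}{r} - \binom{k-2}{r} > \binom{k-2}{r-1}$ type inequalities hold with room to spare (they do, since $\binom{k-1}{r} = \binom{k-2}{r} + \binom{k-2}{r-1}$, so in fact one needs the subtler statement that a \emph{partial} attachment to fewer than $k-2$ vertices is strictly suboptimal), and then propagating this local inequality through the global structure via the induction hypothesis on $n - (k-1)$ vertices. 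A secondary technical point is the base case and the additive error term: for the range where $n$ is not large compared to $k$, one should either verify the claim directly or absorb small discrepancies, which is why the statement restricts to sufficiently large $n$. I would expect the case $r = k-1$ to be essentially trivial (each $K_{k-1}$ must lie inside a single component, and a $T$-free graph's components have order $\le k-1$, so $\cN(K_{k-1},G)$ is just the number of components of order exactly $k-1$, maximized by $aK_{k-1} \cup K_b$), providing a useful sanity check and a template for the general induction.
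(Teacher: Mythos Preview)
The statement is presented in the paper as an \emph{open conjecture}, not a theorem; there is no proof in the paper to compare against. The paper explicitly offers it as a problem, noting only that the case $r=k-1$ is trivial and that the known upper bound $\ex(n,K_r,T)\le \frac{n}{k-1}\binom{k-1}{r}$ from \cite{GerMetPal18} is itself conditional on the Erd\H os--S\'os conjecture holding for all subtrees of $T$.

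Your proposal does not close the gap. First, any route through the bound of \cite{GerMetPal18} inherits the conditional dependence on Erd\H os--S\'os, so at best you would obtain a conditional result, not a proof of the conjecture. Second, the structural assertion that ``$G$ locally looks like a disjoint union of cliques of order at most $k-1$'' is not what \cite{GerMetPal18} establishes; that paper gives a counting bound, not a structure theorem, and you have not supplied an argument for the structural claim. Third, your own discussion of the key inequality exposes the problem: you write down $\binom{k-1}{r}-\binom{k-2}{r}>\binom{k-2}{r-1}$ and then immediately observe that Pascal's rule makes this an equality, so there is no ``room to spare'' and the purported local improvement step is not justified. The vague ``weight-redistribution / discharging'' sketch never becomes a concrete modification that strictly increases the clique count.

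Finally, your sanity-check argument for $r=k-1$ is incorrect as stated: it is false that a $T$-free graph has all components of order at most $k-1$ (for instance, $K_{1,n-1}$ is $P_4$-free). The reason the case $r=k-1$ is genuinely trivial is different: if $G$ contains a copy of $K_{k-1}$ and some further vertex $v$ is adjacent to any vertex of that clique, then $K_{k-1}$ together with the pendant edge to $v$ contains every $k$-vertex tree (embed $T$ minus a leaf into the clique, using vertex-transitivity of $K_{k-1}$ to place the leaf's neighbour at the pendant's attachment point). Hence every copy of $K_{k-1}$ in a $T$-free graph is an isolated component, and there are at most $\lfloor n/(k-1)\rfloor$ of them. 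This argument does not extend to $r<k-1$, which is precisely why the conjecture is open.
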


We now consider problems when we fix one of the graphs, but the other can be (almost) any graph. What can we say in this case? When the counting graph $H$ is fixed, we do not have a full characterization of the order of magnitude, even if $H$ is a single edge. However, for some graphs $H$ we may be able to say something about the relationship between $\ex(n,H,F)$ and $\ex(n,F)$. For $H$ a matching $M_r$, we have $\ex(n,M_r,F)=\Theta(\ex(n,F)^r)$, moreover, $\ex(n,M_r,F)=(1+o(1))\frac{1}{r!}\ex(n,F)^r$ if $F$ is a tree or not a forest, due to~\cite{Ger20,Ger24d}. For $H=S_r$, the order of magnitude of $\ex(n,S_r,F)$ was almost completely determined in~\cite{FurKun06}. What about other graphs $H$?

Consider now the case where $F$ is fixed. If $F=K_r$ and $H$ is $K_r$-free, then blowups of $H$ are also $K_r$-free, thus by Proposition~\ref{degen-prop} we have that $\ex(n,H,F)=\Theta(n^{|V(H)|})$. The case $F=tK_r$ is more complicated, but Gerbner~\cite{Ger23b} determined the order of magnitude of $\ex(n,H,tK_r)$ for every $H$. Letzter~\cite{Let18} obtained a similar characterization for trees $F$. What about other graphs $F$?

There are other phenomena worth investigating. Recall that Chase~\cite{Cha19} showed that $\ex(n,K_r,S_k)$ is given by vertex-disjoint copies of $K_{k-1}$ and a smaller clique, assuming $r>2$. Depending on $n$ and $r$, it is possible that the remaining smaller clique has order less than $r$, thus there may exist vertices and edges that are not in any copy of $K_r$. More generally, when do we have that a graph in $\Ex(n,H,F)$ contains some \textit{useless} vertices or edges, i.e., vertices or edges that are not contained in any copy of $H$? Even more generally, we can examine useless copies of some graph $H'$, i.e., copies of $H'$ that do not share any vertex with any copy of $H$.

Finally, let us state more specific questions. Some of the fundamental problems regarding $\ex(n,H,F)$ are when $H$ and $F$ come from the same family of graphs. If $H$ and $F$ are both cliques, then Zykov's theorem determines $\ex(n,H,F)$ exactly. If $H$ and $F$ are both cycles, the order of magnitude of $\ex(n,H,F)$ is determined in~\cite{GisSha18} unless $H=K_3$. As we have described in Subsection~\ref{cycles}, $\ex(n,K_3,C_5)=\Theta(n^{3/2})$, and the upper bound, after several papers, is quite close to the lower bound. We know that $C_{2r}$ is $C_{2k+1}$-Tur\'an-good~\cite{Ger20c} which gives an exact result. There are other exact and asymptotic results for $\ex(n,C_r,C_k)$ when $k$ or $r$ are small. Perhaps the exact value of $\ex(n,C_6,C_8)$ is the natural next question. Recall that $\ex_{\textrm{bip}}(n,C_6,C_8)$ was determined in~\cite{GyoHeLv22}.

Just like cycles, the first open case for paths is $\ex(n,P_6,P_8)$.
The asymptotics of $\ex(n,P_r,P_k)$ were determined in~\cite{GyoSalTom18}. Moreover, they found the exact value in several cases and, for $k \geq k$ odd, they conjectured the extremal graph for $\ex(n,P_{k-1},P_k)$. Let us describe their construction. Begin with a clique $K_{\lfloor k/2\rfloor }$ and identify one of its vertices $v$. Add $m$ new independent vertices each adjacent to $v$ and then $n-m-\lfloor k/2 \rfloor$ further independent vertices adjacent to all vertices of the clique $K_{\lfloor k/2\rfloor}$ except for $v$. The number of copies of $P_{k-1}$ depends on $m$ and so let $H_{n,k}$ be such a graph that maximizes the number of copies of $P_{k-1}$.

\begin{conjecture}[Gy\H ori, Salia, Tompkins, and Zamora~\cite{GyoSalTom18}]
    For $k \geq 5$ odd, the extremal number $\ex(n,P_{k-1},P_k)$ is attained by the graph $H_{n,k}$.
\end{conjecture}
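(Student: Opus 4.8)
The plan is to treat the conjecture as a pure extremal problem about near-spanning paths, combining the P\'osa rotation--extension technique with a Kopylov-type structure theorem for $P_k$-free connected graphs and a concluding application of progressive induction (Lemma~\ref{progin}). \textbf{Reduction and rigidity.} Since $\cN(P_{k-1},\cdot)$ is additive over connected components and a component whose longest path has fewer than $k-1$ vertices contains no copy of $P_{k-1}$, while adjoining a fresh vertex to the ``page set'' of an $H_{n,k}$-like graph strictly increases the count, one first reduces to the case that the extremal graph $G$ is connected with no isolated vertices. As $G$ is $P_k$-free, its longest path then has exactly $k-1$ vertices, so \emph{every} copy of $P_{k-1}$ in $G$ is a longest path. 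This yields the key rigidity: for each such path $P$ with endpoints $x,y$ one has $N(x)\cup N(y)\subseteq V(P)$, and the P\'osa rotation lemma can be applied simultaneously to the whole family of copies of $P_{k-1}$.

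\textbf{Structure.} Fixing one longest path and rotating about both ends, I would show that $G$ admits a set $S$ of at most $(k-3)/2$ vertices such that every component of $G-S$ is a clique of order at most $(k-1)/2-|S|$ glued onto $S$, except possibly for a single distinguished vertex $v$ carrying the remaining vertices as leaves --- this is the generalized-Tur\'an analogue of Kopylov's theorem on $P_k$-free connected graphs (and of the Balister--Gy\H ori--Lehel--Schelp stability description of dense such graphs). Pushing the rotations further, one concludes that $G$ is a subgraph of some $H_{n,m,k}$ (the $H_{n,k}$ construction with $|A|=m$), the only residual freedom being $m$ and which ``book'' edges are present; a supersaturation argument in the spirit of Subsection~\ref{stabmeth} rules out missing book edges, and a short case check shows that no second pendant center can be created without either forming a $P_k$ or reducing the count.

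\textbf{Optimization and exact result.} Once $G\subseteq H_{n,m,k}$, the quantity $\cN(P_{k-1},G)$ is an explicit polynomial in $n$ and $m$, whose dominant term counts the copies of $P_{k-1}$ of the shape (leaf of $A$)--$v$--(alternating spine/page path of length $k-3$); maximizing over $m$, together with verifying that diverting vertices to any secondary pendant center strictly decreases the count, singles out precisely $H_{n,k}$. To upgrade the resulting asymptotic/stability statement to the exact value for all large $n$, I would invoke Lemma~\ref{progin} with the removable piece $G'$ taken to be the leaf set $A$ together with one copy of the spine clique: one needs $\cN_I(P_{k-1},G,G')\le\cN_I(P_{k-1},H_{n,k},G')$ with equality forcing $G$ into the family, after which the constant surplus for small $n$ is absorbed by the induction.

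\textbf{Main obstacle.} The hard part is the structure step. With only one vertex of slack between the counted $P_{k-1}$ and the forbidden $P_k$, the rotation arguments are extremely delicate: essentially any nontrivial rotation threatens to produce a $P_k$, so one must carry the endpoint conditions $N(x),N(y)\subseteq V(P)$ across all longest paths at once and show they are jointly compatible only with the clique-book-plus-pendants shape. Isolating the unique pendant center $v$ --- the sole vertex permitted to have neighbors outside $S$ --- and proving that its leaf set is the only extra structure the graph can afford is the technical heart of the argument; by comparison, the subsequent optimization over $m$ and the progressive-induction endgame should be routine.
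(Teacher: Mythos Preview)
The statement you are attempting to prove is listed in the paper as an \emph{open conjecture} (in the ``Open problems'' section), attributed to Gy\H{o}ri, Salia, Tompkins, and Zamora; the paper contains no proof of it. So there is nothing to compare your proposal against --- you are proposing an attack on an open problem, not reproving a result of the paper.

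As a plan of attack your outline is coherent, but what you have written is a sketch, not a proof, and the part you yourself flag as ``the hard part'' is essentially the entire problem. Concretely: the structural claim --- that a connected $P_k$-free graph maximizing $\cN(P_{k-1},\cdot)$ must be a subgraph of some $H_{n,m,k}$ --- is asserted rather than proved. Invoking a ``Kopylov-type structure theorem'' does not bridge the gap: Kopylov's theorem and the Balister--Gy\H{o}ri--Lehel--Schelp stability describe \emph{edge}-extremal connected $P_k$-free graphs, and the corresponding extremal configurations are not $H_{n,k}$. The objective function $\cN(P_{k-1},G)$ behaves very differently from $e(G)$, and there is no off-the-shelf analogue of Kopylov for counting $P_{k-1}$; producing one is precisely the content of the conjecture.

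I am also skeptical that P\'osa rotation alone can isolate the single pendant center $v$. Rotation arguments are symmetric in the two endpoints of a longest path and tend to yield symmetric structural conclusions (complete bipartite pieces, cliques glued along a cut set, etc.). The extremal construction $H_{n,k}$ is deliberately asymmetric --- one vertex of the spine clique receives a star of leaves while the remaining clique vertices receive the ``pages'' --- and your outline gives no mechanism by which rotation would single out exactly one such vertex rather than several, or none. The ``short case check'' you allude to for ruling out a second pendant center would have to do real work here, and it is not clear it is short. Until the structure step is actually carried out, the subsequent optimization over $m$ and the progressive-induction endgame, while plausible, are premature.
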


For stars and matchings, the problem is trivial. For $\ex(n,K_{a,b},K_{s,t})$, there are several open cases. We highlight the case $1<s<a\le b<t$, where knowledge is the most limited. In this range, we have the upper bound $O(n^s)$  and there are several lower bounds in different ranges of $a,b,s,t$, each of order of magnitude at most $n^{3/2}$.

We do not make an exhaustive list the remaining open problems on $\ex(n,H,F)$ for $H$ and $F$ from distinct graph classes where there are no prior results. 
However, we note that there are hardly any results for $\ex(n,K_{s,t},C_{2k})$ if $s,t> 2$, for $\ex(n,C_r,K_{s,t})$ (even in the case $s=1$), for $\ex(n,C_r,K_k)$ for $k>3$, or for $\ex(n,P_r,K_{s,t})$ with $s>2$.

\bibliography{gts}
\bibliographystyle{abbrvurl}

\end{document}